\numberwithin{equation}{section}
\newtheorem{Theorem}{Theorem}[section]
\newtheorem{Lemma}[Theorem]{Lemma}
\theoremstyle{definition}
\newtheorem{remark}[Theorem]{Remark}
\newtheorem{Proposition}[Theorem]{Proposition}
\newcommand{\R}{\mathbb{R}}
\newcommand{\FL}{\mathbf{L}}
\newcommand{\FI}{\mathbf{I}}
\newcommand{\la}{\lambda}
\def\f{\frac}
\def\fc{\mathfrak{c}}
\def\v{\varepsilon}
\newcounter{RomanNumber}
\title [Newtonian Limit for the relativistic Boltzmann equation] {The Global Well-posedness and Newtonian Limit for the Relativistic Boltzmann Equation in a Periodic Box}
\author[C. Q. Cao]{Chuqi Cao}
\address[C. Q. Cao]{Department of Applied Mathematics, The Hong Kong Polytechnic University, Hong Kong, China}
\email{chuqicao@gmail.com}
\author[J. Ouyang]{Jing Ouyang}
\address[J. Ouyang]{Academy of Mathematics and Systems Science, Chinese Academy of Sciences, Beijing 100190, China}
\email{ouyangjing@mass.ac.cn}
\author[Y. Wang]{Yong Wang}
\address[Y. Wang]{Academy of Mathematics and Systems Science, Chinese Academy of Sciences, Beijing 100190, China; School of Mathematical Sciences, University of Chinese Academy of Sciences, Beijing 100049, China}
\email{yongwang@amss.ac.cn}
\author[C. G. Xiao]{Changguo Xiao}
\address[C. G. Xiao]{School of Mathematics and Statistics, Guangxi Normal University, Guilin, Guangxi 541004, China}
\email{changguoxiao@mailbox.gxnu.edu.cn}
\subjclass[2010]{82C40;\, 35Q20;\, 35Q75;\, 76P05;\, 76Y05.}
\thanks{* Corresponding author: changguoxiao@mailbox.gxnu.edu.cn}
\keywords{relativistic Boltzmann equation;\, Newtonian limit;\, exponential decay;\, global existence;\, initial value problem.}\bigbreak
\date{\today}
\begin{document}
\begin{abstract}
In this paper, we study the Newtonian limit for relativistic Boltzmann equation in a periodic box $\mathbb{T}^3$. We first establish the global-in-time mild solutions of relativistic Boltzmann equation with uniform-in-$\mathfrak{c}$ estimates and time decay rate. Then we rigorously justify the global-in-time Newtonian limits from the relativistic Boltzmann solutions to the solution of Newtonian Boltzmann equation  in  $L^1_pL^{\infty}_x$. Moreover, if the initial data of Newtonian Boltzmann equation belong to $W^{1,\infty}(\mathbb{T}^3\times\mathbb{R}^3)$, based on a  decomposition and  $L^2-L^\infty$ argument, the global-in-time Newtonian limit is proved in $L^{\infty}_{x,p}$.  The convergence rates of Newtonian limit are obtained both in $L^1_pL^{\infty}_x$ and $L^{\infty}_{x,p}$.
\end{abstract}

\maketitle
\setcounter{tocdepth}{1}
\tableofcontents

	\section{Introduction}
	
	\subsection{The relativistic Boltzmann equation}
	
	The dynamics of single-species special relativistic particles whose speed is comparable to
	the speed of light is governed by the special relativistic Boltzmann equation
	\begin{equation}\label{1.1-0}
		p^{\mu}\partial_{\mu}F=\mathcal{C}(F, F).
	\end{equation}
	The unknown $F(t,x,p)\ge 0$ stands for the density distribution function for relativistic particles with position $x=(x_1,x_2,x_3)\in \mathbb{T}^3$ and particle momentum $p=(p^1,p^2,p^3)\in \mathbb R^3$ at time $t>0$. The collision term $\mathcal{C}(h_1, h_2)$ is defined by
	\begin{equation*}
		\mathcal{C}(h_1, h_2)=\frac{1}{2} \int_{\mathbb{R}^{3}} \frac{d q}{q^{0}} \int_{\mathbb{R}^{3}} \frac{d p^{\prime}}{p^{\prime 0}} \int_{\mathbb{R}^{3}} \frac{d q^{\prime}}{q'^{0}} W\left(p, q \mid p^{\prime}, q^{\prime}\right)\left[h_1\left(p^{\prime}\right) h_2\left(q^{\prime}\right)-h_1(p) h_2(q)\right],
	\end{equation*}
     where the transition rate $W\left(p, q \mid p^{\prime}, q^{\prime}\right)$ is
	\begin{align}\label{1.2-0}
		W\left(p, q \mid p^{\prime}, q^{\prime}\right)=\mathfrak{s} \sigma(g, \theta)\delta(p^0+q^0-p'^{0}-q'^{0}) \delta^{(3)}(p+q-p'-q').
	\end{align}
    The function $\sigma(g,\theta)$ in \eqref{1.2-0} is called the relativistic differential cross-section or scattering kernel which describes the collisions
    between relativistic particles. 
	The streaming term of the relativistic Boltzmann equation \eqref{1.1-0} is given by
	\begin{align*}
		p^\mu\partial_\mu=\frac{p^0}{\mathfrak{c}}\partial_t+p\cdot \nabla_x,
	\end{align*}
	where $\mathfrak{c}$ denotes the speed of light and $p^0$ denotes the energy of a relativistic particle with
	$$p^{0}=\sqrt{m^2\mathfrak{c}^2+|p|^{2}}.$$
	Here $m$ denotes the rest mass of particle. 
	
	We can rewrite \eqref{1.1-0} as
	\begin{align}\label{1.3-0}
		\partial_t F+\hat{p}\cdot\nabla_x F=\mathcal{Q}(F,F),
	\end{align}
	where $\hat{p}$ denotes the normalized particle velocity
	\begin{align*}
		\hat{p}:=\mathfrak{c}\frac{p}{p^0}=\frac{\mathfrak{c}p}{\sqrt{m^2\mathfrak{c}^2+|p|^2}}.
	\end{align*}
	The collision term $\mathcal{Q}(h_1,h_2)$ in \eqref{1.3-0} has the form
	\begin{equation*}
		\mathcal{Q}(h_1, h_2)=\frac{\mathfrak{c}}{2} \frac{1}{p^{0}}\int_{\mathbb{R}^{3}} \frac{d q}{q^{0}} \int_{\mathbb{R}^{3}} \frac{d p^{\prime}}{p^{\prime 0}} \int_{\mathbb{R}^{3}} \frac{d q^{\prime}}{q'^{0}} W\left(p, q \mid p^{\prime}, q^{\prime}\right)\left[h_1\left(p^{\prime}\right) h_2\left(q^{\prime}\right)-h_1(p) h_2(q)\right].
	\end{equation*}
	The relativistic Boltzmann equation \eqref{1.3-0} is supplemented with the following initial data
	\begin{align}\label{1.4-0}
		F(t,x,p)\mid_{t=0}=F_0(x,p).
	\end{align}

	We denote the energy-momentum 4-vector as $p^{\mu}=(p^0,p^1,p^2,p^3)$. The energy-momentum 4-vector with the lower index is written as a product in the Minkowski metric $p_{\mu}=g_{\mu \nu} p^{\nu}$, where the Minkowski metric is given by $g_{\mu \nu}=\operatorname{diag}(-1,1,1,1)$. The inner product of energy-momentum 4-vectors $p^{\mu}$ and $q_{\mu}$ is defined via the Minkowski metric
	\begin{align*}
		p^{\mu} q_{\mu}=p^{\mu} g_{\mu \nu} q^{\nu}=-p^{0} q^{0}+\sum_{i=1}^{3} p^{i} q^{i}.
	\end{align*}
    Note that the momentum for each particle satisfies the mass shell condition 
    $$p^{\mu} p_{\mu}=-m^2\mathfrak{c}^2$$
    with $p^0 > 0$. Also, the product $p^{\mu} q_{\mu}$ is Lorentz invariant.
	
	The quantity $\mathfrak{s}$ and $g$ are respectively the square of the energy and the relative momentum in the \textit{center of momentum system}, $p+q=0$, and they are given by
	\begin{align*}
		\mathfrak{s}=\mathfrak{s}(p, q)=-\left(p^{\mu}+q^{\mu}\right)\left(p_{\mu}+q_{\mu}\right)=2\left(p^{0} q^{0}-p \cdot q+m^{2} \mathfrak{c}^{2}\right) \geq 4 m^{2} \mathfrak{c}^{2}
	\end{align*}
	and
	\begin{align*}
		g=g(p, q)=\sqrt{\left(p^{\mu}-q^{\mu}\right)\left(p_{\mu}-q_{\mu}\right)}=\sqrt{2\left(p^{0} q^{0}-p \cdot q-m^{2} \mathfrak{c}^{2}\right)} \geq 0.
	\end{align*}
	Notice that we have 
	\begin{align*}
		\mathfrak{s}=g^2+4m^2\mathfrak{c}^2.
	\end{align*}
    For simplicity, we normalize $m$ to one in the rest of this paper.
    
	The post-collision momentum pair $(p'^{\mu},q'^{\mu})$ and the pre-collision momentum pair $(p^{\mu},q^{\mu})$ satisfy the relation
	\begin{align}\label{1.8}
		p^{\mu}+q^{\mu}=p^{\prime \mu}+q^{\prime \mu}.
	\end{align}
	One may also write \eqref{1.8} as
	\begin{align}
		p^0+q^0&=p^{\prime 0}+q^{\prime 0},\label{1.9}\\
		p+q&=p^{\prime}+q^{\prime }\label{1.10},
	\end{align}
	where \eqref{1.9} represents the principle of conservation of energy and \eqref{1.10} represents the conservation of momentum after a binary collision.

	Throughout the present paper, we consider the ``hard ball" particles 
	\begin{align*}
		\sigma(g, \theta) = \text{constant}.
	\end{align*}
	Without loss of generality, we take $\sigma(g, \theta) = 1$ for simplicity. The Newtonian limit in this situation, as $\mathfrak{c}\rightarrow \infty$,  is the Newtonian hard-sphere Boltzmann collision operator \cite{Strain}.
	
	We introduce the relativistic global Maxwellian $J_{\mathfrak{c}}(p)$ as
	\begin{align*}
		J_{\mathfrak{c}}(p):=\f{1}{4\pi \fc K_2(\fc^2)}e^{-\fc p^0},
	\end{align*} 
	where $K_2(z)$ is the modified Bessel function of the second kind (see \eqref{e2.15}). A direct calculation shows that
	\begin{align*}
		\int_{\mathbb{R}^3}J_{\mathfrak{c}}(p)dp=1.
	\end{align*} 

	Let $F_{\mathfrak{c}}$ be a solution of the relativistic Boltzmann equation \eqref{1.3-0}--\eqref{1.4-0} with initial data $F_{0,\mathfrak{c}}$. Formally, $F_{\mathfrak{c}}$ satisfies the conversations of mass, momentum, and energy
	\begin{align}\label{1.8-10}
		\int_{\mathbb{T}^3} \int_{\mathbb{R}^3}[F_{\mathfrak{c}}(t, x, p)-J_{\mathfrak{c}}(p)]
		\begin{pmatrix}
			1  \\  p \vspace{1.0ex}\\  p^0
		\end{pmatrix}
	    d p d x&=\int_{\mathbb{T}^3} \int_{\mathbb{R}^3}[F_{0,\mathfrak{c}}(x, p)-J_{\mathfrak{c}}(p)]
	    \begin{pmatrix}
	    	1  \\  p \vspace{1.0ex}\\  p^0
	    \end{pmatrix}
	    d p d x
	    =\begin{pmatrix}
	    	M_{\mathfrak{c}} \vspace{1.0ex} \\  \mathbf{J}_{\mathfrak{c}} \vspace{1.0ex}\\  E_{\mathfrak{c}}
	    \end{pmatrix}.
    \end{align}

\subsection{A brief history on the Newtonian limit of  relativistic Boltzmann equation}

In the context of the Newtonian limit of relativistic particles, Calogero \cite{Calogero} previously established local-in-time solutions for the relativistic Boltzmann equation within a periodic box, independent of the light speed $\mathfrak{c}$. He also demonstrated that these solutions converge to the Newtonian Boltzmann solutions as $\mathfrak{c}\to \infty$. Subsequently, Strain \cite{Strain} extended this work, proving the existence of unique global-in-time mild solutions and justifying the Newtonian limit over arbitrary time intervals $[0, T]$, with a convergence rate $\mathfrak{c}^{2-\varepsilon}$ applicable to any $\varepsilon\in (0,2)$.

Wang-Xiao \cite{Wang-Xiao} established  the Hilbert expansion of relativistic Boltzmann equation with uniform-in-$\mathfrak{c}$ estimates, and hence proved both the hydrodynamic limit and Newtonian limit which bridges the gap between relativistic Boltzmann solutions and classical Euler solutions.

\subsection{Main results}

In the present paper, we study Newtonian limit of relativistic Boltzmann equation in torus. For this, we first focus on the global existence of \eqref{1.3-0}--\eqref{1.4-0} with uniform-in-$\mathfrak{c}$ estimates. For later use, we assume  $\mathfrak{c}\ge 1$ throughout the paper. To emphasize the dependence on $\mathfrak{c}$ for relativistic Boltzmann operators, initial data or solutions, we denote $\mathcal{Q}(\cdot)$ as $\mathcal{Q}_{\mathfrak{c}}(\cdot)$, $F_0(\cdot)$ as $F_{0,\mathfrak{c}}(\cdot)$ and denote the solutions of \eqref{1.3-0} as $F_{\mathfrak{c}}$. 

For each $\beta\ge 0$, we define the weight function $w_{\beta}(p)$ as
\begin{align}\label{1.9-01}
	w_{\beta}(p):=\left(1+|p|^2\right)^{\frac{\beta}{2}}.
\end{align}
We look for solutions in the perturbation form
    \begin{align}\label{1.10-0}
    	f_{\mathfrak{c}}(t, x, p):=\frac{F_{\mathfrak{c}}(t, x, p)-J_{\mathfrak{c}}(p)}{\sqrt{J_{\mathfrak{c}}(p)}}.
    \end{align}
    
    \begin{Theorem}[Global existence]\label{thm1.1}
    	Let $\beta>4$ and $(M_{\mathfrak{c}},\mathbf{J}_{\mathfrak{c}},E_{\mathfrak{c}})=(0,\mathbf{0},0)$. There exist constants $\varepsilon_{0}>0$, $C_{0}>0$ and $\lambda_{0}>0$, which are all independent of the light speed $\mathfrak{c}\ (\gg1)$, such that if $F_{0,\mathfrak{c}}(x, p)=J_{\mathfrak{c}}(p)+\sqrt{J_{\mathfrak{c}}(p)}f_{0,\mathfrak{c}}(x, p) \ge 0$ 
        and
    	\begin{align}\label{1.29-R}
    		\left\|w_{\beta} f_{0,\mathfrak{c}}\right\|_{L^{\infty}} \le \varepsilon_{0},
    	\end{align}
    	then the relativistic Boltzmann equation \eqref{1.3-0}-\eqref{1.4-0} admits a unique global solution $F_{\mathfrak{c}}(t, x, p)=J_{\mathfrak{c}}(p)+\sqrt{J_{\mathfrak{c}}(p)}f_{\mathfrak{c}}(t, x, p) \ge 0$ satisfying \eqref{1.8-10}
    	and
    	\begin{align}\label{1.31}
    		\sup_{t\ge 0}\big\{e^{\lambda_{0} t}\|w_{\beta} f_{\mathfrak{c}}(t)\|_{L^{\infty}}\big\}\le C_{0} \left\|w_{\beta} f_{0,\mathfrak{c}}\right\|_{L^{\infty}}.
    	\end{align}
    \end{Theorem}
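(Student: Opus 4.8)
The plan is to follow the now-standard $L^2$–$L^\infty$ bootstrap scheme of Guo, adapted to the relativistic setting with care to keep all constants independent of $\mathfrak{c}$. Writing the perturbation equation for $f_{\mathfrak{c}}$ in \eqref{1.10-0}, one gets $\partial_t f_{\mathfrak{c}} + \hat p\cdot\nabla_x f_{\mathfrak{c}} + L_{\mathfrak{c}} f_{\mathfrak{c}} = \Gamma_{\mathfrak{c}}(f_{\mathfrak{c}},f_{\mathfrak{c}})$, where $L_{\mathfrak{c}} = \nu_{\mathfrak{c}} - K_{\mathfrak{c}}$ is the linearized relativistic collision operator and $\Gamma_{\mathfrak{c}}$ the quadratic remainder. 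The first block of work is the linear theory: I would record (citing or reproving from Strain's work and Dudy\'nski–Ekiel-Je\.zewska) the uniform-in-$\mathfrak{c}$ bounds $\nu_{\mathfrak{c}}(p)\sim (1+|p|)$ from below and above, the compactness/smoothing estimates for $K_{\mathfrak{c}}$ with an explicit kernel $k_{\mathfrak{c}}(p,p')$ admitting a Gaussian-type upper bound uniformly in $\mathfrak{c}$, and the coercivity of $L_{\mathfrak{c}}$: on the orthogonal complement of the five-dimensional null space $\mathrm{span}\{\sqrt{J_{\mathfrak{c}}},\,p\sqrt{J_{\mathfrak{c}}},\,p^0\sqrt{J_{\mathfrak{c}}}\}$, $\langle L_{\mathfrak{c}} f,f\rangle \ge \delta_0\|\sqrt{\nu_{\mathfrak{c}}}(\mathbf I-\mathbf P_{\mathfrak{c}})f\|_{L^2}^2$ with $\delta_0$ independent of $\mathfrak{c}$. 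This last point is where the conservation-law hypothesis $(M_{\mathfrak{c}},\mathbf J_{\mathfrak{c}},E_{\mathfrak{c}})=(0,\mathbf 0,0)$ enters, ensuring $\mathbf P_{\mathfrak{c}}f_{\mathfrak{c}}$ can be controlled.

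The second block is the $L^2$ energy estimate with exponential decay. Taking the $L^2_{x,p}$ inner product of the perturbation equation with $f_{\mathfrak{c}}$ and using coercivity gives $\frac{d}{dt}\|f_{\mathfrak{c}}\|_{L^2}^2 + \delta_0\|\sqrt{\nu_{\mathfrak{c}}}(\mathbf I-\mathbf P_{\mathfrak{c}})f_{\mathfrak{c}}\|_{L^2}^2 \lesssim \langle\Gamma_{\mathfrak{c}}(f_{\mathfrak{c}},f_{\mathfrak{c}}),f_{\mathfrak{c}}\rangle$, and the nonlinear term is absorbed using the a priori smallness of $\|w_\beta f_{\mathfrak{c}}\|_{L^\infty}$. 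To upgrade dissipation of only the microscopic part to full dissipation, I would invoke the macroscopic (velocity-moment / Poincaré-type) estimates on the torus — the standard argument producing $\int_0^t\|\mathbf P_{\mathfrak{c}}f_{\mathfrak{c}}\|^2\,ds \lesssim \int_0^t\|(\mathbf I-\mathbf P_{\mathfrak{c}})f_{\mathfrak{c}}\|^2\,ds + \text{(nonlinear)}$ — again checking $\mathfrak{c}$-uniformity of the constants; this yields $\|f_{\mathfrak{c}}(t)\|_{L^2}\le Ce^{-\lambda_0 t}\|f_{0,\mathfrak{c}}\|_{L^2}$.

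The third block is the $L^\infty$ estimate via the Duhamel/double-Duhamel representation along characteristics $X(s)=x-(t-s)\hat p$. Writing $h_{\mathfrak{c}}=w_\beta f_{\mathfrak{c}}$, the equation becomes $\partial_t h_{\mathfrak{c}} + \hat p\cdot\nabla_x h_{\mathfrak{c}} + \nu_{\mathfrak{c}} h_{\mathfrak{c}} = K_{w,\mathfrak{c}} h_{\mathfrak{c}} + w_\beta\Gamma_{\mathfrak{c}}(f_{\mathfrak{c}},f_{\mathfrak{c}})$ with $K_{w,\mathfrak{c}}$ having kernel $k_{w,\mathfrak{c}}(p,p') = \frac{w_\beta(p)}{w_\beta(p')}k_{\mathfrak{c}}(p,p')$ still enjoying a $\mathfrak{c}$-uniform integrable bound $\int k_{w,\mathfrak{c}}(p,p')e^{\epsilon|p-p'|}\,dp' \lesssim (1+|p|)^{-1}$. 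Iterating Duhamel twice, the double-kernel term $\int\int k_{w,\mathfrak{c}}(p,p')k_{w,\mathfrak{c}}(p',p'')$ is split into a region $|p-p'|$ or $|p'-p''|$ large (small by the exponential tail), a region $|p'|$ large (small by smallness of $\nu_{\mathfrak{c}}^{-1}$), and a bounded region where the kernel is $L^2$ in $p''$, so one can dominate the remaining integral by the $L^2$ norm $\|f_{\mathfrak{c}}(s)\|_{L^2}$, which decays. Closing this gives $\sup_t e^{\lambda_0 t}\|h_{\mathfrak{c}}(t)\|_{L^\infty}\le C_0\|h_{0,\mathfrak{c}}\|_{L^\infty}$, and the nonlinear terms along the way are controlled by $\|w_\beta\Gamma_{\mathfrak{c}}(f,g)\|_{L^\infty}\lesssim \nu_{\mathfrak{c}}\|w_\beta f\|_{L^\infty}\|w_\beta g\|_{L^\infty}$ (valid for $\beta>4$), closing the bootstrap under \eqref{1.29-R}. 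Existence and uniqueness then follow from a standard contraction/iteration scheme using exactly these estimates, and positivity $F_{\mathfrak{c}}\ge 0$ is preserved through the iteration, since each Picard iterate solves a linear transport-gain/loss equation with nonnegative data.

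The main obstacle I anticipate is the \emph{uniformity in $\mathfrak{c}$} of every constant — particularly (i) the spectral gap $\delta_0$ of $L_{\mathfrak{c}}$, which requires controlling how $J_{\mathfrak{c}}$, the asymptotics of $K_2(\mathfrak{c}^2)$, and the null space vary as $\mathfrak{c}\to\infty$ (one must show the relativistic operator degenerates to the Newtonian hard-sphere operator in a quantitatively stable way, not losing coercivity), and (ii) the $\mathfrak{c}$-uniform Gaussian bound on the kernel $k_{\mathfrak{c}}(p,p')$, whose relativistic expression (involving $\mathfrak{s}$, $g$, and Bessel functions) is considerably more delicate than the classical one and must be shown to be bounded by a $\mathfrak{c}$-independent envelope on, say, $|p|\le R$, with harmless modifications for large momenta. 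Everything else is the by-now-routine Guo-type machinery, but these two uniformity checks are the technical heart.
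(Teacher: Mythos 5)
Your overall scheme is the same as the paper's: a Guo-type $L^2$--$L^\infty$ bootstrap, with uniform-in-$\mathfrak{c}$ kernel and coercivity estimates (the paper imports these from Wang--Xiao, Lemma \ref{lem2.3}), a double-Duhamel $L^\infty$ bound along characteristics that is closed by the $L^2$ decay (Lemma \ref{lem4.3} and Proposition \ref{prop4.6}), a macroscopic estimate on the torus to recover dissipation of $\mathbf{P}_{\mathfrak{c}}f_{\mathfrak{c}}$ (Proposition \ref{prop3.2} and Lemma \ref{lem4.5}), the nonlinear bound of Lemma \ref{lem2.7}, local existence, and a continuity argument. So there is no divergence of strategy.

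There is, however, a genuine gap at the step you dismiss as ``the standard argument \ldots again checking $\mathfrak{c}$-uniformity of the constants.'' The macroscopic estimate is precisely where the routine Newtonian machinery does not transplant: the streaming term carries the relativistic factor $\frac{\mathfrak{c}}{p^0}$, so the classical test functions must be replaced by ones with Bessel-function-dependent constants, e.g.\ $\beta_c$ chosen so that $\int_{\mathbb{R}^3}\frac{\mathfrak{c}}{p^0}(|p|^2-\beta_c)p_i^2J_{\mathfrak{c}}\,dp=0$ as in \eqref{3.36-0}, $\beta_b=\frac{K_3(\mathfrak{c}^2)}{K_2(\mathfrak{c}^2)}$ so that the $a$-contribution in the $b$-estimate vanishes exactly, and similarly for $\beta_a$. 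Moreover, unlike in the Newtonian case, the $b$-contribution in the $\partial_t\psi_c$ term does \emph{not} vanish; it is only of size $\mathfrak{c}^{-2}$, which forces the estimates \eqref{3.43}, \eqref{3.55}, \eqref{3.62} to be closed in a specific order (first $c$, then $b$, then $a$, taking $\varepsilon$ small and then $\mathfrak{c}$ large). Without these choices the constants are not uniform and the bootstrap does not close, so this step cannot be labeled routine; it is the main new content of the global existence proof, whereas the spectral gap and Gaussian kernel bounds you flag as ``the technical heart'' are already available uniformly in $\mathfrak{c}$. A minor related inaccuracy: $\nu_{\mathfrak{c}}(p)$ is \emph{not} comparable to $1+|p|$ uniformly in $\mathfrak{c}$; it saturates at $\mathfrak{c}$ for $|p|\gtrsim\mathfrak{c}$, and correspondingly the weighted kernel integral is only $\lesssim\max\{\frac{1}{1+|p|},\frac{1}{\mathfrak{c}}\}$, which is why the $L^\infty$ iteration must absorb a term of size $\max\{\frac{1}{\mathfrak{c}},\frac{1}{N}\}$ rather than simply $\frac{1}{N}$.
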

    
Our next results are concerned with the Newtonian limit. More specifically, we prove that, in the limit $\mathfrak{c}\to \infty$, the solutions of the relativistic Boltzmann equation converge, in a suitable norm, to the solutions of the Newtonian Boltzmann equation. To achieve this, the solutions of \eqref{1.3-0} will be directly compared to the solutions of the Newtonian Boltzmann equation with hard-sphere collision kernel, which is
\begin{align}\label{1.14-0}
	\partial_t F+p\cdot \nabla_x F=Q(F,F).
\end{align}
Here the Newtonian collision term $Q(h_1,h_2)$ in \eqref{1.14-0} has the form
\begin{align*}
	Q(h_1,h_2)=&\int_{\mathbb R^3}\int_{\mathbb S^2}\mathcal{K}_{\infty}(p,q,\omega)\Big[h_1(\bar{p}')h_2(\bar{q}')-h_1(p)h_2(q)\Big]d\omega dq
\end{align*}
with hard-sphere collision kernel
\begin{align*}
	\mathcal{K}_{\infty}(p,q,\omega):=|\omega\cdot (p-q)|.
\end{align*}
The post-collision velocities $(\bar{p}',\bar{q}')$ of two particles with the pre-collision velocity $(p,q)$ are given by 
\begin{align}\label{1.16-0}
	\bar{p}'=p+[\omega\cdot(q-p)]\omega,\quad \bar{q}'=q-[\omega\cdot(q-p)]\omega.
\end{align}

We impose equation \eqref{1.14-0} with initial data
\begin{align}\label{1.17-0}
	F(t,x,p)\mid_{t=0}=F_{0}(x,p).
\end{align}

To look for a solution $F(t,x,p)$  to the problem \eqref{1.14-0}, \eqref{1.17-0}, we introduce the Newtonian global Maxwellian 
\begin{align*}
	\mu(p):=\frac{1}{(2\pi)^{\frac{3}{2}}}e^{-\frac{|p|^2}{2}}.
\end{align*}

Let $F$ be a solution of the Newtonian Boltzmann equation \eqref{1.14-0}, \eqref{1.17-0}. Then $F$ satisfies the conversations of mass, momentum, and energy
\begin{align}\label{1.18-10}
	\int_{\mathbb{T}^3} \int_{\mathbb{R}^3}[F(t, x, p)-\mu(p)]
	\begin{pmatrix}
		1  \\  p \vspace{1.0ex}\\  |p|^2
	\end{pmatrix}
	d p d x&=\int_{\mathbb{T}^3} \int_{\mathbb{R}^3}[F_0(x, p)-\mu(p)]
	\begin{pmatrix}
		1  \\  p \vspace{1.0ex}\\  |p|^2
	\end{pmatrix}
	d p d x
	=\begin{pmatrix}
		M_{0} \vspace{1.0ex} \\  \mathbf{J}_{0} \vspace{1.0ex}\\  E_{0}
	\end{pmatrix}.
\end{align}

In the framework of perturbation
\begin{align}\label{1.19-0}
f(t, x, p):=\frac{F(t, x, p)-\mu(p)}{\sqrt{\mu(p)}},
\end{align}
by applying Theorems 1.1 $\&$ 1.6 in \cite{Duan} to the case $\|w_{\beta}f_0\|_{L^{\infty}}\le \bar{\epsilon}$ with $\bar{\epsilon}$ sufficiently small, we can obtain the following lemma, see also \cite{Kim} for the case with a large amplitude external potential. 

\begin{Lemma}\emph{(\cite{Duan,Kim})}\label{lem1.2}
		Let $\beta>4$ and $(M_{0},\mathbf{J}_{0},E_{0})=(0,\mathbf{0},0)$. There exist constants $\bar{\epsilon}>0$, $\bar{C}>0$ and $\bar{\sigma}>0$, such that if $F_{0}(x, p)=\mu(p)+\sqrt{\mu(p)}f_{0}(x, p) \ge 0$ 
		and
		\begin{align}\label{1.29-N}
			\left\|w_{\beta} f_{0}\right\|_{L^{\infty}} \le \bar{\epsilon},
		\end{align}
		then the Newtonian Boltzmann equation \eqref{1.14-0}, \eqref{1.17-0} admits a unique global solution $F(t, x, p)=\mu(p)+\sqrt{\mu(p)}f(t, x, p) \ge 0$ satisfying \eqref{1.18-10}
		and
		\begin{align*} 
			\sup_{t\ge 0}\big\{e^{\bar{\sigma} t}\|w_{\beta} f(t)\|_{L^{\infty}}\big\}\le \bar{C} \left\|w_{\beta} f_{0}\right\|_{L^{\infty}}.
		\end{align*}
\end{Lemma}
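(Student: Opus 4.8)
The statement is, by design, a specialization of the $L^2$--$L^\infty$ theory for the Newtonian hard-sphere Boltzmann equation in a periodic box, so the plan is to reduce it to the cited results of \cite{Duan} (and \cite{Kim} for the version with an external potential, taken here to be identically zero). First I would rewrite \eqref{1.14-0}, \eqref{1.17-0} in the perturbation unknown \eqref{1.19-0}: substituting $F=\mu+\sqrt{\mu}f$ produces
\begin{align*}
	\partial_t f+p\cdot\nabla_x f+\mathbf{L}f=\Gamma(f,f),\qquad f|_{t=0}=f_0,
\end{align*}
where $\mathbf{L}=\nu-\mathbf{K}$ is the standard linearized hard-sphere operator, with collision frequency $\nu(p)\sim 1+|p|$, a compact velocity-averaging piece $\mathbf{K}$, five-dimensional null space $\mathrm{span}\{\sqrt{\mu},p_1\sqrt{\mu},p_2\sqrt{\mu},p_3\sqrt{\mu},|p|^2\sqrt{\mu}\}$, and coercivity $\langle \mathbf{L}g,g\rangle\gtrsim\|\{\mathbf{I}-\mathbf{P}\}g\|_\nu^2$ on that complement; here $\Gamma(f,f)=\mu^{-1/2}Q(\sqrt{\mu}f,\sqrt{\mu}f)$ is the quadratic remainder. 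The hypothesis $(M_0,\mathbf{J}_0,E_0)=(0,\mathbf{0},0)$ in \eqref{1.18-10} is exactly the requirement that the conserved macroscopic quantities of $f(t)$ vanish for all $t\ge0$, which is what makes the full (not merely microscopic) $L^2$ dissipation available.

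Second, I would invoke the $L^2$ energy estimate: for data with vanishing conservation laws one has, via a Guo-type energy functional on $\mathbb{T}^3$ together with the macroscopic (Poincar\'e-type) estimates, the exponential decay
\begin{align*}
	\|f(t)\|_{L^2_{x,p}}\le Ce^{-\sigma t}\|f(0)\|_{L^2_{x,p}}
\end{align*}
for small data. Third comes the $L^\infty$ step: writing the mild formulation of the perturbed equation along the backward characteristics $X(s;t,x,p)=x-(t-s)p$, one iterates the Duhamel representation twice so that the two successive $\mathbf{K}$ factors produce enough velocity averaging to be dominated, after a change of variables, by the $L^2_{x,p}$ norm on a bounded set. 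Feeding in the $L^2$ decay and absorbing the small quadratic contribution $w_\beta\Gamma(f,f)$ closes a continuity/bootstrap argument and yields
\begin{align*}
	\sup_{t\ge0}\big\{e^{\bar\sigma t}\|w_\beta f(t)\|_{L^\infty}\big\}\le \bar C\|w_\beta f_0\|_{L^\infty},
\end{align*}
the positivity $F=\mu+\sqrt{\mu}f\ge0$ following from the nonnegativity-preserving structure of the iteration scheme. The threshold $\beta>4$ is precisely the weight exponent for which $w_\beta\mathbf{K}w_\beta^{-1}$ and $w_\beta\Gamma$ are controlled in the hard-sphere case.

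Concretely, then, the proof is: verify that our data $F_0=\mu+\sqrt{\mu}f_0\ge0$, $(M_0,\mathbf{J}_0,E_0)=0$, $\beta>4$, and $\|w_\beta f_0\|_{L^\infty}\le\bar\epsilon$ match the hypotheses of Theorems~1.1 and~1.6 of \cite{Duan} (respectively global existence in the weighted $L^\infty$ space and its exponential time-decay); the large-amplitude external-potential version \cite{Kim} contains the same conclusion and reduces to \cite{Duan} when the potential is set to zero. Reading off their conclusions gives the global solution and the asserted decay bound. The only genuine work is the bookkeeping of constants and the observation that the hard-sphere kernel $\mathcal{K}_\infty(p,q,\omega)=|\omega\cdot(p-q)|$ is admissible in those frameworks; I do not expect a real obstacle, since the substance of the argument---the double-Duhamel $L^2$--$L^\infty$ reduction and the spectral gap of $\mathbf{L}$---is entirely contained in the cited works.
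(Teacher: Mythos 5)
Your proposal is correct and matches the paper's treatment: the paper offers no independent proof of Lemma \ref{lem1.2}, but obtains it exactly as you do, by writing the problem in the perturbation form \eqref{1.19-0} and applying Theorems 1.1 and 1.6 of \cite{Duan} (with \cite{Kim} noted for the external-potential case) under the smallness assumption \eqref{1.29-N} and vanishing conserved quantities. Your sketch of the underlying $L^2$--$L^\infty$ machinery is consistent with the cited works, so no further comparison is needed.
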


For later use, we define the translation operator $\tau_h^x$ and $\tau_h^p$  as follows
\begin{align*}
	\tau_h^x \xi(x, p):=\xi(x+h, p),\quad \tau_h^p \xi(x, p):=\xi(x, p+h),\quad h \in \mathbb{R}^3.
\end{align*}

\begin{Theorem}[Newtonian limit in $L_p^1 L_x^{\infty}$] \label{thm1.3}
Let the initial data $F_0$, $\left\{F_{0, \mathfrak{c}}\right\}\in L^1_pL^{\infty}_x$ for all $\mathfrak{c} \geq 1$. Assume  $F_0$, $\left\{F_{0, \mathfrak{c}}\right\}$ initially converge for some $\hat{C}_1>0$ as
	\begin{align}\label{1.5}
		\left\|F_{0, \mathfrak{c}}-F_0\right\|_{L_p^1 L_x^{\infty}} \leq \hat{C}_1 / \mathfrak{c}^k, \quad \exists k \in(0,2] .
	\end{align}
	Suppose each of these initial data $\left\{F_{0, \mathfrak{c}}\right\}$ leads to global-in-time unique mild solutions $F_{\mathfrak{c}}(t)$ of \eqref{1.3-0}-\eqref{1.4-0} with
	\begin{align*}
	F_{\mathfrak{c}}(t,x,p)=J_{\mathfrak{c}}(p)+\sqrt{J_{\mathfrak{c}}(p)}f_{\mathfrak{c}}(t,x,p)\geq 0,\quad \|f_{\mathfrak{c}}(t)\|_{L^{\infty}_{x,p}} \leq \hat{C}_2e^{-\tilde{\sigma}t},
	\end{align*}
	 for some $\tilde{\sigma}>0$ independent of $\mathfrak{c}$. Similarly, suppose $F_0$ leads to a global-in-time mild solution $F(t)$ of \eqref{1.14-0}, \eqref{1.17-0} with 
	 \begin{align*}
	 F(t,x,p)=\mu(p)+\sqrt{\mu(p)}f(t,x,p)\geq 0,\quad \|f(t)\|_{L^{\infty}_{x,p}} \leq \hat{C}_2e^{-\tilde{\sigma}t}.
	 \end{align*}
	
Assume
	\begin{align}\label{1.6}
		\left\|\tau_h^x F_0-F_0\right\|_{L_p^1 L_x^{\infty}}+\left\|\tau_h^p F_0-F_0\right\|_{L_p^1 L_x^{\infty}} \leq \hat{C}_3|h|, \quad|h|<1,
	\end{align}
then for any $ \delta \in(0, k)$ and $0<\alpha_0<\frac{1}{2}$, $\beta_0>1$, 
	we have the following Newtonian limit
	\begin{align}\label{1.7}
		\begin{split}
		&\left\|F_{\mathfrak{c}}(t)-F(t)\right\|_{L_p^1 L_x^{\infty}} \leq C(\delta) / \mathfrak{c}^{k-\delta},\quad \mbox{for}\quad 0\le t\le (\log \mathfrak{c})^{\alpha_0},\\
		&\left\|F_{\mathfrak{c}}(t)-F(t)\right\|_{L_p^1 L_x^{\infty}} \le C(\delta)/\mathfrak{c}^{2-\delta},\quad \mbox{for}\quad t\ge (\log \mathfrak{c})^{\beta_0},\\
		&\left\|F_{\mathfrak{c}}(t)-F(t)\right\|_{L_p^1 L_x^{\infty}} \le \hat{C}_4 e^{-\tilde{\sigma}(\log \mathfrak{c})^{\alpha_0}},\quad \mbox{for}\quad t\in \big[(\log \mathfrak{c})^{\alpha_0}, (\log \mathfrak{c})^{\beta_0}\big].
		\end{split}
	\end{align}
All above positive constants $\hat{C}_i\ (i=1,\cdots,4)$ and $C(\delta)$ are independent of $\mathfrak{c}$.
\end{Theorem}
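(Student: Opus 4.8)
The plan is to split the time axis into the three ranges of \eqref{1.7} and dispatch the two long-time ranges by a soft argument, reserving the real work for $0\le t\le(\log\mathfrak{c})^{\alpha_0}$. For the long-time ranges I would simply write $F_{\mathfrak{c}}(t)-F(t)=(J_{\mathfrak{c}}-\mu)+\sqrt{J_{\mathfrak{c}}}f_{\mathfrak{c}}(t)-\sqrt{\mu}f(t)$ and estimate each piece: from the asymptotics of $K_2$, $\|J_{\mathfrak{c}}-\mu\|_{L^1_p}\le C\mathfrak{c}^{-2}$ and $\sqrt{J_{\mathfrak{c}}(p)}\le Ce^{-c|p|^2}$ uniformly in $\mathfrak{c}\ge1$, so the assumed exponential decay of $f_{\mathfrak{c}},f$ gives $\|F_{\mathfrak{c}}(t)-F(t)\|_{L^1_pL^\infty_x}\le C\mathfrak{c}^{-2}+C\hat C_2 e^{-\tilde\sigma t}$. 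On $[(\log\mathfrak{c})^{\alpha_0},(\log\mathfrak{c})^{\beta_0}]$ this is $\le\hat C_4 e^{-\tilde\sigma(\log\mathfrak{c})^{\alpha_0}}$ (the $\mathfrak{c}^{-2}$ term is dominated because $\alpha_0<1$), and on $t\ge(\log\mathfrak{c})^{\beta_0}$ it is $\le C\mathfrak{c}^{-(2-\delta)}$ (the exponential is now super-polynomially small because $\beta_0>1$); these are the last two lines of \eqref{1.7}.

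For $0\le t\le(\log\mathfrak{c})^{\alpha_0}$ I would subtract the two mild formulations written along the characteristics $x-\hat p\,t$ and $x-p\,t$, use the splittings $\mathcal Q_{\mathfrak{c}}=\mathcal Q_{\mathfrak{c}}^+-(\cdot)L_{\mathfrak{c}}$, $Q=Q^+-(\cdot)L$, and organize $F_{\mathfrak{c}}-F$ into: (i) the transported initial-data error; (ii) the error of replacing $(\mathcal Q_{\mathfrak{c}}^+,L_{\mathfrak{c}})$ by $(Q^+,L)$ on the \emph{same} arguments; (iii) the transport mismatch between $\hat p$ and $p$; (iv) the discrepancy $J_{\mathfrak{c}}$ versus $\mu$; and (v) the closing bilinear terms $Q^+(F_{\mathfrak{c}}-F,\cdot)+Q^+(\cdot,F_{\mathfrak{c}}-F)$ and the analogous $L$-contribution. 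Truncating at $|p|\le R:=A\sqrt{\log\mathfrak{c}}$, the Gaussian decay of $F_{\mathfrak{c}},F$ bounds every $|p|>R$ tail by $Ce^{-cR^2}\le C\mathfrak{c}^{-2}$ once $A$ is large, while on $|p|\le R$ one uses $|\hat p-p|=|p|^3/(p^0(\mathfrak{c}+p^0))\le R^3\mathfrak{c}^{-2}$. Term (i) splits as $\|F_{0,\mathfrak{c}}-F_0\|_{L^1_pL^\infty_x}\le\hat C_1\mathfrak{c}^{-k}$ (by \eqref{1.5}) plus an $x$-translation of $F_0$ by $(p-\hat p)t$; the latter is controlled by \eqref{1.6}, which via a Fatou argument yields $\nabla_xF_0\in L^1_pL^\infty_x$ and, with the Gaussian decay of $F_0$, the weighted estimate $\|w_\beta(\tau_h^xF_0-F_0)\|_{L^1_pL^\infty_x}\le C_\epsilon|h|^{1-\epsilon}$, producing a contribution $\le CR^3 t\,\mathfrak{c}^{-2}+Ce^{-cR^2}$. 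Term (iii) is treated the same way after propagating the regularity encoded in \eqref{1.6} to $F(s)$ and $F_{\mathfrak{c}}(s)$ — a Gronwall estimate for the difference quotients $\tau_h^xF-F$, $\tau_h^pF-F$, which commute (modulo a harmless lower-order term for $\tau_h^p$) with the transport and with the collision operators. Term (iv) uses $\|J_{\mathfrak{c}}-\mu\|,\|\sqrt{J_{\mathfrak{c}}}-\sqrt{\mu}\|\le C\mathfrak{c}^{-2}$ in the relevant weighted norms.

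Collecting (i)--(v), $\mathcal D(t):=\|w_\beta(F_{\mathfrak{c}}(t)-F(t))\|_{L^1_pL^\infty_x}$ obeys an inequality of the form
\[
\mathcal D(t)\ \le\ C\hat C_1\mathfrak{c}^{-k}+C_\epsilon(\log\mathfrak{c})^{O(1)}\mathfrak{c}^{-2}e^{Ct}+C\int_0^t\big(\|w_\beta F_{\mathfrak{c}}(s)\|_{L^\infty}+\|w_\beta F(s)\|_{L^\infty}\big)\mathcal D(s)\,ds,
\]
where $A$ has been chosen so the truncation tails are $\le\mathfrak{c}^{-2}$ and (ii) has been used (see below). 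Since both solutions remain close to the Maxwellians, the integral coefficient is bounded uniformly in $\mathfrak{c}$ and $s$, so Gronwall produces a factor $e^{Ct}$; for $0\le t\le(\log\mathfrak{c})^{\alpha_0}$ (here $\alpha_0<\tfrac12$ gives ample room) this is $\mathfrak{c}^{o(1)}\le C(\delta)\mathfrak{c}^{\delta}$, and since $k\le2$ the auxiliary parameters can be chosen so that $\mathcal D(t)\le C(\delta)\mathfrak{c}^{-(k-\delta)}$ — the first line of \eqref{1.7}.

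The \emph{main obstacle} is step (ii): proving
\[
\big\|\mathcal Q_{\mathfrak{c}}(G,H)-Q(G,H)\big\|_{L^1_pL^\infty_x}\ \le\ C\mathfrak{c}^{-2}\,\mathrm{poly}(R)\,\|w_\beta G\|\,\|w_\beta H\|+Ce^{-cR^2}\|w_\beta G\|\,\|w_\beta H\|
\]
for $G,H$ with Gaussian decay and \emph{without} assuming any smoothness of $G,H$ (none is available under the hypotheses of the theorem). The difficulty is that the relativistic and Newtonian gain operators evaluate $G,H$ at different post-collision momenta — the relativistic $p'_{\mathfrak{c}},q'_{\mathfrak{c}}$ versus the Newtonian $\bar p',\bar q'$ of \eqref{1.16-0} — so one cannot Taylor-expand $G$ about $\bar p'$. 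I would overcome this by a change of variables that reparametrizes the relativistic collision geometry so that its post-collision momenta coincide with the Newtonian ones, after which all $\mathfrak{c}$-dependence sits in explicit kernel and Jacobian factors converging to $|\omega\cdot(p-q)|$ at rate $\mathfrak{c}^{-2}$ on $\{|p|,|q|\le R\}$ with constants polynomial in $R$, following the collision-geometry computations of Calogero \cite{Calogero} and Strain \cite{Strain}. The remaining, more routine, technical point is the weighted propagation of the translation-continuity \eqref{1.6} to the solutions used in step (iii).
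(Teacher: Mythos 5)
Your treatment of the two long-time regimes and your overall skeleton for $0\le t\le(\log \mathfrak{c})^{\alpha_0}$ (difference of the two mild forms, momentum truncation, Gr\"{o}nwall, propagation of the translation continuity \eqref{1.6} to handle the $\hat{p}$-versus-$p$ transport mismatch) are in line with the paper. The gap is precisely in your step (ii), which you yourself flag as the main obstacle: the claimed bound $\|\mathcal{Q}_{\mathfrak{c}}(G,H)-Q(G,H)\|_{L^1_pL^\infty_x}\lesssim \mathfrak{c}^{-2}\,\mathrm{poly}(R)\,\|w_\beta G\|\,\|w_\beta H\|+e^{-cR^2}\|w_\beta G\|\,\|w_\beta H\|$ \emph{without any continuity of $G,H$} is false, and the proposed reparametrization cannot exist. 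If a change of variables $(q,\omega)\mapsto(\tilde q,\tilde\omega)$ is to send the relativistic post-collisional pair $(p',q')$ of \eqref{2.16-0} onto the Newtonian pair $(\bar p',\bar q')$ of \eqref{1.16-0} for the same output momentum $p$, then momentum conservation ($p'+q'=p+q$ and $\bar p'+\bar q'=p+\tilde q$) forces $\tilde q=q$, after which the two energy constraints $p'^0+q'^0=p^0+q^0$ and $|\bar p'|^2+|\bar q'|^2=|p|^2+|q|^2$ are incompatible: the relativistic and Newtonian post-collisional pairs lie on two distinct hypersurfaces, close (within $O((|p|+|q|)^3/\mathfrak{c}^2)$, cf.\ Lemma \ref{lem2.4}) but different, so you can match at most one of the two evaluation points, never both. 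For the bilinear gain terms, where \emph{both} arguments carry the non-smooth solution, no change of variables removes the mismatch, and the estimate genuinely fails for merely bounded data: taking $G,H$ of the form Gaussian times $\cos(\lambda|\cdot|^2)$ with $\lambda\sim\mathfrak{c}^{2}$, the product is essentially constant along the Newtonian energy shell but oscillates by $O(1)$ along the relativistic one, so the two gain integrals differ by $O(1)$, not $O(\mathfrak{c}^{-2})$. Hence a modulus of continuity of the solution in $p$ (and in $x$, since $y_{\mathfrak{c}}\neq y$) is unavoidable in step (ii) as well, not only in your step (iii).

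The paper's resolution (following Strain) is exactly to feed the propagated continuity into the collision comparison: Lemma \ref{lem4.1} gives $\sup_{|u|\le|h|}\big(\|\sqrt{\mu}(\tau^x_uf-f)(t)\|_{L^1_pL^\infty_x}+\|\sqrt{\mu}(\tau^p_uf-f)(t)\|_{L^1_pL^\infty_x}\big)\le C(\varepsilon)|h|^{1-\varepsilon}$ for $t\le(-\log|h|)^{\alpha_0}$, and the difference of mild forms is organized so that every shifted-argument difference, namely $f(s,y,p')-f(s,y,\bar p')$ and $f(s,y_{\mathfrak{c}},\cdot)-f(s,y,\cdot)$, falls on the Newtonian solution $f$ only (whose initial datum satisfies \eqref{1.6}), while $f_{\mathfrak{c}}$ appears only through the same-point difference $(f_{\mathfrak{c}}-f)(s,\cdot,p')$, which is converted by the Glassey-Strauss Jacobian identity \eqref{2.30-10} into the Gr\"{o}nwall unknown $\|\sqrt{J_{\mathfrak{c}}}(f_{\mathfrak{c}}-f)(s)\|_{L^1_pL^\infty_x}$. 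Two further points in your plan need repair for the same reason: you propose propagating continuity also to $F_{\mathfrak{c}}(s)$, but \eqref{1.6} is assumed only for $F_0$ (it can be transferred to $F_{0,\mathfrak{c}}$ only up to an $O(\mathfrak{c}^{-k})$ defect via \eqref{1.5}); and your Gr\"{o}nwall unknown $\|w_\beta(F_{\mathfrak{c}}-F)\|_{L^1_pL^\infty_x}$ is not controlled at $t=0$ by the unweighted hypothesis \eqref{1.5} without an extra interpolation. Both are avoidable by arranging the argument as the paper does, with all continuity requirements placed on $f$ and the unweighted (or $\sqrt{J_{\mathfrak{c}}}$-weighted) norm as the Gr\"{o}nwall quantity.
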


\begin{remark}
	Due to Theorem \ref{thm1.1} and Lemma \ref{lem1.2}, the Newtonian limit in Theorem \ref{thm1.3} is self-consistent under the smallness assumption \eqref{1.29-R} and \eqref{1.29-N} of the initial data. Indeed, the Newtonian limit in Theorem \ref{thm1.3}  is valid as long as the uniform-in-$\mathfrak{c}$ estimates $\|f_\fc\|_{L^\infty_{x,p}}\lesssim e^{-\tilde{\sigma}t}$ and $\|f\|_{L^\infty_{x,p}}\lesssim e^{-\tilde{\sigma}t}$ hold true, but it is hard to establish such existence results for large initial data.
\end{remark}

If initial data $f_0\in W^{1,\infty}_{x,p}(\mathbb{T}^3\times\mathbb{R}^3)$, we can further obtain the following Newtonian limit  in stronger norm  $L^{\infty}_{x,p}$, and also with better convergence rate.

\begin{Theorem}[Newtonian limit in $L^{\infty}_{x,p}$]\label{thm1.4}
	Let $\beta>8$ and $(M_\fc,\mathbf{J}_\fc,E_\fc)=(M_0,\mathbf{J}_0,E_0)=(0,\mathbf{0},0)$. Assume that $\{F_{\mathfrak{c}}(t)\}$ and $F(t)$ are the solutions for relativistic Boltzmann equation and Newtonian Boltzmann equation, respectively, such that
	\begin{align}
		F_{\mathfrak{c}}(t,x, p)&=J_{\mathfrak{c}}(p)+\sqrt{J_{\mathfrak{c}}(p)}f_{\mathfrak{c}}(x, p) \ge 0, \quad \|w_{\beta}f_\fc(t)\|_{L^\infty_{x,p}}\lesssim e^{-\sigma_0 t},\label{1.29}\\
		F(t,x, p)&=\mu(p)+\sqrt{\mu(p)}f(x, p) \ge 0,\qquad \|w_{\beta}f(t)\|_{L^\infty_{x,p}}\lesssim e^{-\sigma_0t},\label{1.30}
	\end{align}
    where $\sigma_0>0$ is independent of $\mathfrak{c}$. Suppose further that $f_0\in W^{1,\infty}_{x,p}(\mathbb{T}^3\times\mathbb{R}^3)$ with
	\begin{align}
		\|w_{\beta-1}\nabla_xf_0\|_{L^\infty_{x,p}}+\|w_{\beta-2}\nabla_p f_0\|_{L^\infty_{x,p}}\leq \hat{C}_5,\label{1.25}\\
		\|w_{\beta-6}(f_{0,\fc}-f_0)\|_{L^\infty_{x,p}}\leq \f{\hat{C}_6}{\fc^k},\quad \exists k\in(0,2].\label{1.26}
	\end{align}
    Then for any $\delta\in(0,k)$, there exists a constant $\hat{C}(\delta )>0$ such that
	\begin{align}\label{1.27}
		\sup_{t\ge 0}\|w_{\beta-6}(f_\fc-f)(t)\|_{L^\infty_{x,p}}\leq \f{\hat{C}(\delta)}{\fc^{k-\delta}}.
	\end{align}
	The positive constants $\hat{C}_5, \hat{C}_6$ and  $\hat{C}(\delta)$ are all independent of the light speed $\fc$.
\end{Theorem}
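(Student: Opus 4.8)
The plan is to run an $L^2$--$L^\infty$ argument, in the spirit of Guo's approach, on the difference $G_\fc:=f_\fc-f$, treating the gap between the relativistic and Newtonian dynamics as a small, exponentially decaying source. First I would subtract the perturbed forms of \eqref{1.3-0} and \eqref{1.14-0}, keeping the \emph{relativistic} transport operator $\hat p\cdot\nabla_x$ and linearized operator $L_\fc$ on the left, which gives
\[
\partial_t G_\fc+\hat p\cdot\nabla_x G_\fc+L_\fc G_\fc=\Gamma_\fc(f_\fc,G_\fc)+\Gamma_\fc(G_\fc,f)+\mathcal{R}_\fc,
\]
where $L_\fc,\Gamma_\fc$ are the relativistic linearized and nonlinear collision operators, $G_\fc(0)=f_{0,\fc}-f_0$ satisfies $\|w_{\beta-6}G_\fc(0)\|_{L^\infty_{x,p}}\le \hat C_6\fc^{-k}$ by \eqref{1.26}, and the remainder
\[
\mathcal{R}_\fc=(p-\hat p)\cdot\nabla_x f+(L-L_\fc)f+(\Gamma_\fc-\Gamma)(f,f)+\big(\text{terms generated by }J_\fc\neq\mu\big)
\]
collects everything that vanishes as $\fc\to\infty$.

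\textbf{Step 1 (the remainder).} The first task is to bound $\mathcal{R}_\fc$ in weighted $L^\infty$. I would use the large-$\fc$ asymptotics $\hat p=p+O(|p|^3\fc^{-2})$ and $J_\fc(p)=\mu(p)\big(1+O(|p|^4\fc^{-2})\big)$ — the latter from $\fc p^0=\fc^2+\tfrac{|p|^2}{2}-\tfrac{|p|^4}{8\fc^2}+\cdots$ together with the large-argument expansion of $K_2(\fc^2)$ — and the corresponding $O(\fc^{-2})$ convergence of the ``hard ball'' relativistic scattering (with post-collision momenta $p'^\mu$) to the hard-sphere law \eqref{1.16-0}; these are combined with the regularity hypothesis \eqref{1.25}, propagated in time by running the argument behind Lemma \ref{lem1.2} for $\nabla_x f$ and $\nabla_p f$, so that $\|w_{\beta-1}\nabla_x f(t)\|_{L^\infty_{x,p}}+\|w_{\beta-2}\nabla_p f(t)\|_{L^\infty_{x,p}}\lesssim e^{-\sigma_0 t}$. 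The streaming defect $(p-\hat p)\cdot\nabla_x f$ then costs a factor $|p|^3\fc^{-2}$ and one power of weight, while comparing $\Gamma_\fc$ with $\Gamma$ (a change of variables in the gain term, controlled by $\nabla_p f$) and estimating $\sqrt{J_\fc/\mu}$ cost the remaining few powers; altogether, using $k\le 2$,
\[
\|w_{\beta-6}\mathcal{R}_\fc(t)\|_{L^\infty_{x,p}}\lesssim \fc^{-2}e^{-\sigma_0 t}\le \fc^{-k}e^{-\sigma_0 t}.
\]
This is where the assumption $f_0\in W^{1,\infty}_{x,p}$ and the reduced weight $w_{\beta-6}$ (with $\beta>8$ giving slack) are consumed.

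\textbf{Step 2 (the $L^2$ bound).} I would pair the difference equation with $G_\fc$ and invoke the uniform-in-$\fc$ coercivity of $L_\fc$ from the proof of Theorem \ref{thm1.1}, obtaining
\[
\frac{d}{dt}\|G_\fc\|_{L^2_{x,p}}^2+\lambda_0\|(\mathbf{I}-\mathbf{P}_\fc)G_\fc\|_\nu^2\lesssim \|\mathbf{P}_\fc G_\fc\|_{L^2}^2+\|\mathcal{R}_\fc\|_{L^2}\|G_\fc\|_{L^2}+\varepsilon_0\|G_\fc\|_\nu^2,
\]
with $\mathbf{P}_\fc$ the hydrodynamic projection onto $\ker L_\fc$; the last term, from $\Gamma_\fc(f_\fc,\cdot)$ and $\Gamma_\fc(\cdot,f)$, is absorbed since $\|w_\beta f_\fc\|_{L^\infty},\|w_\beta f\|_{L^\infty}\lesssim\varepsilon_0$. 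Because $(M_\fc,\mathbf{J}_\fc,E_\fc)=(M_0,\mathbf{J}_0,E_0)=0$, the conservation laws \eqref{1.8-10} and \eqref{1.18-10} force the spatial averages $\int_{\mathbb{T}^3}\!\int_{\mathbb{R}^3}G_\fc\sqrt{J_\fc}\,(1,p,p^0-\fc)\,dp\,dx$ to be $O(\fc^{-2}e^{-\sigma_0 t})$ — precisely the $O(\fc^{-2})$ discrepancy between the relativistic energy invariant $p^0-\fc$ and the Newtonian one $|p|^2/2$, tested against the (decaying) Newtonian solution $f$ — and the standard macro--micro (Kawashima-type) argument on the torus then controls $\|\mathbf{P}_\fc G_\fc\|_{L^2}$ by $\|(\mathbf{I}-\mathbf{P}_\fc)G_\fc\|_{L^2}$ plus that source. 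Closing the resulting Gronwall inequality yields
\[
\sup_{t\ge 0}\|G_\fc(t)\|_{L^2_{x,p}}\lesssim \fc^{-k},\qquad \int_0^\infty \|(\mathbf{I}-\mathbf{P}_\fc)G_\fc(s)\|_\nu^2\,ds\lesssim \fc^{-2k}.
\]

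\textbf{Step 3 ($L^2\to L^\infty$, and the main obstacle).} Writing $h_\fc:=w_{\beta-6}G_\fc$, multiplying by $w_{\beta-6}$ and integrating along the straight characteristics of $\partial_t+\hat p\cdot\nabla_x$ on $\mathbb{T}^3$, I would iterate the Duhamel formula twice to treat the compact part $K_\fc$ of $L_\fc$. In the doubly iterated term, $k_\fc(p,p')$ restricted to $|p|,|p'|\le R$ is Hilbert--Schmidt uniformly in $\fc$, so (after a change of variables in the intermediate position) that term is dominated by $\int_0^\infty\|G_\fc(s)\|_{L^2_{x,p}}\,ds\lesssim \fc^{-k}$ via Step 2; the region $|p|>R$ and the free-streaming part contribute $Ce^{-cR}\sup_{s}\|h_\fc(s)\|_{L^\infty_{x,p}}$, the source contributes $\lesssim\fc^{-k}$ by Step 1, and the nonlinear terms carry the small factor $\varepsilon_0$. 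Taking the cut-off $R=R(\fc)\sim\log\fc$ so that $e^{-cR}\le \fc^{-k}$, the constant accumulated in this passage is sub-polynomial in $\fc$, hence $\le\fc^{\delta}$ for $\fc$ large — this is the origin of the arbitrarily small loss $\delta\in(0,k)$ in \eqref{1.27} — and, absorbing the $\varepsilon_0$-term, we obtain $\sup_{t\ge0}\|h_\fc(t)\|_{L^\infty_{x,p}}\lesssim \fc^{-(k-\delta)}$, which is \eqref{1.27}. The main difficulty I anticipate lies in Step 2: $L_\fc$ and $L$ have \emph{different} five-dimensional kernels, so $G_\fc$ is orthogonal to neither, and the only handle on its macroscopic component is through the conservation laws; keeping the resulting collision-invariant mismatch quantitatively $O(\fc^{-2})$, uniformly in $\fc$ and compatibly with the exponential-in-time decay, while simultaneously running the macro--micro estimate, is the technical heart of the proof. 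A close second is fitting the changes of variables comparing $\Gamma_\fc$ with $\Gamma$ and the bound on $\sqrt{J_\fc/\mu}$ (which is only mildly super-polynomial for $|p|\gtrsim\fc$) inside the available weight budget, which is why $\beta>8$ is imposed.
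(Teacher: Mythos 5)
Your overall route — difference equation with the relativistic operators on the left, an $O(\fc^{-2})$ remainder controlled through propagated derivative bounds on $f$, an $L^2$ estimate via the macro--micro machinery plus the conservation laws, and a Duhamel $L^2\to L^\infty$ bootstrap with a logarithmic cut-off producing the $\delta$-loss — is the same as the paper's. However, two of your key steps are asserted rather than available. First, the claim $\|w_{\beta-2}\nabla_p f(t)\|_{L^\infty_{x,p}}\lesssim e^{-\sigma_0 t}$ ``by running the argument behind Lemma \ref{lem1.2}'' is a genuine gap. Differentiating \eqref{2.4-10} in $p_i$ produces the extra sources $-\partial_{x_i}f$, $\tfrac{p_i}{2}\tfrac{1}{\sqrt{\mu}}Q(F,F)$, $\mathbf{L}\{\tfrac{p_i}{2}f\}$, $\mathbf{\Gamma}(-p_i\sqrt{\mu},f)$, etc.\ (see \eqref{5.9-b}), and these are \emph{not} orthogonal to the collision invariants, cf.\ \eqref{5.9-1}; hence the $L^2$ decay input needed for the $L^2$--$L^\infty$ scheme (the analogue of Proposition \ref{prop3.2} and Lemma \ref{lem4.5}) is simply not available for $\partial_{p_i}f$ without further work. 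The paper must introduce the corrector decomposition $\mathfrak{f}_i=\mathfrak{f}_{i,1}+\mathfrak{f}_{i,2}$ with ODE-defined macroscopic coefficients and, even so, proves only boundedness $\|w_{\beta-2}\nabla_p f(t)\|_{L^\infty_{x,p}}\lesssim 1$ (Lemma \ref{lem5.2}), explicitly noting that time decay is out of reach. Your proof in fact only needs boundedness (the $\fc^{-2}$ smallness of the remainder comes from $|p'-\bar p'|\lesssim (|p|+|q|)^3\fc^{-2}$, not from decay of $\nabla_p f$), but as written Step 1 rests on an unproved estimate and conceals that propagating $\nabla_p f$ already requires exactly the corrector idea you defer to Step 2.

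Second, in Step 2 you correctly identify the obstruction — the source tested against the relativistic invariants does not vanish, cf.\ \eqref{5.30-000} — but ``the standard macro--micro argument plus that source'' is not yet an argument: Proposition \ref{prop3.2} and Lemma \ref{lem4.5} are proved under the exact orthogonality \eqref{3.21-0} and exactly mean-zero $(a,b,c)$ (needed to solve $-\Delta\phi=a$ on $\mathbb{T}^3$ and to exclude the constant modes). The paper restores these \emph{exactly} with the corrector $\mathfrak{g}_1=\bigl(\mathfrak{e}+\mathfrak{p}\cdot p+\mathfrak{l}\tfrac{p^0-A_3}{\sqrt{A_2-A_3^2}}\bigr)\sqrt{J_\fc}$, coefficients solving \eqref{4.58-11}--\eqref{4.58-13} with constant initial data fixed by \eqref{5.32-06}--\eqref{5.34-06}, and the $O(\fc^{-2})$ size of the energy-invariant mismatch requires the cancellation $\fc p^0-\fc^2-\tfrac{|p|^2}{2}=-\tfrac{|p|^4}{2(p^0+\fc)^2}$ exploited in \eqref{4.48-1}, not merely $\sqrt{J_\fc}-\sqrt{\mu}=O(\fc^{-2+\epsilon})$. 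Your plan of dragging the small nonzero spatial means through the elliptic step can presumably be made to work, but it is the same construction in disguise and must actually be carried out. Finally, a minor point in Step 3: $\int_0^\infty\|G_\fc(s)\|_{L^2}\,ds\lesssim\fc^{-k}$ does not follow from $\sup_s\|G_\fc(s)\|_{L^2}\lesssim\fc^{-k}$; what you need (and have) is the sup bound inside the exponentially damped Duhamel integral, as in \eqref{5.75-00}.
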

\begin{remark}
	Compared with the results in \cite{Strain} where the Newtonian limit holds for any finite time interval $[0, T]$, our results in Theorems \ref{thm1.3} $\&$ \ref{thm1.4}  hold  for all $t\in [0,\infty)$.
\end{remark}


\begin{remark} 
	It is worth noting that when \eqref{1.30} and \eqref{1.25} are both valid, the solution  $f(t)$ of Newtonian Boltzmann equation obeys the following estimates
	\begin{align*}
		\|w_{\beta-1}\nabla_xf(t)\|_{L^\infty_{x,p}}&\lesssim e^{-\hat{\lambda}t},\text{ for }t\geq0,\\
			\|w_{\beta-2}\nabla_pf(t)\|_{L^\infty_{x,p}}&\lesssim 1,\text{ for }t\geq0,
	\end{align*}
  see Lemmas \ref{lem5.1} $\&$ \ref{lem5.2} for details. The above estimates will play important roles in the Newtonian limit in $L^\infty_{x,p}$.
\end{remark}

\subsection{Key points of the proof}
We now make some comments on the analysis of this paper.

$\bullet$ {\it Uniform-in-$\mathfrak{c}$ estimates on hydrodynamic part $\mathbf{P}_{\mathfrak{c}} f_{\mathfrak{c}}$.} In the small perturbation framework, for Newtonian Boltzmann equation, only the dissipation  $\|(\mathbf{I-P})f\|_{\nu}^2$ is controlled. How to estimate the missing hydrodynamic part $\mathbf{P}f$ is a well-known basic question in the Boltzmann theory. Guo \cite{Guo4,Guo6} first developed a new nonlinear energy method in high Sobolev norms  to estimate $\mathbf{P}f$ in terms of $\|(\mathbf{I-P})f\|_{\nu}^2$ in the case of periodic box for classical Vlasov-Poisson(Maxwell)-Boltzmann equations. 

In the present paper, we follow the ideas in \cite{Esposito} to control the hydrodynamic part $\mathbf{P}_\fc f_\fc$ as
\begin{align*}
	\mathbf{P}_\fc f_\fc=\Big\{a+b\cdot p+c\frac{p^0-A_3}{\sqrt{A_2-A^2_3}}\Big\}\sqrt{J_\fc}
\end{align*}
for our relativistic Boltzmann equation, where $A_2, A_3$ are coefficients involving Bessel functions, see Lemma \ref{lem2.2} for more details. The situation becomes slightly more complex due to the appearance of factor $\displaystyle{\frac{\mathfrak{c}}{p^0}}$ which indeed comes from the effect of special relativity.
We point out that the test functions are very similar to the ones in \cite{Esposito}, but with complex constants $\beta_{c}$, $\beta_{b}$ and $\beta_{a}$ which involve Bessel functions, see Proposition \ref{prop3.2}. However, to control the term $\int_{\mathbb{R}^3}\partial_t\psi f_\fc dp$, one should be more careful to calculate the coefficients.

We briefly explain some key points for the estimates in the following. To control $c$  first, we choose the test function as $\psi=\psi_c=(|p|^2-\beta_{c})\sqrt{J(p)}p\cdot \nabla_x \phi_{c}(x)$ with $\phi_{c}$ defined in \eqref{3.27-0}, and $\beta_{c}$ such that
\begin{align}\label{1.34}    	\int_{\mathbb{R}^3}\frac{\mathfrak{c}}{p^0}\left(|p|^2-\beta_{c}\right)p^2_{i}J_\fc(p)dp=0.
\end{align}
Since
\begin{align*} 
	&\int_0^t\iint_{\mathbb{T}^3 \times \mathbb{R}^3}\partial_t\psi_c \mathbf{P}_\fc f_\fc dpdxd\tau\nonumber\\
	&=\sum_{i=1}^3\int_0^t\iint_{\mathbb{T}^3 \times \mathbb{R}^3}(|p|^2-\beta_{c})p_iJ_\fc(p)  \Big\{a+b\cdot p+\frac{p^0-A_3}{\sqrt{A_2-A_3^2}}c\Big\}\partial_t\partial_i \phi_{c}dpdxd\tau,
\end{align*}
we notice that the $a$ and $c$ contributions vanish due to the oddness in $p$. While, the $b$ contribution does not vanish, which is different from \cite{Esposito}. In fact, by using \eqref{1.34},  we have
\begin{align*} 
	&\Big| \int_0^t\iint_{\mathbb{T}^3\times \mathbb{R}^3}(|p|^2-\beta_{c})p^2_i J_\fc(p) \partial_t\partial_i \phi_{c}b_idpdxd\tau\Big|
\le C\frac{1}{\mathfrak{c}^2}\int_0^t \|\partial_t \nabla_x \phi_{c}\|_{L^2}\cdot \|b(\tau)\|_{L^2}d\tau.
\end{align*}
Since the coefficient $\frac{1}{\mathfrak{c}^2}$ is small for suitably large $\mathfrak{c}$, the term  $\frac{1}{\mathfrak{c}^2}\int_0^t \|b(\tau)\|^2_{L^2}d\tau$ does not cause difficulties for us to close the estimate of $c$, see \eqref{3.38-0}--\eqref{3.41-1} for details.

\smallskip
To control $b$, we choose the test function as $\psi=\psi^{i,j}_b=(p_i^2-\beta_{b})\sqrt{J_\fc(p)}\partial_{j} \phi^j_{b}$ with $\phi^j_{b}$ defined in \eqref{3.26-0}, and $\beta_{b}$ such that
\begin{align*}	\int_{\mathbb{R}^3}\frac{\mathfrak{c}}{p^0}\left(p_i^2-\beta_{b}\right)p_k^2J_\fc(p)dp=0, \quad k\neq i.
\end{align*}
Using Lemma \ref{lem2.2}, we obtain $\beta_{b}=\frac{K_3(\mathfrak{c}^2)}{K_2(\mathfrak{c}^2)}$. Then one has
\begin{align}\label{1.38}
	&\int_{0}^{t} \iint_{\mathbb{T}^3 \times \mathbb{R}^3}\partial_t\psi \mathbf{P}_\fc f_\fc dpdxd\tau
	\nonumber\\
	&=\int_{0}^{t} \iint_{\mathbb{T}^3 \times  \mathbb{R}^3}\left(p_{i}^{2}-\beta_{b}\right) J_\fc(p) \partial_{t} \partial_{j} \phi_{b}^{j} \Big\{a+b \cdot p+c\frac{p^0-A_3}{\sqrt{A_2-A^2_3}}\Big\}dpdxd\tau.
\end{align}
For the terms on the RHS of \eqref{1.38}, the $b$ contribution vanishes due to the oddness in $p$. For the $a$ contribution, fortunately, with the help of Lemma \ref{lem2.2}, it is direct to check that
\begin{align*}
	\int_{\mathbb{R}^3} \left(p_i^2-\beta_{b}\right) J_\fc(p)dp=0,
\end{align*}
which yields that the $a$ contribution also vanishes. It is indeed a crucial point in the estimate of $b$. Without the zero contribution of $a$, it is hard for us to close the estimate of $b$. 
The estimate of $a$ is similar to the one of $c$. 

Combining all the above estimates, we obtain
\begin{align*}
	\int_{s}^{t}\|\mathbf{P}_{\mathfrak{c}} f_{\mathfrak{c}}(\tau)\|_{\nu_{\mathfrak{c}}}^{2}d\tau &\lesssim
	\int_{s}^{t}\Big(\|(\mathbf{I}-\mathbf{P}_{\mathfrak{c}}) f_{\mathfrak{c}}(\tau)\|_{\nu_{\mathfrak{c}}}^{2}+\|\mathcal{S}(\tau)\|_{L^2}^{2}\Big)d\tau+|G(t)-G(s)|.
\end{align*}
Here we emphasize that the above constants are independent of $\mathfrak{\mathfrak{c}}$, see Proposition \ref{prop3.2} for details.


$\bullet${\it Global-in-time Newtonian limit.} 
Following the arguments in \cite{Calogero,Strain}, and applying the uniform-in-$\mathfrak{c}$ estimate $\|f_{\mathfrak{c}}(t)\|_{L^{\infty}_{x,p}} \lesssim e^{-\tilde{\sigma}t}$,  we can obtain the desired Newtonian limit in $L^1_pL^\infty_x$ as well as the decay rates by performing a difference on the mild forms of two solutions.
To obtain the global-in-time Newtonian limit in $L^{\infty}_{x,p}$, we assume that the initial data of Newtonian Boltzmann equation $f_0\in W^{1,\infty}(\mathbb{T}^3\times\mathbb{R}^3)$.  Consider the equation for $\mathfrak{g}:=f_{\mathfrak{c}}-f$
	\begin{align*}
		\partial_t \mathfrak{g}+\hat{p}\cdot\nabla_x \mathfrak{g}+\FL_\fc \mathfrak{g}=\mathbf{\Gamma}_\fc(f_\fc,f_\fc)-\mathbf{\Gamma}_\fc(f,f)+\mathcal{R},
	\end{align*}
	where 
	\begin{align*}
		\mathcal{R}:=(p-\hat{p})\cdot\nabla_xf+(\FL-\FL_\fc)f+(\mathbf{\Gamma}_\fc-\mathbf{\Gamma})(f,f).
	\end{align*}
Unfortunately, due to the term $\mathbf{L}f-\mathbf{\Gamma}(f,f)$, we note that 
\begin{align*}
	\int_{\mathbb{T}^3}\int_{\mathbb{R}^3}\Big(\mathbf{\Gamma}_\fc(f_\fc,f_\fc)-\mathbf{\Gamma}_\fc(f,f)+\mathcal{R}\Big)
	\begin{pmatrix}
		1  \\  p \vspace{1.0ex}\\  \f{p^0-A_3}{\sqrt{A_2-A_3^2}}
	\end{pmatrix}
	\sqrt{J_\fc(p)}dpdx\neq\mathbf{0},
\end{align*}
so it is hard to  use similar arguments as in Proposition \ref{prop3.2} and Lemma \ref{lem4.5} to obtain a key $L^2$ decay estimate on $\mathfrak{g}$. To overcome this difficulty, we introduce a decompose $\mathfrak{g}$ as $\mathfrak{g}=\mathfrak{g}_1+\mathfrak{g}_2$ and thus the equation for $\mathfrak{g}_2$ takes the form
\begin{align*}
	\partial_t\mathfrak{g}_2+\hat{p}\cdot\nabla_x\mathfrak{g}_2+\FL_\fc \mathfrak{g}_2=\mathbf{\Gamma}_\fc(\mathfrak{g}_2,f_\fc)+\mathbf{\Gamma}_\fc(f,\mathfrak{g}_2)+\tilde{\mathcal{R}},
\end{align*}
where
\begin{align*}
	\tilde{\mathcal{R}}:=\mathbf{\Gamma}_\fc(\mathfrak{g}_1,f_\fc)+\mathbf{\Gamma}_\fc(f,\mathfrak{g}_1)+\mathcal{R}-\{\partial_t\mathfrak{g}_1+\hat{p}\cdot\nabla_x\mathfrak{g}_1+\FL_\fc \mathfrak{g}_1\}.
\end{align*}
We choose $\mathfrak{g}_1=\Big(\mathfrak{e}(t,x)+\mathfrak{p}(t,x)\cdot p+\mathfrak{l}(t,x)\f{p^0-A_3}{\sqrt{A_2-A_3^2}}\Big)\sqrt{J_\fc(p)}$ with
\begin{align*}
	&\partial_t \mathfrak{e}=\int_{\R^3}\{\FL f-\mathbf{\Gamma}(f,f)\}\sqrt{J_\fc(p)}dp,\\
	&\partial_t \mathfrak{p}=\f{1}{A_1}\int_{\R^3}\{\FL f-\mathbf{\Gamma}(f,f)\}p\sqrt{J_\fc(p)}dp,\\
	&\partial_t \mathfrak{l}=\int_{\R^3}\{\FL f-\mathbf{\Gamma}(f,f)\}\f{p^0-A_3}{\sqrt{A_2-A_3^2}}\sqrt{J_\fc(p)}dp,
\end{align*}
and a careful choosing of the initial data on $(\mathfrak{e},\mathfrak{p},\mathfrak{l})(0,x)$, so that 
\begin{align*} 
	\int_{\mathbb{T}^3}\int_{ \mathbb{R}^3}\mathfrak{g}_{2}(0,x,p)
	\begin{pmatrix}
		1  \\  p \vspace{1.0ex}\\  \f{p^0-A_3}{\sqrt{A_2-A_3^2}}
	\end{pmatrix}
	\sqrt{J_\fc(p)}dpdx=\mathbf{0}.
\end{align*}
Based on the choice of $\mathfrak{g}_{1}$, it is direct to check that the following condition holds
\begin{align*}
	\int_{\mathbb{T}^3}\int_{\mathbb{R}^3}\Big(\mathbf{\Gamma}_\fc(\mathfrak{g}_2,f_\fc)+\mathbf{\Gamma}_\fc(f,\mathfrak{g}_2)+\tilde{\mathcal{R}}\Big)
	\begin{pmatrix}
		1  \\  p \vspace{1.0ex}\\  \f{p^0-A_3}{\sqrt{A_2-A_3^2}}
	\end{pmatrix}
	\sqrt{J_\fc(p)}dpdx=\mathbf{0},
\end{align*}
then we can apply similar arguments as in Proposition \ref{prop3.2} and Lemma \ref{lem4.5} for  $\mathfrak{g}_2$  to obtain a key $L^2$ estimate of $\mathfrak{g}_2$. 

We point that the estimates on $\mathfrak{g}_1$ and $\mathcal{R}$ are not trivial, which are involved in  $\|w_{\beta-1}\nabla_xf\|_{L^{\infty}_{x,p}}$ and $\|w_{\beta-2}\nabla_pf\|_{L^{\infty}_{x,p}}$, hence we put a lot of effort into obtaining 
	\begin{align*}
\begin{split}
		\|w_{\beta-1}\nabla_xf(t)\|_{L^\infty_{x,p}}&\lesssim e^{-\hat{\lambda}t},\\
	\|w_{\beta-2}\nabla_pf(t)\|_{L^\infty_{x,p}}&\lesssim 1,
\end{split}
\quad \text{ for }t\geq0,
\end{align*}
 to close the whole proof. Here it is hard to obtain the time decay  estimate for $\|w_{\beta-2}\nabla_pf(t)\|_{L^\infty_{x,p}}$ due to the appearance of $\f{p_i}{2}\f{1}{\sqrt{\mu}}Q(F,F)$ on RHS of \eqref{5.9-a}--\eqref{5.9-b}, see Lemma \ref{lem5.2} for more details. 
 Based on above preparations, we can obtain the global-in-time estimate $$\|w_{\beta-6}\mathfrak{g}\|_{L^\infty_{x,p}}\lesssim \frac{1}{\mathfrak{c}^{k-\v}},$$ 
 by a $L^2-L^\infty$ argument.

\subsection{Organization of the paper}

In Section 2, we present some basic facts about Bessel functions, Newtonian Boltzmann collision operators and relativistic Boltzmann collision operators.
Section 3 is dedicated to the global existence for the solutions of the relativistic Boltzmann equation in the torus $\mathbb{T}^3$, and the uniform-in-$\mathfrak{c}$ estimates on the relativistic Boltzmann solutions are also obtained.
In Section 4,  the Newtonian limit is obtained in $L^1_pL^{\infty}_x$.
The Newtonian limit in $L^{\infty}_{x,p}$ is proved in Section 5.

\subsection{Notations}

Throughout this paper, $C$ denotes a generic positive constant which is independent of $\mathfrak{c}$ and $C_{a}, C_{b}, \ldots$ denote the generic positive constants depending on $a, b, \ldots$, respectively, which may vary from line to line. $A \lesssim B$ means that there exists a constant $C>0$, which is independent of $\mathfrak{c}$, such that $A \le C B$.  $A\cong B$ means that both $A \lesssim B$ and $B \lesssim A$ hold.  $\|\cdot\|_{L^2}$ denotes either the standard  $L^{2}\left( \mathbb{T}_{x}^{3}\right)$-norm or $L^{2}\left( \mathbb{R}_{p}^{3}\right)$-norm or $L^{2}\left(\mathbb{T}_x^3 \times \mathbb{R}_{p}^{3}\right)$-norm. Similarly, $\|\cdot\|_{L^{\infty}}$ denotes either the $L^{\infty}\left( \mathbb{T}_{x}^{3}\right)$-norm or $L^{\infty}\left( \mathbb{R}_{p}^{3}\right)$-norm or  $L^{\infty}\left(\mathbb{T}_x^3 \times \mathbb{R}_{p}^{3}\right)$-norm. We denote $\langle\cdot, \cdot\rangle$ as either the $L^{2}\left(\mathbb{T}_{x}^{3}\right)$ inner product or $L^{2}\left(\mathbb{R}_{p}^{3}\right)$ inner product or $L^{2}\left(\mathbb{T}_x^3 \times \mathbb{R}_{p}^{3}\right)$ inner product. Moreover, we denote $\|\cdot\|_{\nu_{\mathfrak{c}}}:=\|\sqrt{\nu_{\mathfrak{c}}} \cdot\|_{L^2}$.

\section{Preliminaries}

\subsection{Bessel function}

The modified Bessel function of the second kind is given by
\begin{align}\label{e2.15}
	K_{j}(z)=\Big(\frac{z}{2}\Big)^j\frac{\Gamma(\frac{1}{2})}{\Gamma(j+\frac{1}{2})}\int_{1}^{\infty} e^{-z s}\left(s^{2}-1\right)^{j-\frac{1}{2}} ds,\quad j\ge 0,\ z>0.
\end{align}

Some of the properties of the modified Bessel function $K_j$ are needed.
\begin{Lemma}\emph{(\cite{Olver, Watson})} \label{lem2.1}
	It holds that
	\begin{align*}
		K_{j+1}(z) & =\frac{2j}{z}K_j(z)+K_{j-1}(z), \quad j\ge 1,\ z>0.
	\end{align*}
	The asymptotic expansion for $K_j(z)$ takes the form
	\begin{align*}
		K_{j}(z) &=\sqrt{\frac{\pi}{2z}} \frac{1}{e^{z}}\Big[\sum_{m=0}^{n-1}\mathcal{A}_{j,m}z^{-m}+\gamma_{j,n}(z)z^{-n}\Big],\quad j\ge 0,\ z>0,\ n\ge 1,
	\end{align*}
	where the following additional identities and inequalities also hold:
	\begin{align*}
		\mathcal{A}_{j,0}&=1,\nonumber\\
		\mathcal{A}_{j,m}&=\frac{1}{m!8^m}(4j^2-1)(4j^2-3^2)\cdots (4j^2-(2m-1)^2),\quad j\ge 0,\ m\ge 1,\nonumber\\
		|\gamma_{j,n}(z)|&\le 2|\mathcal{A}_{j,n}|\exp{\Big(\big[j^2-\frac{1}{4}\big]z^{-1}\Big)},\quad j\ge 0,\ n\ge 1,\nonumber\\
		K_{j}(z)&<K_{j+1}(z),\quad j\ge 0.
	\end{align*} 
	Furthermore, for $j\le n+\frac{1}{2}$, one has a more exact estimate
	\begin{align*}
		|\gamma_{j,n}(z)|&\le |\mathcal{A}_{j,n}|.
	\end{align*}
\end{Lemma}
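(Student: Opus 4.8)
The plan is to prove all the stated facts directly from the integral representation \eqref{e2.15}, exploiting the analytic structure of the integrand $e^{-zs}(s^2-1)^{j-\frac12}$. First I would establish the recurrence $K_{j+1}(z)=\frac{2j}{z}K_j(z)+K_{j-1}(z)$. The cleanest route is via the well-known contiguous relations for modified Bessel functions, but since only \eqref{e2.15} is available here, I would argue by integration by parts: differentiate $e^{-zs}(s^2-1)^{j+\frac12}$ in $s$, integrate from $1$ to $\infty$ (the boundary terms vanish since $(s^2-1)^{j+\frac12}=0$ at $s=1$ and the exponential kills the upper limit), and match the resulting identity against the definitions of $K_{j+1}$, $K_j$, $K_{j-1}$ after absorbing the Gamma-function prefactors using $\Gamma(j+\tfrac32)=(j+\tfrac12)\Gamma(j+\tfrac12)$. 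This reduces the recurrence to an elementary polynomial identity in $s$.

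Next I would derive the asymptotic expansion. Substituting $s=1+u/z$ in \eqref{e2.15} turns the integral into $\int_0^\infty e^{-z}e^{-u}(u/z)^{j-\frac12}(2+u/z)^{j-\frac12}\,z^{-1}\,du$, which after collecting the $e^{-z}$ and the powers of $z$ yields the prefactor $\sqrt{\pi/(2z)}\,e^{-z}$ times $\int_0^\infty e^{-u}u^{j-\frac12}(1+u/(2z))^{j-\frac12}\,du$ up to normalization. I would then Taylor-expand $(1+u/(2z))^{j-\frac12}$ to order $n-1$ with integral remainder, integrate term by term using $\int_0^\infty e^{-u}u^{j-\frac12+m}\,du=\Gamma(j+m+\tfrac12)$, and identify the coefficients $\mathcal{A}_{j,m}$; the ratios of Gamma functions collapse precisely to the product $(4j^2-1)(4j^2-3^2)\cdots(4j^2-(2m-1)^2)/(m!8^m)$, giving $\mathcal A_{j,0}=1$ and the stated closed form. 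The remainder bounds $|\gamma_{j,n}(z)|\le 2|\mathcal A_{j,n}|\exp([j^2-\tfrac14]z^{-1})$ and the sharper $|\gamma_{j,n}(z)|\le|\mathcal A_{j,n}|$ for $j\le n+\tfrac12$ come from estimating the Taylor remainder of $(1+u/(2z))^{j-\frac12}$: when $j-\tfrac12\le n$ the $(n)$-th derivative does not change sign past the leading term, so the remainder is bounded by the first omitted term, giving the sharp constant; in general one controls it by a crude exponential bound on the binomial-type coefficients.

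Finally, the monotonicity $K_j(z)<K_{j+1}(z)$ for $j\ge 0$ follows directly from \eqref{e2.15}: compare the two integrands. Writing both with a common prefactor requires handling the Gamma ratio $\Gamma(\tfrac12)/\Gamma(j+\tfrac12)$ versus $\Gamma(\tfrac12)/\Gamma(j+\tfrac32)$ and the extra factor $(s^2-1)$; a convenient way is to use the already-proven recurrence, $K_{j+1}(z)-K_{j-1}(z)=\frac{2j}{z}K_j(z)>0$, which gives $K_{j+1}>K_{j-1}$, and then upgrade to consecutive indices by an interpolation/log-convexity argument, or simply observe from the $s=1+u/z$ substitution that the representation of $K_j$ has integrand proportional to $u^{j-\frac12}(2+u/z)^{j-\frac12}$ against $e^{-u}$ with a positive normalization, and that increasing $j$ multiplies the integrand pointwise by $u(2+u/z)\ge$ its normalized average — so a Chebyshev/correlation inequality for the measure $e^{-u}u^{j-\frac12}(2+u/z)^{j-\frac12}du$ yields the claim. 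I expect the main obstacle to be bookkeeping: keeping the Gamma-function prefactors, the substitution Jacobians, and the index shifts consistent so that the coefficients $\mathcal A_{j,m}$ come out exactly in the stated normalized form, and making the remainder estimate sharp enough to get the constant $1$ (rather than $2$) in the regime $j\le n+\tfrac12$. None of the individual steps is deep, but the expansion argument must be done carefully to land on the precise identities quoted.
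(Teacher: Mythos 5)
The paper does not actually prove this lemma; it is quoted verbatim from the classical literature (Olver, Watson), so the only comparison available is between your self-contained plan and the standard proofs. Most of your plan is sound: the integration-by-parts route to the recurrence works (with one extra step — a single differentiation of $e^{-zs}(s^2-1)^{j+\frac12}$ is not enough, you also need the split $s^2=(s^2-1)+1$ or a second integration by parts to close the identity), the substitution $s=1+u/z$ does produce $\sqrt{\pi/(2z)}\,e^{-z}\,\Gamma(j+\tfrac12)^{-1}\int_0^\infty e^{-u}u^{j-\frac12}(1+\tfrac{u}{2z})^{j-\frac12}du$, termwise integration gives exactly $\mathcal{A}_{j,m}=\binom{j-1/2}{m}\frac{\Gamma(j+m+1/2)}{2^m\Gamma(j+1/2)}$ in the stated closed form, and for $j\le n+\tfrac12$ the Lagrange remainder $\binom{j-1/2}{n}(1+\theta x)^{j-\frac12-n}x^n$ with nonpositive exponent is bounded by its first omitted term, yielding $|\gamma_{j,n}|\le|\mathcal{A}_{j,n}|$.

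There are, however, two genuine gaps. First, the general bound $|\gamma_{j,n}(z)|\le 2|\mathcal{A}_{j,n}|\exp\big([j^2-\tfrac14]z^{-1}\big)$ for all $z>0$ is not reachable by the ``crude exponential bound'' you gesture at: estimating $(1+\theta u/(2z))^{j-\frac12-n}\le e^{(j-\frac12-n)u/(2z)}$ and integrating in $u$ produces $\int_0^\infty e^{-u(1-(j-\frac12-n)/(2z))}u^{j+n-\frac12}du$, which converges only for $2z>j-\tfrac12-n$ and blows up as $z$ approaches that threshold, so it cannot give a bound with the stated constant uniformly in $z>0$; this is precisely the point where Olver's more delicate error analysis (or a citation, as the paper chooses) is required. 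Second, your monotonicity argument is incomplete as stated: the recurrence only gives $K_{j+1}>K_{j-1}$, log-convexity of $j\mapsto K_j(z)$ is not immediate from \eqref{e2.15} because Cauchy--Schwarz gives $I_{j-1/2}^2\le I_{j-3/2}I_{j+1/2}$ for the raw integrals while the prefactor inequality $\Gamma(j+\tfrac12)^2\le\Gamma(j-\tfrac12)\Gamma(j+\tfrac32)$ points the opposite way, and the Chebyshev/correlation inequality does not by itself deliver the quantitative bound $\langle s^2-1\rangle_\mu>(2j+1)/z$ that the claim is equivalent to. The fix is already inside your own framework: the integration by parts you use for the recurrence gives
\begin{align*}
K_{j+1}(z)=\Big(\frac{z}{2}\Big)^{j}\frac{\Gamma(\tfrac12)}{\Gamma(j+\tfrac12)}\int_1^\infty e^{-zs}\,s\,(s^2-1)^{j-\frac12}\,ds,
\end{align*}
and since $s>1$ on the domain of integration this is strictly larger than $K_j(z)$, proving $K_j<K_{j+1}$ for all $j\ge0$, $z>0$ in one line.
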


\

Using the asymptotic expansions for $K_2(z)$ and $K_3(z)$ in Lemma \ref{lem2.1}, one can obtain
\begin{align}\label{2.2-30}
	\frac{K_3(\mathfrak{c}^2)}{K_2(\mathfrak{c}^2)}-1&
	=\frac{5}{2}\mathfrak{c}^{-2}+\frac{15}{8}\mathfrak{c}^{-4}-\frac{15}{8}\mathfrak{c}^{-6}+O(\mathfrak{c}^{-8})
\end{align}
and 
\begin{align}\label{2.3-30}
	\frac{K^2_3(\mathfrak{c}^2)}{K^2_2(\mathfrak{c}^2)}-1&
	=5\mathfrak{c}^{-2}+10\mathfrak{c}^{-4}+\frac{45}{8}\mathfrak{c}^{-6}-\frac{15}{4}\mathfrak{c}^{-8}+O(\mathfrak{c}^{-10}).
\end{align}

\

Using the definition of $K_j$ and integration by parts, one can prove the following integration identities directly. We omit the details here for brevity.

\begin{Lemma} \label{lem2.2}
	For the normalized global Maxwellian $J_{\mathfrak{c}}(p)$, for $i=1,2,3$, $i\neq k$, it holds that
	\begin{enumerate}[(1)]
	\item
	$\displaystyle  \int_{\mathbb{R}^3}p^2_iJ_{\mathfrak{c}}(p)dp=\frac{K_3(\mathfrak{c}^2)}{K_2(\mathfrak{c}^2)}:=A_1$.\\
	\item
	$\displaystyle 
	\int_{\mathbb{R}^3}(p^0)^2J_{\mathfrak{c}}(p)dp=\mathfrak{c}^2+3\frac{K_3(\mathfrak{c}^2)}{K_2(\mathfrak{c}^2)}:=A_2$.\\
	\item
	$\displaystyle 
	\int_{\mathbb{R}^3}p^0J_{\mathfrak{c}}(p)dp=\mathfrak{c}\frac{K_3(\mathfrak{c}^2)}{K_2(\mathfrak{c}^2)}-\frac{1}{\mathfrak{c}}:=A_3$.\\
	\item
	$\displaystyle 
	\int_{\mathbb{R}^3}\frac{p^2_i}{p^0}J_{\mathfrak{c}}(p)dp=\frac{1}{\mathfrak{c}}:=A_4$.\\
	\item
	$\displaystyle 
	\int_{\mathbb{R}^3}{p^2_i}p^0J_{\mathfrak{c}}(p)dp=\mathfrak{c}+\frac{5}{\mathfrak{c}}\frac{K_3(\mathfrak{c}^2)}{K_2(\mathfrak{c}^2)}:=A_5$.\\
	\item
	$\displaystyle 
	\int_{\mathbb{R}^3}\f{p_i^2|p|^2}{p^0}J_{\mathfrak{c}}(p)dp=\f{5}{\fc}\f{K_3(\fc^2)}{K_2(\fc^2)}:=A_6$.\\
	\item
	$\displaystyle  \int_{\R^3}\f{p_i^2p_k^2}{p^0}J_{\mathfrak{c}}(p)dp=\f{1}{\mathfrak{c}}\f{K_3(\fc^2)}{K_2(\fc^2)}:=A_7$.\\
	\item $\displaystyle \int_{\R^3}|p|^2\f{p_i^2p_k^2}{p^0}J_{\mathfrak{c}}(p)dp=\f{7}{\fc}+\f{42}{\fc^3}\f{K_3(\fc^2)}{K_2(\fc^2)}:=A_8$.\\
	\item
	$\displaystyle  \int_{\R^3}\f{p_i^4}{p^0}J_{\mathfrak{c}}(p)dp=\f{3}{\fc}\f{K_3(\fc^2)}{K_2(\fc^2)}:=A_9.$\\
	\item $\displaystyle \int_{\R^3}p_i^2|p|^2J_{\mathfrak{c}}(p)dp=5+\f{30}{\fc^2}\f{K_3(\fc^2)}{K_2(\fc^2)}:=A_{10}.$
	\end{enumerate}
\end{Lemma}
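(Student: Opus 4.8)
\textbf{Proof proposal for Lemma \ref{lem2.2}.}

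The plan is to reduce every one of the ten integrals to a one–dimensional integral in the energy variable and then match it against the integral representation \eqref{e2.15} of the modified Bessel functions $K_j(\mathfrak{c}^2)$. First I would pass to the "energy" variable $\rho=p^0=\sqrt{\mathfrak{c}^2+|p|^2}$, so that $|p|=\sqrt{\rho^2-\mathfrak{c}^2}$ and $|p|\,d|p|=\rho\,d\rho$ on $[\mathfrak{c},\infty)$; integrating out the angular variables gives, for any radial weight $\Phi$,
\begin{align*}
	\int_{\mathbb{R}^3}\Phi(p^0,|p|)\,e^{-\mathfrak{c}p^0}\,dp
	=4\pi\int_{\mathfrak{c}}^{\infty}\Phi(\rho,\sqrt{\rho^2-\mathfrak{c}^2})\,\rho\sqrt{\rho^2-\mathfrak{c}^2}\,e^{-\mathfrak{c}\rho}\,d\rho,
\end{align*}
and then the substitution $\rho=\mathfrak{c}s$ turns this into $4\pi\,\mathfrak{c}^2\int_1^{\infty}\Phi(\mathfrak{c}s,\mathfrak{c}\sqrt{s^2-1})\,s\sqrt{s^2-1}\,e^{-\mathfrak{c}^2 s}\,ds$. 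Recalling $J_{\mathfrak{c}}(p)=\frac{1}{4\pi\mathfrak{c}K_2(\mathfrak{c}^2)}e^{-\mathfrak{c}p^0}$ and that $K_2(\mathfrak{c}^2)=\frac{(\mathfrak{c}^2/2)^2\Gamma(1/2)}{\Gamma(5/2)}\int_1^\infty e^{-\mathfrak{c}^2 s}(s^2-1)^{3/2}ds=\frac{\mathfrak{c}^4}{3}\int_1^\infty e^{-\mathfrak{c}^2 s}(s^2-1)^{3/2}ds$, the normalizing constant and the $(s^2-1)^{1/2}$ from the Jacobian are exactly what is needed to produce Bessel-type integrands; the case $\Phi\equiv1$ reproduces $\int J_{\mathfrak{c}}=1$ as a sanity check.

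Next, for each specific integrand I would use the isotropy identity $\int_{\mathbb{R}^3}p_i^2\,g(|p|)\,dp=\frac13\int_{\mathbb{R}^3}|p|^2 g(|p|)\,dp$ (and, for the quartic ones, $\int p_i^4=3\int p_i^2 p_k^2$ and $\int p_i^2p_k^2 g=\frac{1}{15}\int|p|^4 g$ for $i\neq k$) to express everything through powers of $|p|^2=\rho^2-\mathfrak{c}^2$ and of $p^0=\rho$. After the substitution $\rho=\mathfrak{c}s$ each integral becomes a finite linear combination of $\int_1^\infty e^{-\mathfrak{c}^2 s}s^{a}(s^2-1)^{b+1/2}\,ds$ with half-integer exponents; writing $s^2-1=(s^2-1)$ and, where needed, integrating by parts in $s$ (using $\frac{d}{ds}(s^2-1)^{j+1/2}=(2j+1)s(s^2-1)^{j-1/2}$) lowers or raises the exponents until one lands precisely on the representation \eqref{e2.15} for $K_2(\mathfrak{c}^2)$, $K_3(\mathfrak{c}^2)$, or — after using the recurrence $K_{j+1}(z)=\frac{2j}{z}K_j(z)+K_{j-1}(z)$ of Lemma \ref{lem2.1} — expressions in $K_2$ and $K_3$ alone. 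Dividing by the normalizer $\mathfrak{c}K_2(\mathfrak{c}^2)$ and simplifying then yields the claimed closed forms $A_1,\dots,A_{10}$; for instance $(4)$ comes from $\int\frac{p_i^2}{p^0}J_{\mathfrak{c}}=\frac{1}{3}\int\frac{|p|^2}{p^0}J_{\mathfrak{c}}$, and the identity $\frac{|p|^2}{p^0}=p^0-\frac{\mathfrak{c}^2}{p^0}$ combined with $\int p^0 J_{\mathfrak{c}}=A_3$ and $\int\frac{1}{p^0}J_{\mathfrak{c}}=\frac{1}{\mathfrak{c}}\cdot\frac{1}{\text{(something)}}$ collapses to $\frac{1}{\mathfrak{c}}$.

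The only real bookkeeping obstacle is the systematic reduction of the half-integer integrals to exactly $K_2$ and $K_3$: one must repeatedly use $p^0 p^0=\mathfrak{c}^2+|p|^2$ (to trade even powers of $|p|$ against powers of $p^0$), the substitution's shift by $1$ in the Bessel index coming from the Jacobian factor $(s^2-1)^{1/2}$, the integration-by-parts rule above, and the recurrence relation from Lemma \ref{lem2.1}; none of these steps is deep, but a single miscounted exponent propagates. I would organize the computation by first establishing the three "primitive" moments $\int (p^0)^k J_{\mathfrak{c}}\,dp$ and $\int \frac{1}{p^0}J_{\mathfrak{c}}\,dp$ for small $k$, then obtaining every listed $A_j$ as an algebraic combination of these via $|p|^2=(p^0)^2-\mathfrak{c}^2$ and the isotropy factors $\frac13,\frac{1}{15}$; this keeps the Bessel-function manipulations confined to a short preliminary list and makes the rest routine algebra, exactly as the statement promises ("one can prove $\dots$ directly"). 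Finally, \eqref{2.2-30}--\eqref{2.3-30} follow by feeding the asymptotic expansion of $K_2,K_3$ from Lemma \ref{lem2.1} into the ratio $K_3(\mathfrak{c}^2)/K_2(\mathfrak{c}^2)$ and its square.
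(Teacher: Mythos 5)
Your proposal is correct and takes essentially the route the paper intends for this lemma (the paper omits the computation, stating only that the identities follow directly from the definition of $K_j$ and integration by parts): reduce each moment by isotropy to radial integrals, pass to the energy variable with $\rho=\mathfrak{c}s$, and match against the representation \eqref{e2.15} using the recurrence of Lemma \ref{lem2.1}. The only slip is the Jacobian prefactor in your general reduction formula, which should be $4\pi\mathfrak{c}^{3}$ rather than $4\pi\mathfrak{c}^{2}$; with $\mathfrak{c}^{3}$ the sanity check $\int_{\mathbb{R}^3}J_{\mathfrak{c}}\,dp=1$ indeed comes out, since $\int_1^\infty s\sqrt{s^2-1}\,e^{-\mathfrak{c}^2 s}\,ds=K_2(\mathfrak{c}^2)/\mathfrak{c}^{2}$.
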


%
%

\subsection{Newtonian Boltzmann collision operators}

Recall \eqref{1.19-0}, we can rewrite the Newtonian Boltzmann equation \eqref{1.14-0} as
\begin{align}\label{2.4-10}
f_t+p \cdot \nabla_x f+\mathbf{L} f=\mathbf{\Gamma}(f, f),
\end{align}
where the linearized term is given by
$$
\mathbf{L} f=\nu(p) f-\mathbf{K} f=-\frac{1}{\sqrt{\mu}}\{Q(\mu, \sqrt{\mu} f)+Q(\sqrt{\mu} f, \mu)\}
$$
Here $\nu$ and $\mathbf{K}:=\mathbf{K}_2-\mathbf{K}_1$ are defined as
\begin{equation}\label{2.5}
	\begin{split}
		\left(\mathbf{K}_1 f\right)(p):= & \int_{\mathbb{R}^3} \int_{\mathbb{S}^2} \mathcal{K}_\infty(p,q,\omega) \sqrt{\mu(p) \mu(q)} f(q) \mathrm{d} \omega \mathrm{d}q, \\
		\left(\mathbf{K}_2 f\right)(p):= & \int_{\mathbb{R}^3} \int_{\mathbb{S}^2} \mathcal{K}_\infty(p,q,\omega) \sqrt{\mu(q) \mu\left(\bar{q}^{\prime}\right)} f\left(\bar{p}^{\prime}\right) \mathrm{d} \omega \mathrm{d} q, \\
		& +\int_{\mathbb{R}^3} \int_{\mathbb{S}^2} \mathcal{K}_\infty(p,q,\omega) \sqrt{\mu(q) \mu\left(\bar{p}^{\prime}\right)} f\left(\bar{q}^{\prime}\right) \mathrm{d} \omega \mathrm{d} q,  \\
		\nu(p):=& \int_{\mathbb{R}^3} \int_{\mathbb{S}^2} \mathcal{K}_\infty(p,q,\omega) \mu(q) \mathrm{d} \omega \mathrm{d} q.
	\end{split}
\end{equation}
The Newtonian nonlinear collision operator is given by
\begin{align*}
	\mathbf{\Gamma}(h_1, h_2) & :=\frac{1}{\sqrt{\mu}} Q(\sqrt{\mu} h_1, \sqrt{\mu} h_2)=\frac{1}{\sqrt{\mu}} Q^{+}(\sqrt{\mu} h_1, \sqrt{\mu} h_2)-\frac{1}{\sqrt{\mu}} Q_{-}(\sqrt{\mu} h_1, \sqrt{\mu} h_2)\nonumber\\
    &=\mathbf{\Gamma}^{+}(h_1, h_2)-\mathbf{\Gamma}^{-}(h_1, h_2),
\end{align*}
where the post-collision velocities are defined in \eqref{1.16-0}. It holds that
\begin{align*}
	\|\nu^{-1}w_{\beta}\mathbf{\Gamma}(h_1,h_2)\|_{L^\infty_p}\lesssim \|w_{\beta}h_1\|_{L^\infty_p}\|w_{\beta}h_2\|_{L^\infty_p}
\end{align*}
and 
\begin{align*}
	\|w_{\beta}\mathbf{\Gamma}^{+}(h_1,h_2)\|_{L^\infty_p}\lesssim \|w_{\beta}h_1\|_{L^\infty_p}\|w_{\beta}h_2\|_{L^\infty_p}.
\end{align*}

It follows from \cite{Glassey} that
\begin{align*}
	(\mathbf{K}_if)(p)=\int_{\mathbb{R}^3}k_i(p,q)f(q)dq,\quad i=1,2,
\end{align*}
where 
\begin{align*}
	k_1(p,q)&:=\frac{1}{\sqrt{2\pi}}|p-q|e^{-\frac{|p|^2}{4}-\frac{|q|^2}{4}},\\
	k_2(p,q)&:=\sqrt{\frac{2}{\pi}}\frac{1}{|p-q|}e^{-\frac{|p-q|^2}{8}-\frac{(|p|^2-|q|^2)^2}{8|p-q|^2}}.
\end{align*}
Denote $k(p,q):=k_2(p,q)-k_1(p,q)$. By similar calculations as in \cite[Lemma 3.3.1]{Glassey}, it holds that for any $\alpha\ge 0$,
\begin{align}\label{2.12-20}
	\int_{\mathbb{R}^3}|k(p,q)|\frac{(1+|p|^2)^{\frac{\alpha}{2}}}{(1+|q|^2)^{\frac{\alpha}{2}}}dq\lesssim \frac{1}{1+|p|}.
\end{align}

\subsection{Hilbert-Schmidt formulation}

Using Lorentz transformations as described in \cite{Groot, Strain2}, in the \textit{center of momentum system}, $\mathcal{Q}_{\mathfrak{c}}(F,F)$ can be written as
\begin{align}\label{a1.6}
	\mathcal{Q}_{\mathfrak{c}}(F,F)=&\int_{\mathbb R^3}\int_{\mathbb S^2}v_{\o} \Big[F(p')F(q')-F(p)F(q)\Big]d\omega dq 
	:=\mathcal{Q}_{\mathfrak{c}}^{+}(F,F)-\mathcal{Q}_{\mathfrak{c}}^{-}(F,F),
\end{align}
where $v_{\o}=v_{\o}(p,q)$ is the M{\o}ller velocity
\begin{align*}
	v_{\o}(p,q):=\frac{\mathfrak{c}}{2}\sqrt{\left|\frac{p}{p^0}-\frac{q}{q^0} \right|^2-\left|\frac{p}{p^0}\times \frac{q}{q^0}\right|^2}=\frac{\mathfrak{c}}{4}\frac{g\sqrt{\mathfrak{s}}}{p^0q^0}.
\end{align*}
The pre-post collisional momentum in \eqref{a1.6} satisfies
\begin{equation}\label{2.14-20}
	\left\{
	\begin{aligned}
		&p'=\frac12(p+q)+\frac12 g\Big(\omega+(\gamma_0-1)(p+q)\frac{(p+q)\cdot \omega}{|p+q|^2}\Big),\\
		&q'=\frac12(p+q)-\frac12 g\Big(\omega+(\gamma_0-1)(p+q)\frac{(p+q)\cdot \omega}{|p+q|^2}\Big),
	\end{aligned}
	\right.
\end{equation}
where $\gamma_0:=(p^0+q^0)/\sqrt{\mathfrak{s}}$. The pre-post collisional energy is given by
\begin{equation}\label{2.14-21}
	\left\{
	\begin{aligned}
		&p^{\prime 0}=\frac12(p^0+q^0)+\frac{1}{2}\frac{g}{\sqrt{\mathfrak{s}}}(p+q)\cdot \omega,\\
		&q^{\prime 0}=\frac12(p^0+q^0)-\frac{1}{2}\frac{g}{\sqrt{\mathfrak{s}}}(p+q)\cdot \omega.
	\end{aligned}
	\right.
\end{equation}

Under the perturbation form \eqref{1.10-0}, we can rewrite the relativistic Boltzmann equation \eqref{1.3-0}--\eqref{1.4-0} as
\begin{align}\label{5.8}
	\partial_tf_{\fc}+\hat{p} \cdot \nabla_x f_\fc+\mathbf{L}_{\mathfrak{c}} f_{\mathfrak{c}}=\mathbf{\Gamma}_{\mathfrak{c}}(f_\fc, f_\fc)
\end{align}
and 
\begin{align}\label{5.9}
	f_\fc(t,x,p)|_{t=0}=f_{0,\fc},
\end{align}
where the linearized term $\mathbf{L}_{\mathfrak{c}}f_{\mathfrak{c}}$ is
\begin{align*}
	\mathbf{L}_{\mathfrak{c}} f_{\mathfrak{c}}:=-\frac{1}{\sqrt{J_{\mathfrak{c}}}}\big[\mathcal{Q}_{\mathfrak{c}}(J_{\mathfrak{c}}, \sqrt{J_{\mathfrak{c}}} f_{\mathfrak{c}})+\mathcal{Q}_{\mathfrak{c}}(\sqrt{J_{\mathfrak{c}}} f_{\mathfrak{c}}, J_{\mathfrak{c}})\big]=\nu_{\mathfrak{c}}(p) f_{\mathfrak{c}}-\mathbf{K}_{\mathfrak{c}} f_{\mathfrak{c}}.
\end{align*}
Here $\nu_{\mathfrak{c}}$ and  $\mathbf{K}_{\mathfrak{c}}f_\fc:=\mathbf{K}_{\mathfrak{c}2}f_\fc-\mathbf{K}_{\mathfrak{c}1}f_\fc$ are defined as 
\begin{align*}
	\begin{split}
	\left(\mathbf{K}_{\mathfrak{c}1} f_{\mathfrak{c}}\right)(p):= & \int_{\mathbb{R}^3} \int_{\mathbb{S}^2} v_{\o}  \sqrt{J_{\mathfrak{c}}(p) J_{\mathfrak{c}}(q)} f_{\mathfrak{c}}(q) d \omega d q,  \\
	\left(\mathbf{K}_{\mathfrak{c}2} f_{\mathfrak{c}}\right)(p)
	:= & \int_{\mathbb{R}^3} \int_{\mathbb{S}^2} v_{\o}  \sqrt{J_{\mathfrak{c}}(q) J_{\mathfrak{c}}\left(q^{\prime}\right)} f_{\mathfrak{c}}\left(p^{\prime}\right) d \omega d q +\int_{\mathbb{R}^3} \int_{\mathbb{S}^2} v_{\o}   \sqrt{J_{\mathfrak{c}}(q) J_{\mathfrak{c}}\left(p^{\prime}\right)} f_{\mathfrak{c}}\left(q^{\prime}\right) d \omega d q, \\
	\nu_{\mathfrak{c}}(p):=&\int_{\mathbb{R}^3} \int_{\mathbb{S}^2} v_{\o}  J_{\mathfrak{c}}(q) d \omega d q.
	\end{split}
\end{align*}
The relativistic nonlinear collision operator takes the form
\begin{align*} 
	\mathbf{\Gamma}_{\mathfrak{c}}(h_1, h_2) &=\frac{1}{\sqrt{J_{\mathfrak{c}}}} \mathcal{Q}_{\mathfrak{c}}(\sqrt{J_{\mathfrak{c}}} h_1, \sqrt{J_{\mathfrak{c}}} h_2)=\frac{1}{\sqrt{J_{\mathfrak{c}}}} \mathcal{Q}^{+}_{\mathfrak{c}}(\sqrt{J_{\mathfrak{c}}} h_1, \sqrt{J_{\mathfrak{c}}} h_2)-\frac{1}{\sqrt{J_{\mathfrak{c}}}} \mathcal{Q}^{-}_{\mathfrak{c}}(\sqrt{J_{\mathfrak{c}}} h_1, \sqrt{J_{\mathfrak{c}}} h_2)\nonumber\\
	&:=\mathbf{\Gamma}_{\mathfrak{c}}^{+}(h_1, h_2)-\mathbf{\Gamma}_{\mathfrak{c}}^{-}(h_1, h_2).
\end{align*}

	Denote
\begin{align*}
	\boldsymbol{\ell}=\mathfrak{c}\frac{p^0+q^0}{2}, \quad  
	\boldsymbol{j}=\mathfrak{c}\frac{|p\times q|}{g}.
\end{align*}
From \cite{Groot, Strain2}, we know that
\begin{align*}
	\left(\mathbf{K}_{\mathfrak{c} i} f_{\mathfrak{c}}\right)(p)=\int_{\mathbb{R}^{3}} k_{\mathfrak{c}i}(p, q) f_{\mathfrak{c}}(q) d q, \quad i=1,2
\end{align*}
with the symmetric kernels
\begin{align*} 
	k_{\mathfrak{c}1}(p, q)=\frac{\pi}{2}\mathfrak{c}\frac{g\sqrt{\mathfrak{s}}}{p^0q^0}\frac{1}{4\pi \mathfrak{c}K_{2}(\mathfrak{c}^2)}e^{-\boldsymbol{\ell}}\int_{0}^{\pi}\sin \theta d\theta=\frac{1}{4K_2(\mathfrak{c}^2)}\frac{g\sqrt{\mathfrak{s}}}{p^0q^0}e^{-\boldsymbol{\boldsymbol{\ell}}}
\end{align*}
and 
\begin{align*} 
	k_{\mathfrak{c}2}(p,q)&=\frac{\pi}{4}\mathfrak{c}\frac{\mathfrak{s}^{3 / 2}}{g p^{0} q^{0}}\frac{1}{4\pi \mathfrak{c}K_{2}(\mathfrak{c}^2)}\int_{0}^{\infty} \frac{y\left(1+\sqrt{y^{2}+1}\right)}{\sqrt{y^{2}+1}}  e^{-\boldsymbol{\ell}\sqrt{y^{2}+1}}I_{0}\left(\boldsymbol{j}y\right)d y\nonumber\\
	&=\frac{1}{16K_2(\mathfrak{c}^2)}\frac{\mathfrak{s}^{3 / 2}}{g p^{0} q^{0}}\left[J_1(\boldsymbol{\ell},\boldsymbol{j})+J_2(\boldsymbol{\ell},\boldsymbol{j})\right].
\end{align*}
Here $I_{0}(u):=\frac{1}{2 \pi} \int_{0}^{2 \pi} e^{u \cos \varphi} d \varphi$ and
\begin{align*} 
		J_{1}(\boldsymbol{\ell}, \boldsymbol{j}):=\frac{\boldsymbol{\ell}}{\boldsymbol{\ell}^{2}-\boldsymbol{j}^{2}}\left[1+\frac{1}{\sqrt{\boldsymbol{\ell}^{2}-\boldsymbol{j}^{2}}}\right] e^{-\sqrt{\boldsymbol{\ell}^{2}-\boldsymbol{j}^{2}}}, \quad 
		J_{2}(\boldsymbol{\ell}, \boldsymbol{j}):=\frac{1}{\sqrt{\boldsymbol{\ell}^{2}-\boldsymbol{j}^{2}}} e^{-\sqrt{\boldsymbol{\ell}^{2}-\boldsymbol{j}^{2}}}.
\end{align*}

It is well-known that $\mathbf{L}_{\mathfrak{c}}$ is a self-adjoint non-negative definite operator in $L_{p}^{2}$ space with the kernel
\begin{align*}
	\mathcal{N}_{\mathfrak{c}}=\operatorname{span}\left\{\sqrt{J_{\mathfrak{c}}},\ p_i\sqrt{J_{\mathfrak{c}}} \ (i=1,2,3),\ p^0\sqrt{J_{\mathfrak{c}}} \right\}.
\end{align*}
Using Lemma \ref{lem2.2}, one can obtain the {\it orthonormal} basis for the null space $\mathcal{N}_{\mathfrak{c}}$ as
\begin{align*}
	\mathcal{N}_{\mathfrak{\mathfrak{c}}}=\operatorname{span}\left\{e_{0}, e_{1}, e_{2}, e_{3}, e_{4}\right\}
\end{align*}
with
\begin{align}\label{2.20}
	e_{0} =\sqrt{J_{\mathfrak{c}}},\ e_{i} =\frac{p_i}{\sqrt{A_1}}\sqrt{J_{\mathfrak{c}}}\ (i=1,2,3), \
	e_{4} =\frac{p^0-A_3}{\sqrt{A_2-A^2_3}}\sqrt{J_{\mathfrak{c}}},
\end{align}
where $A_1$, $A_2$ and $A_3$ are defined in Lemma \ref{lem2.2}. 

Let $\mathbf{P}_{\mathfrak{c}}$ be the orthogonal projection from $L_{p}^{2}$ onto $\mathcal{N}_{\mathfrak{c}}$. For given $f$,  we denote the relativistic macroscopic part $\mathbf{P}_{\mathfrak{c}} f$ as 
\begin{align*}
	\mathbf{P}_{\mathfrak{c}} f=\Big\{a+b\cdot p+c\frac{p^0-A_3}{\sqrt{A_2-A^2_3}}\Big\}\sqrt{J_{\mathfrak{c}}}
\end{align*}
and further denote $(\mathbf{I}-\mathbf{P}_{\mathfrak{c}})f$ to be the relativistic microscopic part of $f$. 

Using \cite[Propositon 10]{Ruggeri}, one has
\begin{align*}
	\left(\frac{K_1(z)}{K_2(z)}\right)^2+\frac{3}{z}\frac{K_1(z)}{K_2(z)}-\frac{3}{z^2}-1<0, \quad z>0,
\end{align*}
which is equivalent to
\begin{align*} 
	\left(\frac{K_3(z)}{K_2(z)}\right)^2-\frac{5}{z}\frac{K_3(z)}{K_2(z)}+\frac{1}{z^2}-1<0, \quad z>0.
\end{align*}
Then we have $A_2-A^2_3>0$ and thus \eqref{2.20} is well-defined.

Denote $k_{\mathfrak{c}}(p,q):=k_{\mathfrak{c}2}(p,q)-k_{\mathfrak{c}1}(p,q)$ and
\begin{align*}
	k_{\mathfrak{c}w}(p,q):=k_{\mathfrak{c}}(p,q)\frac{w_{\beta}(p)}{w_{\beta}(q)}.
\end{align*}
Then the following uniform-in-$\mathfrak{c}$ estimates holds:
\begin{Lemma}\emph{(\cite{Wang-Xiao})}\label{lem2.3}
	Recall $w_{\beta}(p)$ in \eqref{1.9-01}. Then it holds that 
	\begin{enumerate}[(1)]
		\item $\displaystyle 
\nu_\fc(p)\simeq\begin{cases}
	1+|p|,\ |p|\leq \mathfrak{c},\\
	\mathfrak{c},\ |p|\geq\mathfrak{c}.
\end{cases}$ \\
		\item $\displaystyle \int_{\R^3}k^2_{\mathfrak{c}w}(p,q)dq\lesssim
		\left\{
		\begin{aligned}
			&\frac{1}{1+|p|}, \ |p|\le \mathfrak{c},\\
			&\frac{1}{\mathfrak{c}}, \  |p|\ge \mathfrak{c}.
		\end{aligned}
		\right.$\\
		\item $\displaystyle 
		\int_{\mathbb{R}^3}|k_{\mathfrak{c}w}(p,q)|e^{\frac{1}{32}|p-q|}dq\lesssim
		\left\{
		\begin{aligned}
			&\frac{1}{1+|p|}, \ |p|\le \mathfrak{c},\\
			&\frac{1}{\mathfrak{c}}, \  |p|\ge \mathfrak{c}.
		\end{aligned}
		\right.$\\
		\item $\displaystyle 
		\langle\mathbf{L}_\fc g,g\rangle\geq \zeta_0\|(\mathbf{I-P_\fc})g\|_{\nu_\fc}^2,\quad \zeta_0>0\, \text{and $\zeta_0$ is independent of $\mathfrak{c}$} $.
	\end{enumerate}
\end{Lemma}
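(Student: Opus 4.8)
The four estimates are of two distinct natures: (1)--(3) are pointwise and kernel bounds that can be read off from the explicit Hilbert--Schmidt representation of $\mathbf{L}_\fc$ together with the Bessel asymptotics of Lemma~\ref{lem2.1}, while (4) is a spectral-gap statement whose uniformity in $\fc$ is the real content. My plan is to handle (1)--(3) by direct asymptotic analysis and (4) by a weak-compactness argument that invokes the Newtonian limit of the linearised operator.

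\textbf{The collision frequency (1).} Starting from $\nu_\fc(p)=\int_{\R^3}\int_{\mathbb{S}^2}v_\o J_\fc(q)\,d\omega\,dq$ with $v_\o=\tfrac{\fc}{4}\tfrac{g\sqrt{\mathfrak{s}}}{p^0q^0}$, I would use $\mathfrak{s}=g^2+4\fc^2$ together with the elementary identities $g^2=|p-q|^2-(p^0-q^0)^2$ and $\mathfrak{s}=(p^0+q^0)^2-|p+q|^2$. Since $J_\fc(q)=\frac{1}{4\pi\fc K_2(\fc^2)}e^{-\fc q^0}$ is, by Lemma~\ref{lem2.1}, a Gaussian-type density concentrated on $|q|=O(1)$ with moments listed in Lemma~\ref{lem2.2}, on the range $|p|\le\fc$ one has $p^0\cong\fc$, hence $v_\o\cong\fc^{-1}g\sqrt{\mathfrak{s}}$ and the $q$-integration gives $\nu_\fc(p)\cong 1+|p|$; on $|p|\ge\fc$ one has $p^0\cong|p|\ge\fc$, the factor $\fc/(p^0q^0)$ removes a power of $\fc$, and a short computation gives $\nu_\fc(p)\cong\fc$. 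This is the relativistic counterpart of the classical hard-sphere frequency $\nu(p)\cong 1+|p|$ in \eqref{2.5}.

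\textbf{The kernel estimates (2)--(3).} Here one works with the explicit $k_{\fc1},k_{\fc2}$. The factor $e^{-\boldsymbol\ell}=e^{-\fc(p^0+q^0)/2}$ makes $k_{\fc1}$ harmless, so the point is $k_{\fc2}$ and the exponent $\sqrt{\boldsymbol\ell^2-\boldsymbol j^2}$. A Lorentz-invariant computation yields $\boldsymbol\ell^2-\boldsymbol j^2\ge\fc^4+\tfrac14\fc^2 g^2$, hence $\sqrt{\boldsymbol\ell^2-\boldsymbol j^2}\ge\fc^2+c\,g^2$ for $g\lesssim\fc$; combining this with $g^2\ge\tfrac12|p-q|^2$ on $|p|,|q|\le\fc$ and the prefactor $1/K_2(\fc^2)\cong\fc e^{\fc^2}$, the $e^{\fc^2}$ cancels and one is left with a Glassey-type kernel $|k_\fc(p,q)|\lesssim(\text{poly})\,e^{-c|p-q|^2-c(|p|^2-|q|^2)^2/|p-q|^2}$ on $|p|\le\fc$, plus a Gaussian of size $O(1/\fc)$ on $|p|\ge\fc$ (where $g^2\gtrsim\fc(|p|+|q|)$). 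Inserting the weight ratio $w_\beta(p)/w_\beta(q)$, which grows at most like $e^{\frac{1}{64}|p-q|}$, and integrating exactly as in \eqref{2.12-20} and \cite[Lemma~3.3.1]{Glassey} gives (2); carrying the extra factor $e^{\frac{1}{32}|p-q|}$ into the Gaussian gives (3).

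\textbf{The spectral gap (4).} $\mathbf{L}_\fc=\nu_\fc-\mathbf{K}_\fc$ is self-adjoint, non-negative, with five-dimensional kernel $\mathcal{N}_\fc$ and orthonormal basis \eqref{2.20}; by (2)--(3) the operator $\mathbf{K}_\fc$ is compact on $L^2_p$ with operator norm bounded uniformly in $\fc$. It suffices to prove $\langle\mathbf{L}_\fc g,g\rangle\ge\zeta_0\|g\|_{\nu_\fc}^2$ for $g\in\mathcal{N}_\fc^\perp$ with $\zeta_0$ independent of $\fc$. Arguing by contradiction, take $\fc_n$ and $g_n\in\mathcal{N}_{\fc_n}^\perp$ with $\|g_n\|_{\nu_{\fc_n}}=1$ and $\langle\mathbf{L}_{\fc_n}g_n,g_n\rangle\to0$; a bounded subsequence $\fc_n\to\fc_*<\infty$ is handled analogously with the limiting operator $\mathbf{L}_{\fc_*}$, so assume $\fc_n\to\infty$. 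Then $J_\fc\to\mu$, $\nu_\fc\to\nu$, $\mathbf{K}_\fc\to\mathbf{K}$, and the basis \eqref{2.20} converges in $L^2_p$ to the Newtonian basis of $\mathcal{N}=\mathrm{span}\{\sqrt\mu,\,p_i\sqrt\mu,\,|p|^2\sqrt\mu\}$. Since $\|g_n\|_{L^2}\le1$, pass to a weak limit $g_n\rightharpoonup g_\infty$; uniform compactness forces $\mathbf{K}_{\fc_n}g_n\to\mathbf{K}g_\infty$ strongly, hence $\langle\nu_{\fc_n}g_n,g_n\rangle\to\langle\mathbf{K}g_\infty,g_\infty\rangle$, which gives $\langle\mathbf{L}g_\infty,g_\infty\rangle\le0$ and $g_\infty\in\mathcal{N}$; passing to the limit in $\langle g_n,e\rangle=0$ for $e\in\mathcal{N}_{\fc_n}$ forces $g_\infty\perp\mathcal{N}$, so $g_\infty=0$; but then $1=\langle\nu_{\fc_n}g_n,g_n\rangle=\langle\mathbf{K}_{\fc_n}g_n,g_n\rangle+\langle\mathbf{L}_{\fc_n}g_n,g_n\rangle\to0$, a contradiction.

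The main obstacle is precisely this last step: the gap of $\mathbf{L}_\fc$ could a priori degenerate as $\fc\to\infty$, and ruling this out requires knowing that $\mathbf{L}_\fc$, the null space $\mathcal{N}_\fc$, the collision frequency $\nu_\fc$ and the compact part $\mathbf{K}_\fc$ all converge to their Newtonian counterparts in the right topology — for which the uniform bounds (1)--(3) are exactly what is needed. The alternative, carried out in \cite{Wang-Xiao}, is to bypass the compactness argument entirely by an explicit Guo-type quantitative decomposition of $\langle\mathbf{L}_\fc g,g\rangle$ that exhibits the lower bound with a constant manifestly independent of $\fc$.
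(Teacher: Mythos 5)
The paper does not actually prove Lemma \ref{lem2.3}: it is quoted from \cite{Wang-Xiao}, where all four items are established by direct, quantitative estimates that are uniform in $\mathfrak{c}$. So the comparison is against that route, and your proposal diverges from it precisely at item (4).

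For (1)--(3) your outline follows the standard path (Bessel asymptotics, the lower bound $\boldsymbol{\ell}^2-\boldsymbol{j}^2\ge \mathfrak{c}^4+\tfrac14\mathfrak{c}^2g^2$, cancellation of $e^{\mathfrak{c}^2}$ against $1/K_2(\mathfrak{c}^2)$, reduction to a Glassey-type kernel, and \eqref{2.12-20}), and at sketch level this is consistent with the cited proofs. One concrete misstep: the parenthetical claim that $g^2\gtrsim\mathfrak{c}(|p|+|q|)$ on $|p|\ge\mathfrak{c}$ is false in general --- take $q$ close to $p$ with both large, e.g.\ $q=p$ gives $g=0$; it only holds when $|q|=O(1)\ll\mathfrak{c}\le|p|$ (which is enough for $\nu_\fc$ in item (1), since $J_\fc(q)$ confines $|q|$, but not for the kernel estimates (2)--(3), where $q$ is unrestricted). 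On the region $|p|\ge\mathfrak{c}$, $q$ near $p$, the exponential gives no smallness and the factor $1/\mathfrak{c}$ must be extracted from the prefactors $\mathfrak{s}^{3/2}/(g\,p^0q^0)$ with $p^0q^0\gtrsim\mathfrak{c}|p|$, which is exactly the delicate bookkeeping done in \cite{Wang-Xiao}; as written, your derivation of the $1/\mathfrak{c}$ bound would not go through.

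The genuine gap is in (4). The pivotal sentence ``uniform compactness forces $\mathbf{K}_{\fc_n}g_n\to\mathbf{K}g_\infty$ strongly'' is asserted, not proven, and it is precisely the uniform-in-$\mathfrak{c}$ content that the lemma encodes. To run the contradiction argument you need (i) collective compactness of the family $\{\mathbf{K}_\fc\}_{\fc\ge1}$ on $L^2_p$ --- compactness of each $\mathbf{K}_\fc$ plus the uniform Schur bounds from (2)--(3) does not by itself give a truncation/equicontinuity estimate uniform in $\mathfrak{c}$ --- and (ii) convergence $\mathbf{K}_\fc\to\mathbf{K}$ in a topology strong enough that $\langle\mathbf{K}_{\fc_n}g_n,g_n\rangle\to\langle\mathbf{K}g_\infty,g_\infty\rangle$ along a merely weakly convergent sequence. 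Point (ii) is not innocuous: the relativistic and Newtonian post-collisional maps differ ($p'$ versus $\bar p'$), and in this very paper the comparison of $\mathbf{L}_\fc f$ with $\mathbf{L}f$ (Step 3 of the proof of Theorem \ref{thm1.4}, estimate \eqref{4.50-2}) is achieved only by paying a derivative, via $\|w_{\beta-2}\nabla_p f\|_{L^\infty_{x,p}}$ and the mean value theorem --- a device unavailable for a general $L^2$ function $g_n$; one would instead have to prove kernel convergence $k_\fc\to k$ with uniform tail control and deduce, say, operator-norm convergence. You would also need to justify the weak lower semicontinuity step $\|g_\infty\|_\nu^2\le\liminf\|g_n\|^2_{\nu_{\fc_n}}$ (using $\nu_{\fc_n}\to\nu$ locally uniformly and item (1) to rule out mass escaping to $|p|\to\infty$), and the strong $L^2$ convergence of the orthonormal basis \eqref{2.20} to its Newtonian counterpart; these are fixable but unaddressed. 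In short, the soft-compactness route could in principle be made rigorous, but as proposed it presupposes exactly the uniform operator-level estimates it was meant to deliver, whereas the cited reference obtains $\zeta_0$ independent of $\mathfrak{c}$ by explicit quantitative bounds and thereby avoids the compactness-in-a-parameter issue altogether.
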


\subsection{Glassey-Strauss expression}
To establish the Newtonian limit, as pointed in \cite{Strain}, it is more convenient to use the Glassey-Strauss expression instead of the one under {\it center of momentum system}. Glassey and Strauss illustrated in \cite{Gla-Str} that a reduction of the collision integrals can be performed, without using Lorentz transformations, to obtain
\begin{align}\label{2.13-00}
	\mathcal{Q}_{\mathfrak{c}}(h_1,h_2)=\iint_{\mathbb{R}^3\times \mathbb{S}^2}\mathcal{K}_\fc(p,q,\omega)[h_1(p')h_2(q')-h_1(p)h_2(q)]d\omega dq,
\end{align}
where the kernel
\begin{align*}
	\mathcal{K}_\fc(p,q,\omega):=\f{\mathfrak{s}}{p^0q^0}B(p,q,\omega)
\end{align*}
with
\begin{align*}
	B(p,q,\omega):=\fc\f{(p^0+q^0)^2p^0q^0|\omega\cdot(\f{p}{p^0}-\f{q}{q^0})|}{[(p^0+q^0)^2-(\omega\cdot(p+q))^2]^2}.
\end{align*} 
The post-collision momentum in \eqref{2.13-00} is given by
\begin{align}\label{2.16-0}
	p'=p+a(p,q,\omega)\omega,\quad q'=q-a(p,q,\omega)\omega,
\end{align}
where
\begin{align*}
	a(p,q,\omega):=\f{2p^0q^0(p^0+q^0)\{\omega\cdot(\f{q}{q^0}-\f{p}{p^0})\}}{(p^0+q^0)^2-\{\omega\cdot(p+q)\}^2}.
\end{align*} 
The post-collision energies can be expressed as
\begin{align}\label{2.16-1}
	p'^{0}=p^0+N_0, q'^{0}=q^0-N_0
\end{align} 
with 
\begin{align}\label{2.30-20}
	N_0:=\f{2\omega\cdot(p+q)\{p^0(\omega\cdot q)-q^0(\omega\cdot p)\}}{(p^0+q^0)^2-\{\omega\cdot(p+q)\}^2}.
\end{align}

It follows from \cite{Glassey1} that
\begin{align*}
	 \frac{\partial(p',q')}{\partial(p,q)}=-\frac{p^{\prime 0}q^{\prime 0}}{p^0q^0},
\end{align*}
which, together with $B(p,q,\omega)=B(p',q',\omega)$, yields that 
\begin{align}\label{2.30-10}
	\mathcal{K}_\fc(p,q,\omega)dpdq=\f{\mathfrak{s}}{p^{\prime 0}q^{\prime 0}}B(p',q',\omega)dp'dq'=\mathcal{K}_\fc(p',q',\omega)dp'dq'.
\end{align}
It is direct to check that
\begin{align*}
	\lim_{\mathfrak{c}\to \infty}\mathcal{K}_{\mathfrak{c}}(p,q,\omega)=\mathcal{K}_\infty(p,q,\omega).
\end{align*}

It follows from \cite[Corollary 1.5]{Strain1} that, for any suitable integrable function $\mathcal{G}: \mathbb{R}^3\times \mathbb{R}^3\times \mathbb{R}^3\times \mathbb{R}^3\rightarrow \mathbb{R}$, it holds that
\begin{align*} 
	\int_{\mathbb{S}^2}  v_{\o}  \mathcal{G}\left(P, Q, P^{\prime}, Q^{\prime}\right)d \omega=\int_{\mathbb{S}^{2}} \mathcal{K}_{\mathfrak{c}}(p,q,\omega) \mathcal{G}\left(P, Q, P^{\prime}, Q^{\prime}\right)d \omega,
\end{align*}
where the post-collisional momentum on the left are defined as in \eqref{2.14-20}--\eqref{2.14-21} and the similar expressions on the right are the ones given  in \eqref{2.16-0}--\eqref{2.16-1}. 
Consequently, under the Glassey-Strauss expression with $\mathcal{Q}_{\mathfrak{c}}$ given by \eqref{2.13-00}, the collision operators take the following forms:
\begin{align*}
	\mathbf{L}_{\mathfrak{c}} f_{\mathfrak{c}}:=-\frac{1}{\sqrt{J_{\mathfrak{c}}}}\big[\mathcal{Q}_{\mathfrak{c}}(J_{\mathfrak{c}}, \sqrt{J_{\mathfrak{c}}} f_{\mathfrak{c}})+\mathcal{Q}_{\mathfrak{c}}(\sqrt{J_{\mathfrak{c}}} f_{\mathfrak{c}}, J_{\mathfrak{c}})\big]=\nu_{\mathfrak{c}}(p) f_{\mathfrak{c}}-\mathbf{K}_{\mathfrak{c}} f_{\mathfrak{c}}.
\end{align*}
Here $\nu_{\mathfrak{c}}$ and $\mathbf{K}_{\mathfrak{c}}f_\fc:=\mathbf{K}_{\mathfrak{c}2}f_\fc-\mathbf{K}_{\mathfrak{c}1}f_\fc$ are defined as 
\begin{equation}\label{2.21-03}
	\begin{split}
	\left(\mathbf{K}_{\mathfrak{c}1} f_{\mathfrak{c}}\right)(p)= & \int_{\mathbb{R}^3} \int_{\mathbb{S}^2} \mathcal{K}_{\mathfrak{c}}(p,q,\omega)  \sqrt{J_{\mathfrak{c}}(p) J_{\mathfrak{c}}(q)} f_{\mathfrak{c}}(q) d \omega d q, \\
	\left(\mathbf{K}_{\mathfrak{c}2} f_{\mathfrak{c}}\right)(p)
	= & \int_{\mathbb{R}^3} \int_{\mathbb{S}^2} \mathcal{K}_{\mathfrak{c}}(p,q,\omega)  \sqrt{J_{\mathfrak{c}}(q) J_{\mathfrak{c}}\left(q^{\prime}\right)} f_{\mathfrak{c}}\left(p^{\prime}\right) d \omega d q \\
	&\quad  +\int_{\mathbb{R}^3} \int_{\mathbb{S}^2} \mathcal{K}_{\mathfrak{c}}(p,q,\omega)   \sqrt{J_{\mathfrak{c}}(q) J_{\mathfrak{c}}\left(p^{\prime}\right)} f_{\mathfrak{c}}\left(q^{\prime}\right) d \omega d q, \\
	\nu_{\mathfrak{c}}(p)=&\int_{\mathbb{R}^3} \int_{\mathbb{S}^2} \mathcal{K}_{\mathfrak{c}}(p,q,\omega)  J_{\mathfrak{c}}(q) d \omega d q.
	\end{split}
\end{equation}
The relativistic nonlinear collision operator takes the form
\begin{align} \label{2.22-03}
	&\mathbf{\Gamma}_{\mathfrak{c}}(h_1, h_2)=\mathbf{\Gamma}_{\mathfrak{c}}^{+}(h_1, h_2)-\mathbf{\Gamma}_{\mathfrak{c}}^{-}(h_1, h_2)\nonumber\\ 
	&=\int_{\mathbb{R}^3} \int_{\mathbb{S}^2} \mathcal{K}_{\mathfrak{c}}(p,q,\omega)   \sqrt{J_{\mathfrak{c}}(q) } h_1\left(p^{\prime}\right)h_2\left(q^{\prime}\right) d \omega d q-\int_{\mathbb{R}^3} \int_{\mathbb{S}^2} \mathcal{K}_{\mathfrak{c}}(p,q,\omega)   \sqrt{J_{\mathfrak{c}}(q) } h_1\left(p\right)h_2\left(q\right) d \omega d q.
\end{align}
It is noted that the post-collisional momentum $(p',q')$ in \eqref{2.21-03}--\eqref{2.22-03} are the ones given in \eqref{2.16-0}--\eqref{2.16-1}.

As pointed out in \cite{Strain}, due to the complex expression of $N_0$ in \eqref{2.30-20} which involves too many  angles $\omega$, it is hard to use the Glassey-Strauss expression \eqref{2.13-00} for $\mathcal{Q}_{\mathfrak{c}}$ to prove the global existence theorem, i.e., Theorem \ref{thm1.1}. Alternatively, we adopt the expression \eqref{a1.6} for $\mathcal{Q}_{\mathfrak{c}}$ in the {\it center of momentum system}. As shown in \cite{Wang-Xiao}, using expression \eqref{a1.6}, one can obtain a series of uniform-in-$\mathfrak{c}$ estimates for the linear and nonlinear collision operators. While, to prove the Newtonian limit from the relativistic Boltzmann solutions to the Newtonian Boltzmann solutions, it is much more convenient to use the Glassey-Strauss expression \eqref{2.13-00} for $\mathcal{Q}_{\mathfrak{c}}$. The reason is that, in the {\it center of momentum system}, it is hard to compute the Jacobian determinant $\frac{\partial (p',q')}{\partial (p,q)}$, see \eqref{2.14-20} for the complex expressions of the post-collision momentum $(p',q')$. For the study of this determinant, we refer to \cite{Chapman} for more details. On the Glassey-Strauss expression \eqref{2.13-00}, it has been proved in \cite{Glassey1} that
\begin{align*}
	\frac{\partial (p',q')}{\partial (p,q)}=-\frac{p^{\prime 0}q^{\prime 0}}{p^0q^0},
\end{align*}
which is useful for substitution of variables when estimating integrals involving both $(p,q)$ and $(p',q')$. 
Hence we shall adopt the expression in  {\it center of momentum system} in Section 3, and use the Glassey-Strauss expression in Sections 4 $\&$ 5.

By similar arguments as in \cite{Calogero,Strain} with some minor adjustments, we have:
\begin{Lemma}\emph{(\cite{Calogero,Strain})} \label{lem2.4}
	Let $p'$, $q'$ be defined in \eqref{2.16-0} and recall $\bar{p}'$, $\bar{q}'$ in \eqref{1.16-0}. Then the following estimates hold:
	\begin{enumerate}
		\item
		$| p^{\prime}-\bar{p}'|+|q^{\prime}-\bar{q}'| \lesssim \frac{(|p|+|q|)^3}{\mathfrak{c}^2}$.
		\item
		$|p-\hat{p}|\le\frac{|p|^3}{2\mathfrak{c}^2}$.
		\item
		$\left|\mathcal{K}_{\mathfrak{c}}(p, q, \omega)-\mathcal{K}_{\infty}(p, q, \omega)\right|  \lesssim \frac{(1+|p|+|q|)^6}{\mathfrak{c}^2}$.
		\item
		$\left|\mathcal{K}_{\mathfrak{c}}(p, q, \omega)\right|\lesssim (1+|p|)\left(1+|q|\right)^{3}$.
		\item
		$\left|\mathcal{K}_{\mathfrak{c}}(p, q, \omega)\right|\lesssim (1+|q|)\left(1+|p|\right)^{3}$.
	\end{enumerate}
\end{Lemma}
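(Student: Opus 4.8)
The plan is to derive all five bounds by direct computation from the explicit Glassey--Strauss formulas \eqref{2.16-0}--\eqref{2.30-20}, together with the expression $\mathcal{K}_\fc(p,q,\omega)=\fc\,\mathfrak{s}\,(p^0+q^0)^2\big|\omega\cdot(\tfrac{p}{p^0}-\tfrac{q}{q^0})\big|/D^2$ with $D:=(p^0+q^0)^2-(\omega\cdot(p+q))^2$. For the rate-type items (1) and (3) I would split into a ``near'' regime $|p|+|q|\le\fc$, in which $p^0/\fc=\sqrt{1+|p|^2/\fc^2}=1+O(\fc^{-2}|p|^2)$ with $\fc$-independent remainder because $|p|/\fc\le1$, and a ``far'' regime $|p|+|q|\ge\fc$, in which crude bounds already beat the target since $\fc^{-2}(1+|p|+|q|)^m$ dominates the trivial estimate there.

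First I would dispose of the elementary items (2), (4), (5). For (2), writing $\hat p=\fc p/p^0$ gives $|p-\hat p|=|p|\,(p^0-\fc)/p^0=|p|^3/\big(p^0(p^0+\fc)\big)\le|p|^3/(2\fc^2)$, since $p^0\ge\fc$ (no restriction on $p$ is needed). For (4)--(5) the two crucial inequalities are: $D\ge(p^0+q^0)^2-|p+q|^2=\mathfrak{s}$, which absorbs one power of $\mathfrak{s}$ from the numerator; and $p^0-|p|=\fc^2/(p^0+|p|)\ge\fc^2/(2p^0)$ (and likewise in $q$), which yields $(p^0+q^0)^2/\mathfrak{s}\le 2p^0q^0/\fc^2$. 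Combining these, $\mathcal{K}_\fc\le\fc\,\tfrac{(p^0+q^0)^2}{\mathfrak{s}}\big|\omega\cdot(\tfrac{p}{p^0}-\tfrac{q}{q^0})\big|\lesssim\fc^{-1}\big|\omega\cdot(q^0p-p^0q)\big|\le\fc^{-1}(q^0|p|+p^0|q|)\lesssim(1+|p|)(1+|q|)$, using $p^0\le\fc(1+|p|)$; this is stronger than both (4) and (5).

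For (1), from \eqref{1.16-0} and \eqref{2.16-0} one has $p'-\bar p'=\big(a(p,q,\omega)-\omega\cdot(q-p)\big)\omega$ and $q'-\bar q'=-\big(a(p,q,\omega)-\omega\cdot(q-p)\big)\omega$, so the left side is $2|a(p,q,\omega)-\omega\cdot(q-p)|$. In the far regime, $|a|=|p'-p|\le|p'|+|p|$ with $|p'|^2=(p^{\prime 0})^2-\fc^2\le(p^0+q^0)^2\lesssim(|p|+|q|)^2$, and $(|p|+|q|)^3/\fc^2\ge|p|+|q|$, so the bound is immediate. In the near regime I expand each factor: $2p^0q^0(p^0+q^0)=4\fc^3(1+O(\fc^{-2}(|p|^2+|q|^2)))$, $\omega\cdot(\tfrac{q}{q^0}-\tfrac{p}{p^0})=\fc^{-1}(\omega\cdot(q-p)+O(\fc^{-2}(|p|^3+|q|^3)))$, and $D=4\fc^2(1+O(\fc^{-2}(|p|^2+|q|^2)))$ with $D\ge\mathfrak{s}\ge4\fc^2$, and collect to get $a(p,q,\omega)=\omega\cdot(q-p)+O(\fc^{-2}(|p|+|q|)^3)$. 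Part (3) runs the same way: in the far regime invoke (4) and $|\mathcal{K}_\infty|\le|p|+|q|$, both $\le(1+|p|+|q|)^2\le(1+|p|+|q|)^6/\fc^2$; in the near regime expand $\mathfrak{s}$, $(p^0+q^0)^2$ and $D$ (each $=4\fc^2+O(|p|^2+|q|^2)$, $D$ bounded below by $\mathfrak{s}$) together with the above velocity expansion, obtaining $\mathcal{K}_\fc=|\omega\cdot(p-q)|+O(\fc^{-2}(|p|+|q|)^3)$, which is stronger than the claimed $\fc^{-2}(1+|p|+|q|)^6$.

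The only real obstacle is the bookkeeping in (1) and (3): one must check that every $O(\cdot)$ in the $\fc^{-1}$-expansions carries a constant independent of $\fc$, and this is exactly where the case split earns its keep, since the Taylor expansions of $\sqrt{1+|p|^2/\fc^2}$, $1/D$ and the like are uniformly controlled only for $|p|,|q|\le\fc$, while for $|p|+|q|\ge\fc$ the target error already dominates the trivial bound and no expansion is needed. These are the minor adaptations of \cite{Calogero,Strain} referred to in the statement.
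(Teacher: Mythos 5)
Your proposal is correct, and it follows essentially the same route as the paper, which offers no proof of its own but simply invokes \cite{Calogero,Strain}, whose arguments are exactly the kind of $\mathfrak{c}^{-1}$-Taylor expansion of $p^0$, $\mathfrak{s}$, $D$ and the collision map with remainders controlled via $D\ge\mathfrak{s}\ge 4\mathfrak{c}^2$ and a near/far case split that you carry out. Your kernel estimate $\left|\mathcal{K}_{\mathfrak{c}}\right|\lesssim(1+|p|)(1+|q|)$, obtained from $D\ge\mathfrak{s}$ and $(p^0+q^0)^2\le 2p^0q^0\,\mathfrak{s}/\mathfrak{c}^2$, is in fact sharper than the stated cubic bounds (4)--(5), which it implies, so this only strengthens the lemma.
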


\subsection{Some auxiliary estimates}

The following lemma collects some estimates which are stated in \cite[p.1583]{Strain} and will play an important role in the proof of Theorem \ref{thm1.3}. 
\begin{Lemma}\emph{(\cite{Strain})}\label{lem2.5}
	Let $r(z)=(\log z)^{\alpha_1}$ for $\frac{1}{2}<\alpha_1<1$. Then there hold
	\begin{enumerate}
		\item For given $B_0>0$, $\epsilon>0$, there is a uniform constant $\tilde{B}_0=\tilde{B}_0(\epsilon)>0$ such that
		$$
		e^{B_0r(z)} \leq \tilde{B}_0 z^\epsilon, \quad z \geq 1 .
		$$
		\item For given $B_1>0$, $\alpha>0$, there is a uniform constant $\tilde{B}_1=\tilde{B}_1(\alpha)>0$ such that
		$$
		e^{-B_1 r^2(z)} \leq \frac{\tilde{B}_1}{z^{1+\alpha}},\quad z\geq 1.
		$$
	\end{enumerate}
\end{Lemma}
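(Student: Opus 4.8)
\textbf{Proof strategy for Lemma \ref{lem2.5}.}

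The plan is to prove both estimates by reducing them to elementary comparisons between $(\log z)^{\alpha_1}$-type quantities and powers of $z$, and then taking suprema over $z\geq 1$. For part (1), I would substitute $z=e^s$ with $s\geq 0$, so that $r(z)=s^{\alpha_1}$ and the claimed inequality $e^{B_0 s^{\alpha_1}}\leq \tilde B_0 e^{\epsilon s}$ becomes $\sup_{s\geq 0}\big(B_0 s^{\alpha_1}-\epsilon s\big)\leq \log\tilde B_0$. Since $0<\alpha_1<1$, the function $s\mapsto B_0 s^{\alpha_1}-\epsilon s$ is continuous on $[0,\infty)$, tends to $-\infty$ as $s\to\infty$, and hence attains a finite maximum; one may even compute the maximizer $s_\ast=(\alpha_1 B_0/\epsilon)^{1/(1-\alpha_1)}$ and read off $\tilde B_0(\epsilon)=\exp\!\big((1-\alpha_1)(\alpha_1/\epsilon)^{\alpha_1/(1-\alpha_1)}B_0^{1/(1-\alpha_1)}\big)$, which depends only on $\epsilon$ (with $B_0,\alpha_1$ fixed). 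This gives the first bound with a uniform constant.

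For part (2), the same substitution $z=e^s$ turns the claim $e^{-B_1 s^{2\alpha_1}}\leq \tilde B_1 e^{-(1+\alpha)s}$ into $\sup_{s\geq 0}\big((1+\alpha)s-B_1 s^{2\alpha_1}\big)\leq \log\tilde B_1$. The exponent satisfies $2\alpha_1>1$ because $\alpha_1>\tfrac12$, so the subtracted term $B_1 s^{2\alpha_1}$ eventually dominates the linear term $(1+\alpha)s$; thus $(1+\alpha)s-B_1 s^{2\alpha_1}\to-\infty$ as $s\to\infty$, the function is continuous on $[0,\infty)$, and it attains a finite maximum, which we take to be $\log\tilde B_1(\alpha)$. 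Again one could exhibit the maximizer explicitly by differentiating, but it is not necessary: compactness of the effective domain (beyond some $s_0$ the expression is negative, and on $[0,s_0]$ it is bounded by continuity) suffices to produce the constant, and the constant depends only on $\alpha$ once $B_1,\alpha_1$ are regarded as fixed parameters.

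The only mildly delicate point — and the one I would be careful to state cleanly — is the dependence of the constants: the lemma asserts $\tilde B_0$ depends only on $\epsilon$ and $\tilde B_1$ only on $\alpha$, so I would make explicit at the outset that $\alpha_1$ (and $B_0$, resp. $B_1$) are treated as fixed throughout, and that the suprema above are finite precisely because of the strict inequalities $\alpha_1<1$ in (1) and $\alpha_1>\tfrac12$ in (2). There is no real obstacle here; the lemma is a packaging of the fact that any sublinear power of $\log z$ is eventually beaten by any positive power of $z$ (part 1) and any superlinear power of $\log z$ eventually beats any linear function (part 2). The bulk of the ``work'' is simply the change of variables and an elementary one-variable calculus/compactness argument, so I would keep the write-up to a few lines, possibly just citing \cite{Strain} for the routine details as the paper already does.
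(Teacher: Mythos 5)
Your proof is correct. The paper itself gives no argument for this lemma --- it simply collects the estimates from \cite[p.1583]{Strain} --- and your substitution $z=e^{s}$ together with the elementary observation that $B_0 s^{\alpha_1}-\epsilon s$ (using $\alpha_1<1$) and $(1+\alpha)s-B_1 s^{2\alpha_1}$ (using $2\alpha_1>1$) are continuous on $[0,\infty)$ and tend to $-\infty$, hence bounded above, is exactly the standard way to verify them; your explicit maximizer and constant in part (1) check out, and your remark that $B_0$, $B_1$, $\alpha_1$ are regarded as fixed correctly accounts for the stated dependence of $\tilde B_0$ on $\epsilon$ and $\tilde B_1$ on $\alpha$.
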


For later use, we need the following lemma which 
 compares $J_{\mathfrak{c}}(p)$ and $\mu(p)$.
\begin{Lemma}\label{lem2.6}
	Recall that $J_{\mathfrak{c}}(p)=\frac{1}{4\pi\fc K_2(\fc^2)}e^{-\mathfrak{c}p^0}$ and $\mu(p)=\frac{1}{(2\pi)^{\f32}}e^{-\frac{|p|^2}{2}}$. Then for any $\epsilon\in (0,2)$, it holds that
	\begin{align}\label{1.8-00}
		|J_{\mathfrak{c}}(p)-\mu(p)|\le C(\epsilon) \frac{1}{\mathfrak{c}^{2-\epsilon}}e^{-\frac{|p|}{2}}
	\end{align}
    and
    \begin{align}\label{1.9-00}
    	|\sqrt{J_{\mathfrak{c}}(p)}-\sqrt{\mu(p)}|\le  C(\epsilon)\frac{1}{\mathfrak{c}^{2-\epsilon}}e^{-\frac{|p|}{4}}.
    \end{align}
\end{Lemma}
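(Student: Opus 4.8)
The plan is to compare $J_{\mathfrak{c}}(p)$ and $\mu(p)$ by splitting the momentum space into a bounded region $|p|\le R$, where a Taylor expansion in $1/\mathfrak{c}$ is effective, and an unbounded region $|p|\ge R$, where crude exponential bounds suffice; here one should take $R$ of size comparable to a power of $\log \mathfrak{c}$ so that the tail is controlled by a negative power of $\mathfrak{c}$. The starting point is the elementary identity $\mathfrak{c}p^0 = \mathfrak{c}\sqrt{\mathfrak{c}^2+|p|^2} = \mathfrak{c}^2\sqrt{1+|p|^2/\mathfrak{c}^2}$, so that
\begin{align*}
	\mathfrak{c}p^0 = \mathfrak{c}^2 + \frac{|p|^2}{2} - \frac{|p|^4}{8\mathfrak{c}^2} + O\Big(\frac{|p|^6}{\mathfrak{c}^4}\Big),
\end{align*}
valid whenever $|p|\lesssim \mathfrak{c}$, and hence $e^{-\mathfrak{c}p^0} = e^{-\mathfrak{c}^2}e^{-|p|^2/2}\big(1 + O(|p|^4/\mathfrak{c}^2)\big)$ in that range. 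Meanwhile, the normalizing constant is handled via the asymptotic expansion of the Bessel function in Lemma \ref{lem2.1}: writing $K_2(\mathfrak{c}^2) = \sqrt{\frac{\pi}{2\mathfrak{c}^2}}\,e^{-\mathfrak{c}^2}\big[1 + O(\mathfrak{c}^{-2})\big]$, one gets
\begin{align*}
	\frac{1}{4\pi\mathfrak{c}K_2(\mathfrak{c}^2)} = \frac{1}{4\pi\mathfrak{c}}\cdot\frac{1}{\sqrt{\pi/(2\mathfrak{c}^2)}}\,e^{\mathfrak{c}^2}\big[1+O(\mathfrak{c}^{-2})\big] = \frac{e^{\mathfrak{c}^2}}{(2\pi)^{3/2}}\big[1+O(\mathfrak{c}^{-2})\big].
\end{align*}
Multiplying these two expansions gives $J_{\mathfrak{c}}(p) = \mu(p)\big[1 + O(|p|^4/\mathfrak{c}^2) + O(\mathfrak{c}^{-2})\big]$ on $|p|\le R$, so that $|J_{\mathfrak{c}}(p)-\mu(p)| \lesssim \mu(p)\,\frac{1+|p|^4}{\mathfrak{c}^2}$ there.

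Next I would bound the two regions against the target $\frac{1}{\mathfrak{c}^{2-\epsilon}}e^{-|p|/2}$. On $|p|\le R$ with $R \cong (\log\mathfrak{c})^{1/2}$ (or any suitable power), the factor $1+|p|^4 \le 1 + R^4 \lesssim (\log\mathfrak{c})^2$, which is absorbed into $\mathfrak{c}^{\epsilon}$ for any $\epsilon>0$ by the elementary inequality $(\log\mathfrak{c})^2 \le C(\epsilon)\mathfrak{c}^{\epsilon}$; and $\mu(p) = (2\pi)^{-3/2}e^{-|p|^2/2} \le (2\pi)^{-3/2}e^{-|p|/2}$ since $|p|^2\ge |p|$ is false for $|p|<1$ — so more carefully one uses $e^{-|p|^2/2}\le C e^{-|p|/2}$ which holds for all $p$ with a fixed constant. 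On $|p|\ge R$, one estimates $J_{\mathfrak{c}}(p)$ and $\mu(p)$ separately: for $\mu$ one has $\mu(p)\le e^{-R/4}e^{-|p|/4}\lesssim \mathfrak{c}^{-N}e^{-|p|/4}$ for any $N$ by the choice of $R$ together with Lemma \ref{lem2.5}(2)-type estimates (or directly $e^{-|p|^2/4}\le e^{-R^2/4}$); for $J_{\mathfrak{c}}$ one uses $\mathfrak{c}p^0 \ge \mathfrak{c}^2 + c\min\{|p|^2, \mathfrak{c}|p|\}$ (from $\sqrt{1+s}-1 \ge c\min\{s,\sqrt{s}\}$) together with the lower bound $K_2(\mathfrak{c}^2)\gtrsim \mathfrak{c}^{-1}e^{-\mathfrak{c}^2}$ to again produce a bound of the form $\mathfrak{c}^{-N}e^{-|p|/4}$. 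Summing the contributions of the two regions yields \eqref{1.8-00}.

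For the square-root estimate \eqref{1.9-00}, I would use the algebraic inequality $|\sqrt{x}-\sqrt{y}| = \frac{|x-y|}{\sqrt{x}+\sqrt{y}} \le \frac{|x-y|}{\max\{\sqrt{x},\sqrt{y}\}}$. Writing $x=J_{\mathfrak{c}}(p)$, $y=\mu(p)$ and using \eqref{1.8-00} for the numerator $|x-y|\lesssim \mathfrak{c}^{-(2-\epsilon)}e^{-|p|/2}$, it remains to control $1/\max\{\sqrt{J_{\mathfrak{c}}(p)},\sqrt{\mu(p)}\}$ by something like $C e^{|p|/4}$; indeed $\sqrt{\mu(p)} = (2\pi)^{-3/4}e^{-|p|^2/4} \ge c\,e^{-|p|/4}$ is false for large $|p|$, so instead one should estimate more carefully, splitting once more into $|p|\le R$ and $|p|\ge R$. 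On $|p|\le R$ the denominator $\sqrt{\mu(p)}\gtrsim e^{-R^2/4}\gtrsim \mathfrak{c}^{-\epsilon/2}$ (absorbing into the power of $\mathfrak{c}$ at the cost of worsening $\epsilon$ slightly), and combined with $e^{-|p|/2}\le e^{-|p|/4}$ this gives the claim; on $|p|\ge R$ one bounds $|\sqrt{J_{\mathfrak{c}}}-\sqrt{\mu}| \le \sqrt{J_{\mathfrak{c}}} + \sqrt{\mu}$ directly and uses the pointwise exponential decay of each term (with the same $R$-dependent gain) to beat $\mathfrak{c}^{-(2-\epsilon)}e^{-|p|/4}$. \textbf{The main obstacle} is bookkeeping the cutoff radius $R$: it must be large enough that the tails on $|p|\ge R$ are $o(\mathfrak{c}^{-2})$, yet the polynomial-in-$R$ losses on $|p|\le R$ (from the $|p|^4$ factor in the Taylor remainder and from the lower bound on $\sqrt{\mu(p)}$) must only cost $\mathfrak{c}^{\epsilon}$; choosing $R\cong(\log\mathfrak{c})^{1/2}$ and invoking the elementary bounds "$(\log\mathfrak{c})^{m}\le C(\epsilon)\mathfrak{c}^{\epsilon}$" and "$e^{-cR^2}\le C\mathfrak{c}^{-N}$" reconciles the two requirements, and this is essentially the only quantitative point that needs care. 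All remaining steps are routine applications of the Bessel asymptotics from Lemma \ref{lem2.1} and elementary calculus.
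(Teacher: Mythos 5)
Your treatment of \eqref{1.8-00} is essentially correct and is, in substance, the paper's own argument: the paper also splits momentum space into a near region (cut at $|p|=\mathfrak{c}^{\epsilon/4}$ rather than at a poly-logarithmic radius), where the exact identity $\mathfrak{c}^2-\mathfrak{c}p^0+\frac{|p|^2}{2}=\frac{|p|^4}{2(\mathfrak{c}+p^0)^2}$ plays the role of your Taylor remainder and the Bessel asymptotics handle the normalizing constant, and a far region controlled by crude exponential decay. Two small bookkeeping points: the gain on the far region must come from $e^{-cR^2}$ (your parenthetical ``$e^{-|p|^2/4}\le e^{-R^2/4}$''), not from $e^{-R/4}$, which is not even polynomially small in $\mathfrak{c}$ for a poly-log $R$; and your use of the quadratic gain $\mathfrak{c}p^0-\mathfrak{c}^2\gtrsim\min\{|p|^2,\mathfrak{c}|p|\}$ for the $J_{\mathfrak{c}}$-tail is exactly what makes a poly-log cutoff viable (with the linear decay $e^{-|p|}$ alone, as the paper uses, one needs a cutoff that is a power of $\mathfrak{c}$). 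With either choice the near-region loss is only polynomial in $R$, so \eqref{1.8-00} closes.

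The genuine gap is in your proof of \eqref{1.9-00}. Using $|\sqrt{J_{\mathfrak{c}}}-\sqrt{\mu}|\le |J_{\mathfrak{c}}-\mu|/\max\{\sqrt{J_{\mathfrak{c}}},\sqrt{\mu}\}$ and dividing by $\sqrt{\mu(p)}$ on $|p|\le R$ costs a factor $e^{R^2/4}$, i.e.\ an exponential-in-$R^2$ loss, unlike the merely polynomial-in-$R$ loss in part one. Your requirement $e^{-R^2/4}\gtrsim\mathfrak{c}^{-\epsilon/2}$ forces $R\le\sqrt{2\epsilon}\,(\log\mathfrak{c})^{1/2}$ (and is simply false for your preferred $R=(\log\mathfrak{c})^{\alpha_1}$ with $\alpha_1>\frac12$, since then $e^{R^2/4}$ exceeds every power of $\mathfrak{c}$). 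On the other hand, on $|p|\ge R$ the Gaussian term $\sqrt{\mu(p)}=(2\pi)^{-3/4}e^{-|p|^2/4}$ must be bounded by $C\mathfrak{c}^{-(2-\epsilon)}e^{-|p|/4}$, and the only available gain is again $e^{-R^2/4}$ (up to lower order), forcing $R\gtrsim 2\sqrt{2-\epsilon}\,(\log\mathfrak{c})^{1/2}$. The two constraints are compatible only when $2-\epsilon\lesssim\epsilon$ (with your constants, $\epsilon\ge 4/3$), so a single cutoff cannot close this argument in the regime of small $\epsilon$, which is exactly where the lemma has content. The repair is what the paper means by ``proved similarly'': do not divide at all, but rerun the near/far comparison directly on $\sqrt{J_{\mathfrak{c}}(p)}=(4\pi\mathfrak{c}K_2(\mathfrak{c}^2))^{-1/2}e^{-\mathfrak{c}p^0/2}$ versus $\sqrt{\mu(p)}=(2\pi)^{-3/4}e^{-|p|^2/4}$; all exponents are halved, the identity becomes $\frac{\mathfrak{c}^2-\mathfrak{c}p^0}{2}+\frac{|p|^2}{4}=\frac{|p|^4}{4(\mathfrak{c}+p^0)^2}$, and the weight $e^{-|p|/4}$ in \eqref{1.9-00} comes out exactly as $e^{-|p|/2}$ did in \eqref{1.8-00}, with no small denominator anywhere.
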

\begin{proof}
    For any $p\in \mathbb{R}^3$, it is clear that
    \begin{align*}
    	\mathfrak{c}^2-\mathfrak{c}p^0=\mathfrak{c}^2(1-\sqrt{1+\frac{|p|^2}{\mathfrak{c}^2}})=-\frac{|p|^2}{1+\sqrt{1+\frac{|p|^2}{\mathfrak{c}^2}}},
    \end{align*}
    which yields
    \begin{align}\label{2.36-00}
    	-\frac{|p|^2}{2}\le \mathfrak{c}^2-\mathfrak{c}p^0\le -\frac{|p|^2}{1+\sqrt{1+|p|^2}}=-\sqrt{1+|p|^2}+1\le -|p|+1.
    \end{align}

    Using Lemma \ref{lem2.1}, one has 
	\begin{align}\label{2.17-0}
		J_{\mathfrak{c}}(p)-\mu(p)&=\f{1}{(2\pi)^{\f32}}e^{\fc^2-\fc p^0}\big(1+O(\fc^{-2})\big)-\f{1}{(2\pi)^{\f32}}e^{-\f{|p|^2}{2}}\nonumber\\
		&= \f{1}{(2\pi)^{\f32}}e^{\fc^2-\fc p^0}O(\fc^{-2})+\f{1}{(2\pi)^{\f32}}\Big(e^{\fc^2-\fc p^0}-e^{-\f{|p|^2}{2}}\Big):= \mathcal{D}_1+\mathcal{D}_2.
	\end{align}
It follows from \eqref{2.36-00} that
\begin{align}\label{2.18-0}
	|\mathcal{D}_1|\lesssim \frac{1}{\mathfrak{c}^2}e^{-|p|}.
\end{align}

To estimate $\mathcal{D}_2$, for $|p|\ge \mathfrak{c}^{\frac{\epsilon}{4}}$, we have
\begin{align}\label{2.19-0}
	|\mathcal{D}_2|\lesssim e^{-|p|}+e^{-\frac{|p|^2}{2}}&\lesssim \exp{\Big(-\frac{\mathfrak{c}^{\frac{\epsilon}{4}}}{2}\Big)}e^{-\frac{|p|}{2}}+\exp{\Big(-\frac{\mathfrak{c}^{\frac{\epsilon}{2}}}{4}\Big)}e^{-\frac{|p|^2}{4}}\le C(\epsilon)\frac{1}{\mathfrak{c}^2}e^{-\frac{|p|}{2}}.
\end{align}
For $|p|\le \mathfrak{c}^{\frac{\epsilon}{4}}$, it holds that
\begin{align*}
	0\le \fc^2-\fc p^0+\f{|p|^2}{2}=-\f{\fc|p|^2}{\fc+p^0}+\f{|p|^2}{2}=\f{|p|^4}{2(\fc+p^0)^2}\le  \frac{1}{8\mathfrak{c}^{2-\epsilon}},
\end{align*}
which implies that
\begin{align}\label{2.20-0}
	|\mathcal{D}_2|\lesssim e^{-\frac{|p|^2}{2}}\Big[\exp{\Big(\fc^2-\fc p^0+\f{|p|^2}{2}\Big)}-1\Big]\lesssim \frac{1}{\mathfrak{c}^{2-\epsilon}}e^{-\frac{|p|}{2}},\quad |p|\le \mathfrak{c}^{\frac{\epsilon}{4}}.
\end{align}
Thus \eqref{1.8-00} follows from \eqref{2.17-0}--\eqref{2.20-0} and \eqref{1.9-00} can be proved similarly. Therefore the proof is completed.
\end{proof}

\subsection{Nonlinear estimate} 

One also needs the following nonlinear estimate to obtain the local-in-time existence for the relativistic Boltzmann equation. 
\begin{Lemma}\label{lem2.7}
	Assume $\beta>\f72$, then it holds that
	\begin{align}\label{2.17-10}
		\left\|\nu_{\mathfrak{c}}^{-1}w_{\beta} \mathbf{\Gamma}_{\mathfrak{c}}(f_1,f_2)\right\|_{L^\infty} \le C\left\| w_{\beta} f_1\right\|_{L^\infty} \left\| w_{\beta} f_2\right\|_{L^\infty} .
	\end{align}
	Especially, we have 
	\begin{align}\label{2.18-10}
		\left\|w_{\beta} \mathbf{\Gamma}_{\mathfrak{c}}^{+}(f_1,f_2)\right\|_{L^\infty} \le C \left\| w_{\beta} f_1\right\|_{L^\infty} \left\| w_{\beta} f_2\right\|_{L^\infty} .
	\end{align}
    We point out that the above constants $C$ are independent of $\mathfrak{c}$.
\end{Lemma}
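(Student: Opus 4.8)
The plan is to bound $|\mathbf{\Gamma}_{\mathfrak{c}}^{\pm}(f_1,f_2)(p)|$ pointwise by pulling the weight $w_\beta(p)$ inside the collision integral and using the conservation law $p^0+q^0=p'^0+q'^0$ together with the kernel bounds from Lemma \ref{lem2.4}. First I would write, using the Glassey--Strauss expression \eqref{2.22-03},
\begin{align*}
w_\beta(p)|\mathbf{\Gamma}_{\mathfrak{c}}^{+}(f_1,f_2)(p)|
\le \|w_\beta f_1\|_{L^\infty}\|w_\beta f_2\|_{L^\infty}
\int_{\mathbb{R}^3}\int_{\mathbb{S}^2}\mathcal{K}_{\mathfrak{c}}(p,q,\omega)\,\sqrt{J_{\mathfrak{c}}(q)}\,
\frac{w_\beta(p)}{w_\beta(p')w_\beta(q')}\,d\omega\,dq,
\end{align*}
and similarly $w_\beta(p)|\mathbf{\Gamma}_{\mathfrak{c}}^{-}(f_1,f_2)(p)|$ is bounded by the same product times $\int\int \mathcal{K}_{\mathfrak{c}}(p,q,\omega)\sqrt{J_{\mathfrak{c}}(q)}\,w_\beta(p)/(w_\beta(p)w_\beta(q))\,d\omega\,dq$. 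So it suffices to show both angular-momentum integrals are bounded by $C\nu_{\mathfrak{c}}(p)$, uniformly in $\mathfrak{c}$.

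For the loss term the integral is just $\int_{\mathbb{R}^3}\int_{\mathbb{S}^2}\mathcal{K}_{\mathfrak{c}}(p,q,\omega)\sqrt{J_{\mathfrak{c}}(q)}\,w_\beta(q)^{-1}\,d\omega\,dq$; using Lemma \ref{lem2.4}(4), $\mathcal{K}_{\mathfrak{c}}\lesssim (1+|p|)(1+|q|)^3$, and the exponential decay of $\sqrt{J_{\mathfrak{c}}(q)}$ (which, by Lemma \ref{lem2.6} or directly from $\mathfrak{c} p^0 \ge \mathfrak{c}^2 + \tfrac{|q|^2}{1+\sqrt{1+|q|^2/\mathfrak{c}^2}} \ge \mathfrak c^2 + |q| - 1$, dominates any polynomial in $q$ uniformly in $\mathfrak{c}\ge 1$), the $q$-integral converges and is $\lesssim 1+|p|\lesssim \nu_{\mathfrak{c}}(p)$ by Lemma \ref{lem2.3}(1). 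For the gain term, the key point is the weight transfer: since $p^0+q^0 = p'^0+q'^0$ and $\sqrt{1+|q|^2} = q^0/\mathfrak{c}\cdot\mathfrak{c}/\sqrt{\cdots}$ is comparable to $q^0/\mathfrak{c}$, one has $w_\beta(p) \lesssim (p^0)^\beta/\mathfrak{c}^\beta \lesssim ((p'^0)^\beta + (q'^0)^\beta)/\mathfrak{c}^\beta \lesssim w_\beta(p')w_\beta(q')$ (crucially using $q'^0\ge \mathfrak{c}$ so that $(q'^0)^\beta/\mathfrak{c}^\beta \le (q'^0/\mathfrak c)^\beta \le w_\beta(q')$, and similarly for the other term), hence $w_\beta(p)/(w_\beta(p')w_\beta(q')) \lesssim 1$. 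Then the gain integral is controlled by $\int\int \mathcal{K}_{\mathfrak{c}}(p,q,\omega)\sqrt{J_{\mathfrak{c}}(q)}\,d\omega\,dq = \nu_{\mathfrak{c}}(p)$, which gives \eqref{2.17-10}; for \eqref{2.18-10} one simply keeps the factor $\nu_{\mathfrak c}(p)^{-1}$ out and uses $\sqrt{J_{\mathfrak c}(q)}\le \sqrt{J_{\mathfrak c}(q)}\,w_\beta(q)$ to absorb a weight, bounding the remaining $q$-integral directly without needing a full power of $\nu_{\mathfrak c}$.

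The main obstacle is making the weight-transfer inequality $w_\beta(p)\lesssim w_\beta(p')w_\beta(q')$ genuinely uniform in $\mathfrak{c}\ge 1$: one cannot use $|p|\le |p'|+|q'|$ naively because the Euclidean momenta are not conserved, only the energies $p^0,q^0$ are; the clean route is to convert weights to powers of the conserved energies via $1+|p|^2 = (p^0)^2 - \mathfrak c^2 + 1 \le (p^0/\mathfrak c)^2$ for $\mathfrak c\ge 1$ (and the reverse comparison $w_\beta(p)\cong (p^0/\mathfrak c)^\beta$ on the relevant range), then exploit energy conservation, taking care that $p'^0,q'^0 \ge \mathfrak c$ so dividing by $\mathfrak c^\beta$ stays harmless. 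Once this pointwise inequality is in hand, the rest is the routine convergence of Gaussian-type momentum integrals against polynomially growing kernels, with the bound $\beta > 7/2$ entering only to guarantee $\int_{\mathbb R^3}\sqrt{J_{\mathfrak c}(q)}\,(1+|q|)^{3}w_\beta(q)^{-1}\,dq$ — and the analogous loss-term integral — converge with constants independent of $\mathfrak c$.
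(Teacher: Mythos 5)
Your outline for \eqref{2.17-10} is broadly the standard ``sup out $f_1,f_2$ and transfer the weight'' argument, but the step you call the clean route rests on a false inequality: $1+|p|^2=(p^0)^2-\mathfrak{c}^2+1\le (p^0/\mathfrak{c})^2$ is equivalent to $p^0\le\mathfrak{c}$, which holds only at $p=0$ once $\mathfrak{c}>1$; in the regime $1\ll|p|\le\mathfrak{c}$ one has $w_\beta(p)\cong|p|^\beta$ while $(p^0/\mathfrak{c})^\beta\cong1$, so the two differ by a factor as large as $\mathfrak{c}^\beta$ and no comparison $w_\beta(p)\cong(p^0/\mathfrak{c})^\beta$ is available uniformly in $\mathfrak{c}$. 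The transfer $|p|\le|p'|+|q'|$ (hence $w_\beta(p)\lesssim w_\beta(p')w_\beta(q')$) is nevertheless true, but it must be derived differently: from $p^0\le p'^0+q'^0-\mathfrak{c}$ (using $q^0\ge\mathfrak{c}$) together with $p'^0-\mathfrak{c}\le|p'|$, $q'^0-\mathfrak{c}\le|q'|$ one gets $|p|^2=(p^0)^2-\mathfrak{c}^2\le|p'|^2+|q'|^2+2(p'^0-\mathfrak{c})(q'^0-\mathfrak{c})\le(|p'|+|q'|)^2$; alternatively use momentum conservation and absorb the extra $w_\beta(q)$ into $\sqrt{J_{\mathfrak{c}}(q)}$. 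Two further slips: $1+|p|\lesssim\nu_{\mathfrak{c}}(p)$ fails for $|p|\ge\mathfrak{c}$, where Lemma \ref{lem2.3}(1) gives $\nu_{\mathfrak{c}}(p)\simeq\mathfrak{c}$, and $\iint\mathcal{K}_{\mathfrak{c}}(p,q,\omega)\sqrt{J_{\mathfrak{c}}(q)}\,d\omega dq$ is not $\nu_{\mathfrak{c}}(p)$ (the latter carries $J_{\mathfrak{c}}$, not $\sqrt{J_{\mathfrak{c}}}$); both can be repaired, but only by proving the two-regime bound $\lesssim\min\{1+|p|,\mathfrak{c}\}$ rather than quoting it.

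The real gap is \eqref{2.18-10}, which is the estimate the paper actually proves. With your method the best possible conclusion is $w_\beta|\mathbf{\Gamma}^{+}_{\mathfrak{c}}(f_1,f_2)|\lesssim\nu_{\mathfrak{c}}(p)\|w_\beta f_1\|_{L^\infty}\|w_\beta f_2\|_{L^\infty}$, and since $\nu_{\mathfrak{c}}(p)$ grows like $1+|p|$ up to size $\mathfrak{c}\gg1$ this is not bounded; ``keeping $\nu_{\mathfrak{c}}^{-1}$ out'' and writing $\sqrt{J_{\mathfrak{c}}(q)}\le\sqrt{J_{\mathfrak{c}}(q)}w_\beta(q)$ only enlarges the integrand and gains nothing, and even retaining the full ratio $w_\beta(p)/(w_\beta(p')w_\beta(q'))$ cannot help, since for grazing collisions with $|q|=O(1)$ that ratio is of order one while the kernel contributes a factor $\sim|p|$. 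The uniform bound \eqref{2.18-10} is a genuine gain-term smoothing estimate: the paper writes $d\omega$ via the four-dimensional delta function, exchanges variables so that the Maxwellian sits on an integrated (post-collisional) momentum, and computes the resulting kernel $\mathcal{H}(p,q)$ through Bessel-type integrals, with a maximization argument showing the total exponential factor is $\le 1$ and yielding $\mathcal{H}(p,q)\lesssim(\sqrt{1+|p|}+\sqrt{1+|q|})/|p-q|$. The $1/|p-q|$ gain is what turns the would-be factor $1+|p|$ into $(1+|p|)^{-1/2}$, and it is precisely there, not in the convergence of Gaussian integrals, that $\beta>\frac72$ enters, via $\int_{\mathbb{R}^3}|p-q|^{-1}(1+|q|)^{-\beta+\frac12}dq\lesssim1$; your proposed $q$-integral $\int\sqrt{J_{\mathfrak{c}}(q)}(1+|q|)^{3}w_\beta(q)^{-1}dq$ converges for every $\beta\ge0$, so it cannot be the place where the hypothesis is used. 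Without some version of this Carleman/dual representation your plan cannot produce \eqref{2.18-10}.
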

\begin{proof}
		It follows from \cite[Lemma 6.1]{Wang-Xiao} that \eqref{2.17-10} holds and $|p|\lesssim |p'|+|q'|$. Hence we mainly focus on the proof of \eqref{2.18-10}. Using the fact that  $$d\omega=\frac{2\sqrt{\mathfrak{s}}}{g}\delta^{(4)}(p^{\mu}+q^{\mu}-p'^{\mu}-q'^{\mu})\frac{dp'}{p'^{0}}\frac{dq'}{q'^{0}},$$ 
		one has
	\begin{align}\label{2.19-10}
		&\left|w_{\beta}(p)\mathbf{\Gamma}_{\mathfrak{c}}^{+}(f_1, f_2)(p)\right|\nonumber\\
		&=\left|w_{\beta}(p) \int_{\mathbb{R}^{3}}\int_{\mathbb{S}^{2}} v_{\o} \sqrt{J_{\mathfrak{c}}(q)} f_1(p') f_2(q') d \omega d q\right| \nonumber\\
		&=\left|\frac{w_{\beta}(p)}{p^0} \int_{\mathbb{R}^{3}}\int_{\mathbb{R}^{3}}\int_{\mathbb{R}^{3}}\frac{\mathfrak{c}}{2}\mathfrak{s}\delta^{(4)}(p^{\mu}+q^{\mu}-p'^{\mu}-q'^{\mu})\sqrt{J_{\mathfrak{c}}(q)} f_1(p') f_2(q') \frac{dq}{q^{0}}\frac{dp'}{p'^{0}}\frac{dq'}{q'^{0}}\right| \nonumber\\
		&\lesssim  \frac{\mathfrak{c}}{p^0} \int_{\mathbb{R}^{3}}\int_{\mathbb{R}^{3}}\int_{\mathbb{R}^{3}}\mathfrak{s}\delta^{(4)}(p^{\mu}+q^{\mu}-p'^{\mu}-q'^{\mu})\sqrt{J_{\mathfrak{c}}(q)}|w_{\beta}(q')f_2(q')| \cdot |f_1(p')| \frac{dq}{q^{0}}\frac{dp'}{p'^{0}}\frac{dq'}{q'^{0}} \nonumber\\
		&\qquad +\frac{\mathfrak{c}}{p^0} \int_{\mathbb{R}^{3}}\int_{\mathbb{R}^{3}}\int_{\mathbb{R}^{3}}\mathfrak{s}\delta^{(4)}(p^{\mu}+q^{\mu}-p'^{\mu}-q'^{\mu})\sqrt{J_{\mathfrak{c}}(q)}|w_{\beta}(p')f_1(p')| \cdot |f_2(q')| \frac{dq}{q^{0}}\frac{dp'}{p'^{0}}\frac{dq'}{q'^{0}}.
	\end{align}

	For the first term on the RHS of \eqref{2.19-10}, we exchange $p'$ and $q$ to get that
	\begin{align}\label{2.20-10}
		\frac{\mathfrak{c}}{p^0} \int_{\mathbb{R}^{3}}\int_{\mathbb{R}^{3}}\int_{\mathbb{R}^{3}}\bar{\mathfrak{s}}\delta^{(4)}(p^{\mu}+p'^{\mu}-q^{\mu}-q'^{\mu})\sqrt{J_{\mathfrak{c}}(p')}|w_{\beta}(q')f_2(q')| \cdot |f_1(q)| \frac{dq}{q^{0}}\frac{dp'}{p'^{0}}\frac{dq'}{q'^{0}},
	\end{align}
	where
	\begin{equation*}
     \bar{\mathfrak{s}}=\bar{g}^{2}+4 \mathfrak{c}^{2},\quad \bar{g}^{2}=g^{2}+\frac{1}{2}(p^{\mu}+q^{\mu}) \left(p_{\mu}+q_{\mu}-p'_{\mu}-q'_{\mu}\right).
	\end{equation*}

	For the second term on the RHS of \eqref{2.19-10}, we first exchange $p'$ and $q'$ and then exchange $p'$ and $q$ to get that
	\begin{align}\label{2.21-10}
		\frac{\mathfrak{c}}{p^0} \int_{\mathbb{R}^{3}}\int_{\mathbb{R}^{3}}\int_{\mathbb{R}^{3}}\bar{\mathfrak{s}}\delta^{(4)}(p^{\mu}+p'^{\mu}-q^{\mu}-q'^{\mu})\sqrt{J_{\mathfrak{c}}(p')}|w_{\beta}(q')f_1(q')| \cdot |f_2(q)| \frac{dq}{q^{0}}\frac{dp'}{p'^{0}}\frac{dq'}{q'^{0}}.
	\end{align}
	It follows from \eqref{2.19-10}--\eqref{2.21-10} that
	\begin{align}\label{2.22-10}
		\left|w_{\beta}(p)\mathbf{\Gamma}_{\mathfrak{c}}^{+}(f_1, f_2)(p)\right|\lesssim \left\| w_{\beta} f_1\right\|_{L^\infty} \left\| w_{\beta} f_2\right\|_{L^\infty} \int_{\mathbb{R}^{3}}\mathcal{H}(p,q)(1+|q|^2)^{-\frac{\beta}{2}}dq,
	\end{align}
	where
	\begin{align*} 
		\mathcal{H}(p,q):=\frac{\mathfrak{c}}{p^0q^0}\int_{\mathbb{R}^{3}}\int_{\mathbb{R}^{3}}\bar{\mathfrak{s}}\delta^{(4)}(p^{\mu}+p'^{\mu}-q^{\mu}-q'^{\mu})\sqrt{J_{\mathfrak{c}}(p')} \frac{dp'}{p'^{0}}\frac{dq'}{q'^{0}}.
	\end{align*}

    By similar arguments as in \cite{Wang}, one can show that
    	\begin{align}\label{2.24-10}
    	\mathcal{H}(p,q)&=\frac{\mathfrak{c}\pi \mathfrak{s}^{\frac{3}{2}}}{4gp^{0} q^{0}}\Big(\frac{1}{4\pi \mathfrak{c}K_2(\mathfrak{c}^2)}\Big)^{\frac{1}{2}}e^{-\frac{1}{4}\mathfrak{c}(q^0-p^0)}\int_{0}^{\infty} \frac{y\left[1+\sqrt{y^{2}+1}\right]}{\sqrt{1+y^{2}}} e^{-\frac{1}{2}\boldsymbol{\ell}\sqrt{1+y^2}} I_{0}\left(\frac{\boldsymbol{j}}{2} y\right)dy\nonumber\\
    	&\lesssim \frac{\mathfrak{c} \mathfrak{s}^{\frac{3}{2}}}{gp^{0} q^{0}}e^{\frac{\mathfrak{c}^2}{2}}e^{-\frac{\mathfrak{c}}{4}(q^0-p^0)}\int_{0}^{\infty} \frac{y\left[1+\sqrt{y^{2}+1}\right]}{\sqrt{1+y^{2}}} e^{-\frac{1}{2}\boldsymbol{\ell}\sqrt{1+y^2}} I_{0}\left(\frac{\boldsymbol{j}}{2} y\right)dy\nonumber\\
    	&\lesssim \frac{\mathfrak{c} \mathfrak{s}^{\frac{3}{2}}}{gp^{0} q^{0}}\frac{\boldsymbol{\ell}}{\boldsymbol{\ell}^2-\boldsymbol{j}^2}\exp{\Big(\frac{\mathfrak{c}^2}{2}-\frac{\mathfrak{c}}{4}(q^0-p^0)-\frac{1}{2}\sqrt{\boldsymbol{\ell}^2-\boldsymbol{j}^2}\Big)}\nonumber\\
    	&\lesssim \frac{\sqrt{\mathfrak{s}}(p^0+q^0)}{p^{0} q^{0}}\frac{1}{|p-q|}\exp{\Big(\frac{\mathfrak{c}^2}{2}-\frac{\mathfrak{c}}{4}(q^0-p^0)-\frac{1}{2}\sqrt{\boldsymbol{\ell}^2-\boldsymbol{j}^2}\Big)}.
    \end{align}
    A direct calculation shows that 
    $$
    \frac{\mathfrak{c}^2}{2}-\frac{\mathfrak{c}}{4}(q^0-p^0)-\frac{1}{2}\sqrt{\boldsymbol{\ell}^2-\boldsymbol{j}^2}\leq \frac{\mathfrak{c}^2}{2}+\frac{\mathfrak{c}}{4} \sqrt{|p-q|^2-g^2}-\frac{|p-q|}{2} \sqrt{\frac{\mathfrak{c}^2}{4}+\frac{\mathfrak{c}^4}{g^2}}:=\mathcal{A}.
    $$
    Set $\mathfrak{a}=|p-q|$, then $\mathfrak{a} \geq g$. We regard $\mathcal{A}$ as a function of $\mathfrak{a}$ with $g$ fixed, then
    $$
    \frac{d \mathcal{A}}{d \mathfrak{a}}=\frac{\mathfrak{c}}{4} \frac{\mathfrak{a}}{\sqrt{\mathfrak{a}^2-g^2}}-\frac{1}{2} \sqrt{\frac{\mathfrak{c}^2}{4}+\frac{\mathfrak{c}^4}{g^2}} .
    $$
    It is clear that $\mathcal{A}$ has a maximum point $\mathfrak{a}_0$ with
    $$
    \mathfrak{a}_0^2=\frac{g^4}{4 \mathfrak{c}^2}+g^2.
    $$
    Then one has
    $$
    \frac{\mathfrak{c}^2}{2}-\frac{\mathfrak{c}}{4}(q^0-p^0)-\frac{1}{2}\sqrt{\boldsymbol{\ell}^2-\boldsymbol{j}^2}\le \mathcal{A} \leq \frac{\mathfrak{c}^2}{2}+\frac{1}{8} g^2-\frac{1}{2}\left(\mathfrak{c}^2+\frac{g^2}{4}\right) =0,
    $$
    which, together with \eqref{2.24-10}, yields that
    \begin{align*}
    	\mathcal{H}(p,q)&\lesssim \frac{\sqrt{\mathfrak{s}}(p^0+q^0)}{p^{0} q^{0}}\frac{1}{|p-q|}\lesssim \frac{p^0+q^0}{\sqrt{p^{0} q^{0}}}\frac{1}{|p-q|}\nonumber\\
    	&\lesssim \frac{\sqrt{1+|p|}}{|p-q|}+\frac{\sqrt{1+|q|}}{|p-q|}.
    \end{align*}
    Hence, for $\beta>\frac{7}{2}$, one has 
    \begin{align}\label{2.25-10}
    	\int_{\mathbb{R}^{3}}\mathcal{H}(p,q)(1+|q|^2)^{-\frac{\beta}{2}}dq&\lesssim \sqrt{1+|p|}\int_{\mathbb{R}^{3}}\frac{1}{|p-q|}(1+|q|)^{-\beta}dq+\int_{\mathbb{R}^{3}}\frac{1}{|p-q|}(1+|q|)^{-\beta+\frac{1}{2}}dq\nonumber\\
    	&\lesssim 1.
    \end{align}
    Hence \eqref{2.18-10} follows from \eqref{2.22-10} and \eqref{2.25-10}. Therefore the proof is completed.
\end{proof}

\section{Global existence for the relativistic Boltzmann equation}

In this section, we shall establish the existence of global solutions for the relativistic Boltzmann equation \eqref{1.3-0}--\eqref{1.4-0}.


\subsection{Linear problem}

To solve the nonlinear problem, we first consider a linear problem. Recall \eqref{1.9-01} and denote
\begin{align*} 
\mathfrak{h}_{\mathfrak{c}}(t, x, p):=w_{\beta}(p) f_{\mathfrak{c}}(t, x, p).
\end{align*}
Multiplying \eqref{5.8} by $w_{\beta}$, one gets
\begin{align*} 
\partial_t\mathfrak{h}_{\mathfrak{c}}+\hat{p} \cdot \nabla_x \mathfrak{h}_{\mathfrak{c}}+\nu_{\mathfrak{c}}(p) \mathfrak{h}_{\mathfrak{c}}-\mathbf{K}_{\mathfrak{c}w} \mathfrak{h}_{\mathfrak{c}}=w_{\beta}\mathbf{\Gamma}_{\mathfrak{c}}\Big(\frac{\mathfrak{h}_{\mathfrak{c}}}{w_{\beta}}, \frac{\mathfrak{h}_{\mathfrak{c}}}{w_{\beta}}\Big),
\end{align*}
where
$$
\left(\mathbf{K}_{\mathfrak{c}w} \mathfrak{h}_{\mathfrak{c}}\right)(p):=w_{\beta}\Big(\mathbf{K}_{\mathfrak{c}} \frac{\mathfrak{h}_{\mathfrak{c}}}{w_{\beta}}\Big)(p).
$$

We first study the following linear inhomogeneous problem:
\begin{equation}\label{4.6}
	\left\{\begin{aligned}
		&\partial_{t} f_{\mathfrak{c}}+\hat{p} \cdot \nabla_{x} f_{\mathfrak{c}}+\mathbf{L}_{\mathfrak{c}} f_{\mathfrak{c}}=\mathcal{S}, \\
		&f_{\mathfrak{c}}(t, x, p)|_{t=0}=f_{0,\mathfrak{c}}(x, p),
	\end{aligned}\right.
\end{equation}
where $\mathcal{S}$ is a given function. Then the equation of $\mathfrak{h}_{\mathfrak{c}}=w_{\beta}f_{\mathfrak{c}}$ becomes
\begin{equation}\label{4.7}
	\left\{\begin{aligned}
		&\partial_{t} \mathfrak{h}_{\mathfrak{c}}+\hat{p} \cdot \nabla_{x} \mathfrak{h}_{\mathfrak{c}}+\nu_{\mathfrak{c}}(p) \mathfrak{h}_{\mathfrak{c}}=\mathbf{K}_{\mathfrak{c}w} \mathfrak{h}_{\mathfrak{c}}+w_{\beta} \mathcal{S}, \\
		&\mathfrak{h}_{\mathfrak{c}}(t,x,p)\mid_{t=0}=\mathfrak{h}_{0,\mathfrak{c}}(x,p).
	\end{aligned}\right.
\end{equation}
The mild form of \eqref{4.7} can be written as
\begin{align}\label{6.3}
	\mathfrak{h}_{\mathfrak{c}}(t, x, p)&= e^{-\nu_{\mathfrak{c}}(p) t} \mathfrak{h}_{0,\mathfrak{c}}(x-\hat{p} t, p)+\int_0^t e^{-\nu_{\mathfrak{c}}(p)(t-s)}\left(\mathbf{K}_{\mathfrak{c}w} \mathfrak{h}_{\mathfrak{c}}\right)(s, x-\hat{p}(t-s), p) d s \nonumber\\
	&\qquad +\int_0^t e^{-\nu_{\mathfrak{c}}(p)(t-s)}\left(w_{\beta}\mathcal{S}\right)(s, x-\hat{p}(t-s), p) d s\nonumber\\
	&:=\mathcal{I}_1+\mathcal{I}_2+\mathcal{I}_3.
\end{align}

\begin{Lemma}\label{lem4.3}
	Let $\mathfrak{h}_{\mathfrak{c}}(t, x, p)$ be the $L^{\infty}$ mild solution of the linear problem \eqref{4.7}. It holds that
	\begin{align}\label{4.10}
		|\mathfrak{h}_{\mathfrak{c}}(t, x, p)| &\le 
		 Ce^{-\frac{\nu_0}{2}t}\|\mathfrak{h}_{0,\mathfrak{c}}\|_{L^{\infty}}+Ce^{-\lambda_1t}\sup_{0 \le \tau \le t}\|e^{\lambda_1\tau}\nu_{\mathfrak{c}}^{-1}w_{\beta}\mathcal{S}(\tau)\|_{L^{\infty}}\nonumber\\
		&\quad +C_{N}e^{-\lambda_1t}\sup_{0 \le \tau \le t}\|e^{\lambda_1\tau}f_{\mathfrak{c}}(\tau)\|_{L^{2}}+C\max\Big\{\frac{1}{\mathfrak{c}},\ \frac{1}{N}\Big\}e^{-\lambda_1t}\sup_{0 \le \tau \le t}
		\|e^{\lambda_1\tau}\mathfrak{h}_{\mathfrak{c}}(\tau)\|_{L^{\infty}},
	\end{align}
	where $\nu_0:=\inf_{p\in \mathbb{R}^3}\nu_{\mathfrak{c}}(p)>0$ and $0<\lambda_1<\frac{\nu_0}{2}$, and $N\geq 1$ is a large constant chosen later. Here all above constants are independent of $\mathfrak{\mathfrak{c}}$.
\end{Lemma}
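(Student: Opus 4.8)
The plan is to estimate the three Duhamel contributions $\mathcal I_1,\mathcal I_2,\mathcal I_3$ in the mild formula \eqref{6.3} separately, the substantial part being a double-Duhamel iteration on $\mathcal I_2$ combined with an $L^2$-in-$(x,p)$ trapping argument in the spirit of Guo's $L^2$--$L^\infty$ method. Since $\nu_{\mathfrak{c}}(p)\ge\nu_0:=\inf_p\nu_{\mathfrak{c}}(p)>0$ and $\nu_0>2\lambda_1$ by Lemma~\ref{lem2.3}(1), the free-transport term obeys $|\mathcal I_1|\le e^{-\nu_0 t}\|\mathfrak h_{0,\mathfrak{c}}\|_{L^\infty}\le Ce^{-\nu_0 t/2}\|\mathfrak h_{0,\mathfrak{c}}\|_{L^\infty}$. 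For $\mathcal I_3$, writing $w_\beta\mathcal S=\nu_{\mathfrak{c}}\cdot(\nu_{\mathfrak{c}}^{-1}w_\beta\mathcal S)$, bounding $|(\nu_{\mathfrak{c}}^{-1}w_\beta\mathcal S)(s)|\le e^{-\lambda_1 s}\sup_{0\le\tau\le t}\|e^{\lambda_1\tau}\nu_{\mathfrak{c}}^{-1}w_\beta\mathcal S(\tau)\|_{L^\infty}$ and using $\nu_{\mathfrak{c}}(p)\int_0^t e^{-\nu_{\mathfrak{c}}(p)(t-s)}e^{-\lambda_1 s}\,ds\le\tfrac{\nu_{\mathfrak{c}}(p)}{\nu_{\mathfrak{c}}(p)-\lambda_1}e^{-\lambda_1 t}\le 2e^{-\lambda_1 t}$ produces the second term of \eqref{4.10}.

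Next I would split $\mathcal I_2$ according to $|p|\ge N$ and $|p|\le N$. On $|p|\ge N$, Lemma~\ref{lem2.3}(2)--(3) gives $\int_{\mathbb R^3}|k_{\mathfrak{c}w}(p,q)|\,dq\lesssim\max\{1/N,1/\mathfrak{c}\}$, so this piece already has the size of the last term in \eqref{4.10}. On $|p|\le N$, substitute \eqref{6.3} once more for $\mathfrak h_{\mathfrak{c}}(s,\cdot,q)$ inside $\mathbf K_{\mathfrak{c}w}$; this splits $\mathcal I_2$ into an initial-data part (bounded by $Ce^{-\nu_0 t/2}\|\mathfrak h_{0,\mathfrak{c}}\|_{L^\infty}$, using $\int|k_{\mathfrak{c}w}|\,dq\lesssim1$), a source part (handled exactly as $\mathcal I_3$), and the iterated-kernel term
\[
\mathcal I_2^{\,b}=\int_0^t\!\!\int_0^s e^{-\nu_{\mathfrak{c}}(p)(t-s)-\nu_{\mathfrak{c}}(q)(s-s')}\!\iint k_{\mathfrak{c}w}(p,q)\,k_{\mathfrak{c}w}(q,q')\,\mathfrak h_{\mathfrak{c}}\!\big(s',x-\hat p(t-s)-\hat q(s-s'),q'\big)\,dq'\,dq\,ds'\,ds.
\]
For $\mathcal I_2^{\,b}$ there are three regimes: (i) $|q|\ge 2N$ (hence $|p-q|\ge N$) or $|q|\le 2N,\ |q'|\ge 3N$, where by Lemma~\ref{lem2.3}(3) the relevant $L^1_q$ (resp. $L^1_{q'}$) kernel mass is $\lesssim e^{-N/32}$, again of small-coefficient type; (ii) $s-s'\le\kappa$ with $\kappa:=1/N$, where bounding the kernel factors by $C\int|k_{\mathfrak{c}w}(p,q)|\,dq\le C$ and the last factor by $\|\mathfrak h_{\mathfrak{c}}(s')\|_{L^\infty}$ gives a contribution $\lesssim\kappa\,e^{-\lambda_1 t}\sup_{0\le\tau\le t}\|e^{\lambda_1\tau}\mathfrak h_{\mathfrak{c}}(\tau)\|_{L^\infty}$; and (iii) the remaining range $|p|\le N,\ |q|\le 2N,\ |q'|\le 3N,\ \kappa\le s-s'\le s\le t$.

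On regime (iii) I would first truncate $k_{\mathfrak{c}w}$, discarding the integrable $|p-q|^{-1}$-type singularity near $p=q$ and the large-$|p-q|$ tail — both of $L^1_q$ mass $o_N(1)$ uniformly in $\mathfrak{c}$ by Lemma~\ref{lem2.3}(3), hence of small-coefficient type — so that the remaining kernels are bounded on a compact set. Then, changing variables $q\mapsto y:=x-\hat p(t-s)-\hat q(s-s')$ and applying Cauchy--Schwarz in $(y,q')$, one extracts $\|\mathfrak h_{\mathfrak{c}}(s')\|_{L^2(\mathbb T^3\times\{|q'|\le 3N\})}\le C_N\|f_{\mathfrak{c}}(s')\|_{L^2}$, the weight $w_\beta$ being bounded for $|q'|\le 3N$. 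Finally $\int_0^t\!\int_0^s e^{-\nu_{\mathfrak{c}}(p)(t-s)-\nu_{\mathfrak{c}}(q)(s-s')}e^{-\lambda_1 s'}\,ds'\,ds\le Ce^{-\lambda_1 t}$ produces the term $C_N e^{-\lambda_1 t}\sup_\tau\|e^{\lambda_1\tau}f_{\mathfrak{c}}(\tau)\|_{L^2}$. Collecting all pieces and fixing $N$ so large that $e^{-N/32}+1/N\le\max\{1/N,1/\mathfrak{c}\}$ (up to an absolute factor) yields \eqref{4.10}.

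I expect the main obstacle to be the change of variables in step (iii), for two reasons. First, one needs a lower bound on $|\det\partial_q y|=|\det\partial_q\hat q|\,(s-s')^3$ that is uniform in $\mathfrak{c}$: writing $\hat q=\nabla_q(\mathfrak{c}q^0)$ with $\mathfrak{c}q^0=\mathfrak{c}\sqrt{\mathfrak{c}^2+|q|^2}$ strictly convex, $q\mapsto\hat q$ is a global diffeomorphism onto $\{|\hat q|<\mathfrak{c}\}$ and $|\det\partial_q\hat q|=(1+|q|^2/\mathfrak{c}^2)^{-5/2}\ge(1+4N^2)^{-5/2}=:c_N>0$ for $|q|\le 2N$ and \emph{all} $\mathfrak{c}\ge1$, so $|\det\partial_q y|\ge c_N\kappa^3>0$. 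Second, on the torus the map $q\mapsto y\bmod 1$ need not be injective; but the factor $(s-s')^{-3}$ from the Jacobian is compensated by the at most $(1+2N(s-s'))^3$-fold overlap of the $y$-image with a fundamental domain, and since $s-s'\ge\kappa=1/N$ the net constant depends only on $N$ (not on $t$ or $\mathfrak{c}$), which is precisely what lets $\|f_{\mathfrak{c}}(s')\|_{L^2}$ be pulled out cleanly. The kernel truncations preceding the Cauchy--Schwarz step are routine given the explicit pointwise form of $k_{\mathfrak{c}1},k_{\mathfrak{c}2}$ and Lemma~\ref{lem2.3}(3).
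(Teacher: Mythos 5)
Your proposal is correct and follows essentially the same route as the paper: the same Duhamel iteration of \eqref{6.3} inside $\mathcal I_2$, the same four-regime splitting (large $|p|$, exponentially small kernel mass via Lemma \ref{lem2.3}, short time $s-s_1\le 1/N$, and the main bounded regime), and the same Cauchy--Schwarz plus change of variables $\tilde p\mapsto \tilde y_{\mathfrak c}$ with Jacobian $\mathfrak c^5(s-s_1)^3/(\tilde p^0)^5$ bounded below uniformly in $\mathfrak c$. The only cosmetic differences are that the paper invokes the $L^2$ kernel bound of Lemma \ref{lem2.3} in the Cauchy--Schwarz step rather than your preliminary kernel truncation, and it absorbs the torus wrap-around multiplicity you discuss explicitly into the factor $\big[(s-s_1)^{3/2}+1\big]$.
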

\begin{proof}
	Recall the mild form \eqref{6.3} of $\mathfrak{h}_{\mathfrak{c}}(t, x, p)$.  For $\mathcal{I}_1$, due to $\nu_{\mathfrak{c}}(p)\geq \nu_0$, one has
	\begin{align}\label{3.7-0}
		|\mathcal{I}_1|\leq Ce^{-\nu_0t}\|\mathfrak{h}_{0,\mathfrak{c}}\|_{L^\infty}.
	\end{align}
For $\mathcal{I}_3$, we have
\begin{align}\label{3.8-0}
	|\mathcal{I}_3|&\leq C\sup\limits_{0\leq\tau\leq t}\|e^{\lambda_1\tau}\nu_{\mathfrak{c}}^{-1}w_{\beta}\mathcal{S}(\tau)\|_{L^\infty}\cdot\int_0^te^{-\nu_{\mathfrak{c}}(p)(t-s)}e^{-\lambda_1s}\nu_{\mathfrak{c}}(p)ds\nonumber\\
	&\leq Ce^{-\lambda_1t}\sup\limits_{0\leq\tau\leq t}\|e^{\lambda_1\tau}\nu_{\mathfrak{c}}^{-1}w_{\beta}\mathcal{S}(\tau)\|_{L^\infty}.
\end{align}

  To estimate $\mathcal{I}_2$, we set $y_{\mathfrak{c}}:=x-\hat{p}(t-s)$ and substitute \eqref{6.3} into $\mathcal{I}_2$ to get
\begin{align}\label{3.9-0}
	\mathcal{I}_2&=\int_0^te^{-\nu_{\mathfrak{c}}(p)(t-s)}ds\int_{\R^3}k_{\mathfrak{c}w}(p,\tilde{p})\mathfrak{h}_{\mathfrak{c}}(s,y_{\mathfrak{c}},\tilde{p})d\tilde{p}\nonumber\\
	&=\int_0^{t}e^{-\nu_{\mathfrak{c}}(p)(t-s)}ds\int_{\mathbb{R}^3}k_{\mathfrak{c}w}(p,\tilde{p})e^{-\nu_{\mathfrak{c}}(\tilde{p})s}\mathfrak{h}_{0,\mathfrak{c}}(y_{\mathfrak{c}}-\hat{\tilde{p}}s,\tilde{p})d\tilde{p}\nonumber\\
	&\quad +\int_0^te^{-\nu_{\mathfrak{c}}(p)(t-s)}ds\int_{\R^3}k_{\mathfrak{c}w}(p,\tilde{p})d\tilde{p}\int_0^s e^{-\nu_{\mathfrak{c}}(\tilde{p})(s-s_1)}w_{\beta}\mathcal{S}(s_1,y_{\mathfrak{c}}-\hat{\tilde{p}}(s-s_1),\tilde{p})ds_1\nonumber\\
	&\quad +\int_0^te^{-\nu_{\mathfrak{c}}(p)(t-s)}ds\int_{\R^3}k_{\mathfrak{c}w}(p,\tilde{p})d\tilde{p} \int_0^se^{-\nu_{\mathfrak{c}}(\tilde{p})(s-s_1)}(\mathbf{K}_{\mathfrak{c}w}\mathfrak{h}_{\mathfrak{c}})(s_1,y_{\mathfrak{c}}-\hat{\tilde{p}}(s-s_1),\tilde{p})ds_1\nonumber\\
	&:=\mathcal{I}_{2,1}+\mathcal{I}_{2,2}+\mathcal{I}_{2,3}.
\end{align}
It follows from Lemma \ref{lem2.3} that 
\begin{align}\label{3.10-0}
	\int_{\R^3}|k_{\mathfrak{c}w}(p,\tilde{p})|d\tilde{p}\leq C\max\Big\{\f{1}{1+|p|},\f{1}{\fc}\Big\},
\end{align}
then one has
\begin{align}\label{3.11-0}
	|\mathcal{I}_{2,1}|&\leq C\|\mathfrak{h}_{0,\mathfrak{c}}\|_{L^\infty}\cdot\int_0^te^{-\nu_0(t-s)}ds\int_{\mathbb{R}^3}|k_{\mathfrak{c}w}(p,\tilde{p})|e^{-\nu_0s}d\tilde{p}\nonumber\\
	&\leq Ce^{-\f{\nu_0}{2}t}\|\mathfrak{h}_{0,\mathfrak{c}}\|_{L^{\infty}}.
\end{align}

Using \eqref{3.10-0} again, one has
\begin{align}\label{3.12-0}
	|\mathcal{I}_{2,2}|&\leq C\sup\limits_{0\leq\tau\leq t}\|e^{\la_1\tau}\nu_{\mathfrak{c}}^{-1}w_{\beta}\mathcal{S}(\tau)\|_{L^\infty}\cdot\int_0^{t}e^{-\nu_{\mathfrak{c}}(p)(t-s)}e^{-\lambda_1 s}ds\int_{\mathbb{R}^3}|k_{\mathfrak{c}w}(p,\tilde{p})|d\tilde{p}\nonumber\\
	&\qquad \times \int_0^se^{-\nu_{\mathfrak{c}}(\tilde{p})(s-s_1)}e^{\lambda_1(s-s_1)}\nu_{\mathfrak{c}}(\tilde{p})ds_1\nonumber\\
	&\leq Ce^{-\la_1t}\sup\limits_{0\leq\tau\leq t}\|e^{\la_1\tau}\nu_{\mathfrak{c}}^{-1}w_{\beta}\mathcal{S}(\tau)\|_{L^\infty}.
\end{align}
%

Now we focus on the estimate of $\mathcal{I}_{2,3}$. Denote $\tilde{y}_{\mathfrak{c}}:=y_{\mathfrak{c}}-\hat{\tilde{p}}(s-s_1)$, then one gets
\begin{align}\label{3.13-0}
	|\mathcal{I}_{2,3}|&\le \int_0^{t}e^{-\nu_{\mathfrak{c}}(p)(t-s)}ds\int_0^se^{-\nu_{\mathfrak{c}}(\tilde{p})(s-s_1)}ds_1\nonumber\\
	&\qquad \times\iint_{\mathbb{R}^3\times\mathbb{R}^3 }|k_{\mathfrak{c}w}(p,\tilde{p})k_{\mathfrak{c}w}(\tilde{p},\tilde{p}')|\cdot|\mathfrak{h}_{\mathfrak{c}}(s_1,\tilde{y}_{\mathfrak{c}},\tilde{p}')|d\tilde{p}'d\tilde{p}.
\end{align}
We divide the estimate of \eqref{3.13-0} into four cases.\\
{\it Case 1:} $|p|\ge N$. Using \eqref{3.10-0} and \eqref{3.13-0}, we have
\begin{align}\label{3.14-0} 
	|\mathcal{I}_{2,3}|&\leq C\max\Big\{\f{1}{\fc},\f{1}{N}\Big\}\sup\limits_{0\leq\tau\leq t}\|e^{\la_1\tau}\mathfrak{h}_{\mathfrak{c}}(\tau)\|_{L^\infty}\cdot\int_0^te^{-\nu_{\mathfrak{c}}(p)(t-s)}ds\int_0^se^{-\nu_{\mathfrak{c}}(\tilde{p})(s-s_1)}e^{-\la_1s_1}ds_1\nonumber\\
	&\leq C\max\Big\{\f{1}{\fc},\f{1}{N}\Big\}e^{-\la_1t}\sup\limits_{0\leq\tau\leq t}\|e^{\la_1\tau}\mathfrak{h}_{\mathfrak{c}}(\tau)\|_{L^\infty}.
\end{align}
{\it Case 2:} $|p|\le N$, $|\tilde{p}|\ge 2N$, $\tilde{p}'\in \mathbb{R}^3$, or $|p|\le N$, $|\tilde{p}|\le 2N$, $\tilde{p}'\ge 3N$. Using Lemma \ref{lem2.3}, one of the following holds
\begin{align*}
	&\int_{|\tilde{p}|\ge 2N}|k_{\mathfrak{c}w}(p,\tilde{p})|d\tilde{p}\leq \int_{|\tilde{p}|\ge 2N}|k_{\mathfrak{c}w}(p,\tilde{p})|e^{\f{1}{32}|p-\tilde{p}|}d\tilde{p}\cdot e^{-\f{1}{32}N}\leq C\cdot e^{-\f{1}{32}N} ,
\end{align*}
or
\begin{align*}
	&\int_{|\tilde{p}'|\ge 3N}|k_{\mathfrak{c}w}(\tilde{p},\tilde{p}')|d\tilde{p}'\leq \int_{|\tilde{p}'|\ge 3N}|k_{\mathfrak{c}w}(\tilde{p},\tilde{p}')|e^{\f{1}{32}|\tilde{p}'-\tilde{p}|}d\tilde{p}'\cdot e^{-\f{1}{32}N}\leq C\cdot e^{-\f{1}{32}N}.
\end{align*}
Then it is clear that
\begin{align}\label{3.15-0}
	&\int_0^{t}e^{-\nu_{\mathfrak{c}}(p)(t-s)}ds\int_0^se^{-\nu_{\mathfrak{c}}(\tilde{p})(s-s_1)}ds_1\Big\{\int_{|\tilde{p}|\ge 2N}\int_{\mathbb{R}^3}+\int_{|\tilde{p}|\le 2N}\int_{|\tilde{p}'|\ge 3N}\Big\}\Big\{\cdot\Big\}d\tilde{p}'d\tilde{p}\nonumber\\
	&\le Ce^{-\f{1}{32}N}e^{-\la_1t}\sup\limits_{0\leq\tau\leq t}\|e^{\la_1\tau}\mathfrak{h}_{\mathfrak{c}}(\tau)\|_{L^\infty}.
\end{align}
{\it Case 3:} $s-\frac{1}{N}\le s_1\le s$ and $|p|\le N$, $|\tilde{p}|\le 2N$, $\tilde{p}'\le 3N$. It is easy to see
\begin{align}\label{3.16-0}
	&\int_0^{t}e^{-\nu_{\mathfrak{c}}(p)(t-s)}ds\int_{s-\frac{1}{N}}^se^{-\nu_{\mathfrak{c}}(\tilde{p})(s-s_1)}ds_1\int_{|\tilde{p}|\le 2N}\int_{|\tilde{p}'|\le 3N}\Big\{\cdot\Big\}d\tilde{p}'d\tilde{p}\nonumber\\
	&\le  \frac{C}{N}e^{-\lambda_1 t} \sup\limits_{0\leq\tau\leq t}\|e^{\la_1\tau}\mathfrak{h}_{\mathfrak{c}}(\tau)\|_{L^{\infty}}.
\end{align}
{\it Case 4:} $s-s_1\ge \frac{1}{N}$ and $|p|\le N$, $|\tilde{p}|\le 2N$, $\tilde{p}'\le 3N$. It holds 
\begin{align}\label{3.17-0}
	&\int_{|\tilde{p}|\le 2N}\int_{|\tilde{p}'|\le 3N} |k_{\mathfrak{c}w}(p,\tilde{p})k_{\mathfrak{c}w}(\tilde{p},\tilde{p}')|\cdot|\mathfrak{h}_{\mathfrak{c}}(s_1,\tilde{y}_{\mathfrak{c}},\tilde{p}')|d\tilde{p}'d\tilde{p}\nonumber\\
	&\le C_Ne^{-\lambda_1 s_1}\int_{|\tilde{p}|\le 2N}\int_{|\tilde{p}'|\le 3N} |k_{\mathfrak{c}w}(p,\tilde{p})k_{\mathfrak{c}w}(\tilde{p},\tilde{p}')|\cdot|e^{\lambda_1 s_1}f_{\mathfrak{c}}(s_1,\tilde{y}_{\mathfrak{c}},\tilde{p}')|d\tilde{p}'d\tilde{p}\nonumber\\
	&\le C_Ne^{-\lambda_1 s_1}\Big(\int_{|\tilde{p}|\le 2N}\int_{|\tilde{p}'|\le 3N} k^2_{\mathfrak{c}w}(p,\tilde{p})k^2_{\mathfrak{c}w}(\tilde{p},\tilde{p}')d\tilde{p}'d\tilde{p}\Big)^{\frac{1}{2}}\nonumber\\
	&\quad \times \Big(\int_{|\tilde{p}|\le 2N}\int_{|\tilde{p}'|\le 3N}|e^{\lambda_1 s_1}f_{\mathfrak{c}}(s_1,\tilde{y}_{\mathfrak{c}},\tilde{p}')|^2d\tilde{p}'d\tilde{p}\Big)^{\frac{1}{2}}\nonumber\\
	&\le C_N\big[(s-s_1)^{\frac{3}{2}}+1\big]e^{-\lambda_1 s_1}\Big(\int_{\mathbb{T}^3}\int_{\mathbb{R}^3} |e^{\lambda_1 s_1}f_{\mathfrak{c}}(s_1,\tilde{y}_{\mathfrak{c}},\tilde{p}')|^2d\tilde{y}_{\mathfrak{c}}d\tilde{p}'\Big)^{\frac{1}{2}}\nonumber\\
	&\le  C_N\big[(s-s_1)^{\frac{3}{2}}+1\big]e^{-\lambda_1 s_1} \sup\limits_{0\leq\tau\leq t}\|e^{\la_1\tau}f_{\mathfrak{c}}(\tau)\|_{L^2},
\end{align}
where we have made a change of variables $\tilde{p}\mapsto \tilde{y}_{\mathfrak{c}}$ with
\begin{align*}
	\Big|\text{det}\Big(\f{\partial \tilde{y}_{\mathfrak{c}}}{\partial \tilde{p}}\Big)\Big|=\f{\fc^5(s-s_1)^3}{(p^{\prime 0})^5}\geq \f{\fc^5N^{-3}}{(\fc^2+4N^2)^{\f52}}\geq \f{N^{-3}}{(1+4N^2)^{\f52}}.
\end{align*}
It follows that
\begin{align}\label{3.18-0}
	&\int_0^{t}e^{-\nu_{\mathfrak{c}}(p)(t-s)}ds\int_{0}^{s-\frac{1}{N}}e^{-\nu_{\mathfrak{c}}(\tilde{p})(s-s_1)}ds_1\int_{|\tilde{p}|\le 2N}\int_{|\tilde{p}'|\le 3N}\Big\{\cdot\Big\}d\tilde{p}'d\tilde{p}\nonumber\\
	&\le C_N\sup\limits_{0\leq\tau\leq t}\|e^{\la_1\tau}f_{\mathfrak{c}}(\tau)\|_{L^2}\int_0^{t}e^{-\nu_{\mathfrak{c}}(p)(t-s)}ds\int_{0}^{s-\frac{1}{N}}e^{-\nu_{\mathfrak{c}}(\tilde{p})(s-s_1)}e^{-\lambda_1 s_1}\big[(s-s_1)^{\frac{3}{2}}+1\big]ds_1\nonumber\\
	&\le C_Ne^{-\lambda_1 t}\sup\limits_{0\leq\tau\leq t}\|e^{\la_1\tau}f_{\mathfrak{c}}(\tau)\|_{L^2}.
\end{align}
Combining \eqref{3.13-0}--\eqref{3.18-0}, we obtain
\begin{align*} 
	|\mathcal{I}_{2,3}|\le C\max\Big\{\f{1}{\fc},\f{1}{N}\Big\}e^{-\la_1t}\sup\limits_{0\leq\tau\leq t}\|e^{\la_1\tau}\mathfrak{h}_{\mathfrak{c}}(\tau)\|_{L^\infty}+ C_Ne^{-\lambda_1 t}\sup\limits_{0\leq\tau\leq t}\|e^{\la_1\tau}f_{\mathfrak{c}}(\tau)\|_{L^2},
\end{align*}
which, together with \eqref{3.9-0}, \eqref{3.11-0} and \eqref{3.12-0}, yields that
\begin{align}\label{3.20-0}
	|\mathcal{I}_2|&\le Ce^{-\la_1t}\sup\limits_{0\leq\tau\leq t}\|e^{\la_1\tau}\nu_{\mathfrak{c}}^{-1}w_{\beta}\mathcal{S}(\tau)\|_{L^\infty}+C\max\Big\{\f{1}{\fc},\f{1}{N}\Big\}e^{-\la_1t}\sup\limits_{0\leq\tau\leq t}\|e^{\la_1\tau}\mathfrak{h}_{\mathfrak{c}}(\tau)\|_{L^\infty}\nonumber\\
	&\quad +Ce^{-\f{\nu_0}{2}t}\|\mathfrak{h}_{0,\mathfrak{c}}\|_{L^{\infty}}+ C_Ne^{-\lambda_1 t}\sup\limits_{0\leq\tau\leq t}\|e^{\la_1\tau}f_{\mathfrak{c}}(\tau)\|_{L^2}.
\end{align}
Hence \eqref{4.10} follows from \eqref{6.3}, \eqref{3.7-0}, \eqref{3.8-0} and \eqref{3.20-0}. Therefore the proof of Lemma \ref{lem4.3} is completed.
\end{proof}

\begin{Proposition}\label{prop3.2}
	Let $f_{\mathfrak{c}}$ be a solution of \eqref{4.6} under the condition that $f_{0,\mathfrak{c}}$ satisfies \eqref{1.8-10} with $(M_{\mathfrak{c}},\mathbf{J}_{\mathfrak{c}},E_{\mathfrak{c}})=(0,\mathbf{0},0)$ and 
	\begin{align}\label{3.21-0}
		\iint_{\mathbb{T}^3\times\mathbb{R}^3} \mathcal{S}(t,x,p) \sqrt{J_{\mathfrak{c}}(p)}
		\begin{pmatrix}
		1  \\  p \\ p^0
		\end{pmatrix}
		dpdx=0.
	\end{align}
    Then there exists a function $G(t)$ such that, for all $0 \leq \tau \leq t$, $G(\tau) \lesssim\|f_{\mathfrak{c}}(\tau)\|_{L^2}^{2}$ and
	\begin{align}\label{4.28}
		\int_{s}^{t}\|\mathbf{P}_{\mathfrak{c}} f_{\mathfrak{c}}(\tau)\|_{\nu_{\mathfrak{c}}}^{2}d\tau &\lesssim
		\int_{s}^{t}\Big(\|(\mathbf{I}-\mathbf{P}_{\mathfrak{c}}) f_{\mathfrak{c}}(\tau)\|_{\nu_{\mathfrak{c}}}^{2}+\|\mathcal{S}(\tau)\|_{L^2}^{2}\Big)d\tau+|G(t)-G(s)|.
	\end{align}
     Here all the constants are independent of $\mathfrak{\mathfrak{c}}$.
\end{Proposition}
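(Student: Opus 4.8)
The plan is to run the classical ``macroscopic'' test‑function argument in the spirit of \cite{Esposito,Guo4,Guo6}, adapted to the relativistic Maxwellian $J_\fc$ and to the streaming factor $\f{\fc}{p^0}$. Write $\mathbf{P}_\fc f_\fc=\big(a+b\cdot p+c\,\f{p^0-A_3}{\sqrt{A_2-A_3^2}}\big)\sqrt{J_\fc}$ with $a,b,c$ functions of $(\tau,x)$, so $\|\mathbf{P}_\fc f_\fc\|_{\nu_\fc}^2\cong\|a\|_{L^2}^2+\|b\|_{L^2}^2+\|c\|_{L^2}^2$ uniformly in $\fc$. First I would write down the local conservation laws: testing \eqref{4.6} against $e_0,e_i,e_4$ and a few extra moments $p_ip_j\sqrt{J_\fc}$, $p_ip^0\sqrt{J_\fc}$, and using $\langle\mathbf{L}_\fc f_\fc,e_k\rangle=0$, one gets a first‑order system in which $\partial_\tau$ of each macroscopic quantity equals $\nabla_x$ of other macroscopic quantities plus $\nabla_x$ of microscopic fluxes plus moments of $\mathcal{S}$. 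Averaging over $\mathbb{T}^3$ and using $(M_\fc,\mathbf{J}_\fc,E_\fc)=(0,\mathbf{0},0)$ together with \eqref{3.21-0} shows $\int_{\mathbb{T}^3}a\,dx=\int_{\mathbb{T}^3}b\,dx=\int_{\mathbb{T}^3}c\,dx=0$ for all $\tau$, which is exactly what makes the elliptic potentials below well defined on the torus and subject to the Poincar\'e inequality.

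Next, for the temperature coefficient $c$ I would take $\psi=\psi_c=(|p|^2-\beta_c)\sqrt{J_\fc}\,p\cdot\nabla_x\phi_c$, with $\phi_c$ the mean‑zero solution of $-\Delta_x\phi_c=c$ on $\mathbb{T}^3$ and $\beta_c$ fixed by \eqref{1.34}, multiply \eqref{4.6} by $\psi_c$, integrate over $[s,t]\times\mathbb{T}^3\times\mathbb{R}^3$, and integrate by parts in $\tau$ and $x$ to get
\[
\int_s^t\!\!\iint f_\fc\,\hat{p}\cdot\nabla_x\psi_c\,dpdxd\tau=\Big[\iint f_\fc\psi_c\,dpdx\Big]_s^t-\int_s^t\!\!\iint f_\fc\,\partial_\tau\psi_c\,dpdxd\tau+\int_s^t\!\!\iint(\mathbf{L}_\fc f_\fc)\psi_c\,dpdxd\tau-\int_s^t\!\!\iint\mathcal{S}\,\psi_c\,dpdxd\tau.
\]
Inserting $f_\fc=\mathbf{P}_\fc f_\fc+(\mathbf{I}-\mathbf{P}_\fc)f_\fc$ into the left side, the $a$‑contribution of the $\mathbf{P}_\fc f_\fc$ part vanishes \emph{precisely because of} \eqref{1.34} (this is where the $\f{\fc}{p^0}$‑weighting is used), the $b$‑contribution vanishes by oddness in $p$, and the $c$‑contribution, after diagonalizing and applying Lemma \ref{lem2.2}, equals a positive $\fc$‑uniform multiple of $\iint c\,\Delta_x\phi_c\,dx=-\|c(\tau)\|_{L^2}^2$; the multiple is $\f{\fc(A_{10}-\beta_cA_1)}{\sqrt{A_2-A_3^2}}$, which stays bounded below because the small numerator $\fc(A_{10}-\beta_cA_1)=O(\fc^{-1})$ is balanced by $1/\sqrt{A_2-A_3^2}=O(\fc)$ from the normalization of $e_4$. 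In $\int_s^t\iint\partial_\tau\psi_c\,\mathbf{P}_\fc f_\fc$ the $a$‑ and $c$‑contributions again vanish by oddness, while the $b$‑contribution carries the factor $A_{10}-\beta_cA_1=O(\fc^{-2})$ (read off from \eqref{2.2-30}--\eqref{2.3-30}), so it is bounded by $C\fc^{-2}\int_s^t\|\partial_\tau\nabla_x\phi_c\|_{L^2}\|b\|_{L^2}d\tau$, and since $\partial_\tau\nabla_x\phi_c$ is a Riesz transform of the flux in the conservation law for $c$, by $C\fc^{-2}\int_s^t\big(\|b\|_{L^2}^2+\|(\mathbf{I}-\mathbf{P}_\fc)f_\fc\|_{\nu_\fc}^2+\|\mathcal{S}\|_{L^2}^2\big)d\tau$. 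The estimates for $b$ and $a$ are analogous: for $b$ one uses $\psi^{i,j}_b=(p_i^2-\beta_b)\sqrt{J_\fc}\,\partial_j\phi^j_b$ with $-\Delta_x\phi^j_b=b_j$ and $\beta_b=K_3(\fc^2)/K_2(\fc^2)$, where the $b$‑contribution of the analogue of $\int\partial_\tau\psi_b\,\mathbf{P}_\fc f_\fc$ vanishes by oddness and the $a$‑contribution vanishes by the identity $\int_{\mathbb R^3}(p_i^2-\beta_b)J_\fc\,dp=A_1-\beta_b=0$ (the crucial cancellation of \eqref{1.38}), leaving only a $c$‑cross‑term absorbed via the $c$‑estimate and Young's inequality; for $a$ one chooses $\psi_a$ analogously with $\beta_a$ making the $c$‑contribution vanish.

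It then remains to handle the non‑coercive terms uniformly in $\fc$: the microscopic part of $\int f_\fc\hat{p}\cdot\nabla_x\psi$ is $\lesssim\|(\mathbf{I}-\mathbf{P}_\fc)f_\fc\|_{\nu_\fc}\|\nabla_x^2\phi\|_{L^2}\lesssim\|(\mathbf{I}-\mathbf{P}_\fc)f_\fc\|_{\nu_\fc}\|\text{coeff}\|_{L^2}$ by elliptic regularity; the linear term is $\lesssim\|(\mathbf{I}-\mathbf{P}_\fc)f_\fc\|_{\nu_\fc}\|\psi\|_{\nu_\fc}\lesssim\|(\mathbf{I}-\mathbf{P}_\fc)f_\fc\|_{\nu_\fc}\|\text{coeff}\|_{L^2}$, since $\mathbf{L}_\fc$ kills $\mathcal{N}_\fc$ and is $\nu_\fc$‑bounded; the source term is $\lesssim\|\mathcal{S}\|_{L^2}\|\text{coeff}\|_{L^2}$; and by Young's inequality all three are $\le\v\|\text{coeff}\|_{L^2}^2+C_\v(\|(\mathbf{I}-\mathbf{P}_\fc)f_\fc\|_{\nu_\fc}^2+\|\mathcal{S}\|_{L^2}^2)$, the $\v$‑terms being absorbed into the coercive quantity. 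The boundary terms $[\iint f_\fc\psi\,dpdx]_s^t$ have $\psi=(\text{polynomial})\sqrt{J_\fc}\,\nabla_x\phi$, so by Cauchy--Schwarz and $\|\nabla_x\phi(\tau)\|_{L^2}\lesssim\|\text{coeff}(\tau)\|_{L^2}\lesssim\|f_\fc(\tau)\|_{L^2}$ each is $\lesssim\|f_\fc(\tau)\|_{L^2}^2$; setting $G(\tau)$ to be the sum over the three test functions of these boundary quantities gives $G(\tau)\lesssim\|f_\fc(\tau)\|_{L^2}^2$ and the $|G(t)-G(s)|$ in \eqref{4.28}. Finally I would add the three estimates in the order $c$, $b$, $a$: the $c$‑estimate controls $\int_s^t\|c\|_{L^2}^2$ by $\fc^{-2}\int_s^t\|b\|_{L^2}^2$ plus microscopic/source/$G$; plugging this into the $b$‑ and then $a$‑estimates lets the $\fc^{-2}$‑couplings be absorbed for $\fc$ large, and with $\|\mathbf{P}_\fc f_\fc\|_{\nu_\fc}^2\cong\|a\|^2+\|b\|^2+\|c\|^2$ this yields \eqref{4.28}.

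The main obstacle is the $\fc$‑uniform bookkeeping. One must compute the relativistic moments $\beta_c$, $\beta_b$, $\fc(A_{10}-\beta_cA_1)$, $A_5-A_1A_3$, $A_2-A_3^2$ accurately \emph{in the parameter $\fc$} — which forces systematic use of Lemma \ref{lem2.2} and of the expansions \eqref{2.2-30}--\eqref{2.3-30} — and check that $A_1\cong1$ and $\fc^2(A_2-A_3^2)\cong1$ so that nothing degenerates; the factor $\f{\fc}{p^0}\in(0,1]$ in the streaming operator forces the orthogonality conditions defining $\beta_c,\beta_b$ to be taken with the weight $\f{\fc}{p^0}$, and the delicate point is the exact cancellation of the small $\fc(A_{10}-\beta_cA_1)$ against the large $1/\sqrt{A_2-A_3^2}$ which keeps the coercive constant away from $0$; and one must verify that the $a$‑contribution really vanishes in the $b$‑estimate ($A_1-\beta_b=0$) and that the remaining $c$‑cross‑term is absorbable, without which that estimate does not close. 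Everything else is the routine Poisson/Poincar\'e/Young manipulation.
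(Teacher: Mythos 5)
Your strategy coincides with the paper's own proof: the same weak formulation, the same mean-zero elliptic potentials $\phi_a,\phi_b^i,\phi_c$, the same test functions $\psi_c,\psi_b^{i,j},\psi_a$ with the $\frac{\mathfrak{c}}{p^0}$-weighted orthogonality conditions fixing $\beta_c,\beta_b,\beta_a$, and you correctly identify the delicate points: the surviving $b$-contribution of size $A_{10}-\beta_cA_1=O(\mathfrak{c}^{-2})$ in the $\partial_\tau\psi_c$ term, the identity $\int_{\mathbb{R}^3}(p_i^2-\beta_b)J_{\mathfrak{c}}\,dp=0$ killing the $a$-contribution in the $b$-estimate, and the balance of $\mathfrak{c}(A_{10}-\beta_cA_1)=O(\mathfrak{c}^{-1})$ against $(A_2-A_3^2)^{-1/2}=O(\mathfrak{c})$ producing the $\mathfrak{c}$-uniform coercive constant $\tfrac{5\sqrt6}{3}+O(\mathfrak{c}^{-2})$, all of which match the paper's computations. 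The treatment of the microscopic, linear, source and boundary ($G$) terms and the closing order $c\to b\to a$ are likewise the paper's.

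There is, however, one genuine gap: the $b$-estimate as you sketch it does not close. Testing with $\psi_b^{i,j}=(p_i^2-\beta_b)\sqrt{J_{\mathfrak{c}}}\,\partial_j\phi_b^j$ alone, the transport term yields, up to microscopic and $O(\mathfrak{c}^{-2})$ errors, only the quantities $\int_{\mathbb{T}^3}(\partial_{ij}\Delta^{-1}b_j)\,b_i\,dx$ (the paper's \eqref{3.46} and \eqref{eq2.40}). In Fourier variables these involve only $\frac{k_ik_j}{|k|^2}\hat b_j(k)\overline{\hat b_i(k)}$, so even knowing them for every pair $(i,j)$ you see nothing of the solenoidal part of $b$: for a single divergence-free mode with $\hat b(k)\perp k$ all of these quantities vanish while $\|b\|_{L^2}>0$. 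Hence no choice of $\varepsilon$ or largeness of $\mathfrak{c}$ lets you deduce $\int_s^t\|b\|_{L^2}^2\,d\tau\lesssim\cdots$ from this family alone. This is exactly why the paper introduces the second family of test functions $\psi=|p|^2p_ip_j\sqrt{J_{\mathfrak{c}}}\,\partial_j\phi_b^i$ with $i\neq j$, whose transport term (using $\mathfrak{c}A_8=7+O(\mathfrak{c}^{-2})$ from Lemma \ref{lem2.2}) generates the combination $\int_{\mathbb{T}^3}\{(\partial_{ij}\Delta^{-1}b_i)b_j+(\partial_{jj}\Delta^{-1}b_i)b_i\}\,dx$; combined with \eqref{eq2.40} this gives \eqref{3.53}--\eqref{3.54} and, via $b_i=-\sum_j\partial_{jj}\Delta^{-1}b_i$, the full bound \eqref{3.55} on $\int_s^t\|b\|_{L^2}^2\,d\tau$. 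Once you add this ingredient (or an equivalent device controlling the divergence-free component of $b$), the rest of your argument — closing $c$, then $b$, then $a$, absorbing the $(\varepsilon+\mathfrak{c}^{-2})$-couplings for $\mathfrak{c}$ large — goes through exactly as in the paper.
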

\begin{proof}
	By Green's identity, the weak formulation of \eqref{4.6} is 
	\begin{align}\label{3.23}
		&-\int_s^t\iint_{\mathbb{T}^3\times\R^3}(\hat{p}\cdot\nabla_x\psi)f_{\mathfrak{c}}dpdxd\tau-\int_s^t\iint_{\mathbb{T}^3\times\R^3}(\partial_{\tau} \psi)f_{\mathfrak{c}}dpdxd\tau\nonumber\\
		&=-\iint_{\mathbb{T}^3\times\R^3}\psi f_{\mathfrak{c}}(t)dpdx+\iint_{\mathbb{T}^3\times\R^3}\psi f_{\mathfrak{c}}(s)dpdx+\int_s^t\iint_{\mathbb{T}^3\times\R^3}(-\psi \mathbf{L}_{\mathfrak{c}}(\mathbf{I}-\mathbf{P}_{\mathfrak{c}})f_{\mathfrak{c}}+\psi \mathcal{S})dpdxd\tau.
	\end{align}
    Without loss of generality we only prove the case of $s=0$. We note that \eqref{1.8-10}, \eqref{4.6} and \eqref{3.21-0} are all invariant under a standard $t$-mollification for all $t>0$. The estimates in \textit{ Step 1} to \textit{Step 3} below are obtained via a $t$-mollification so that all the functions are smooth in $t$. For notational simplicity we do not write explicitly of the  regularization. 
    
    Denote $\mathbf{P}_{\mathfrak{c}} f_{\mathfrak{c}}= \Big\{a+b\cdot  p+c\f{p^0-A_3}{\sqrt{A_2-A_3^2}}\Big\}\sqrt{J_{\mathfrak{c}}(p)}$ on $\mathbb{T}^3\times \mathbb{R}^3$. 
    It follows from \eqref{1.8-10}, \eqref{4.6} and \eqref{3.21-0} that
    \begin{align*} 
    	\iint_{\mathbb{T}^3\times\mathbb{R}^3} f_{\mathfrak{c}}(t,x,p) \sqrt{J_{\mathfrak{c}}(p)}
    	\begin{pmatrix}
    		1  \\  p \\ p^0
    	\end{pmatrix}
    	dpdx=0,
    \end{align*}
    which implies that
    \begin{align*}
	\int_{\mathbb{T}^3}a(t,x)dx=\int_{\mathbb{T}^3}b_i(t,x)dx=\int_{\mathbb{T}^3}c(t,x)dx=0,\quad t\ge 0.
    \end{align*}
    We define $\phi_a(t,x)$, $\phi^i_b(t,x)$ and $\phi_c(t,x)$ as the solutions of 
    \begin{align}
	    &-\Delta\phi_a(t,x)=a(t,x),\quad \int_{\mathbb{T}^3}\phi_a(t,x)dx=0,\label{3.25-0}\\ &-\Delta\phi_b^i(t,x)=b_i(t,x),\quad  \int_{\mathbb{T}^3}\phi_b^i(t,x)dx=0,\label{3.26-0}\\
	    &-\Delta\phi_c(t,x)=c(t,x),\quad \int_{\mathbb{T}^3}\phi_c(t,x)dx=0.\label{3.27-0}
    \end{align}   
It follows from the standard elliptic estimate that
\begin{align*}
	\|\phi_a\|_{H^2}\leq C\|a\|_{L^2},\quad \|\phi_b^i\|_{H^2}\leq C\|b_i\|_{L^2},\quad \|\phi_c\|_{H^2}\leq C\|c\|_{L^2}.
\end{align*}
Now we divide the proof into four steps.\\
\noindent{\it Step 1. Estimate on $\nabla_x\partial_t\phi_a$.} Choosing the test function $\psi=\phi(x)\sqrt{J_{\mathfrak{c}}(p)}$ with $\phi(x)$ depending only on $x$ and substituting it into \eqref{3.23} (with time integration over $[t,t+\varepsilon]$), we have from Lemma \ref{lem2.2} that 
\begin{align*}
	&\int_{\mathbb{T}^3}[a(t+\v)-a(t)]\phi(x)dx=\int_{t}^{t+\v}\iint_{\mathbb{T}^3\times\R^3}\hat{p}\cdot\nabla_x\phi\sqrt{J_{\mathfrak{c}}(p)}\cdot (\FI-\mathbf{P}_{\mathfrak{c}})f_{\mathfrak{c}}dpdxd\tau\nonumber\\
	&\quad +\int_t^{t+\v}\int_{\mathbb{T}^3}b\cdot\nabla_x\phi dxd\tau  +\int_{t}^{t+\v}\iint_{\mathbb{T}^3\times\R^3}\phi\sqrt{J_{\mathfrak{c}}(p)}\mathcal{S}dpdxd\tau.
\end{align*}
Taking the difference quotient, we have that for all $t>0$,
\begin{align*}
	\int_{\mathbb{T}^3}\phi\partial_ta(t)dx&=\iint_{\mathbb{T}^3\times\R^3}\hat{p}\cdot\nabla_x\phi\sqrt{J_{\mathfrak{c}}(p)}\cdot (\FI-\mathbf{P}_{\mathfrak{c}})f_{\mathfrak{c}}(t)dpdx\nonumber\\
	&\quad +\int_{\mathbb{T}^3}b(t)\cdot\nabla_x\phi dx+\iint_{\mathbb{T}^3\times\R^3}\phi\sqrt{J_{\mathfrak{c}}(p)}\mathcal{S}(t)dpdx,
\end{align*}
which yields that
\begin{align*}
	\Big|\int_{\mathbb{T}^3}\phi \partial_ta(t)dx\Big|\lesssim\{\|b(t)\|_{L^2}+\|(\FI-\mathbf{P}_{\mathfrak{c}})f_{\mathfrak{c}}(t)\|_{L^2}+\|\mathcal{S}(t)\|_{L^2}\}\cdot \|\phi\|_{H^1}.
\end{align*}
Hence, for all $t>0$, one has
\begin{align*}
	\|\partial_ta(t)\|_{(H^1)^*}\lesssim\|b(t)\|_{L^2}+\|(\FI-\mathbf{P}_{\mathfrak{c}})f_{\mathfrak{c}}(t)\|_{L^2}+\|\mathcal{S}(t)\|_{L^2},
\end{align*}
where $(H^1)^*=\big(H^1(\mathbb{T}^3)\big)^{*}$ is the dual space of $H^1(\mathbb{T}^3)$ with respect to the dual pair $\langle A, B\rangle=\int_{\mathbb{T}^3} A(x) B(x) d x$, for $A \in H^{1}$ and $B \in\left(H^{1}\right)^{*}$. 

By the standard elliptic theory, we can solve $-\Delta \Phi_{a}=\partial_{t} a(t)$, $\int_{\mathbb{T}^3}\Phi_{a}dx=0$. Noting that $\Phi_{a}=-\Delta^{-1} \partial_{t} a=\partial_{t} \phi_{a}$ with $\phi_{a}$ defined in \eqref{3.25-0}, thus we have
\begin{align}\label{3.31}
	\left\|\nabla_{x} \partial_{t} \phi_{a}\right\|_{L^2}&\le \left\|\Delta^{-1} \partial_{t} a(t)\right\|_{H^{1}}=\left\|\Phi_{a}\right\|_{H^{1}} \lesssim\left\|\partial_{t} a(t)\right\|_{\left(H^{1}\right)^{*}}\nonumber\\ &\le C\Big(\|b(t)\|_{L^2}+\|(\mathbf{I}-\mathbf{P}_{\mathfrak{c}}) f_{\mathfrak{c}}(t)\|_{L^2}+\|\mathcal{S}(t)\|_{L^2}\Big),
\end{align}
where the constant $C>0$ is independent of the light speed $\mathfrak{c}$.\\

\noindent \textit{Step 2. Estimate on $\nabla_{x} \partial_{t} \phi_{b}^{i}$.} Choosing the test function $\psi=\phi(x) p_{i} \sqrt{J_{\mathfrak{c}}}$ with $\phi(x)$ depending only on $x$ and substituting it into \eqref{3.23}, we have from Lemma \ref{lem2.2} that
\begin{align*}
	&A_1\int_{\mathbb{T}^3}\left[b_{i}(t+\varepsilon)-b_{i}(t)\right] \phi dx=\int_{t}^{t+\varepsilon}\int_{\mathbb{T}^3}\partial_i\phi\Big[a+\frac{1}{\mathfrak{c}\sqrt{A_2-A^2_3}}c\Big]dxd\tau \nonumber\\
	&\qquad +\int_{t}^{t+\varepsilon} \iint_{\mathbb{T}^3 \times \mathbb{R}^3} p_i\sqrt{J_{\mathfrak{c}}} \hat{p}\cdot \nabla_x \phi(\mathbf{I}-\mathbf{P}_{\mathfrak{c}}) f_{\mathfrak{c}} dpdxd\tau
	+\int_{t}^{t+\varepsilon} \iint_{\mathbb{T}^3 \times \mathbb{R}^3} \phi p_{i} \mathcal{S} \sqrt{J_{\mathfrak{c}}}dpdxd\tau.
\end{align*}
Taking the difference quotient, we can obtain
\begin{align} \label{3.25-000}
	&A_1\int_{\mathbb{T}^3} \partial_{t} b_{i}(t) \phi dx=\int_{\mathbb{T}^3} \partial_{i} \phi\Big[a(t)+\frac{1}{\mathfrak{c}\sqrt{A_2-A^2_3}}c(t)\Big]dx\nonumber\\
	&\qquad +\iint_{\mathbb{T}^3 \times \mathbb{R}^3}p_i\sqrt{J_{\mathfrak{c}}}\hat{p}\cdot \nabla_x \phi(\mathbf{I}-\mathbf{P}_{\mathfrak{c}}) f_{\mathfrak{c}}(t)dpdx
	+\iint_{\mathbb{T}^3 \times \mathbb{R}^3} \phi p_{i} \mathcal{S}(t) \sqrt{J_{\mathfrak{c}}}dpdx,
 \end{align}
 which, together with $A_1=1+O(\mathfrak{c}^{-2})$ and $\mathfrak{c}\sqrt{A_2-A_3^2}=\frac{\sqrt{6}}{2}+O(\mathfrak{c}^{-2})$, yields that 
\begin{align*}
		\|\partial_t b_i(t)\|_{(H^1)^*} \lesssim \|(a,c)(t)\|_{L^2}+\|(\FI-\mathbf{P}_{\mathfrak{c}})f_{\mathfrak{c}}(t)\|_{L^2}+\|\mathcal{S}(t)\|_{L^2}.
\end{align*}

For fixed $t>0$, we choose $\phi=\Phi_{b}^{i}$ with $-\Delta \Phi_{b}^{i}=\partial_{t} b_{i}(t)$, $\int_{\mathbb{T}^3}\Phi_{b}^{i}dx=0$. It is clear that $\Phi_{b}^{i}=-\Delta^{-1} \partial_{t} b_{i}=\partial_{t} \phi_{b}^{i}$, where $\phi_{b}^{i}$ is defined in \eqref{3.26-0}. By similar arguments as in \textit{Step 1}, we have
\begin{align} \label{4.30}
	\left\|\nabla_{x} \partial_{t} \phi^i_{b}\right\|_{L^2}&\le \left\|\Delta^{-1} \partial_{t} b_i(t)\right\|_{H^{1}}=\left\|\Phi^i_{b}\right\|_{H^{1}} \lesssim\left\|\partial_{t} b_i(t)\right\|_{\left(H^{1}\right)^{*}}\nonumber\\ &\le C\Big(\|(a,c)(t)\|_{L^2}+\|(\FI-\mathbf{P}_{\mathfrak{c}})f_{\mathfrak{c}}(t)\|_{L^2}+\|\mathcal{S}(t)\|_{L^2}\Big),
\end{align}
where the constant $C>0$ is independent of the light speed $\mathfrak{c}$.\\

\noindent \textit {Step 3. Estimate on $\nabla_{x} \partial_{t} \phi_{c}$}. We choose the test function $\psi=\phi(x)\frac{p^0-A_3}{\sqrt{A_2-A_3^2}} \sqrt{J_{\mathfrak{c}}}$ in \eqref{3.23}, then it follows from Lemma \ref{lem2.2} that
\begin{align*}
	\int_{\mathbb{T}^3} \phi(x) [c(t+\varepsilon)-c(t)]dx&=
	\sum_{i=1}^3\int_{t}^{t+\varepsilon} \iint_{\mathbb{T}^3\times \mathbb{R}^3} \frac{\mathfrak{c}}{p^0}p_i^2 \frac{p^0-A_3}{\sqrt{A_2-A_3^2}}J_{\mathfrak{c}}(p)b_i \cdot \partial_{i}\phi dpdxd\tau\nonumber\\
	&\quad + \int_{t}^{t+\varepsilon} \iint_{\mathbb{T}^3 \times \mathbb{R}^3} \phi \mathcal{S}\frac{p^0-A_3}{\sqrt{A_2-A_3^2}}\sqrt{J_{\mathfrak{c}}}dpdxd\tau\nonumber\\
	&\quad +\int_{t}^{t+\varepsilon} \iint_{\mathbb{T}^3 \times \mathbb{R}^3}(\mathbf{I}-\mathbf{P}_{\mathfrak{c}}) f_{\mathfrak{c}} \nabla_x\phi\cdot p \sqrt{J_{\mathfrak{c}}}\frac{\mathfrak{c}}{p^0}\frac{p^0-A_3}{\sqrt{A_2-A_3^2}}dpdxd\tau.
\end{align*}
Taking the difference quotient, we obtain
\begin{align*}
	\int_{\mathbb{T}^3} \phi(x)\partial_t{c(t)}d x&=
	\sum_{i=1}^3\iint_{\mathbb{T}^3\times \mathbb{R}^3}\frac{\mathfrak{c}}{p^0}p_i^2 \frac{p^0-A_3}{\sqrt{A_2-A_3^2}}J_{\mathfrak{c}}(p)b_i(t) \cdot \partial_{i}\phi dpdx\nonumber\\
	&\quad +\iint_{\mathbb{T}^3 \times \mathbb{R}^3}(\mathbf{I}-\mathbf{P}_{\mathfrak{c}}) f_{\mathfrak{c}}(t) \nabla_x\phi\cdot p \sqrt{J_{\mathfrak{c}}}\frac{\mathfrak{c}}{p^0}\frac{p^0-A_3}{\sqrt{A_2-A_3^2}}dpdx\nonumber\\
	&\quad +\iint_{\mathbb{T}^3 \times \mathbb{R}^3} \phi \mathcal{S}(t)\frac{p^0-A_3}{\sqrt{A_2-A_3^2}}\sqrt{J_{\mathfrak{c}}}dpdx,
\end{align*}
which yields that 
\begin{align*}
	\|\partial_t c(t)\|_{(H^1)^*} \lesssim \|b(t)\|_{L^2}+\|(\FI-\mathbf{P}_{\mathfrak{c}})f_{\mathfrak{c}}(t)\|_{L^2}+\|\mathcal{S}(t)\|_{L^2}.
\end{align*}

For fixed $t>0$, we choose $\phi=\Phi_{c}$ with $-\Delta \Phi_{c}=\partial_{t} c(t)$, $\int_{\mathbb{T}^3}\Phi_{c}dx=0$. It is clear that $\Phi_{c}=-\Delta^{-1} \partial_{t} c(t)=\partial_{t} \phi_{c}$, where $\phi_{c}$ is defined in \eqref{3.27-0}. By similar arguments as in \textit{Step 1}, it holds that
\begin{align}\label{4.33}
	\left\|\nabla_{x} \partial_{t} \phi_{c}\right\|_{L^2}&\le \left\|\Delta^{-1} \partial_{t} c(t)\right\|_{H^{1}}=\left\|\Phi_{c}\right\|_{H^{1}} \lesssim\left\|\partial_{t} c(t)\right\|_{\left(H^{1}\right)^{*}}\nonumber\\ 
	&\le C\Big(\|b(t)\|_{L^2}+\|(\FI-\mathbf{P}_{\mathfrak{c}})f_{\mathfrak{c}}(t)\|_{L^2}+\|\mathcal{S}(t)\|_{L^2}\Big),
\end{align}
where the constant $C>0$ is independent of the light speed $\mathfrak{c}$.\\ 

\noindent{\it Step 4. Estimates on $a,b,c$.} Choose $G(\tau):=-\iint_{\mathbb{T}^3\times\R^3}\psi f_{\mathfrak{c}}(\tau)dpdx$. Due to the choices of $\psi$ in the following cases, we have $|G(\tau)|\lesssim\|f_{\mathfrak{c}}(\tau)\|_{L^2}^2$.

\noindent{\it Step 4.1. Estimate on $c$.} To estimate $c$, we  choose the test function
\begin{align*}
	\psi_c=(|p|^2-\beta_c)\sqrt{J_{\mathfrak{c}}(p)}p\cdot\nabla_x\phi_c,
\end{align*}
where $\phi_c$ is defined in \eqref{3.27-0} and $\beta_c$ satisfies
\begin{align}\label{3.36-0}
	\int_{\R^3}\frac{\mathfrak{c}}{p^0}(|p|^2-\beta_c)p_i^2J_{\mathfrak{c}}(p)dp=0.
\end{align}
A direct calculation shows that $\beta_c=\f{A_6}{A_4}=5\f{K_3(\fc^2)}{K_2(\fc^2)}=5+O(\fc^{-2})$. Since 
$$
\hat{p} \cdot \nabla_{x} \psi_{c}=\sum_{i, j=1}^{3}\frac{\mathfrak{c}}{p^0}\left(|p|^2-\beta_{c}\right)  \sqrt{J_{\mathfrak{c}}(p)} p_{i}p_{j}\partial_{i j} \phi_{c},
$$ 
and, in view of Lemma \ref{lem2.2} and \eqref{2.2-30}--\eqref{2.3-30},
\begin{align*}
	\int_{\mathbb{R}^3}\frac{\mathfrak{c}}{p^0}\left(|p|^2-\beta_{c}\right)\frac{p^0-A_3}{\sqrt{A_2-A^2_3}}p^2_{i}J_{\mathfrak{c}}(p)dp=\frac{5\sqrt{6}}{3}
	+O(\mathfrak{c}^{-2}),
\end{align*}
then the first term on the LHS of \eqref{3.23} is
\begin{align}\label{3.157}
	&-\int_0^t\iint_{\mathbb{T}^3\times\R^3}(\hat{p}\cdot\nabla_x\psi_c)f_{\mathfrak{c}} dpdxd\tau\nonumber\\
	&=-\int_0^t\iint_{\mathbb{T}^3 \times \mathbb{R}^3}\frac{\mathfrak{c}}{p^0}\left(|p|^2-\beta_{c}\right) J_{\mathfrak{c}}(p)\Big\{\sum_{i, j=1}^{3} p_{i} p_{j} \partial_{i j} \phi_{c}\Big\}\cdot \Big\{a+b\cdot p+c\f{p^0-A_3}{\sqrt{A_2-A_3^2}}\Big\}dpdx\nonumber\\
	&\quad -\int_0^t\iint_{\mathbb{T}^3 \times \mathbb{R}^3}\frac{\mathfrak{c}}{p^0}\left(|p|^2-\beta_{c}\right) \sqrt{J_{\mathfrak{c}}}\Big\{\sum_{i, j=1}^{3} p_{i} p_{j} \partial_{i j} \phi_{c}\Big\}\cdot (\FI-\mathbf{P}_{\mathfrak{c}})f_{\mathfrak{c}}dpdx\nonumber\\
	&=-\sum_{i=1}^{3} \int_0^t\iint_{\mathbb{T}^3 \times \mathbb{R}^3}\frac{\mathfrak{c}}{p^0}\left(|p|^2-\beta_{c}\right) J_{\mathfrak{c}}(p)\Big\{p^2_{i} \partial_{ii} \phi_{c}\Big\}\cdot c\f{p^0-A_3}{\sqrt{A_2-A_3^2}}dpdx\nonumber\\
	&\quad -\int_0^t\iint_{\mathbb{T}^3 \times \mathbb{R}^3}\frac{\mathfrak{c}}{p^0}\left(|p|^2-\beta_{c}\right) \sqrt{J_{\mathfrak{c}}}\Big\{\sum_{i, j=1}^{3} p_{i} p_{j} \partial_{i j} \phi_{c}\Big\}\cdot (\FI-\mathbf{P}_{\mathfrak{c}})f_\fc dpdx\nonumber\\
    &\ge  \Big(\f53\sqrt{6}+O(\fc^{-2})\Big)\int_0^t\|c\|_{L^2}^2d\tau-C\int_0^t\|c\|_{L^2}\|(\FI-\mathbf{P}_{\mathfrak{c}})f_{\mathfrak{c}}\|_{L^2}d\tau,
\end{align}
where the $b$ contribution vanishes due to the oddness in $p$ and the $a$ contribution vanishes because of the choice of $\beta_c$. The second term on the LHS of \eqref{3.23} has the form
\begin{align}\label{3.38-0}
	&-\int_0^t\iint_{\mathbb{T}^3\times\R^3}\partial_{\tau} \psi_c f_{\mathfrak{c}}dpdxd\tau\nonumber\\
	&=-\sum_{i=1}^3\int_0^t\iint_{\mathbb{T}^3\times\R^3}(|p|^2-\beta_c)p_iJ_{\mathfrak{c}}(p)\partial_{\tau} \partial_i\phi_c\Big\{a+b\cdot p+c\f{p^0-A_3}{\sqrt{A_2-A_3^2}}\Big\}dpdxd\tau\nonumber\\
	&\quad -\sum_{i=1}^3\int_0^t\iint_{\mathbb{T}^3\times\R^3}(|p|^2-\beta_c)p_i\sqrt{J_{\mathfrak{c}}(p)}\partial_{\tau} \partial_i\phi_c\cdot(\FI-\mathbf{P}_{\mathfrak{c}})f_{\mathfrak{c}}dpdxd\tau:=\mathcal{P}_1+\mathcal{P}_2.
\end{align}
 The $a$, $c$ contributions in $\mathcal{P}_1$ vanish due to the oddness in $p$. Using \eqref{3.36-0}, one has
\begin{align*}
	\left|\int_{\mathbb{R}^3}\left(|p|^2-\beta_{c}\right)p^2_i J_{\mathfrak{c}}(p)dp\right|&=\left|\int_{\mathbb{R}^3}(1-\frac{\mathfrak{c}}{p^0})\left(|p|^2-\beta_{c}\right)p^2_i J_{\mathfrak{c}}(p)dp\right|
	\lesssim \frac{1}{\mathfrak{c}^2},
\end{align*}
which yields that
\begin{align}\label{3.40-0}
	|\mathcal{P}_1|\lesssim \f{1}{\fc^2}\int_0^t\|\partial_{\tau} \nabla_x\phi_c\|_{L^2}\cdot\|b\|_{L^2}d\tau.
\end{align}
It is clear that
\begin{align}\label{3.41-0}
	|\mathcal{P}_2|\lesssim \int_0^t\|\partial_{\tau} \nabla_x\phi_c\|_{L^2}\cdot \|(\FI-\mathbf{P}_{\mathfrak{c}})f_{\mathfrak{c}}\|_{L^2}d\tau.
\end{align}
Combing \eqref{4.33},\eqref{3.38-0}--\eqref{3.41-0}, one obtains that
\begin{align}\label{3.41-1}
	&\Big|\int_0^t\iint_{\mathbb{T}^3\times\R^3}\partial_{\tau} \psi_cf_{\mathfrak{c}}dpdxd\tau\Big|\nonumber\\
	&\lesssim\int_0^t\{\|b(\tau)\|_{L^2}+\|(\FI-\mathbf{P}_{\mathfrak{c}})f_{\mathfrak{c}}(\tau)\|_{L^2}+\|\mathcal{S}(\tau)\|_{L^2}\}\cdot\{\|(\FI-\mathbf{P}_{\mathfrak{c}})f_{\mathfrak{c}}(\tau)\|_{L^2}+\f{1}{\fc^2}\|b(\tau)\|_{L^2}\}d\tau\nonumber\\
	&\leq C_\v\int_0^t\{\|(\FI-\mathbf{P}_{\mathfrak{c}})f_{\mathfrak{c}}(\tau)\|_{L^2}^2+\|\mathcal{S}(\tau)\|_{L^2}^2\}d\tau+C\Big(\v+\f{1}{\fc^2}\Big)\int_0^t\|b(\tau)\|_{L^2}^2d\tau.
\end{align}
   It is easy to see that the RHS of \eqref{3.23} is bounded uniformly by 
   \begin{align}\label{3.42}
   	|G_c(t)-G_c(0)|+\int_0^t	\|c(\tau)\|_{L^2}\left\{\|(\mathbf{I}-\mathbf{P}_{\mathfrak{c}}) f_{\mathfrak{c}}(\tau)\|_{L^2}+\|\mathcal{S}(\tau)\|_{L^2}\right\}d\tau.
   \end{align}
It follows from \eqref{3.23}, \eqref{3.157}, \eqref{3.41-1} and \eqref{3.42} that
\begin{align}\label{3.43}
	\int_0^t\|c(\tau)\|_{L^2}^2d\tau&\leq C_\v \int_0^t\{\|(\FI-\mathbf{P}_{\mathfrak{c}})f_{\mathfrak{c}}(\tau)\|_{L^2}^2+\|\mathcal{S}(\tau)\|_{L^2}^2\}d\tau+|G_c(t)-G_c(0)|\nonumber\\
	&\quad +C\Big(\v+\f{1}{\fc^2}\Big)\int_0^t\|b(\tau)\|_{L^2}^2d\tau.
\end{align}

\noindent{\it Step 4.2. Estimate on $b$.} To estimate $b$, we choose the test function 
\begin{align}\label{3.44-0}
\psi_b^{ij}=(p_i^2-\beta_b)\sqrt{J_{\mathfrak{c}}(p)}\partial_j\phi_b^j,
\end{align}
where $\phi_b^j$ is defined in \eqref{3.26-0} and $\beta_b$ satisfies
\begin{align*}
	\int_{\R^3}\frac{\mathfrak{c}}{p^0}(p_i^2-\beta_b)p_k^2J_{\mathfrak{c}}(p)dp=0,\quad k\neq i.
\end{align*}
Using Lemma \ref{lem2.2}, one has $\beta_b=\f{K_3(\fc^2)}{K_2(\fc^2)}=1+O(\mathfrak{c}^{-2})$. Substituting \eqref{3.44-0} into \eqref{3.23} and using the fact that
\begin{equation*} 
	\int_{\mathbb{R}^3}\frac{\mathfrak{c}}{p^0}\left(p_{i}^{2}-\beta_{b}\right) p_{i}^{2} J_{\mathfrak{c}}(p) d p=2\frac{K_3(\mathfrak{c}^2)}{K_2(\mathfrak{c}^2)}=2+O(\mathfrak{c}^{-2}),
\end{equation*}
one has
\begin{align}\label{3.46}
	&-\int_0^t\iint_{\mathbb{T}^3\times\R^3}(\hat{p}\cdot\nabla_x\psi_b^{ij})f_{\mathfrak{c}}dpdxd\tau\nonumber\\
	&=-\sum_{k=1}^3\int_0^t\iint_{\mathbb{T}^3\times\R^3}\frac{\mathfrak{c}}{p^0}(p_i^2-\beta_b)p_k\partial_{jk}\phi_b^jJ_{\mathfrak{c}}(p)\cdot \Big\{a+b\cdot p+c\f{p^0-A_3}{\sqrt{A_2-A_3^2}}\Big\}dpdxd\tau\nonumber\\
	&\quad -\int_0^t\iint_{\mathbb{T}^3\times\R^3}(p_i^2-\beta_b)\sqrt{J_{\mathfrak{c}}(p)}(\hat{p}\cdot\nabla_x)\partial_j\phi_b^j\cdot (\FI-\mathbf{P}_{\mathfrak{c}})f_{\mathfrak{c}}dpdxd\tau\nonumber\\
	&=-\sum_{l=1}^3\int_0^t\iint_{\mathbb{T}^3\times\R^3}\frac{\mathfrak{c}}{p^0}(p_i^2-\beta_b)p^2_l\partial_{lj}\phi_b^jJ_{\mathfrak{c}}(p)\cdot b_l(x)dpdxd\tau\nonumber\\
	&\quad -\int_0^t\iint_{\mathbb{T}^3\times\R^3}(p_i^2-\beta_b)\sqrt{J_{\mathfrak{c}}(p)}(\hat{p}\cdot\nabla_x)\partial_j\phi_b^j\cdot (\FI-\mathbf{P}_{\mathfrak{c}})f_{\mathfrak{c}}dpdxd\tau\nonumber\\
	&=(-2+O(\mathfrak{c}^{-2}))\int_0^t\int_{\mathbb{T}^3}\partial_{ij}\phi_b^j\cdot b_i(x)dxd\tau\nonumber\\
	&\quad-\int_0^t\iint_{\mathbb{T}^3\times\R^3}(p_i^2-\beta_b)\sqrt{J_{\mathfrak{c}}(p)}(\hat{p}\cdot\nabla_x)\partial_j\phi_b^j\cdot (\FI-\mathbf{P}_{\mathfrak{c}})f_{\mathfrak{c}}dpdxd\tau,
\end{align}
    where the $a$, $c$ contributions vanish due to the oddness in $p$ and, by the choice of $\beta_b$, there is only one term $l=i$ left in the summation. 
    
For the second term on the LHS of \eqref{3.23}, it holds
\begin{align*}
	&-\int_0^t\iint_{\mathbb{T}^3\times\R^3}\partial_{\tau} \psi_b^{ij}f_{\mathfrak{c}}dpdxd\tau\nonumber\\
	&=-\int_0^t\iint_{\mathbb{T}^3\times\R^3}(p_i^2-\beta_b)J_{\mathfrak{c}}(p)\partial_{\tau} \partial_j\phi_b^j\Big\{a+b\cdot p+c\f{p^0-A_3}{\sqrt{A_2-A_3^2}}\Big\}dpdxd\tau\nonumber\\
	&\quad-\int_0^t\iint_{\mathbb{T}^3\times\R^3}(p_i^2-\beta_b)\sqrt{J_{\mathfrak{c}}(p)}\partial_{\tau} \partial_j\phi_b^j\cdot (\FI-\mathbf{P}_{\mathfrak{c}})f_{\mathfrak{c}}dpdxd\tau.
\end{align*}
The $b$ contribution vanishes due to the oddness in $p$. Since $\beta_b=\f{K_3(\fc^2)}{K_2(\fc^2)}$, it is direct to check that
\begin{align*}
	\int_{\R^3}(p_i^2-\beta_b)J_{\mathfrak{c}}(p)dp=0
\end{align*}
and
\begin{align*}
	\int_{\R^3}(p_i^2-\beta_b)J_{\mathfrak{c}}(p)\f{p^0-A_3}{\sqrt{A_2-A_3^2}}dp=\f{A_5-A_1A_3}{\sqrt{A_2-A_3^2}}=\frac{\sqrt{6}}{3}+O(\fc^{-2}),
\end{align*}
which, together with \eqref{4.30}, yields that
\begin{align}\label{3.48}
	&\Big|\int_0^t\iint_{\mathbb{T}^3\times\R^3}\partial_{\tau} \psi_b^{ij}f_{\mathfrak{c}}dpdxd\tau\Big|\nonumber\\
	&\lesssim \int_0^t\|\partial_{\tau} \nabla_x\phi_b^j\|_{L^2}\cdot\{\|c\|_{L^2}+\|(\FI-\mathbf{P}_{\mathfrak{c}})f_{\mathfrak{c}}\|_{L^2}\}d\tau\nonumber\\
	&\lesssim\int_0^t\{\|(a,c)\|_{L^2}+\|(\FI-\mathbf{P}_{\mathfrak{c}})f_{\mathfrak{c}}\|_{L^2}+\|\mathcal{S}\|_{L^2}\}\cdot\{\|(\FI-\mathbf{P}_{\mathfrak{c}})f_{\mathfrak{c}}\|_{L^2}+\|c\|_{L^2}\}d\tau\nonumber\\
	&\leq \frac{C}{\sqrt{\varepsilon}}\int_0^t\{\|(\FI-\mathbf{P}_{\mathfrak{c}})f_{\mathfrak{c}}\|_{L^2}^2+\|\mathcal{S}\|_{L^2}+\|c\|_{L^2}^2\}d\tau+C\sqrt{\v}\int_0^t\|a\|_{L^2}^2d\tau.
\end{align}
The RHS of \eqref{3.23} is  bounded  by 
\begin{align}\label{3.49}
	|G_b(t)-G_b(0)|+\int_0^t	\|b(\tau)\|_{L^2}\left\{\|(\mathbf{I}-\mathbf{P}_{\mathfrak{c}}) f_{\mathfrak{c}}(\tau)\|_{L^2}+\|\mathcal{S}(\tau)\|_{L^2}\right\}d\tau.
\end{align}
Combining \eqref{3.23}, \eqref{3.46} and \eqref{3.48}, for all $i$, $j$, one has that
\begin{align}\label{eq2.40}
	\Big|\int_0^t\int_{\mathbb{T}^3}(\partial_{ij}\Delta^{-1}b_j)b_idxd\tau\Big|&\leq |G_b(t)-G_b(0)|+C\sqrt{\v}\int_0^t \|(a,b)(\tau)\|_{L^2}^2d\tau\nonumber\\
	&\qquad +\frac{C}{\sqrt{\varepsilon}}\int_0^t\{\|(\FI-\mathbf{P}_{\mathfrak{c}})f\|_{L^2}^2+\|\mathcal{S}\|_{L^2}^2+\|c\|_{L^2}^2\}d\tau.
\end{align}

When $i\neq j$, we choose the test function as $\psi=|p|^2p_ip_j\sqrt{J_{\mathfrak{c}}(p)}\partial_j\phi_b^i$ and substitute it into \eqref{3.23}, then the first term on the LHS is
	\begin{align*}
	&-\int_0^t\iint_{\mathbb{T}^3\times\R^3}(\hat{p}\cdot\nabla_x\psi)f_{\mathfrak{c}}dpdxd\tau\nonumber\\
	&=-\int_0^t\iint_{\mathbb{T}^3 \times \mathbb{R}^3}\frac{\mathfrak{c}}{p^0}|p|^{2} p_{i} p_{j} \sqrt{J_{\mathfrak{c}}}\left\{\sum_{k=1}^3 p_{k} \partial_{k j} \phi_{b}^{i}\right\} f_{\mathfrak{c}}dpdxd\tau \nonumber\\
	&=-\int_0^t\iint_{\mathbb{T}^3 \times \mathbb{R}^3}\frac{\mathfrak{c}}{p^0}|p|^{2} p_{i}^{2} p_{j}^{2} J_{\mathfrak{c}}(p)\left[\partial_{i j} \phi_{b}^{i} b_{j}+\partial_{j j} \phi_{b}^{i}(x) b_{i}\right]dpdxd\tau \nonumber\\
	&\qquad -\sum_{k=1}^3\int_0^t\iint_{\mathbb{T}^3 \times \mathbb{R}^3}\frac{\mathfrak{c}}{p^0}|p|^{2} p_{i} p_{j} p_{k} \sqrt{J_{\mathfrak{c}}} \partial_{k j} \phi_{b}^{i}(x)\cdot (\mathbf{I}-\mathbf{P}_{\mathfrak{c}}) f_{\mathfrak{c}}dpdxd\tau\nonumber\\
	&=(7+O(\mathfrak{c}^{-2}))\int_0^t\int_{\mathbb{T}^3}\{(\partial_{ij}\Delta^{-1}b_i)b_j+(\partial_{jj}\Delta^{-1}b_i)b_i\}dxd\tau \nonumber\\
	&\qquad -\sum_{k=1}^3\int_0^t\iint_{\mathbb{T}^3 \times \mathbb{R}^3}\frac{\mathfrak{c}}{p^0}|p|^{2} p_{i} p_{j} p_{k} \sqrt{J_{\mathfrak{c}}} \partial_{k j} \phi_{b}^{i}(x)\cdot (\mathbf{I}-\mathbf{P}_{\mathfrak{c}}) f_{\mathfrak{c}}dpdxd\tau,
\end{align*}
where we have used the fact that
\begin{align*}
	\int_{\mathbb{R}^3}\frac{\mathfrak{c}}{p^0}|p|^{2} p_{i}^{2} p_{j}^{2} J_{\mathfrak{c}}(p)dp=\mathfrak{c}A_8=7\frac{K_4(\mathfrak{c}^2)}{K_2(\mathfrak{c}^2)}=7+O(\mathfrak{c}^{-2}),\quad i\neq j.
\end{align*}
Using \eqref{4.30}, we can bound the second term on the RHS of \eqref{3.23} as
\begin{align}\label{3.52}
	\Big|\int_0^t\iint_{\mathbb{T}^3\times\R^3}\partial_{\tau} \psi f_{\mathfrak{c}}dpdxd\tau\Big|&=\Big|-\int_0^t\iint_{\mathbb{T}^3\times\R^3}|p|^2p_ip_j\sqrt{J_{\mathfrak{c}}(p)}\partial_{\tau} \partial_j\phi_b^i\cdot (\FI-\mathbf{P}_{\mathfrak{c}})f_{\mathfrak{c}}dpdxd\tau\Big|\nonumber\\
	&\lesssim\int_0^t\{\|(a,c)\|_{L^2}+\|(\FI-\mathbf{P}_{\mathfrak{c}})f_{\mathfrak{c}}\|_{L^2}+\|\mathcal{S}\|_{L^2}\}\cdot\|(\FI-\mathbf{P}_{\mathfrak{c}})f_{\mathfrak{c}}\|_{L^2}d\tau\nonumber\\
	&\le \frac{C}{\sqrt{\varepsilon}}\int_0^t \Big(\|(\FI-\mathbf{P}_{\mathfrak{c}})f_{\mathfrak{c}}\|_{L^2}^2+\|\mathcal{S}\|_{L^2}^2\Big)d\tau+C\sqrt{\varepsilon} \int_0^t \|(a,c)(\tau)\|_{L^2}^2 d\tau.
\end{align}
The RHS of \eqref{3.23} is again bounded uniformly by \eqref{3.49}.  Hence
it follows from \eqref{3.23} and \eqref{3.49}--\eqref{3.52} that, for $i\neq j$,
\begin{align}\label{3.53}
	&\Big|\int_0^t\int_{\mathbb{T}^3}(\partial_{jj}\Delta^{-1}b_i)b_idxd\tau\Big|\nonumber\\
	&\leq C\Big|\int_0^t\int_{\mathbb{T}^3}(\partial_{ij}\Delta^{-1}b_i)b_jdxd\tau\Big|+ \frac{C}{\sqrt{\varepsilon}}\int_0^t \{\|(\FI-\mathbf{P}_{\mathfrak{c}})f_{\mathfrak{c}}\|_{L^2}^2+\|\mathcal{S}\|_{L^2}^2+\|c\|_{L^2}^2\}d\tau\nonumber\\
	&\quad +|G_b(t)-G_b(0)|+C\sqrt{\v}\int_0^t \|(a,b)\|_{L^2}^2 d\tau\nonumber\\
	&\le \frac{C}{\sqrt{\varepsilon}}\int_0^t \{\|(\FI-\mathbf{P}_{\mathfrak{c}})f_{\mathfrak{c}}\|_{L^2}^2+\|\mathcal{S}\|_{L^2}^2+\|c\|_{L^2}^2\}d\tau  +|G_b(t)-G_b(0)|+C\sqrt{\v}\int_0^t \|(a,b)\|_{L^2}^2 d\tau.
\end{align}
It follows from \eqref{eq2.40} that
\begin{align}\label{3.54}
	\Big|\int_0^t\int_{\mathbb{T}^3}(\partial_{ii}\Delta^{-1}b_i)b_idxd\tau\Big|&\leq |G_b(t)-G_b(0)|+C\sqrt{\v}\int_0^t \|(a,b)(\tau)\|_{L^2}^2 d\tau\nonumber\\
	&\qquad +\frac{C}{\sqrt{\varepsilon}}\int_0^t\{\|(\FI-\mathbf{P}_{\mathfrak{c}})f_{\mathfrak{c}}\|_{L^2}^2+\|\mathcal{S}\|_{L^2}^2+\|c\|_{L^2}^2\}d\tau.
\end{align}
Combining \eqref{3.53} and \eqref{3.54}, we obtain
\begin{align}\label{3.55}
	\int_0^t\|b(\tau)\|_{L^2}^2d\tau\leq&|G_b(t)-G_b(0)|+C\sqrt{\v}\int_0^t\|a\|_{L^2}^2d\tau\nonumber\\
	&+\frac{C}{\sqrt{\varepsilon}}\int_0^t\{\|(\FI-\mathbf{P}_{\mathfrak{c}})f_{\mathfrak{c}}\|_{L^2}^2+\|\mathcal{S}\|_{L^2}^2+\|c\|_{L^2}^2\}d\tau.
\end{align}

\noindent{\it Step 4.3. Estimate on $a$.}  We choose the test function 
\begin{align}\label{3.56}
	\psi_a=-(|p|^2-\beta_a)\sqrt{J_{\mathfrak{c}}(p)}p\cdot\nabla_x\phi_a,
\end{align} 
where $\phi_a$ is defined in \eqref{3.25-0} and $\beta_a$ satisfies
\begin{align*}
	\int_{\R^3}\f{\fc}{p^0}(|p|^2-\beta_a)p_i^2(p^0-A_3)J_{\mathfrak{c}}(p)dp=0.
\end{align*}
A direct calculation shows that $\beta_a=\f{A_{10}-A_3A_6}{A_1-A_3A_4}=10+O(\fc^{-2})$. Substituting \eqref{3.56} into \eqref{3.23} and using the fact that
\begin{align*}
	\int_{ \mathbb{R}^3}\frac{\mathfrak{c}}{p^0}\left(|p|^2-\beta_{a}\right)p_{i}^{2} J_{\mathfrak{c}}(p)dp =-30\frac{K_3(\mathfrak{c}^2)}{K_2(\mathfrak{c}^2)}+5\mathfrak{c}^2\Big\{\Big(\frac{K_3(\mathfrak{c}^2)}{K_2(\mathfrak{c}^2)}\Big)^2-1\Big\}=-5+O(\mathfrak{c}^{-2}),
\end{align*}
 one has
\begin{align}\label{3.58}
	&-\int_0^t\iint_{\mathbb{T}^3\times\R^3}(\hat{p}\cdot\nabla_x\psi_a)f_{\mathfrak{c}}dpdxd\tau\nonumber\\
	&=\sum_{i,j=1}^3\int_0^t\iint_{\mathbb{T}^3\times\R^3}\frac{\mathfrak{c}}{p^0}(|p|^2-\beta_a)J_{\mathfrak{c}}(p)p_ip_j\partial_{ij}\phi_a\cdot \Big\{a+b\cdot p+c\f{p^0-A_3}{\sqrt{A_2-A_3^2}}\Big\}dpdxd\tau\nonumber\\
	&\qquad +\sum_{i,j=1}^3\int_0^t\iint_{\mathbb{T}^3\times\R^3}\frac{\mathfrak{c}}{p^0}(|p|^2-\beta_a)\sqrt{J_{\mathfrak{c}}(p)}p_ip_j\partial_{ij}\phi_a\cdot (\FI-\mathbf{P}_{\mathfrak{c}})f_{\mathfrak{c}} dpdxd\tau\nonumber\\
	&=\sum_{i=1}^3\int_0^t\iint_{\mathbb{T}^3\times\R^3}\frac{\mathfrak{c}}{p^0}(|p|^2-\beta_a)J_{\mathfrak{c}}(p)p^2_i\partial_{ii}\phi_a\cdot a(x)dpdxd\tau\nonumber\\
	&\qquad +\sum_{i,j=1}^3\int_0^t\iint_{\mathbb{T}^3\times\R^3}\frac{\mathfrak{c}}{p^0}(|p|^2-\beta_a)\sqrt{J_{\mathfrak{c}}(p)}p_ip_j\partial_{ij}\phi_a\cdot (\FI-\mathbf{P}_{\mathfrak{c}})f_{\mathfrak{c}} dpdxd\tau\nonumber\\
	&\geq \{5+O(\fc^{-2})\}\int_0^t\|a\|_{L^2}^2d\tau-C\int_0^t\|a\|_{L^2}\cdot\|(\FI-\mathbf{P}_{\mathfrak{c}})f_{\mathfrak{c}}\|_{L^2}d\tau, 
\end{align}
where the $b$ contribution vanishes due to the oddness in $p$ and the $c$ contribution vanishes because of the choice of $\beta_a$.
The second term on the RHS of \eqref{3.23} is
\begin{align*}
	&-\int_0^t\iint_{\mathbb{T}^3\times\R^3}\partial_{\tau} \psi_a f_{\mathfrak{c}}dpdxd\tau\nonumber\\
	&=\sum_{i=1}^3\int_0^t\iint_{\mathbb{T}^3\times\R^3}(|p|^2-\beta_a)p_i\sqrt{J_{\mathfrak{c}}(p)}\partial_{\tau} \partial_i\phi_a f_{\mathfrak{c}}dpdxd\tau\nonumber\\
	&=\sum_{i=1}^3\int_0^t\iint_{\mathbb{T}^3\times\R^3}(|p|^2-\beta_a)p_i^2J_{\mathfrak{c}}(p)\partial_{\tau} \partial_i\phi_a\cdot b_i(x)dpdxd\tau\nonumber\\
	&\quad +\sum_{i=1}^3\int_0^t\iint_{\mathbb{T}^3\times\R^3}(|p|^2-\beta_a)p_i\sqrt{J_{\mathfrak{c}}(p)}\partial_{\tau} \partial_i\phi_a\cdot(\FI-\mathbf{P}_{\mathfrak{c}})f_{\mathfrak{c}}dpdxd\tau,
\end{align*}
where 
\begin{align*}
	\int_{\R^3}(|p|^2-\beta_a)p_i^2J_{\mathfrak{c}}(p)dp=A_{10}-\beta_aA_1=\f{A_3(A_1A_6-A_4A_{10})}{A_1-A_3A_4}=-5+O(\fc^{-2}),
\end{align*}
which, together with \eqref{3.31}, yields that
\begin{align}\label{3.60}
	\Big|\int_0^t\iint_{\mathbb{T}^3\times\R^3}\partial_{\tau} \psi_a f_{\mathfrak{c}}dpdxd\tau\Big|&\lesssim\int_0^t\{\|b\|_{L^2}+\|(\FI-\mathbf{P}_{\mathfrak{c}})f_{\mathfrak{c}}\|_{L^2}\}\cdot\{\|b\|_{L^2}+\|(\FI-\mathbf{P}_{\mathfrak{c}})f_{\mathfrak{c}}\|_{L^2}\}d\tau\nonumber\\
	&\lesssim\int_0^t\{\|b\|_{L^2}^2+\|(\FI-\mathbf{P}_{\mathfrak{c}})f_{\mathfrak{c}}\|_{L^2}^2\}d\tau.
\end{align}
The RHS of \eqref{3.23} is now bounded uniformly by 
\begin{align}\label{3.61}
	|G_a(t)-G_a(0)|+\int_0^t	\|a(\tau)\|_{L^2}\left\{\|(\mathbf{I}-\mathbf{P}_{\mathfrak{c}}) f_{\mathfrak{c}}(\tau)\|_{L^2}+\|\mathcal{S}(\tau)\|_{L^2}\right\}d\tau.
\end{align}

Combining \eqref{3.23}, \eqref{3.58} and \eqref{3.60}--\eqref{3.61}, we get 
\begin{align}\label{3.62}
	\int_0^t\|a(\tau)\|_{L^2}^2d\tau\leq |G_a(t)-G_a(0)|+C\int_0^t\{\|(\FI-\mathbf{P}_{\mathfrak{c}})f_{\mathfrak{c}}\|_{L^2}^2+\|\mathcal{S}\|_{L^2}^2+\|b\|_{L^2}^2\}d\tau.
\end{align}
Using \eqref{3.43}, \eqref{3.55} and \eqref{3.62}, we can first choose $\v$ suitably small, and then choose $\mathfrak{c}$ large enough. Thus one can first close the estimate of $c$, then $b$, and finally $a$. Hence \eqref{4.28} follows. Therefore the proof of Proposition \ref{prop3.2} is completed.
\end{proof}

\begin{Lemma}\label{lem4.5}
Assume \eqref{3.21-0} and $(M_{\mathfrak{c}},\mathbf{J}_{\mathfrak{c}},E_{\mathfrak{c}})=(0,\mathbf{0},0)$. Let $f_{\mathfrak{c}}(t)$ be the solution of the linearized relativistic Boltzmann equation  \eqref{4.6}. Then there exists a constant $\lambda_{2}>0$ such that for any $t \ge 0$,
	\begin{align*} 
		&\|f_{\mathfrak{c}}(t)\|_{L^{2}}^{2} \le Ce^{-\lambda_2 t}\left\{\left\|f_{0,\mathfrak{c}}\right\|_{L^{2}}^{2}+ \int_0^t e^{\lambda_2 s} \|\mathcal{S}(s)\|^2_{L^{2}}ds\right\}.
	\end{align*}
    \end{Lemma}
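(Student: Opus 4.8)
The plan is to derive the claimed decay by combining the basic $L^2$ energy identity for \eqref{4.6} with the macroscopic estimate of Proposition~\ref{prop3.2}, and then converting the resulting integral inequality into exponential decay via a Gronwall argument. All constants below are independent of $\fc$, which is essential and is guaranteed by the corresponding uniformity in Lemma~\ref{lem2.3} and Proposition~\ref{prop3.2}.

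First I would multiply \eqref{4.6} by $f_\fc$ and integrate over $\mathbb{T}^3\times\mathbb{R}^3$. The transport term vanishes since $\hat p$ is independent of $x$ and $\mathbb{T}^3$ is periodic, while Lemma~\ref{lem2.3}(4) gives $\langle\mathbf{L}_\fc f_\fc,f_\fc\rangle\ge\zeta_0\|(\mathbf{I}-\mathbf{P}_\fc)f_\fc\|_{\nu_\fc}^2$. For the source term I would write $\langle\mathcal{S},f_\fc\rangle=\langle\nu_\fc^{-1/2}\mathcal{S},\nu_\fc^{1/2}f_\fc\rangle$ and use the uniform lower bound $\nu_\fc\ge\nu_0>0$ from Lemma~\ref{lem2.3}(1), together with Young's inequality, to bound it for any small $\eta>0$ by $\eta\|f_\fc\|_{\nu_\fc}^2+C_\eta\|\mathcal{S}\|_{L^2}^2$. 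Integrating in time over $[s,t]$ then yields
\[ \tfrac12\|f_\fc(t)\|_{L^2}^2+\zeta_0\int_s^t\|(\mathbf{I}-\mathbf{P}_\fc)f_\fc\|_{\nu_\fc}^2\,d\tau\le\tfrac12\|f_\fc(s)\|_{L^2}^2+\eta\int_s^t\|f_\fc\|_{\nu_\fc}^2\,d\tau+C_\eta\int_s^t\|\mathcal{S}\|_{L^2}^2\,d\tau. \]

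Next, since the hypotheses \eqref{3.21-0} and $(M_\fc,\mathbf{J}_\fc,E_\fc)=(0,\mathbf{0},0)$ are exactly those of Proposition~\ref{prop3.2}, there is a functional $G$ with $|G(\tau)|\lesssim\|f_\fc(\tau)\|_{L^2}^2$ and $\int_s^t\|\mathbf{P}_\fc f_\fc\|_{\nu_\fc}^2\,d\tau\le C_1\int_s^t\big(\|(\mathbf{I}-\mathbf{P}_\fc)f_\fc\|_{\nu_\fc}^2+\|\mathcal{S}\|_{L^2}^2\big)\,d\tau+C_1|G(t)-G(s)|$. I would add a small multiple $\mathcal{M}>0$ of this inequality to the energy inequality above. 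Splitting $\|f_\fc\|_{\nu_\fc}^2=\|\mathbf{P}_\fc f_\fc\|_{\nu_\fc}^2+\|(\mathbf{I}-\mathbf{P}_\fc)f_\fc\|_{\nu_\fc}^2$ in the $\eta$-term, one first chooses $\mathcal{M}$ small so that $\mathcal{M}C_1\int\|(\mathbf{I}-\mathbf{P}_\fc)f_\fc\|_{\nu_\fc}^2$ is absorbed by $\zeta_0\int\|(\mathbf{I}-\mathbf{P}_\fc)f_\fc\|_{\nu_\fc}^2$, and then chooses $\eta$ small relative to $\mathcal{M}$ so that $\eta\int\|\mathbf{P}_\fc f_\fc\|_{\nu_\fc}^2$ is absorbed by $\mathcal{M}\int\|\mathbf{P}_\fc f_\fc\|_{\nu_\fc}^2$. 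This produces, for some $\delta_0>0$,
\[ \|f_\fc(t)\|_{L^2}^2+\delta_0\int_s^t\|f_\fc\|_{\nu_\fc}^2\,d\tau\le\|f_\fc(s)\|_{L^2}^2+C\int_s^t\|\mathcal{S}\|_{L^2}^2\,d\tau+\mathcal{M}C_1\big(G(t)-G(s)\big). \]

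Finally, to absorb $G$ I would introduce the modified energy $\mathcal{E}(\tau):=\|f_\fc(\tau)\|_{L^2}^2-\mathcal{M}C_1G(\tau)$; shrinking $\mathcal{M}$ once more so that $\mathcal{M}C_1|G(\tau)|\le\tfrac12\|f_\fc(\tau)\|_{L^2}^2$ gives $\tfrac12\|f_\fc(\tau)\|_{L^2}^2\le\mathcal{E}(\tau)\le\tfrac32\|f_\fc(\tau)\|_{L^2}^2$, and the last display becomes $\mathcal{E}(t)+\delta_0\int_s^t\|f_\fc\|_{\nu_\fc}^2\,d\tau\le\mathcal{E}(s)+C\int_s^t\|\mathcal{S}\|_{L^2}^2\,d\tau$. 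Using $\nu_\fc\ge\nu_0$ once more, $\int_s^t\|f_\fc\|_{\nu_\fc}^2\,d\tau\ge c\int_s^t\mathcal{E}(\tau)\,d\tau$, so, working with the $t$-mollified quantities already in place for Proposition~\ref{prop3.2} and differentiating in $t$ at fixed $s$, one obtains $\tfrac{d}{dt}\mathcal{E}(t)+\lambda_2\mathcal{E}(t)\le C\|\mathcal{S}(t)\|_{L^2}^2$ for a uniform $\lambda_2>0$. Gronwall's inequality then gives $\mathcal{E}(t)\le e^{-\lambda_2 t}\mathcal{E}(0)+C\int_0^t e^{-\lambda_2(t-s)}\|\mathcal{S}(s)\|_{L^2}^2\,ds$, and removing the mollification and using $\mathcal{E}\simeq\|f_\fc\|_{L^2}^2$ together with $\mathcal{E}(0)\lesssim\|f_{0,\fc}\|_{L^2}^2$ yields the stated estimate. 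I expect the only real difficulty to be the bookkeeping of the single small parameter $\mathcal{M}$: it must simultaneously absorb the microscopic dissipation produced by Proposition~\ref{prop3.2}, leave $\eta$ room to dominate the source term, and render $\mathcal{E}$ comparable to $\|f_\fc\|_{L^2}^2$ so that $G$ disappears — all with constants independent of $\fc$.
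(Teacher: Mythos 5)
Your overall strategy is genuinely different from the paper's: the paper sets $\mathbf{f}_{\mathfrak{c}}:=e^{\lambda t}f_{\mathfrak{c}}$, observes that the new source $\lambda\mathbf{f}_{\mathfrak{c}}+e^{\lambda t}\mathcal{S}$ still satisfies the orthogonality condition, and then only needs a \emph{uniform-in-time bound} on $\|\mathbf{f}_{\mathfrak{c}}(t)\|_{L^2}$ (energy identity plus Proposition \ref{prop3.2} applied with $s=0$), after which dividing by $e^{2\lambda t}$ yields the decay. You instead try to produce decay directly for $\|f_{\mathfrak{c}}(t)\|_{L^2}^2$ via a modified energy and a Gronwall argument, and this is where there is a genuine gap. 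Proposition \ref{prop3.2} provides the macroscopic bound with the term $|G(t)-G(s)|$, \emph{not} with the signed difference $G(t)-G(s)$ (and its statement only gives the one-sided bound $G(\tau)\lesssim\|f_{\mathfrak{c}}(\tau)\|_{L^2}^2$, while your equivalence $\mathcal{E}\simeq\|f_{\mathfrak{c}}\|_{L^2}^2$ needs the two-sided one). Using the proposition as stated, the only move is $|G(t)-G(s)|\le|G(t)|+|G(s)|\lesssim\|f_{\mathfrak{c}}(t)\|_{L^2}^2+\|f_{\mathfrak{c}}(s)\|_{L^2}^2$; after absorption the coefficient of $\|f_{\mathfrak{c}}(t)\|_{L^2}^2$ on the left is strictly smaller than that of $\|f_{\mathfrak{c}}(s)\|_{L^2}^2$ on the right (say $\tfrac14$ versus $\tfrac34$), so the inequality does not telescope into $\mathcal{E}(t)+\delta_0\int_s^t\le\mathcal{E}(s)+\cdots$ for one and the same functional $\mathcal{E}$, and letting $s\uparrow t$ then gives no differential inequality at all. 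Hence the step ``the last display becomes $\mathcal{E}(t)+\delta_0\int_s^t\|f_{\mathfrak{c}}\|_{\nu_{\mathfrak{c}}}^2\,d\tau\le\mathcal{E}(s)+C\int_s^t\|\mathcal{S}\|_{L^2}^2\,d\tau$'' is not justified by the stated proposition, and the claimed bound $\frac{d}{dt}\mathcal{E}+\lambda_2\mathcal{E}\le C\|\mathcal{S}\|_{L^2}^2$ does not follow. (A secondary point: even with the clean telescoping form, ``differentiating in $t$ at fixed $s$'' is not a valid operation on an inequality; the correct route is to use that the inequality holds for \emph{all} $s\le t$ and pass to difference quotients as $s\uparrow t$, which the $t$-mollification makes rigorous.)

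The gap is repairable. One fix is to inspect the proof of Proposition \ref{prop3.2}: the $G$-contributions arise as exact boundary terms $-\iint_{\mathbb{T}^3\times\mathbb{R}^3}\psi f_{\mathfrak{c}}\,dp\,dx\big|_{s}^{t}$ of the weak formulation, so the macroscopic estimate does hold with a signed combination $G(t)-G(s)$ for a single functional $G$ satisfying $|G(\tau)|\lesssim\|f_{\mathfrak{c}}(\tau)\|_{L^2}^2$; with that strengthening your modified-energy argument closes. The simpler fix is the paper's: with the weight $e^{\lambda t}$ only boundedness is needed, so the crude bound $|G(t)-G(0)|\lesssim\|\mathbf{f}_{\mathfrak{c}}(t)\|_{L^2}^2+\|f_{0,\mathfrak{c}}\|_{L^2}^2$ and the attendant loss of constants on the left-hand side are harmless; this is exactly how \eqref{2.56}--\eqref{2.57} are combined in the paper. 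The rest of your outline (energy identity, coercivity from Lemma \ref{lem2.3}, direct applicability of Proposition \ref{prop3.2} to $f_{\mathfrak{c}}$ under the stated hypotheses, absorption bookkeeping in $\mathcal{M}$ and $\eta$) is sound and uniform in $\mathfrak{c}$.
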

\begin{proof}
	Let $\mathbf{f}_{\mathfrak{c}}(t):=e^{\lambda t}f_{\mathfrak{c}}(t)$, $\lambda>0$, then
	\begin{align}\label{2.55}
		\begin{cases}
			\partial_t\mathbf{f}_{\mathfrak{c}}+\hat{p}\cdot\nabla_x\mathbf{f}_{\mathfrak{c}}+\mathbf{L}_{\mathfrak{c}} \mathbf{f}_{\mathfrak{c}}=\lambda \mathbf{f}_{\mathfrak{c}}+e^{\lambda t}\mathcal{S},\\
			\mathbf{f}_{\mathfrak{c}}(t,x,p)|_{t=0}=f_{0,\mathfrak{c}}.
		\end{cases}
	\end{align}
Multiplying both sides of the equation $\eqref{2.55}_1$ by $\mathbf{f}_{\mathfrak{c}}$ and integrating over $\mathbb{T}^3\times\R^3\times[0,t]$, one obtains
\begin{align*}
	\f12\|\mathbf{f}_{\mathfrak{c}}(t)\|_{L^2}^2+\int_0^t\langle \mathbf{f}_{\mathfrak{c}}, \mathbf{L}_{\mathfrak{c}} \mathbf{f}_{\mathfrak{c}}\rangle ds=\f12\|\mathbf{f}_{\mathfrak{c}}(0)\|_{L^2}^2+\lambda \int_0^t\|\mathbf{f}_{\mathfrak{c}}(s)\|_{L^2}^2ds+\int_0^t\iint_{\mathbb{T}^3\times \mathbb{R}^3}e^{\lambda s}\mathcal{S}(s)\mathbf{f}_{\mathfrak{c}}(s)dpdxds.
\end{align*}

Noting that 
\begin{align*}
	\int_0^t\langle \mathbf{f}_{\mathfrak{c}}, \mathbf{L}_{\mathfrak{c}} \mathbf{f}_{\mathfrak{c}}\rangle ds\geq \zeta_0\int_0^t\|(\FI-\mathbf{P}_{\mathfrak{c}})\mathbf{f}_{\mathfrak{c}}\|_{\nu_{\mathfrak{c}}}^2ds,
\end{align*}
we have
\begin{align}\label{2.56}
	\|\mathbf{f}_{\mathfrak{c}}(t)\|_{L^2}^2+2\zeta_0\int_0^t\|(\FI-\mathbf{P}_{\mathfrak{c}})\mathbf{f}_{\mathfrak{c}}\|_{\nu_{\mathfrak{c}}}^2 ds\leq\|\mathbf{f}_{\mathfrak{c}}(0)\|_{L^2}^2+C\lambda \int_0^t\|\mathbf{f}_{\mathfrak{c}}(s)\|_{L^2}^2ds+C_\lambda\int_0^te^{2\lambda s}\|\mathcal{S}(s)\|_{L^2}^2ds.
\end{align}

It is clear that
\begin{align*}
	\iint_{\mathbb{T}^3\times \mathbb{R}^3}(\lambda \mathbf{f}_{\mathfrak{c}}+e^{\lambda t}\mathcal{S})\sqrt{J_{\mathfrak{c}}(p)}
	\begin{pmatrix}
		1  \\ p \\ p^0
	\end{pmatrix}
	dpdx=0.
\end{align*}
Applying Proposition \ref{prop3.2} to \eqref{2.55}, one can get
\begin{align}\label{2.57}
	\int_0^t\|\mathbf{P}_{\mathfrak{c}} \mathbf{f}_{\mathfrak{c}}(s)\|_{\nu_{\mathfrak{c}}}^2ds&\leq C|G(t)-G(0)|+C\int_0^t\|(\FI-\mathbf{P}_{\mathfrak{c}})\mathbf{f}_{\mathfrak{c}}(s)\|_{\nu_{\mathfrak{c}}}^2ds\nonumber\\
	&\quad+C\int_0^te^{2\lambda s}\|\mathcal{S}(s)\|_{L^2}^2ds+C\lambda^2\int_0^t\|\mathbf{f}_{\mathfrak{c}}(s)\|_{L^2}^2ds,
\end{align}
where $G(t)\lesssim \|\mathbf{f}_{\mathfrak{c}}(t)\|_{L^2}^2$. Combining \eqref{2.56}--\eqref{2.57} and taking $\lambda$ suitably small, we have
\begin{align*}
	\|\mathbf{f}_{\mathfrak{c}}(t)\|_{L^2}^2+\int_0^t\|\mathbf{f}_{\mathfrak{c}}\|_{\nu_{\mathfrak{c}}}^2ds\leq C\|f_{0,\mathfrak{c}}\|_{L^2}^2+C\int_0^te^{2\lambda s}\|\mathcal{S}(s)\|_{L^2}^2ds,
\end{align*} 
which yields that
\begin{align*}
	\|f_{\mathfrak{c}}(t)\|_{L^2}^2\leq Ce^{-2\lambda t}\Big\{\|f_{0,\mathfrak{c}}\|_{L^2}^2+\int_0^t e^{2\lambda s}\|\mathcal{S}(s)\|_{L^2}^2ds\Big\}.
\end{align*}
Take $\lambda_2:=2\lambda$ and we complete the proof of Lemma \ref{lem4.5}.
\end{proof}

\begin{Proposition}\label{prop4.6}
Let $\beta>4$,  \eqref{3.21-0} and   \eqref{1.8-10} hold with $(M_{\mathfrak{c}},\mathbf{J}_{\mathfrak{c}},E_{\mathfrak{c}})=(0,\mathbf{0},0)$. Let $f_{\mathfrak{c}}(t)$ be the solution of  linearized relativistic Boltzmann equation \eqref{4.6} in $t\in [0,T]$.
	Then it holds that
	\begin{align}\label{3.69}
		\sup _{0 \le s \le t} \big\{e^{\lambda_{0} s}\|w_{\beta} f_{\mathfrak{c}}(s)\|_{L^{\infty}}\big\} \le \hat{C}\left\|w_{\beta} f_{0,\mathfrak{c}}\right\|_{L^{\infty}}+\hat{C} \sup _{0 \le s \le t} \big\{e^{\lambda_{0} s}\left\|\nu_{\mathfrak{c}}^{-1} w_{\beta} \mathcal{S}(s)\right\|_{L^{\infty}}\big\}
	\end{align}
	for any $t \in [0,T]$, where $\hat{C}\geq1$ is independent of $\mathfrak{c}$.
    \end{Proposition}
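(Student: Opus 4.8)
The plan is to close the $L^\infty$ estimate by iterating the mild form \eqref{6.3} once, bootstrapping the $L^\infty$ norm against itself with a small constant and an $L^2$ remainder, and then absorbing the $L^2$ remainder via the $L^2$ decay estimate of Lemma \ref{lem4.5}. Concretely, set $\lambda_0>0$ small, to be fixed below, with $\lambda_0<\min\{\tfrac{\nu_0}{2},\tfrac{\lambda_2}{2}\}$ where $\nu_0=\inf_p\nu_{\mathfrak{c}}(p)$ and $\lambda_2$ is from Lemma \ref{lem4.5}; note $\nu_0$ and $\lambda_2$ are independent of $\mathfrak{c}$ by Lemma \ref{lem2.3}(1) and the proof of Lemma \ref{lem4.5}. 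First apply Lemma \ref{lem4.3} with the choice $\lambda_1=\lambda_0$: this gives, for every $t\in[0,T]$,
\begin{align*}
	e^{\lambda_0 t}|\mathfrak{h}_{\mathfrak{c}}(t,x,p)|
	&\le C\|\mathfrak{h}_{0,\mathfrak{c}}\|_{L^\infty}
	+C\sup_{0\le\tau\le t}\big\{e^{\lambda_0\tau}\|\nu_{\mathfrak{c}}^{-1}w_\beta\mathcal{S}(\tau)\|_{L^\infty}\big\}\\
	&\quad+C_N\sup_{0\le\tau\le t}\big\{e^{\lambda_0\tau}\|f_{\mathfrak{c}}(\tau)\|_{L^2}\big\}
	+C\max\Big\{\tfrac1{\mathfrak{c}},\tfrac1N\Big\}\sup_{0\le\tau\le t}\big\{e^{\lambda_0\tau}\|\mathfrak{h}_{\mathfrak{c}}(\tau)\|_{L^\infty}\big\},
\end{align*}
where we used $e^{-\lambda_0 t}\le e^{-(\nu_0/2)t}\cdot e^{(\nu_0/2-\lambda_0)t}$ is bounded on... rather, used directly that the two decaying prefactors $e^{-\nu_0 t/2}$, $e^{-\lambda_0 t}$ dominate $e^{-\lambda_0 t}$.

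Next, fix $N$ large enough that $C\cdot\tfrac1N\le\tfrac14$, and then restrict to $\mathfrak{c}$ large enough that $C\cdot\tfrac1{\mathfrak{c}}\le\tfrac14$ as well, so the last term is bounded by $\tfrac12\sup_{0\le\tau\le t}e^{\lambda_0\tau}\|\mathfrak{h}_{\mathfrak{c}}(\tau)\|_{L^\infty}$ and can be absorbed into the left side after taking $\sup$ over $t\in[0,T]$. This yields
\begin{align*}
	\sup_{0\le s\le t}\big\{e^{\lambda_0 s}\|w_\beta f_{\mathfrak{c}}(s)\|_{L^\infty}\big\}
	\le C\|w_\beta f_{0,\mathfrak{c}}\|_{L^\infty}
	+C\sup_{0\le s\le t}\big\{e^{\lambda_0 s}\|\nu_{\mathfrak{c}}^{-1}w_\beta\mathcal{S}(s)\|_{L^\infty}\big\}
	+C_N\sup_{0\le s\le t}\big\{e^{\lambda_0 s}\|f_{\mathfrak{c}}(s)\|_{L^2}\big\}.
\end{align*}
It remains to control the $L^2$ term. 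By Lemma \ref{lem4.5} (whose hypotheses \eqref{3.21-0} and $(M_{\mathfrak{c}},\mathbf{J}_{\mathfrak{c}},E_{\mathfrak{c}})=(0,\mathbf{0},0)$ are assumed here), for $0\le s\le t$,
\begin{align*}
	e^{\lambda_0 s}\|f_{\mathfrak{c}}(s)\|_{L^2}
	\le C e^{(\lambda_0-\lambda_2/2)s}\Big\{\|f_{0,\mathfrak{c}}\|_{L^2}^2+\int_0^s e^{\lambda_2 r}\|\mathcal{S}(r)\|_{L^2}^2\,dr\Big\}^{1/2}.
\end{align*}
Since $\lambda_0<\lambda_2/2$, the prefactor is bounded; bounding $\|\mathcal{S}(r)\|_{L^2}\le\|\nu_{\mathfrak{c}}^{-1}w_\beta\mathcal{S}(r)\|_{L^\infty}\cdot\|\sqrt{\nu_{\mathfrak{c}}}\,w_\beta^{-1}\sqrt{\mu}\,\|\dots$ — more simply, $\|\mathcal{S}(r)\|_{L^2}\lesssim\|\nu_{\mathfrak{c}}^{-1}w_\beta\mathcal{S}(r)\|_{L^\infty}$ uniformly in $\mathfrak{c}$ because $\nu_{\mathfrak{c}}w_\beta^{-1}\in L^2_p$ uniformly (using Lemma \ref{lem2.3}(1) and $\beta>4$), and using $\|f_{0,\mathfrak{c}}\|_{L^2}\lesssim\|w_\beta f_{0,\mathfrak{c}}\|_{L^\infty}$, one has
\begin{align*}
	e^{\lambda_0 s}\|f_{\mathfrak{c}}(s)\|_{L^2}
	\le C\|w_\beta f_{0,\mathfrak{c}}\|_{L^\infty}
	+C\sup_{0\le r\le s}\big\{e^{\lambda_0 r}\|\nu_{\mathfrak{c}}^{-1}w_\beta\mathcal{S}(r)\|_{L^\infty}\big\},
\end{align*}
where in the integral we extracted $\sup_r e^{\lambda_0 r}\|\nu_{\mathfrak{c}}^{-1}w_\beta\mathcal{S}\|_{L^\infty}$ and used $\int_0^s e^{(\lambda_2-2\lambda_0)r}dr\le C e^{(\lambda_2-2\lambda_0)s}$. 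Substituting this bound back gives \eqref{3.69} with $\hat C=C_N$ depending only on the fixed $N$, hence independent of $\mathfrak{c}$.

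The main obstacle is organizational rather than deep: one must choose the constants in the right order — first $\lambda_0$ small relative to $\nu_0$ and $\lambda_2$, then $N$ large to beat the $C/N$ self-absorption, and only then $\mathfrak{c}$ large to beat $C/\mathfrak{c}$ — and verify at each stage that no constant secretly depends on $\mathfrak{c}$; here Lemma \ref{lem2.3}(1) (so that $\nu_0$ and the Gaussian-type weights behave uniformly) and the $\mathfrak{c}$-independence built into Lemmas \ref{lem4.3} and \ref{lem4.5} do all the work. A minor technical point is the passage $\|\mathcal{S}\|_{L^2}\lesssim\|\nu_{\mathfrak{c}}^{-1}w_\beta\mathcal{S}\|_{L^\infty}$ uniformly in $\mathfrak{c}$, which needs $\beta>4$ so that $\nu_{\mathfrak{c}}(p)w_\beta(p)^{-1}$ is square-integrable in $p$ with a bound independent of $\mathfrak{c}$ (split $|p|\le\mathfrak{c}$ where $\nu_{\mathfrak{c}}\simeq 1+|p|$ and $|p|\ge\mathfrak{c}$ where $\nu_{\mathfrak{c}}\simeq\mathfrak{c}\le|p|\cdot\mathfrak{c}^{0}$, using $\beta>4$ in each region).
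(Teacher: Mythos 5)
Your proposal is correct and follows essentially the same route as the paper: apply Lemma \ref{lem4.3} with $\lambda_1=\lambda_0$ (the paper takes $\lambda_0=\min\{\nu_0/4,\lambda_2/4\}$), absorb the $C\max\{1/\mathfrak{c},1/N\}$ term by fixing $N$ and then $\mathfrak{c}$ large, and control the remaining $L^2$ term via Lemma \ref{lem4.5} together with $\|\mathcal{S}\|_{L^2}\lesssim\|\nu_{\mathfrak{c}}^{-1}w_\beta\mathcal{S}\|_{L^\infty}$ and $\|f_{0,\mathfrak{c}}\|_{L^2}\lesssim\|w_\beta f_{0,\mathfrak{c}}\|_{L^\infty}$. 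The only (harmless) difference is organizational — the paper bounds the $L^2$ norm first and then invokes Lemma \ref{lem4.3} — and your explicit check that $\nu_{\mathfrak{c}}w_\beta^{-1}$ is square-integrable uniformly in $\mathfrak{c}$ is a detail the paper leaves implicit.
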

    \begin{proof}
	
	 Define $\lambda_0=\min\{\f{\nu_0}{4},\f{\lambda_2}{4}\}$ and take $\lambda_1=\lambda_0$. Using Lemma \ref{lem4.5}, one has
	\begin{align*}
		\|f_{\mathfrak{c}}(t)\|_{L^2}^2\leq& Ce^{-4\lambda_0t}\|w_{\beta}f_{0,\mathfrak{c}}\|_{L^\infty}^2+C\int_0^te^{-4\lambda_0(t-s)}\|\nu_{\mathfrak{c}}^{-1}w_{\beta}\mathcal{S}(s)\|_{L^\infty}^2ds\\
		\leq & Ce^{-4\lambda_0t}\|w_{\beta}f_{0,\mathfrak{c}}\|_{L^\infty}^2+Ce^{-2\lambda_0t}\sup\limits_{0\leq s\leq t}\|e^{\lambda_0s}\nu_{\mathfrak{c}}^{-1}w_{\beta}\mathcal{S}(s)\|_{L^\infty}^2,
	\end{align*}
which implies that
\begin{align*}
	e^{\lambda_0t}\|f_{\mathfrak{c}}(t)\|_{L^2}\leq C\|w_{\beta}f_{0,\mathfrak{c}}\|_{L^\infty}+C\sup\limits_{0\leq s\leq t}\|e^{\lambda_0s}\nu_{\mathfrak{c}}^{-1}w_{\beta}\mathcal{S}(s)\|_{L^\infty}.
\end{align*}
Recall  \eqref{4.10},  we have
\begin{align}\label{3.70}
	e^{\lambda_0t}|\mathfrak{h}_{\mathfrak{c}}(t,x,p)|&\leq C\|\mathfrak{h}_{0,\mathfrak{c}}\|_{L^\infty}+C\sup\limits_{0\leq s \leq t}\|e^{\lambda_0 s}\nu_{\mathfrak{c}}^{-1}w_{\beta}\mathcal{S}(s)\|_{L^\infty}+C_N\sup\limits_{0\leq s\leq t}\|e^{\lambda_0 s}f_{\mathfrak{c}}(s)\|_{L^2}\nonumber\\
	&\quad +C\max\Big\{\f{1}{\fc},\f{1}{N}\Big\}\sup\limits_{0\leq s\leq t}\|e^{\lambda_0 s}\mathfrak{h}_{\mathfrak{c}}(s)\|_{L^\infty}\nonumber\\
	&\leq C_N\|\mathfrak{h}_{0,\mathfrak{c}}\|_{L^\infty}+C_N\sup\limits_{0\leq s\leq t}\|e^{\lambda_0 s}\nu_{\mathfrak{c}}^{-1}w_{\beta}\mathcal{S}(s)\|_{L^\infty}\nonumber\\
	&\quad+C\max\Big\{\f{1}{\fc},\f{1}{N}\Big\}\sup\limits_{0\leq s \leq t}\|e^{\lambda_0 s}\mathfrak{h}_{\mathfrak{c}}(s)\|_{L^\infty}.
\end{align}
Take $N$ and $\mathfrak{c}$ suitably large such that $C\max\Big\{\f{1}{\fc},\f{1}{N}\Big\}\le \frac{1}{2}$, then we can conclude \eqref{3.69} from \eqref{3.70}. Therefore the proof of Proposition \ref{prop4.6} is completed.
\end{proof}	

\subsection{Local existence}

The local existence of \eqref{1.3-0}--\eqref{1.4-0} has been proved in \cite[Theorem 3.1]{Wang} for $\mathfrak{c}=1$. In the present paper, we need to obtain  the local solution with uniform-in-$\mathfrak{c}$ estimates.
\begin{Proposition}[Local existence]\label{prop3.5}
	Let $\beta>4$, $F_{0,\mathfrak{c}}(x, p)=J_{\mathfrak{c}}(p)+\sqrt{J_{\mathfrak{c}}(p)} f_{0,\mathfrak{c}}(x, p) \geq 0$, and $\left\|w_{\beta} f_{0,\mathfrak{c}}\right\|_{L^{\infty}}<\infty$. Then there exists a positive time
	$$
	t_1:=\left(8 C_1\left[1+\left\|w_{\beta} f_{0,\mathfrak{c}}\right\|_{L^{\infty}}\right]\right)^{-1}>0,
	$$
	such that the relativistic Boltzmann equation \eqref{1.3-0}, \eqref{1.4-0} has a unique mild solution $F_{\mathfrak{c}}(t, x, p)=J_{\mathfrak{c}}(p)+\sqrt{J_{\mathfrak{c}}(p)} f_{\mathfrak{c}}(t, x, p) \geq 0$ in the time interval $t \in\left[0, t_1\right]$ and satisfies
	$$
	\left\|w_{\beta} f_{\mathfrak{c}}(t)\right\|_{L^{\infty}} \leq 2\left\|w_{\beta} f_{0,\mathfrak{c}}\right\|_{L^{\infty}},\quad 0 \leq t \leq t_1,
	$$
	where the positive constant $C_1 \geq 1$ depends only on $\beta$ and is independent of $\mathfrak{c}$. 
\end{Proposition}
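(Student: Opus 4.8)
The plan is to prove Proposition \ref{prop3.5} by a standard iteration scheme on the mild formulation, with all constants tracked to be independent of $\mathfrak{c}$. First I would set up the iterates: let $\mathfrak{h}_{\mathfrak{c}}^{0}:=e^{-\nu_{\mathfrak{c}}(p)t}\mathfrak{h}_{0,\mathfrak{c}}(x-\hat p t,p)$, and inductively define $\mathfrak{h}^{n+1}_{\mathfrak{c}}=w_\beta f^{n+1}_{\mathfrak{c}}$ as the mild solution of the linear transport problem
\begin{align*}
	\partial_t f^{n+1}_{\mathfrak{c}}+\hat p\cdot\nabla_x f^{n+1}_{\mathfrak{c}}+\nu_{\mathfrak{c}}(p)f^{n+1}_{\mathfrak{c}}=\mathbf{K}_{\mathfrak{c}}f^{n}_{\mathfrak{c}}+\mathbf{\Gamma}_{\mathfrak{c}}(f^{n}_{\mathfrak{c}},f^{n}_{\mathfrak{c}}),\qquad f^{n+1}_{\mathfrak{c}}|_{t=0}=f_{0,\mathfrak{c}},
\end{align*}
which in mild form reads
\begin{align*}
	\mathfrak{h}^{n+1}_{\mathfrak{c}}(t,x,p)=e^{-\nu_{\mathfrak{c}}t}\mathfrak{h}_{0,\mathfrak{c}}(x-\hat p t,p)+\int_0^t e^{-\nu_{\mathfrak{c}}(t-s)}\Big[\mathbf{K}_{\mathfrak{c}w}\mathfrak{h}^{n}_{\mathfrak{c}}+w_\beta\mathbf{\Gamma}_{\mathfrak{c}}\big(\tfrac{\mathfrak{h}^n_{\mathfrak{c}}}{w_\beta},\tfrac{\mathfrak{h}^n_{\mathfrak{c}}}{w_\beta}\big)\Big](s,x-\hat p(t-s),p)\,ds.
\end{align*}
The key inputs are: $\nu_{\mathfrak{c}}(p)\le C(1+|p|)\le C w_\beta$-type bounds via Lemma \ref{lem2.3}(1); the kernel bound $\int_{\mathbb{R}^3}|k_{\mathfrak{c}w}(p,q)|\,dq\le C$ from Lemma \ref{lem2.3}(2)–(3) (taking $|p|\le\mathfrak{c}$ or $|p|\ge\mathfrak{c}$); and the nonlinear estimate $\|\nu_{\mathfrak{c}}^{-1}w_\beta\mathbf{\Gamma}_{\mathfrak{c}}(f_1,f_2)\|_{L^\infty}\le C\|w_\beta f_1\|_{L^\infty}\|w_\beta f_2\|_{L^\infty}$ from Lemma \ref{lem2.7} (here $\beta>4>\tfrac72$). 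All of these constants are $\mathfrak{c}$-independent, which is the whole point.

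Next I would establish the uniform bound. Set $A:=\|w_\beta f_{0,\mathfrak{c}}\|_{L^\infty}$ and $M_n(t):=\sup_{0\le s\le t}\|\mathfrak{h}^n_{\mathfrak{c}}(s)\|_{L^\infty}$. From the mild form, using $\int_0^t e^{-\nu_{\mathfrak{c}}(p)(t-s)}\nu_{\mathfrak{c}}(p)\,ds\le 1$ to absorb the $\nu_{\mathfrak{c}}$ weights generated by $\mathbf{K}_{\mathfrak{c}w}$ and by $\mathbf{\Gamma}_{\mathfrak{c}}$, one gets
\begin{align*}
	M_{n+1}(t)\le A+ C_1 t\, M_n(t)+C_1 t\, M_n(t)^2
\end{align*}
for a constant $C_1\ge 1$ depending only on $\beta$. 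A straightforward induction shows that if $M_n(t)\le 2A$ for $t\le t_1$ with $t_1$ chosen so that $C_1 t_1(1+2A)\le \tfrac12$, i.e. $t_1=(8C_1[1+A])^{-1}$ works since then $C_1 t_1 (2A)+C_1 t_1 (2A)^2 = 2C_1 t_1 A(1+2A)\le A$, then $M_{n+1}(t)\le A+A=2A$. Hence $\sup_n M_n(t)\le 2A$ for $t\in[0,t_1]$.

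Then I would prove the sequence is Cauchy in $L^\infty_{x,p}$ with weight $w_\beta$ on $[0,t_1]$: subtracting the mild forms for $\mathfrak{h}^{n+1}_{\mathfrak{c}}-\mathfrak{h}^{n}_{\mathfrak{c}}$ and using bilinearity $\mathbf{\Gamma}_{\mathfrak{c}}(f^n,f^n)-\mathbf{\Gamma}_{\mathfrak{c}}(f^{n-1},f^{n-1})=\mathbf{\Gamma}_{\mathfrak{c}}(f^n-f^{n-1},f^n)+\mathbf{\Gamma}_{\mathfrak{c}}(f^{n-1},f^n-f^{n-1})$, together with the uniform bound $M_n\le 2A$, gives $\sup_{[0,t_1]}\|w_\beta(f^{n+1}_{\mathfrak{c}}-f^n_{\mathfrak{c}})\|_{L^\infty}\le C_1 t_1(1+4A)\sup_{[0,t_1]}\|w_\beta(f^n_{\mathfrak{c}}-f^{n-1}_{\mathfrak{c}})\|_{L^\infty}\le\tfrac12\sup_{[0,t_1]}\|w_\beta(f^n_{\mathfrak{c}}-f^{n-1}_{\mathfrak{c}})\|_{L^\infty}$ after possibly shrinking $t_1$ by a harmless numerical factor (or simply noting $C_1t_1(1+4A)\le\tfrac34<1$ with the stated $t_1$). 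Thus $f^n_{\mathfrak{c}}\to f_{\mathfrak{c}}$ in $L^\infty([0,t_1];L^\infty_{x,p}(w_\beta))$, and passing to the limit in the mild form (the collision terms pass to the limit by the same bilinear estimates) yields a mild solution with $\|w_\beta f_{\mathfrak{c}}(t)\|_{L^\infty}\le 2A$. Uniqueness follows by the same contraction estimate applied to the difference of two solutions. Nonnegativity $F_{\mathfrak{c}}=J_{\mathfrak{c}}+\sqrt{J_{\mathfrak{c}}}f_{\mathfrak{c}}\ge0$ is obtained by running the iteration at the level of $F^n$ rather than $f^n$: writing the gain/loss split so that $F^{n+1}_{\mathfrak{c}}$ solves $\partial_t F^{n+1}+\hat p\cdot\nabla_x F^{n+1}+F^{n+1}\,\mathcal{R}_{\mathfrak{c}}(F^n)=\mathcal{Q}^+_{\mathfrak{c}}(F^n,F^n)$ with $\mathcal{R}_{\mathfrak{c}}(F^n)\ge0$ and $\mathcal{Q}^+_{\mathfrak{c}}\ge0$, so by the Duhamel representation $F^{n+1}_{\mathfrak{c}}\ge0$ whenever $F^n_{\mathfrak{c}}\ge0$ and $F_{0,\mathfrak{c}}\ge0$; the two iteration schemes have the same limit by uniqueness, so $F_{\mathfrak{c}}\ge0$.

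The main obstacle is bookkeeping rather than conceptual: one must ensure every constant produced along the way — in the $\nu_{\mathfrak{c}}$ estimates, the kernel integrals $\int|k_{\mathfrak{c}w}|$, and the nonlinear bound — is genuinely independent of $\mathfrak{c}$, which is exactly why the preliminary lemmas (Lemma \ref{lem2.3}, Lemma \ref{lem2.7}) were proved in uniform-in-$\mathfrak{c}$ form; and one must be careful that the characteristics $x\mapsto x-\hat p(t-s)$ behave well (they do, since $|\hat p|\le\mathfrak{c}$ but the torus is compact and only the composition structure matters, not the speed). A minor subtlety is matching the precise value $t_1=(8C_1[1+\|w_\beta f_{0,\mathfrak{c}}\|_{L^\infty}])^{-1}$: the factor $8$ gives room to simultaneously close the a priori bound ($2C_1t_1A(1+2A)\le A$) and the contraction ($C_1t_1(1+4A)<1$), so no re-optimization is needed.
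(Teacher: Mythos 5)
Your plan has a genuine gap at its central step, the recursion $M_{n+1}(t)\le A+C_1t\,M_n(t)+C_1t\,M_n(t)^2$. In your fully explicit iteration the whole nonlinearity $\mathbf{\Gamma}_{\mathfrak{c}}(f^n,f^n)$ sits in the Duhamel source, and the only available uniform bound for it is \eqref{2.17-10}, i.e. $|w_\beta\mathbf{\Gamma}_{\mathfrak{c}}(f^n,f^n)|\le C\,\nu_{\mathfrak{c}}(p)M_n^2$: the loss part $\mathbf{\Gamma}^-_{\mathfrak{c}}(f^n,f^n)$ genuinely carries the collision frequency, which by Lemma \ref{lem2.3} grows like $1+|p|$ (up to size $\mathfrak{c}$) and is unbounded in $p$. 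Absorbing it via
\begin{align*}
\int_0^t e^{-\nu_{\mathfrak{c}}(p)(t-s)}\nu_{\mathfrak{c}}(p)\,ds=1-e^{-\nu_{\mathfrak{c}}(p)t}
\end{align*}
gives a constant of order one, \emph{not} $O(t)$, because $1-e^{-\nu_{\mathfrak{c}}(p)t}\le \nu_{\mathfrak{c}}(p)t$ is useless uniformly in $p$. So the quadratic (loss) contribution enters your recursion with an $O(1)$ coefficient, and the induction $M_n\le 2A$ cannot be closed for large $A$; the same defect appears in your contraction step, where the coefficient $C_1t_1(1+4A)$ again cannot be made small. Since Proposition \ref{prop3.5} assumes no smallness on $\left\|w_\beta f_{0,\mathfrak{c}}\right\|_{L^\infty}$ — the whole point is a lifespan $t_1\sim(1+\left\|w_\beta f_{0,\mathfrak{c}}\right\|_{L^\infty})^{-1}$ for arbitrary amplitude — your argument as written only proves the small-data case.

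The repair is exactly the scheme you relegate to a remark about nonnegativity, and it is what the paper (following \cite[Theorem 3.1]{Wang}, with the uniform-in-$\mathfrak{c}$ input being the gain-only estimate \eqref{2.18-10}) actually uses: iterate with the loss term implicit, $\partial_tF^{n+1}+\hat p\cdot\nabla_xF^{n+1}+F^{n+1}\mathcal{R}_{\mathfrak{c}}(F^n)=\mathcal{Q}^+_{\mathfrak{c}}(F^n,F^n)$. Then the Duhamel damping factor $e^{-\int_s^t\mathcal{R}_{\mathfrak{c}}(F^n)}$ is $\le 1$ precisely because $F^n\ge 0$, and the only source is the gain term, which by \eqref{2.18-10} is bounded in the $w_\beta$-weighted $L^\infty$ norm \emph{without} any $\nu_{\mathfrak{c}}$ weight, so the time integral really produces the factor $t$ and yields $\sup_{0\le s\le t}\|w_\beta f^{n+1}(s)\|_{L^\infty}\le \|w_\beta f_{0,\mathfrak{c}}\|_{L^\infty}+C_1t\,(1+\sup_{0\le s\le t}\|w_\beta f^{n}(s)\|_{L^\infty})^2$, from which the stated $t_1$ closes the bound. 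Note that positivity is not an afterthought you can bolt on by "uniqueness of the two schemes": it is needed inside the estimate (to guarantee $\mathcal{R}_{\mathfrak{c}}(F^n)\ge 0$, hence the damping factor $\le1$), so the uniform bound, the contraction, and the propagation of $F^n\ge0$ must all be run on this single implicit iteration.
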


\begin{proof}
	It follows from Lemma \ref{lem2.7} that 
	\begin{align}\label{3.57-40}
		\|w_{\beta}\mathbf{\Gamma}_{\mathfrak{c}}^{+}\left(f_{1},f_{2}\right)\|_{L^{\infty}}\le C\|w_{\beta}f_{1}\|_{L^{\infty}}\|w_{\beta}f_{2}\|_{L^{\infty}},
	\end{align}
	where the constant $C$ is independent of $\mathfrak{c}$. Then by similar arguments as in \cite[Theorem 3.1]{Wang}, one can obtain the local existence for the problem \eqref{1.3-0}--\eqref{1.4-0}. We omit the details here for brevity of presentation. Therefore the proof of Proposition \ref{prop3.5} is completed.
\end{proof}

\subsection{Proof of Theorem \ref{thm1.1}} 

Let $f_{\mathfrak{c}}$ be the solution of \eqref{5.8}--\eqref{5.9} in $[0,t]$. Applying Proposition \ref{prop4.6}, we have
\begin{align}\label{3.88}
	\sup _{0 \le s \le t} \big\{e^{\lambda_{0} s}\|w_{\beta} f_{\mathfrak{c}}(s)\|_{L^{\infty}}\big\} \le \hat{C}\left\|w_{\beta} f_{0,\mathfrak{c}}\right\|_{L^{\infty}}+\hat{C}\sup _{0 \le s \le t} \big\{e^{\lambda_{0} s}\left\|\nu_{\mathfrak{c}}^{-1} w_{\beta} \mathbf{\Gamma}_{\mathfrak{c}}(f_{\mathfrak{c}},f_{\mathfrak{c}})(s)\right\|_{L^{\infty}}\big\}.
\end{align}
Noting Proposition \ref{prop3.5}, we make the {\it a prior} assumption 
\begin{align*}
\sup\limits_{0\leq s\leq t}\|w_{\beta}f_{\mathfrak{c}}(s)\|_{L^\infty}\leq 2\hat{C}\|w_{\beta}f_{0,\mathfrak{c}}\|_{L^\infty}.
\end{align*}
It follows from \eqref{3.88} and \eqref{2.17-10} that
\begin{align*}
	\sup_{0 \le s \le t} \big\{e^{\lambda_{0} s}\|w_{\beta} f_{\mathfrak{c}}(s)\|_{L^{\infty}}\big\} 
	&\le \hat{C}\left\|w_{\beta} f_{0,\mathfrak{c}}\right\|_{L^{\infty}}+ C \hat{C} \sup _{0 \le s \le t} \big\{e^{\lambda_{0} s}\left\|w_{\beta}f_{\mathfrak{c}}(s)\right\|_{L^{\infty}}^2\big\}\nonumber\\
	&\leq \hat{C}\left\|w_{\beta} f_{0,\mathfrak{c}}\right\|_{L^{\infty}} + 2C\hat{C}^2\left\|w_{\beta} f_{0,\mathfrak{c}}\right\|_{L^{\infty}}\cdot \sup _{0 \le s \le t} \big\{e^{\lambda_{0} s}\left\|w_{\beta}f_{\mathfrak{c}}(s)\right\|_{L^{\infty}}\big\}.
\end{align*}
Taking $\left\|w_{\beta} f_{0,\mathfrak{c}}\right\|_{L^{\infty}}$ small so that $2C\hat{C}^2\left\|w_{\beta} f_{0,\mathfrak{c}}\right\|_{L^{\infty}}\le \frac{1}{4}$, then we obtain
\begin{align*}
	\sup_{0 \le s \le t} \big\{e^{\lambda_{0} s}\|w_{\beta} f_{\mathfrak{c}}(s)\|_{L^{\infty}}\big\}\le \frac{4}{3}\hat{C}\|w_{\beta} f_{0,\mathfrak{c}}\|_{L^\infty}.
\end{align*}
Thus we have closed the {\it a priori} assumption. The global existence follows from the standard continuity argument and the estimate \eqref{1.31} follows. Therefore the proof of Theorem \ref{thm1.1} is completed.
$\hfill\Box$

\section{Global-in-time Newtonian limit in $L^1_pL^{\infty}_{x}$}

In this section, we aim to prove Theorem \ref{thm1.3}, i.e., the Newtonian limit in $L^1_pL^{\infty}_{x}$. Recall \eqref{2.4-10}, the problem \eqref{1.14-0}, \eqref{1.17-0} admits a global mild solution $F(t,x,p)=\mu(p)+\sqrt{\mu(p)}f(t,x,p)\geq 0$ with
\begin{align}\label{4.4-0}
	f(t,x,p)&=e^{-\nu(p)t}f_0(x-pt,p)+\int_0^te^{-\nu(p)(t-s)}(\mathbf{K}f)(s,x-p(t-s),p)ds\nonumber\\
	&\qquad +\int_0^te^{-\nu(p)(t-s)}\mathbf{\Gamma}(f,f)(s,x-p(t-s),p)ds,
\end{align}
where $f_0(x,p)=\frac{F_0(x,p)-\mu(p)}{\sqrt{\mu(p)}}$. It is clear that  \eqref{1.6}  is equivalent to 
\begin{align}\label{4.3-0}
	\|\sqrt{\mu}(\tau^x_hf_0-f_0)\|_{L^1_pL^{\infty}_x}+\|\sqrt{\mu}(\tau^p_hf_0-f_0)\|_{L^1_pL^{\infty}_x}\le \tilde{C}_3|h|,\quad |h|<1.
\end{align}

\subsection{Continuity of the mild solution of  Newtonian Boltzmann equation} 

Motivated by \cite{Strain}, we first present a key lemma which states the continuity of the mild form of the Newtonian Boltzmann equation. Such a continuity of Newtonian Boltzmann equation will play an important role in the proof of Newtonian limit of relativistic Boltzmann equation.

\begin{Lemma}\label{lem4.1}
	Let the assumptions on Newtonian Boltzmann equation in Theorem \ref{thm1.3} hold. For any $0<\varepsilon<1$, there is a constant $C=C(\varepsilon)>0$ such that for any $0 \leq t\le  (-\log h)^{\alpha_0}$ with $0< \alpha_0<\frac{1}{2}$, we have
	\begin{align*}
		\sup_{|u| \leq|h|}\left(\left\|\sqrt{\mu}(\tau_u^x f(t)-f(t))\right\|_{L_p^1 L_x^{\infty}}+\left\|\sqrt{\mu}(\tau_u^p f(t)-f(t))\right\|_{L_p^1 L_x^{\infty}}\right) \leq C(\varepsilon)|h|^{1-\varepsilon}, \quad|h|<1 .
	\end{align*}
\end{Lemma}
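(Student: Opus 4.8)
The plan is to derive a Grönwall-type differential inequality for the quantity
\[
A(t):=\sup_{|u|\le|h|}\Big(\big\|\sqrt{\mu}\,(\tau_u^xf(t)-f(t))\big\|_{L^1_pL^\infty_x}+\big\|\sqrt{\mu}\,(\tau_u^pf(t)-f(t))\big\|_{L^1_pL^\infty_x}\Big)
\]
by working directly from the mild form \eqref{4.4-0}. Applying $\tau_u^x$ and $\tau_u^p$ to \eqref{4.4-0} and subtracting, each of the three terms (transport of initial data, $\mathbf{K}f$ term, $\mathbf{\Gamma}(f,f)$ term) splits into a piece where the translation hits an ``inner'' copy of $f$ (or $f_0$) and a piece where it hits a coefficient ($\nu(p)$, the kernel $k(p,q)$, the collision kernel $\mathcal{K}_\infty$, or the shift $x-p(t-s)$). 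For the $x$-translation the free-transport streaming simply composes translations, so $\tau_u^x$ commutes cleanly with the flow; for the $p$-translation it does not, and the shift $x-(p+u)(t-s)$ versus $x-p(t-s)$ produces an extra spatial displacement of size $|u|(t-s)\le|h|t$, which is why the time restriction $t\le(-\log h)^{\alpha_0}$ enters. First I would treat the initial-data term: $\|\sqrt\mu(\tau_u^xf_0-f_0)\|+\|\sqrt\mu(\tau_u^pf_0-f_0)\|\le\tilde C_3|h|$ by \eqref{4.3-0} (plus the $e^{-\nu(p)t}$ factor and a harmless Jacobian from the change of variable $x\mapsto x-pt$), and for the $p$-translation the displacement in the spatial argument of $f_0$ is absorbed by the $L^\infty_x$ norm.

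Next I would handle the linear term $\int_0^te^{-\nu(p)(t-s)}(\mathbf{K}f)(s,x-p(t-s),p)\,ds$, writing $(\mathbf{K}f)(p)=\int k(p,q)f(q)\,dq$ with $k=k_2-k_1$. Translating in $q$ inside the integral and using the smoothness of $k(p,q)$ in both variables together with the decay estimate \eqref{2.12-20} (and the Gaussian weight converting $k$-bounds against $L^\infty_p$ data into $L^1_p$ bounds), the part where $\tau_u$ hits the inner $f$ contributes $\int_0^t\|\sqrt\nu\|\,A(s)\,ds$ up to constants, i.e. exactly the Grönwall structure; the part where $\tau_u$ hits $k$ or $\nu$ contributes $\lesssim|h|$ times $\sup_s\|\sqrt\mu f(s)\|$, which is bounded by $\hat C_2$ via the hypothesis $\|f(t)\|_{L^\infty_{x,p}}\le\hat C_2e^{-\tilde\sigma t}$ and the weight. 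The nonlinear term is analogous but quadratic: using $\mathbf{\Gamma}(f,f)=\mathbf{\Gamma}^+(f,f)-\mathbf{\Gamma}^-(f,f)$, the change of variables $(\bar p',\bar q')\leftrightarrow(p,q)$, the bound $\|\nu^{-1}w_\beta\mathbf{\Gamma}(h_1,h_2)\|_{L^\infty_p}\lesssim\|w_\beta h_1\|_{L^\infty_p}\|w_\beta h_2\|_{L^\infty_p}$ stated in the excerpt, and Lemma~\ref{lem1.2}'s decay $\|w_\beta f(t)\|_{L^\infty}\lesssim e^{-\bar\sigma t}$, one gets a contribution $\lesssim\int_0^t e^{-c s}A(s)\,ds$ from translating an inner $f$, and $\lesssim|h|\int_0^t e^{-cs}\,ds\lesssim|h|$ from translating the kernel $\mathcal{K}_\infty$ (which is Lipschitz away from $p=q$, handled via the usual $|\omega\cdot(p-q)|$ change of variables) or from the spatial shift in the $p$-translation.

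Collecting these estimates yields $A(t)\le C_1|h|+C_1|h|\,t+C_2\int_0^t A(s)\,ds$, where the $C_1|h|\,t$ term is the spatial-displacement contribution present only for the $p$-translation. Grönwall's inequality then gives $A(t)\le C_1|h|(1+t)e^{C_2t}$. Using $t\le(-\log h)^{\alpha_0}$ with $\alpha_0<\tfrac12<1$, we have $e^{C_2t}=e^{C_2(-\log h)^{\alpha_0}}$, and since $(-\log h)^{\alpha_0}=o(-\log h)$ as $h\to0$, for any $\varepsilon>0$ there is $C(\varepsilon)$ with $e^{C_2(-\log h)^{\alpha_0}}\le C(\varepsilon)|h|^{-\varepsilon/2}$; the polynomial factor $(1+t)$ is likewise absorbed into another $|h|^{-\varepsilon/2}$, giving $A(t)\le C(\varepsilon)|h|^{1-\varepsilon}$, which is the claim. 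The main obstacle I anticipate is the bookkeeping for the $p$-translation: making precise that the mismatch between the characteristics $x-p(t-s)$ and $x-(p+u)(t-s)$ contributes only an $O(|h|t)$ error in the $L^1_pL^\infty_x$ norm — this requires combining a spatial translation (harmless in $L^\infty_x$, using $\|\tau_{u(t-s)}^xf(s)-f(s)\|\le A(s)$ only for $|u(t-s)|<1$, which again forces $t\le(-\log h)^{\alpha_0}$ so that $|h|t<1$) with control of $\tau_u^pf$, and careful use of \eqref{4.3-0} rather than any differentiability of $f_0$. A secondary technical point is ensuring all $p$-integrals converge: the Gaussian factors $\sqrt\mu$ and the weight $w_\beta$ with $\beta>4$ from the standing hypotheses guarantee this, together with the kernel estimates \eqref{2.12-20} and the fact that $\tau_u^p\sqrt\mu$ differs from $\sqrt\mu$ by $O(|h|)$ in a weighted $L^1_p$ sense.
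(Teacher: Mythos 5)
There is a genuine gap, and it sits exactly at the point you pass over most quickly: the claim that the terms where the translation hits the inner $f$ close up as $C_2\int_0^tA(s)\,ds$ with a constant $C_2$ \emph{independent of any momentum cutoff (hence of $h$)}. The norm you propagate is $\|\sqrt{\mu}\,(\tau_u f-f)\|_{L^1_pL^\infty_x}$, with the fixed Gaussian weight $\sqrt{\mu}$ and no extra polynomial decay, while the hard-sphere kernel $\mathcal{K}_\infty(p,q,\omega)=|\omega\cdot(p-q)|$ is unbounded in $(p,q)$. After the change of variables $(p,q)\mapsto(\bar p',\bar q')$ in the gain terms you are left with $\int(1+|\bar p'|)\sqrt{\mu(\bar p')}\,\|\tau_u f(\bar p')-f(\bar p')\|_{L^\infty_x}\,d\bar p'$, and the factor $1+|\bar p'|$ cannot be absorbed into $\|\sqrt{\mu}(\tau_u f-f)\|_{L^1_pL^\infty_x}$. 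The same linear loss appears if you instead use Grad's kernel for the linear part: what you need is $\sup_q\mu(q)^{-1/2}\int\sqrt{\mu(p)}\,|k(p,q)|\,dp<\infty$, but the weight $\mu^{1/2}$ is exactly critical for $k_2$ and this supremum grows like $1+|q|$ (a direct computation with $e^{-|p|^2/4}k_2(p,q)e^{|q|^2/4}=c|p-q|^{-1}e^{-(p\cdot(p-q))^2/(2|p-q|^2)}$ shows the $p$-integral is of order $|q|$); the estimate \eqref{2.12-20}, which integrates in $q$ for fixed $p$ with polynomial weight ratios, does not address this dual, Gaussian-critical direction. Likewise, the bilinear bound $\|\nu^{-1}w_\beta\mathbf{\Gamma}(h_1,h_2)\|_{L^\infty_p}\lesssim\|w_\beta h_1\|_{L^\infty_p}\|w_\beta h_2\|_{L^\infty_p}$ cannot be applied with $h_i=\tau_u f-f$, since the difference is only controlled in $L^1_pL^\infty_x$, not in weighted $L^\infty_p$. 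So the inequality $A(t)\le C_1|h|+C_1|h|t+C_2\int_0^tA(s)\,ds$ with $h$-independent $C_2$ is not available by your route.

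The paper resolves this by cutting the momentum integration: on $|p|+|q|\ge R$ it uses only $\|f\|_{L^\infty_{x,p}}\le\hat C_2$ and the Gaussian factors to get a contribution $\lesssim e^{-R^2/16}$, and on $|p|+|q|<R$ the kernel is $\le R$, so the Gr\"onwall coefficient is $CR$, not a fixed constant. Choosing $R=(-\log|h|)^{\alpha_1}$ with $\tfrac12<\alpha_1<1-\alpha_0$, Lemma \ref{lem2.5}(2) makes $e^{-R^2/16}\lesssim|h|$ (this needs $\alpha_1>\tfrac12$), and Lemma \ref{lem2.5}(1) gives $e^{CRt}\le e^{C(-\log|h|)^{\alpha_0+\alpha_1}}\lesssim|h|^{-\epsilon}$ for $t\le(-\log|h|)^{\alpha_0}$ (this needs $\alpha_0+\alpha_1<1$). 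That balance is the true origin of the restriction $\alpha_0<\tfrac12$ in the statement; your attribution of the time restriction to the characteristics mismatch (requiring only $|h|t<1$) is not the binding constraint, and indeed your scheme, if the constant were uniform, would prove the lemma for every $\alpha_0<1$ — a sign the core difficulty has been bypassed. Two smaller points: the bound $\|\tau^x_{u(t-s)}f(s)-f(s)\|\le A(s)$ you invoke is not valid because $|u|(t-s)$ may exceed $|h|$; the paper telescopes the displacement into steps of size $\le|h|$, paying a factor $(t-s)+1$ that is absorbed by $e^{-\nu_0(t-s)}$, and then feeds in the already-established $x$-continuity from Step 1. Apart from these, your treatment of the initial-data terms and of the difference $e^{-\nu(p+h)t}-e^{-\nu(p)t}$ (interpolation giving $|h|^{1-\epsilon}$) is in the right spirit.
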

\begin{proof} 
	We split the proof into two steps.\\
	\noindent{\textit{Step 1: Estimate on the $x$ difference.}}   
	It follows that \eqref{4.4-0} that
	\begin{align*}
		\tau^x_hf(t,x,p)&=e^{-\nu(p)t}f_0(x+h-pt,p)+\int_0^t e^{-\nu(p)(t-s)}(\mathbf{K}f)(s,x+h-p(t-s),p)ds\nonumber\\
		&\quad +\int_0^te^{-\nu(p)(t-s)}\mathbf{\Gamma}(f,f)(s,x+h-p(t-s),p)ds.
	\end{align*}
    Denote $y:=x-p(t-s)$ and $y_h:=x+h-p(t-s)$, then we have
    \begin{align}\label{4.6-0}
    	&\sqrt{\mu(p)}\big[\tau^x_hf(t,x,p)-f(t,x,p)\big]\nonumber\\
    	&=\sqrt{\mu(p)}e^{-\nu(p)t}\Big[f_0(x+h-pt,p)-f_0(x-pt,p)\Big]\nonumber\\
    	&\quad +\int_0^t\sqrt{\mu(p)}e^{-\nu(p)(t-s)}\Big[(\mathbf{K}f)(s,y_h,p)-(\mathbf{K}f)(s,y,p)\Big]ds\nonumber\\
    	&\quad +\int_0^t\sqrt{\mu(p)}e^{-\nu(p)(t-s)}\Big[\mathbf{\Gamma}(f,f)(s,y_h,p)-\mathbf{\Gamma}(f,f)(s,y,p)\Big]ds\nonumber\\
    	&:=\sum_{i=1}^3\mathcal{J}_i.
    \end{align}

	Using the translation invariance of the $L^{\infty}_x$ norm, we have from \eqref{4.3-0} that
	\begin{align}\label{4.7-0}
		\Big\|\mathcal{J}_1\Big\|_{L^1_pL^{\infty}_x}\le \left\|\sqrt{\mu}(\tau_h^x f_0-f_0)\right\|_{L_p^1 L_x^{\infty}}\lesssim |h|.
	\end{align}
	
	For $\mathcal{J}_2$, one has
	\begin{align*} 
		\mathcal{J}_2&=\int_0^tds\int_{\mathbb{R}^3\times \mathbb{S}^2}\mathcal{K}_{\infty}(p,q,\omega)e^{-\nu(p)(t-s)}\sqrt{\mu(p)\mu(q)\mu(\bar{q}')}\Big[f(s,y_h,\bar{p}')-f(s,y,\bar{p}')\Big]d\omega dq\nonumber\\
		&\quad +\int_0^tds\int_{\mathbb{R}^3\times \mathbb{S}^2}\mathcal{K}_{\infty}(p,q,\omega)e^{-\nu(p)(t-s)}\sqrt{\mu(p)\mu(q)\mu(\bar{p}')}\Big[f(s,y_h,\bar{q}')-f(s,y,\bar{q}')\Big]d\omega dq\nonumber\\
		&\quad -\int_0^tds\int_{\mathbb{R}^3\times \mathbb{S}^2}\mathcal{K}_{\infty}(p,q,\omega)e^{-\nu(p)(t-s)}\mu(p)\sqrt{\mu(q)}\Big[f(s,y_h,q)-f(s,y,q)\Big]d\omega dq\nonumber\\
		&:=\mathcal{J}_{21}+\mathcal{J}_{22}-\mathcal{J}_{23}.
	\end{align*}
	We fix a suitably large $R>0$ and split the integrals into an unbounded region $|p|+|q| \geq R$ and a bounded region $|p|+|q|<R$.

	On the unbounded region, using the uniform bound $\|f\|_{L^{\infty}_{x,p}}\le \hat{C}_2<\infty$, for $i=1,2,3$, one has
	\begin{align*}
		\|\mathcal{J}_{2i}\|_{L_p^1(|p|+|q|\ge R) L_x^{\infty}}&\lesssim \int_0^t e^{-\nu_0(t-s)} d s \int_{\mathbb{R}^3 \times \mathbb{R}^3 \times \mathbb{S}^{2}}  \mathcal{K}_{\infty}(p,q,\omega) \mathbf{1}_{|p|+|q| \geq R} \sqrt{\mu(p)\mu(q)} d \omega d q d p\nonumber\\
		&\lesssim  e^{-\frac{R^2}{16}}.
	\end{align*}
	
	On the bounded region, in view of $\mathcal{K}_{\infty}(p,q,\omega)=|\omega\cdot (p-q)|\le |p|+|q|<R$, we have 
	\begin{align*}
		&\|\mathcal{J}_{21}\|_{L_p^1(|p|+|q|< R) L_x^{\infty}}\nonumber\\
		&\leq \int_0^t e^{-\nu_0(t-s)}d s \int_{\mathbb{R}^3 \times \mathbb{R}^3 \times \mathbb{S}^{2}}  \mathbf{1}_{|p|+|q|<R} \mathcal{K}_{\infty}(p,q,\omega)\nonumber\\
		&\qquad \times \sqrt{\mu(p)\mu(q)}\left\|\tau_h^x f\left(\bar{p}^{\prime}\right)-f\left(\bar{p}^{\prime}\right)\right\|_{L_x^{\infty}}d \omega d p d q \nonumber\\
		&\leq R\int_0^t e^{-\nu_0(t-s)}d s \int_{\mathbb{R}^3 \times \mathbb{R}^3 \times \mathbb{S}^{2}}  \sqrt{\mu(\bar{p}')\mu(\bar{q}')}\left\|\tau_h^x f\left(\bar{p}^{\prime}\right)-f\left(\bar{p}^{\prime}\right)\right\|_{L_x^{\infty}}d \omega d p d q \nonumber\\
		&= R\int_0^t e^{-\nu_0(t-s)}d s \int_{\mathbb{R}^3 \times \mathbb{R}^3 \times \mathbb{S}^{2}}  \sqrt{\mu(\bar{p}')\mu(\bar{q}')}\left\|\tau_h^x f\left(\bar{p}'\right)-f\left(\bar{p}'\right)\right\|_{L_x^{\infty}}d \omega d \bar{p}' d \bar{q}'  \nonumber\\
		&\lesssim R \int_0^t e^{-\nu_0(t-s)} \left\|\sqrt{\mu}(\tau_h^x f-f)(s)\right\|_{L_p^1 L_x^{\infty}}ds.
	\end{align*}
	We can bound $\|\mathcal{J}_{22}\|_{L_p^1(|p|+|q|< R) L_x^{\infty}}$ and $\|\mathcal{J}_{23}\|_{L_p^1(|p|+|q|< R) L_x^{\infty}}$ similarly to get the same bound. Hence we have
	\begin{align}\label{4.11}
		\|\mathcal{J}_{2}\|_{L_p^1 L_x^{\infty}}&\lesssim e^{-\frac{R^2}{16}}+R \int_0^t e^{-\nu_0(t-s)} \left\|\sqrt{\mu}(\tau_h^x f-f)(s)\right\|_{L_p^1 L_x^{\infty}}ds.
	\end{align}
	
	For $\mathcal{J}_{3}$, a direct calculation shows that
	\begin{align}\label{4.12}
		\mathcal{J}_{3}&=\int_0^tds\int_{\mathbb{R}^3\times \mathbb{S}^2}\mathcal{K}_{\infty}(p,q,\omega)e^{-\nu(p)(t-s)}\sqrt{\mu(p)\mu(q)}\Big[f(s,y_h,\bar{p}')-f(s,y,\bar{p}')\Big]f(s,y_h,\bar{q}')d\omega dq\nonumber\\
		&\quad +\int_0^tds\int_{\mathbb{R}^3\times \mathbb{S}^2}\mathcal{K}_{\infty}(p,q,\omega)e^{-\nu(p)(t-s)}\sqrt{\mu(p)\mu(q)}\Big[f(s,y_h,\bar{q}')-f(s,y,\bar{q}')\Big]f(s,y,\bar{p}')d\omega dq\nonumber\\
		&\quad -\int_0^tds\int_{\mathbb{R}^3\times \mathbb{S}^2}\mathcal{K}_{\infty}(p,q,\omega)e^{-\nu(p)(t-s)}\sqrt{\mu(p)\mu(q)}\Big[f(s,y_h,p)-f(s,y,p)\Big]f(s,y_h,q)d\omega dq\nonumber\\
		&\quad -\int_0^tds\int_{\mathbb{R}^3\times \mathbb{S}^2}\mathcal{K}_{\infty}(p,q,\omega)e^{-\nu(p)(t-s)}\sqrt{\mu(p)\mu(q)}\Big[f(s,y_h,q)-f(s,y,q)\Big]f(s,y,p)d\omega dq\nonumber\\
		&:=\mathcal{J}_{31}+\mathcal{J}_{32}-\mathcal{J}_{33}-\mathcal{J}_{34}.
	\end{align}
	On the unbounded region,  noting $\|f\|_{L^{\infty}_{x,p}}\le \hat{C}_2<\infty$, one has
	\begin{align}\label{4.13}
		\sum_{i=1}^4\|\mathcal{J}_{3i}\|_{L_p^1(|p|+|q|\ge R) L_x^{\infty}}\lesssim e^{-\frac{R^2}{16}}.
	\end{align}
	On the bounded region, we have
	\begin{align}\label{4.14}
		\|\mathcal{J}_{31}\|_{L_p^1(|p|+|q|< R) L_x^{\infty}}&\lesssim \int_0^t e^{-\nu_0(t-s)}d s \int_{\mathbb{R}^3 \times \mathbb{R}^3 \times \mathbb{S}^{2}}  \mathbf{1}_{|p|+|q|<R} \mathcal{K}_{\infty}(p,q,\omega)\nonumber\\
		&\qquad \times \sqrt{\mu(p)\mu(q)}\left\|\tau_h^x f\left(\bar{p}^{\prime}\right)-f\left(\bar{p}^{\prime}\right)\right\|_{L_x^{\infty}}d \omega d q d p \nonumber\\
		&\lesssim R\int_0^t e^{-\nu_0(t-s)}d s \int_{\mathbb{R}^3 \times \mathbb{R}^3 \times \mathbb{S}^{2}}  \sqrt{\mu(\bar{p}')\mu(\bar{q}')}\left\|\tau_h^x f\left(\bar{p}^{\prime}\right)-f\left(\bar{p}^{\prime}\right)\right\|_{L_x^{\infty}}d \omega d \bar{p}' d \bar{q}' \nonumber\\
		&\lesssim R \int_0^t e^{-\nu_0(t-s)} \left\|\sqrt{\mu}(\tau_h^x f-f)(s)\right\|_{L_p^1 L_x^{\infty}}ds.
	\end{align}
	Similarly, we can obtain
	\begin{align}\label{4.15}
		\sum_{i=2}^4\|\mathcal{J}_{3i}\|_{L_p^1(|p|+|q|< R) L_x^{\infty}}&\lesssim R \int_0^t e^{-\nu_0(t-s)} \left\|\sqrt{\mu}(\tau_h^x f-f)(s)\right\|_{L_p^1 L_x^{\infty}}ds.
	\end{align}
	Combining \eqref{4.12}--\eqref{4.15}, one has
	\begin{align}\label{4.16}
		\|\mathcal{J}_{3}\|_{L_p^1 L_x^{\infty}}&\lesssim e^{-\frac{R^2}{16}}+R \int_0^t e^{-\nu_0(t-s)} \left\|\sqrt{\mu}(\tau_h^x f-f)(s)\right\|_{L_p^1 L_x^{\infty}}ds.
	\end{align}
	It follows from \eqref{4.6-0}--\eqref{4.7-0}, \eqref{4.11} and \eqref{4.16} that 
	\begin{align*}
		\left\|\sqrt{\mu}(\tau_h^x f-f)(t)\right\|_{L_p^1 L_x^{\infty}}&\le C|h|+C e^{-\frac{R^2}{16}}+ CR \int_0^t e^{-\nu_0(t-s)} \left\|\sqrt{\mu}(\tau_h^x f-f)(s)\right\|_{L_p^1 L_x^{\infty}}ds.
	\end{align*}

	Now choose $R=(-\log |h|)^{\alpha_1}$ with $\alpha_1=\frac{3}{4}-\frac{\alpha_0}{2}$, which yields that $\frac{1}{2}<\alpha_1<1-\alpha_0$. Using Lemma \ref{lem2.5} with $z=\frac{1}{|h|}$ and $|h|<1$, we have
	\begin{align*}
		\left\|\sqrt{\mu}(\tau_h^x f-f)(t)\right\|_{L_p^1 L_x^{\infty}}&\le C|h|+ CR \int_0^t e^{-\nu_0(t-s)} \left\|\sqrt{\mu}(\tau_h^x f-f)(s)\right\|_{L_p^1 L_x^{\infty}}ds.
	\end{align*}
	By Gr\"{o}nwall type inequality, we have
	\begin{align}\label{4.11-20}
		\left\|\sqrt{\mu}(\tau_h^x f-f)(t)\right\|_{L_p^1 L_x^{\infty}}&\le C|h|\Big( \frac{CR}{CR-\nu_0}e^{(CR-\nu_0)t}-\frac{\nu_0}{CR-\nu_0}\Big)\le C|h|e^{CRt}.
	\end{align}
	For any $0\le t\le (-\log |h|)^{\alpha_0}$, using Lemma \ref{lem2.5}, one has
	\begin{align*}
		e^{CRt}\le e^{C(-\log |h|)^{\alpha_0+\alpha_1}}\lesssim |h|^{-\epsilon},
	\end{align*}
	which, together with \eqref{4.11-20}, yields that
	\begin{align}\label{4.17-0}
		\left\|\sqrt{\mu}(\tau_h^x f-f)(t)\right\|_{L_p^1 L_x^{\infty}}&\le C(\epsilon) |h|^{1-\epsilon}.
	\end{align}

	{\textit{Step 2: Estimate on the $p$ difference.}}  
	It follows from \eqref{4.4-0} that
	\begin{align}\label{4.19-0}
		\tau^p_hf(t,x,p)&=e^{-\nu(p+h)t}f_0(x-(p+h)t,p+h)\nonumber\\
		&\quad +\int_0^t e^{-\nu(p+h)(t-s)}(\mathbf{K}f)(s,x-(p+h)(t-s),p+h)ds \nonumber\\
		&\quad +\int_0^te^{-\nu(p+h)(t-s)}\mathbf{\Gamma}(f,f)(s,x-(p+h)(t-s),p+h)ds.
	\end{align}
	Denote $\tilde{y}_h:=x-(p+h)(t-s)$ and recall that $y=x-p(t-s)$. Using \eqref{4.19-0} and \eqref{4.4-0}, one has
	\begin{align*} 
		&\sqrt{\mu(p)}\big[\tau^p_hf(t,x,p)-f(t,x,p)\big]\nonumber\\
		&=\sqrt{\mu(p)}\Big[e^{-\nu(p+h)t}f_0(x-(p+h)t,p+h)-e^{-\nu(p)t}f_0(x-pt,p)\Big]\nonumber\\
		&\quad +\int_0^t \sqrt{\mu(p)}e^{-\nu(p+h)(t-s)}(\mathbf{K}f)(s,\tilde{y}_h,p+h)ds-\int_0^t \sqrt{\mu(p)}e^{-\nu(p)(t-s)}(\mathbf{K}f)(s,y,p)ds\nonumber\\
		&\quad +\int_0^t \sqrt{\mu(p)}e^{-\nu(p+h)(t-s)}\mathbf{\Gamma}(f,f)(s,\tilde{y}_h,p+h)ds-\int_0^t \sqrt{\mu(p)}e^{-\nu(p)(t-s)}\mathbf{\Gamma}(f,f)(s,y,p)ds\nonumber\\
		&:=\sum_{i=1}^3\mathcal{B}_i.
	\end{align*}
	
	For pre-collisional velocities $(p+h,q)$, we denote the post-collisional velocities as 
	\begin{align*}
		\bar{p}'_h=p+h+(\omega\cdot(q-p-h))\omega,\quad \bar{q}'_h=q-(\omega\cdot(q-p-h))\omega.
	\end{align*}
	We first estimate $\mathcal{B}_1$. It holds that
	\begin{align}\label{4.20-1}
		\mathcal{B}_1&=\sqrt{\mu(p)}\Big[e^{-\nu(p+h)t}-e^{-\nu(p)t}\Big]f_0(x-(p+h)t,p+h)\nonumber\\
		&\quad +\sqrt{\mu(p)}e^{-\nu(p)t}\Big[f_0(x-(p+h)t,p+h)-f_0(x-(p+h)t,p)\Big]\nonumber\\
		&\quad +\sqrt{\mu(p)}e^{-\nu(p)t}\Big[f_0(x-(p+h)t,p)-f_0(x-pt,p)\Big]\nonumber\\
		&:=\mathcal{B}_{11}+\mathcal{B}_{12}+\mathcal{B}_{13}.
	\end{align}
	For $\mathcal{B}_{11}$, noting
\begin{align}\label{4.15-001}
	|\mathcal{K}_{\infty}(p+h,q,\omega)-\mathcal{K}_{\infty}(p,q,\omega)|\le |\omega\cdot (p+h-q)-\omega\cdot (p-q)|\le |h|,
\end{align}
one gets
\begin{align*}
	|\nu(p+h)-\nu(p)|=& \Big|\int_{\mathbb{R}^3} \int_{\mathbb{S}^2} \Big(\mathcal{K}_{\infty}(p+h,q,\omega)-\mathcal{K}_{\infty}(p,q,\omega)\Big)\mu(q) d \omega d q\Big|\nonumber\\
	&\le \int_{\mathbb{R}^3}\int_{\mathbb{S}^2} |h| \mu(q) d \omega d q\lesssim |h|.
\end{align*}
Then one has
\begin{align}\label{4.21-0}
	\|\mathcal{B}_{11}\|_{L^1_pL^{\infty}_x}&\le \int_{\mathbb{R}^3}\sqrt{\mu(p)} |e^{-\nu(p+h)t}-e^{-\nu(p)t} |^{1-\epsilon}\cdot  |e^{-\nu(p+h)t}-e^{-\nu(p)t} |^{\epsilon}\cdot \|f_0\|_{L^{\infty}_{x,p}}dp\nonumber\\
	&\lesssim  \int_{\mathbb{R}^3}\sqrt{\mu(p)}|\nu(p+h)-\nu(p)|^{1-\epsilon}\cdot t^{1-\epsilon}\cdot e^{-\epsilon \nu_0 t}dp\lesssim  |h|^{1-\epsilon}.
\end{align}
It follows from \eqref{4.3-0} that
\begin{align}\label{4.22-0}
	\|\mathcal{B}_{12}\|_{L^1_pL^{\infty}_x}\lesssim \|\sqrt{\mu}(\tau^p_h f_0-f_0)\|_{L^1_pL^{\infty}_x}\lesssim |h|
\end{align} 
and
\begin{align}\label{4.23-00}
	\|\mathcal{B}_{13}\|_{L^1_pL^{\infty}_x}&\lesssim e^{-\nu_0 t}(t+1)\sup_{|u| \leq|h|}\|\sqrt{\mu}(\tau^x_u f_0-f_0)\|_{L^1_pL^{\infty}_x}\nonumber\\
	&\lesssim \sup_{|u| \leq|h|}\|\sqrt{\mu}(\tau^x_u f_0-f_0)\|_{L^1_pL^{\infty}_x}\lesssim |h|.
\end{align}
Combining \eqref{4.20-1}, \eqref{4.21-0}--\eqref{4.23-00}, we obtain that
\begin{align}\label{4.23-0}
	\|\mathcal{B}_{1}\|_{L^1_pL^{\infty}_x}\lesssim |h|^{1-\epsilon}.
\end{align}

For $\mathcal{B}_2$, it holds that
\begin{align*} 
	\mathcal{B}_2&=\int_0^t \sqrt{\mu(p)}\Big[e^{-\nu(p+h)(t-s)}-e^{-\nu(p)(t-s)}\Big](\mathbf{K}f)(s,\tilde{y}_h,p+h)ds\nonumber\\
	&\quad +\int_0^tds \int_{\mathbb{R}^3\times \mathbb{S}^2} \sqrt{\mu(p)\mu(q)}e^{-\nu(p)(t-s)}\nonumber\\ 
	&\qquad \times  \Big[\mathcal{K}_{\infty}(p+h,q,\omega)\sqrt{\mu(\bar{q}_h')}f(s,\tilde{y}_h,\bar{p}_h')-\mathcal{K}_{\infty}(p,q,\omega)\sqrt{\mu(\bar{q}')}f(s, y,\bar{p}')\Big]d\omega dq\nonumber\\
	&\quad +\int_0^t ds \int_{\mathbb{R}^3\times \mathbb{S}^2} \sqrt{\mu(p)\mu(q)}e^{-\nu(p)(t-s)}\nonumber\\
	&\qquad \times \Big[\mathcal{K}_{\infty}(p+h,q,\omega)\sqrt{\mu(\bar{p}_h')}f(s,\tilde{y}_h,\bar{q}_h')-\mathcal{K}_{\infty}(p,q,\omega)\sqrt{\mu(\bar{p}')}f(s,y,\bar{q}')\Big]d\omega dq\nonumber\\
	&\quad -\int_0^t ds \int_{\mathbb{R}^3\times \mathbb{S}^2} \sqrt{\mu(p)\mu(q)}e^{-\nu(p)(t-s)}\nonumber\\ 	
	&\qquad \times \Big[\mathcal{K}_{\infty}(p+h,q,\omega)\sqrt{\mu(p+h)}f(s,\tilde{y}_h,q)-\mathcal{K}_{\infty}(p,q,\omega)\sqrt{\mu(p)}f(s,y,q)\Big]d\omega dq\nonumber\\
	&:=\mathcal{B}_{21}+\mathcal{B}_{22}+\mathcal{B}_{23}-\mathcal{B}_{24}.
\end{align*}
Using the uniform bound $\|(\mathbf{K}f)(s)\|_{L^{\infty}_{x,p}}\lesssim 1$ and taking similar arguments as in \eqref{4.21-0}, we deduce that 
\begin{align}\label{4.20-000}
	\|\mathcal{B}_{21}\|_{L^1_pL^{\infty}_x}\lesssim |h|^{1-\epsilon}.
\end{align}
For other terms, on the unbounded region, $|p|+|q|\ge R$, it is clear that 
\begin{align}\label{4.21-000}
	\sum_{i=2}^4\|\mathcal{B}_{2i}\|_{L^1_p(|p|+|q|\ge R)L^{\infty}_x}\lesssim e^{-\frac{R^2}{16}}.
\end{align}
Now we consider the bounded region $|p|+|q|<R$. One can expand $\mathcal{B}_{22}$ as 
\begin{align}\label{4.27-0}
	\mathcal{B}_{22}&=\int_0^t ds \int_{\mathbb{R}^3\times \mathbb{S}^2}\sqrt{\mu(p)\mu(q)}e^{-\nu(p)(t-s)} \Big[\mathcal{K}_{\infty}(p+h,q,\omega)-\mathcal{K}_{\infty}(p,q,\omega)\Big]\nonumber\\
	&\qquad \times \sqrt{\mu(\bar{q}_h')}f(s,\tilde{y}_h,\bar{p}_h')d\omega dq\nonumber\\
	&\quad +\int_0^tds \int_{\mathbb{R}^3\times \mathbb{S}^2} \sqrt{\mu(p)\mu(q)}e^{-\nu(p)(t-s)}\mathcal{K}_{\infty}(p,q,\omega)\Big[\sqrt{\mu(\bar{q}_h')}-\sqrt{\mu(\bar{q}')}\Big]f(s,\tilde{y}_h,\bar{p}_h')d\omega dq\nonumber\\
	&\quad +\int_0^t ds \int_{\mathbb{R}^3\times \mathbb{S}^2} \sqrt{\mu(p)\mu(q)}e^{-\nu(p)(t-s)}\mathcal{K}_{\infty}(p,q,\omega)\sqrt{\mu(\bar{q}')}\Big[f(s,\tilde{y}_h,\bar{p}_h')-f(s,\tilde{y}_h,\bar{p}')\Big]d\omega dq\nonumber\\
	&\quad +\int_0^t ds \int_{\mathbb{R}^3\times \mathbb{S}^2} \sqrt{\mu(p)\mu(q)}e^{-\nu(p)(t-s)}\mathcal{K}_{\infty}(p,q,\omega)\sqrt{\mu(\bar{q}')}\Big[f(s,\tilde{y}_h,\bar{p}')-f(s,y,\bar{p}')\Big]d\omega dq\nonumber\\
	&:=\sum_{i=1}^4 \mathcal{B}_{22i}.
\end{align}
It follows from \eqref{4.15-001} that
$\|\mathcal{B}_{221}\|_{L^1_pL^{\infty}_x}\lesssim |h|$.
Due to the fact that $|\sqrt{\mu(\bar{q}_h')}-\sqrt{\mu(\bar{q}')}|\lesssim |h|$, one has $\|\mathcal{B}_{222}\|_{L^1_pL^{\infty}_x}\lesssim |h|$. Using $|\bar{p}_h'-\bar{p}'|\le 2|h|$, we have
\begin{align*}
	\|\mathcal{B}_{223}\|_{L^1_p(|p|+|q|\le R)L^{\infty}_x}&\lesssim R\int_0^t e^{-\nu_0(t-s)}\sup_{|u| \leq 2|h|}\|\sqrt{\mu}(\tau^p_u f-f)(s)\|_{L^1_pL^{\infty}_x}ds\nonumber\\
	&\lesssim R\int_0^t e^{-\nu_0(t-s)}\sup_{|u| \leq |h|}\|\sqrt{\mu}(\tau^p_u f-f)(s)\|_{L^1_pL^{\infty}_x}ds+R|h|.
\end{align*}
The fourth term $\mathcal{B}_{224}$ can be estimated as
\begin{align*}
	\|\mathcal{B}_{224}\|_{L^1_p(|p|+|q|\le R)L^{\infty}_x}&\lesssim R\int_0^t e^{-\nu_0(t-s)}[(t-s)+1]\sup_{|u| \leq |h|}\|\sqrt{\mu}(\tau^x_u f-f)(s)\|_{L^1_pL^{\infty}_x}ds\lesssim R|h|^{1-\epsilon},
\end{align*}
where we have used \eqref{4.17-0} in the second inequality.
Hence we have obtained that 
\begin{align*} 
	\|\mathcal{B}_{22}\|_{L^1_p(|p|+|q|\le R)L^{\infty}_x}
	&\lesssim R|h|^{1-\epsilon}+ R\int_0^t e^{-\nu_0(t-s)}\sup_{|u| \leq |h|}\|\sqrt{\mu}(\tau^p_u f-f)(s)\|_{L^1_pL^{\infty}_x}ds.
\end{align*}
The estimates for 	$\|\mathcal{B}_{2i}\|_{L^1_p(|p|+|q|\le R)L^{\infty}_x}\ (i=3,4)$ are similar to the one of $\|\mathcal{B}_{22}\|_{L^1_p(|p|+|q|\le R)L^{\infty}_x}$ and they have the same bound. Thus one has
\begin{align}\label{4.23-000} 
	\sum_{i=2}^4\|\mathcal{B}_{2i}\|_{L^1_p(|p|+|q|\le R)L^{\infty}_x}
	&\lesssim R|h|^{1-\epsilon}+ R\int_0^t e^{-\nu_0(t-s)}\sup_{|u| \leq |h|}\|\sqrt{\mu}(\tau^p_u f-f)(s)\|_{L^1_pL^{\infty}_x}ds.
\end{align}
Combining \eqref{4.20-000}--\eqref{4.21-000} and \eqref{4.23-000}, one has
\begin{align*}
	\|\mathcal{B}_{2}\|_{L^1_pL^{\infty}_x}
	&\lesssim R|h|^{1-\epsilon}+e^{-\frac{R^2}{16}}+ R\int_0^t e^{-\nu_0(t-s)}\sup_{|u| \leq |h|}\|\sqrt{\mu}(\tau^p_u f-f)(s)\|_{L^1_pL^{\infty}_x}ds.
\end{align*}

Finally we estimate $\mathcal{B}_{3}$. It holds that
\begin{align*} 
	\mathcal{B}_{3}&=\int_0^t \sqrt{\mu(p)}\Big[e^{-\nu(p+h)(t-s)}-e^{-\nu(p)(t-s)}\Big]\mathbf{\Gamma}(f,f)(s,\tilde{y}_h,p+h)ds\nonumber\\
	&\quad +\int_0^t  ds \int_{\mathbb{R}^3\times \mathbb{S}^2}\sqrt{\mu(p)\mu(q)}e^{-\nu(p)(t-s)}\nonumber\\
	&\qquad \times \Big[\mathcal{K}_{\infty}(p+h,q,\omega)f(s,\tilde{y}_h,\bar{p}_h')f(s,\tilde{y}_h,\bar{q}_h')-\mathcal{K}_{\infty}(p,q,\omega)f(s,y,\bar{p}')f(s,y,\bar{q}')\Big]d\omega dq\nonumber\\
	&\quad -\int_0^t  ds \int_{\mathbb{R}^3\times \mathbb{S}^2}\sqrt{\mu(p)\mu(q)}e^{-\nu(p)(t-s)}\nonumber\\ 
	&\qquad \times \Big[\mathcal{K}_{\infty}(p+h,q,\omega)f(s,\tilde{y}_h,p+h)f(s,\tilde{y}_h,q)-\mathcal{K}_{\infty}(p,q,\omega)f(s,y,p)f(s,y,q)\Big]d\omega dq\nonumber\\
	&:=\mathcal{B}_{31}+\mathcal{B}_{32}-\mathcal{B}_{33}.
\end{align*}
Using the uniform boundedness of $\|\mu^{\frac{1}{4}}\mathbf{\Gamma}(f,f)(s)\|_{L^{\infty}_{x,p}}$, by similar arguments as in \eqref{4.21-0}, we deduce that
\begin{align*}
	\|\mathcal{B}_{31}\|_{L^1_pL^{\infty}_x}\lesssim |h|^{1-\epsilon}.
\end{align*}
For other two terms, on the unbounded region, $|p|+|q|\ge R$, it is clear that
\begin{align*}
	\|\mathcal{B}_{32}\|_{L^1_p(|p|+|q|\ge R)L^{\infty}_x}+\|\mathcal{B}_{33}\|_{L^1_p(|p|+|q|\ge R)L^{\infty}_x}\lesssim e^{-\frac{R^2}{16}}.
\end{align*}
Now we consider the bounded region $|p|+|q|<R$. One can expand $\mathcal{B}_{32}$ as 
\begin{align}\label{2.26-1}
	\mathcal{B}_{32}&=\int_0^t ds \int_{\mathbb{R}^3\times \mathbb{S}^2}\sqrt{\mu(p)\mu(q)}e^{-\nu(p)(t-s)}\nonumber\\
	&\qquad \quad \times \Big[\mathcal{K}_{\infty}(p+h,q,\omega)-\mathcal{K}_{\infty}(p,q,\omega)\Big]f(s,\tilde{y}_h,\bar{p}_h')f(s,\tilde{y}_h,\bar{q}_h')d\omega dq\nonumber\\
	&\quad +\int_0^t ds \int_{\mathbb{R}^3\times \mathbb{S}^2}\mathcal{K}_{\infty}(p,q,\omega)\sqrt{\mu(p)\mu(q)}e^{-\nu(p)(t-s)} \nonumber\\
	&\qquad \quad \times \Big[f(s,\tilde{y}_h,\bar{p}_h')-f(s,\tilde{y}_h,\bar{p}')\Big]f(s,\tilde{y}_h,\bar{q}_h')d\omega dq\nonumber\\
	&\quad +\int_0^t ds \int_{\mathbb{R}^3\times \mathbb{S}^2}\mathcal{K}_{\infty}(p,q,\omega)\sqrt{\mu(p)\mu(q)}e^{-\nu(p)(t-s)} \nonumber\\
	&\qquad \quad \times \Big[f(s,\tilde{y}_h,\bar{q}_h')-f(s,\tilde{y}_h,\bar{q}')\Big]f(s,\tilde{y}_h,\bar{p}')d\omega dq\nonumber\\
	&\quad +\int_0^t ds \int_{\mathbb{R}^3\times \mathbb{S}^2}\mathcal{K}_{\infty}(p,q,\omega)\sqrt{\mu(p)\mu(q)}e^{-\nu(p)(t-s)} \nonumber\\
	&\qquad \quad \times \Big[f(s,\tilde{y}_h,\bar{q}')-f(s,y,\bar{q}')\Big]f(s,\tilde{y}_h,\bar{p}')d\omega dq\nonumber\\
	&\quad +\int_0^t ds \int_{\mathbb{R}^3\times \mathbb{S}^2}\mathcal{K}_{\infty}(p,q,\omega)\sqrt{\mu(p)\mu(q)}e^{-\nu(p)(t-s)} \nonumber\\
	&\qquad \quad \times \Big[f(s,\tilde{y}_h,\bar{p}')-f(s,y,\bar{p}')\Big]f(s,y,\bar{q}')d\omega dq.
\end{align}

It is noted that the estimate of each item on the RHS of \eqref{2.26-1} is similar to that of one item in \eqref{4.27-0} with the same bound. Hence we conclude that 
\begin{align*}
	\sup_{|u| \leq |h|}\left\|\sqrt{\mu}(\tau_u^p f-f)(t)\right\|_{L_p^1 L_x^{\infty}} &\le CR|h|^{1-\epsilon}+C e^{-\frac{R^2}{16}}\nonumber\\
	&\quad + CR \int_0^t e^{-\nu_0(t-s)} \sup_{|u| \leq |h|}\left\|\sqrt{\mu}(\tau_u^p f-f)(s)\right\|_{L_p^1 L_x^{\infty}}ds.
\end{align*}
By similar arguments as in \eqref{4.11-20}--\eqref{4.17-0}, for any $0\le t\le (-\log |h|)^{\alpha_0}$, one has
\begin{align}\label{2.25}
	\sup_{|u| \leq |h|}\left\|\sqrt{\mu}(\tau_u^p f-f)(t)\right\|_{L_p^1 L_x^{\infty}} &\le C(\epsilon)|h|^{1-2\epsilon}.
\end{align}
Combining \eqref{4.17-0} and \eqref{2.25} and taking $\varepsilon=2\epsilon$, we complete the proof of Lemma \ref{lem4.1}.
\end{proof}	

\subsection{Proof of Theorems \ref{thm1.3}}
Recall that 
\begin{align*}
F_{\mathfrak{c}}(t,x,p)=J_{\mathfrak{c}}(p)+\sqrt{J_{\mathfrak{c}}(p)}f_{\mathfrak{c}}(t,x,p),\quad F(t,x,p)=\mu(p)+\sqrt{\mu(p)}f(t,x,p),
\end{align*}
which yields that
\begin{align*}
F_{\mathfrak{c}}(t,x,p)-F(t,x,p)&=(J_{\mathfrak{c}}(p)-\mu(p))+f(t,x,p)\big(\sqrt{J_{\mathfrak{c}}(p)}-\sqrt{\mu(p)}\big)\nonumber\\
&\quad +\sqrt{J_{\mathfrak{c}}(p)}\big(f_{\mathfrak{c}}(t,x,p)-f(t,x,p)\big).
\end{align*}
It follows from Lemma \ref{lem2.6} and the uniform boundedness of $\|f\|_{L^{\infty}_{x,p}}$ that 
\begin{align*}
\|J_{\mathfrak{c}}-\mu\|_{L^1_pL^{\infty}_x}+\|f(t)(\sqrt{J_{\mathfrak{c}}}-\sqrt{\mu})\|_{L^1_pL^{\infty}_x}\lesssim \frac{1}{\mathfrak{c}^{2-\epsilon}}.
\end{align*}

Noting also that 
\begin{align*}
F_{0,\mathfrak{c}}(x,p)-F_0(x,p)&=J_{\mathfrak{c}}(p)-\mu(p)+f_0(x,p)\big(\sqrt{J_{\mathfrak{c}}(p)}-\sqrt{\mu(p)}\big)\nonumber\\
&\quad +\sqrt{J_{\mathfrak{c}}(p)}(f_{0,\mathfrak{c}}(x,p)-f_0(x,p)),
\end{align*}
by \eqref{1.5} and Lemma \ref{lem2.6}, we have 
\begin{align}\label{4.31-10}
\|\sqrt{J_{\mathfrak{c}}}(f_{0,\mathfrak{c}}-f_0)\|_{L^1_pL^{\infty}_x}&\le \|F_{0,\mathfrak{c}}-F_0\|_{L^1_pL^{\infty}_x}+\|J_{\mathfrak{c}}-\mu\|_{L^1_pL^{\infty}_x}+\|f_{0}(\sqrt{J_{\mathfrak{c}}}-\sqrt{\mu})\|_{L^1_pL^{\infty}_x}\nonumber\\
&\lesssim \frac{1}{\mathfrak{c}^{k-\epsilon}}.
\end{align}
In the following, we mainly focus on the estimate for $\|\sqrt{J_{\mathfrak{c}}}(f_{\mathfrak{c}}-f)\|_{L^1_pL^{\infty}_x}$. 

By \eqref{5.8}--\eqref{5.9}, the problem \eqref{1.3-0}--\eqref{1.4-0} admits a mild solution $F_{\mathfrak{c}}(t,x,p)=J_{\mathfrak{c}}(p)+\sqrt{J_{\mathfrak{c}}(p)}f_{\mathfrak{c}}(t,x,p)$ with
\begin{align}\label{2.27}
f_{\mathfrak{c}}(t,x,p)&=e^{-\nu_{\mathfrak{c}}(p)t}f_{0,\mathfrak{c}}(x-\hat{p}t,p)+\int_0^te^{-\nu_{\mathfrak{c}}(p)(t-s)}(\mathbf{K}_{\mathfrak{c}}f_{\mathfrak{c}})(s,x-\hat{p}(t-s),p)ds\nonumber\\
&\qquad +\int_0^te^{-\nu_{\mathfrak{c}}(p)(t-s)}\mathbf{\Gamma}_{\mathfrak{c}}(f_{\mathfrak{c}},f_{\mathfrak{c}})(s,x-\hat{p}(t-s),p)ds.
\end{align}
Denote $y:=x-p(t-s)$ and $y_{\mathfrak{c}}:=x-\hat{p}(t-s)$. It follows from \eqref{2.27} and \eqref{4.4-0} that
\begin{align}\label{4.28-000}
&\sqrt{J_{\mathfrak{c}}(p)}\big(f_{\mathfrak{c}}(t,x,p)-f(t,x,p)\big)\nonumber\\
&=\sqrt{J_{\mathfrak{c}}(p)}\Big[e^{-\nu_{\mathfrak{c}}(p)t}f_{0,\mathfrak{c}}(x-\hat{p}t,p)-e^{-\nu(p)t}f_0(x-pt,p)\Big]\nonumber\\
&\quad +\sqrt{J_{\mathfrak{c}}(p)}\Big[\int_0^t e^{-\nu_{\mathfrak{c}}(p)(t-s)}(\mathbf{K}_{\mathfrak{c}}f_{\mathfrak{c}})(s,y_{\mathfrak{c}},p)ds 
-\int_0^t  e^{-\nu(p)(t-s)}(\mathbf{K}f)(s,y,p)ds\Big]\nonumber\\
&\quad +\sqrt{J_{\mathfrak{c}}(p)}\Big[\int_0^t e^{-\nu_{\mathfrak{c}}(p)(t-s)}\mathbf{\Gamma}_{\mathfrak{c}}(f_{\mathfrak{c}}, f_{\mathfrak{c}})(s,y_{\mathfrak{c}},p)ds 
-\int_0^t e^{-\nu(p)(t-s)}\mathbf{\Gamma}(f,f)(s,y,p)ds\Big]\nonumber\\
&:=\mathcal{L}_1+\mathcal{L}_2+\mathcal{L}_3.
\end{align}

For $\mathcal{L}_1$, it holds that
\begin{align}\label{2.29}
\mathcal{L}_1&=\sqrt{J_{\mathfrak{c}}(p)}(e^{-\nu_{\mathfrak{c}}(p)t}-e^{-\nu(p)t})f_{0,\mathfrak{c}}(x-\hat{p}t,p)\nonumber\\
&\quad +
\sqrt{J_{\mathfrak{c}}(p)}e^{-\nu(p)t}(f_{0,\mathfrak{c}}(x-\hat{p}t,p)-f_0(x-\hat{p}t,p))\nonumber\\
&\quad +\sqrt{J_{\mathfrak{c}}(p)}e^{-\nu(p)t}(f_{0}(x-\hat{p}t,p)-f_0(x-pt,p)):=\mathcal{L}_{11}+\mathcal{L}_{12}+\mathcal{L}_{13}.
\end{align}
For $\mathcal{L}_{11}$, using Lemmas \ref{lem2.4}, \ref{lem2.6}, one has 
\begin{align*}
|\nu_{\mathfrak{c}}(p)-\nu(p)|&=\Big|\iint_{\mathbb{R}^3 \times \mathbb{S}^2}\left[\mathcal{K}_{\mathfrak{c}}(p, q, \omega) J_{\mathfrak{c}}(q)-\mathcal{K}_{\infty}(p, q, \omega) \mu(q)\right] d\omega d q\Big|\nonumber\\
& \le \iint_{\mathbb{R}^3 \times \mathbb{S}^2}|\mathcal{K}_{\mathfrak{c}}(p, q, \omega)-\mathcal{K}_{\infty}(p, q, \omega)| J_{\mathfrak{c}}(q) d\omega d q \nonumber\\
&\quad +\iint_{\mathbb{R}^3 \times \mathbb{S}^2}|J_{\mathfrak{c}}(q)-\mu(q)| \mathcal{K}_{\infty}(p, q, \omega)d\omega d q \nonumber\\
& \lesssim \frac{(1+|p|)^6}{\mathfrak{c}^{2-\epsilon}},
\end{align*}
which, together with similar arguments as in \eqref{4.21-0}, yields that
\begin{align} 
\|\mathcal{L}_{11}\|_{L^1_pL^{\infty}_x}\lesssim \frac{1}{\mathfrak{c}^{2-2\epsilon}}.
\end{align}
For $\mathcal{L}_{12}$, by the translation invariance of the $L^{\infty}$ norm and \eqref{4.31-10}, it holds that
\begin{align}
\|\mathcal{L}_{12}\|_{L^1_pL^{\infty}_x}\le \|\sqrt{J_{\mathfrak{c}}}(f_{0,\mathfrak{c}}-f_0)\|_{L^1_pL^{\infty}_x}\lesssim \frac{1}{\mathfrak{c}^{k-\epsilon}}.
\end{align}
For $\mathcal{L}_{13}$, note that 
$$f_{0}(x-\hat{p}t,p)-f_0(x-pt,p)=f_{0}(x-pt+u,p)-f_0(x-pt,p),$$ 
where $u=(p-\hat{p})t$. By Lemma \ref{lem2.4}, one has $|u|\le \frac{|p|^3}{2\mathfrak{c}^2}t$. When $|p| < r(\mathfrak{c}):=(\log \mathfrak{c})^{\alpha_1}$ with $\frac{1}{2}<\alpha_1<1$, we have $|u| \leq \frac{Ct}{\mathfrak{c}^{2-\epsilon}}$. It follows from Lemma \ref{lem2.6} and \eqref{4.3-0} that
\begin{align}\label{2.31}
\|\mathcal{L}_{13}\|_{L^1_p(|p|< r(\mathfrak{c}))L^{\infty}_x}
&\le e^{-\nu_0t}\sup_{|u| \leq \frac{Ct}{\mathfrak{c}^{2-\epsilon}}}\|(\sqrt{J_{\mathfrak{c}}}-\sqrt{\mu})(\tau^x_uf_0-f_0)\|_{L^1_pL^{\infty}_x}\nonumber\\
&\qquad +e^{-\nu_0t}\sup_{|u| \leq \frac{Ct}{\mathfrak{c}^{2-\epsilon}}}\|\sqrt{\mu}(\tau^x_uf_0-f_0)\|_{L^1_pL^{\infty}_x}\nonumber\\
&\lesssim \frac{1}{\mathfrak{c}^{2-\epsilon}}+ e^{-\nu_0t}\frac{t}{\mathfrak{c}^{2-\epsilon}}\lesssim \frac{1}{\mathfrak{c}^{2-\epsilon}}.
\end{align}
On the other hand, if $|p| \geq r(\mathfrak{c})$, using the second part of Lemma \ref{lem2.5}, we have
\begin{align}\label{2.32}
\|\mathcal{L}_{13}\|_{L^1_p(|p|\ge r(\mathfrak{c}))L^{\infty}_x}
&\le 2\|(\sqrt{J_{\mathfrak{c}}}-\sqrt{\mu})f_{0}\|_{L^1_p(|p|\ge r(\mathfrak{c}))L^{\infty}_x}+2\|\sqrt{\mu}f_{0}\|_{L^1_p(|p|\ge r(\mathfrak{c}))L^{\infty}_x}\nonumber\\
&\lesssim \frac{1}{\mathfrak{c}^{2-\epsilon}}.
\end{align}
Using \eqref{2.29}--\eqref{2.32}, we have
\begin{align}\label{2.33}
\|\mathcal{L}_{1}\|_{L^1_pL^{\infty}_x}\lesssim \frac{1}{\mathfrak{c}^{k-\epsilon}} + \frac{1}{\mathfrak{c}^{2-2\epsilon}}\lesssim \frac{1}{\mathfrak{c}^{k-2\epsilon}}. 
\end{align}

Next we estimate $\mathcal{L}_2$. It holds that
\begin{align}\label{2.34}
\mathcal{L}_2&=\int_0^t \sqrt{J_{\mathfrak{c}}(p)}\Big[e^{-\nu_{\mathfrak{c}}(p)(t-s)}-e^{-\nu(p)(t-s)}\Big](\mathbf{K}f)(s,y,p)ds\nonumber\\
&\quad +\int_0^tds \int_{\mathbb{R}^3\times \mathbb{S}^2} \sqrt{J_{\mathfrak{c}}(p)}e^{-\nu_{\mathfrak{c}}(p)(t-s)}\nonumber\\ 
&\qquad \times  \Big[\mathcal{K}_{\mathfrak{c}}(p,q,\omega)\sqrt{J_{\mathfrak{c}}(q)J_{\mathfrak{c}}(q')}f_{\mathfrak{c}}(s,y_{\mathfrak{c}},p')-\mathcal{K}_{\infty}(p,q,\omega)\sqrt{\mu(q)\mu(\bar{q}')}f(s,y,\bar{p}')\Big]d\omega dq\nonumber\\
&\quad +\int_0^tds \int_{\mathbb{R}^3\times \mathbb{S}^2} \sqrt{J_{\mathfrak{c}}(p)}e^{-\nu_{\mathfrak{c}}(p)(t-s)}\nonumber\\ 
&\qquad \times  \Big[\mathcal{K}_{\mathfrak{c}}(p,q,\omega)\sqrt{J_{\mathfrak{c}}(q)J_{\mathfrak{c}}(p')}f_{\mathfrak{c}}(s,y_{\mathfrak{c}},q')-\mathcal{K}_{\infty}(p,q,\omega)\sqrt{\mu(q)\mu(\bar{p}')}f(s,y,\bar{q}')\Big]d\omega dq\nonumber\\
&\quad -\int_0^tds \int_{\mathbb{R}^3\times \mathbb{S}^2} \sqrt{J_{\mathfrak{c}}(p)}e^{-\nu_{\mathfrak{c}}(p)(t-s)}\nonumber\\ 
&\qquad \times  \Big[\mathcal{K}_{\mathfrak{c}}(p,q,\omega)\sqrt{J_{\mathfrak{c}}(p)J_{\mathfrak{c}}(q)}f_{\mathfrak{c}}(s,y_{\mathfrak{c}},q)-\mathcal{K}_{\infty}(p,q,\omega)\sqrt{\mu(p)\mu(q)}f(s,y,q)\Big]d\omega dq\nonumber\\
&:=\sum_{i=1}^4 \mathcal{L}_{2i}.
\end{align}
For $\mathcal{L}_{21}$, using the uniform bound $\|(\mathbf{K}f)(s)\|_{L^{\infty}_{x,p}}\lesssim 1$ and taking similar arguments as in \eqref{4.21-0}, one obtains 
\begin{align}
\|\mathcal{L}_{21}\|_{L^1_pL^{\infty}_x}\lesssim \frac{1}{\mathfrak{c}^{2-2\epsilon}}.
\end{align}
For $\mathcal{L}_{22}$, we have 
\begin{align}\label{4.45}
\mathcal{L}_{22}&=\int_0^tds \int_{\mathbb{R}^3\times \mathbb{S}^2} \sqrt{J_{\mathfrak{c}}(p)}e^{-\nu_{\mathfrak{c}}(p)(t-s)}\mathcal{K}_{\mathfrak{c}}(p,q,\omega)\Big[\sqrt{J_{\mathfrak{c}}(q)J_{\mathfrak{c}}(q')}-\sqrt{\mu(q)\mu(\bar{q}')}\Big]f_{\mathfrak{c}}(s,y_{\mathfrak{c}},p')d\omega dq\nonumber\\
&\quad +\int_0^tds \int_{\mathbb{R}^3\times \mathbb{S}^2} \sqrt{J_{\mathfrak{c}}(p)}e^{-\nu_{\mathfrak{c}}(p)(t-s)}\mathcal{K}_{\mathfrak{c}}(p,q,\omega)\sqrt{\mu(q)\mu(\bar{q}')}(f_{\mathfrak{c}}(s,y_{\mathfrak{c}},p')-f(s,y_{\mathfrak{c}},p'))d\omega dq\nonumber\\
&\quad +\int_0^tds \int_{\mathbb{R}^3\times \mathbb{S}^2} \sqrt{J_{\mathfrak{c}}(p)}e^{-\nu_{\mathfrak{c}}(p)(t-s)}\mathcal{K}_{\mathfrak{c}}(p,q,\omega)\sqrt{\mu(q)\mu(\bar{q}')}(f(s,y_{\mathfrak{c}},p')-f(s,y,p'))d\omega dq\nonumber\\
&\quad +\int_0^tds \int_{\mathbb{R}^3\times \mathbb{S}^2} \sqrt{J_{\mathfrak{c}}(p)}e^{-\nu_{\mathfrak{c}}(p)(t-s)}\mathcal{K}_{\mathfrak{c}}(p,q,\omega)\sqrt{\mu(q)\mu(\bar{q}')}(f(s,y,p')-f(s,y,\bar{p}'))d\omega dq\nonumber\\
&\quad +\int_0^tds \int_{\mathbb{R}^3\times \mathbb{S}^2} \sqrt{J_{\mathfrak{c}}(p)}e^{-\nu_{\mathfrak{c}}(p)(t-s)}(\mathcal{K}_{\mathfrak{c}}-\mathcal{K}_{\infty})(p,q,\omega)\sqrt{\mu(q)\mu(\bar{q}')}f(s,y,\bar{p}')d\omega dq\nonumber\\
&:=\sum_{i=1}^5 \mathcal{L}_{22i}.
\end{align}

By Lemmas \ref{lem2.4} and \ref{lem2.6}, it holds that
\begin{align}\label{4.46}
\|\mathcal{L}_{221}\|_{L^1_pL^{\infty}_x}+\|\mathcal{L}_{225}\|_{L^1_pL^{\infty}_x}\lesssim \frac{1}{\mathfrak{c}^{2-\epsilon}}.
\end{align}
On the unbounded region $|p|+|q|\ge r(\mathfrak{c})$, using $\sqrt{J_{\mathfrak{c}}(p)}=\big(\sqrt{J_{\mathfrak{c}}(p)}-\sqrt{\mu(p)}\big)+\sqrt{\mu(p)}$ and Lemma \ref{lem2.6}, one has
\begin{align}\label{4.47}
\sum_{i=2}^4\|\mathcal{L}_{22i}\|_{L^1_p(|p|+|q|\ge r(\mathfrak{c}))L^{\infty}_x}\lesssim \frac{1}{\mathfrak{c}^{2-\epsilon}}+e^{-\frac{r^2(\mathfrak{c})}{16}}\lesssim \frac{1}{\mathfrak{c}^{2-\epsilon}}.
\end{align}
Then we need only to consider the bounded region $|p|+|q|< r(\mathfrak{c})$. Using Lemma \ref{lem2.4}, one has
\begin{align*}
|p'|\le |p'-\bar{p}'|+|\bar{p}'|\lesssim \frac{(|p|+|q|)^3}{\mathfrak{c}^2}+|p|+|p-q|\lesssim \frac{r^3(\mathfrak{c})}{\mathfrak{c}^2}+r(\mathfrak{c})\lesssim r(\mathfrak{c}).
\end{align*}
We can estimate $|q'|$ similarly. Thus we have
\begin{align*}
|p'|+|q'|\le C r(\mathfrak{c})
\end{align*}
for some positive constant $C>0$.
Using Lemma \ref{lem2.4}, \eqref{2.30-10} and the fact that $\mu(q)\lesssim J_{\mathfrak{c}}(q)$, one obtains
\begin{align}\label{4.50}
&\|\mathcal{L}_{222}\|_{L^1_p(|p|+|q|< r(\mathfrak{c}))L^{\infty}_x}\nonumber\\
&\lesssim \int_0^tds \int_{\mathbb{R}^3\times \mathbb{R}^3\times \mathbb{S}^2} \sqrt{J_{\mathfrak{c}}(p)J_{\mathfrak{c}}(q)}e^{-\nu_{0}(t-s)}\mathcal{K}_{\mathfrak{c}}(p,q,\omega)\mathbf{1}_{|p|+|q|< r(\mathfrak{c})}\|(f_{\mathfrak{c}}-f)(s,\cdot,p')\|_{L^{\infty}_x}d\omega dpdq\nonumber\\
&\lesssim \int_0^tds \int_{\mathbb{R}^3\times \mathbb{R}^3\times \mathbb{S}^2} \sqrt{J_{\mathfrak{c}}(p')J_{\mathfrak{c}}(q')}e^{-\nu_{0}(t-s)}\mathcal{K}_{\mathfrak{c}}(p',q',\omega)\nonumber\\
&\qquad \qquad\times \mathbf{1}_{|p'|+|q'|\le Cr(\mathfrak{c})}\|(f_{\mathfrak{c}}-f)(s,\cdot,p')\|_{L^{\infty}_x}d\omega dp'dq'\nonumber\\
&\lesssim r(\mathfrak{c})\int_0^te^{-\nu_{0}(t-s)}\|\sqrt{J_{\mathfrak{c}}}(f_{\mathfrak{c}}-f)(s)\|_{L^1_pL^{\infty}_x}ds.
\end{align}
Using Lemma \ref{lem4.1}, one can bound $\|\mathcal{L}_{223}\|_{L^1_p(|p|+|q|< r(\mathfrak{c}))L^{\infty}_x}$ as 
\begin{align}\label{4.51}
\|\mathcal{L}_{223}\|_{L^1_p(|p|+|q|< r(\mathfrak{c}))L^{\infty}_x}&\lesssim \int_0^tds \int_{\mathbb{R}^3\times \mathbb{R}^3 \times \mathbb{S}^2} \sqrt{J_{\mathfrak{c}}(p)J_{\mathfrak{c}}(q)}e^{-\nu_{0}(t-s)}\mathcal{K}_{\mathfrak{c}}(p,q,\omega)\mathbf{1}_{|p|+|q|< r(\mathfrak{c})}\nonumber\\
&\qquad \qquad \times \sup_{|u| \leq \frac{(t-s)r^3(\mathfrak{c})}{2\mathfrak{c}^2}}\|\tau^x_u f(p')-f(p')\|_{L^{\infty}_x}d\omega dpdq\nonumber\\
&\lesssim \int_0^tds \int_{\mathbb{R}^3\times \mathbb{R}^3 \times \mathbb{S}^2} \sqrt{J_{\mathfrak{c}}(p')J_{\mathfrak{c}}(q')}e^{-\nu_{0}(t-s)}\mathcal{K}_{\mathfrak{c}}(p',q',\omega)\mathbf{1}_{|p'|+|q'|\le Cr(\mathfrak{c})}\nonumber\\
&\qquad \qquad \times \sup_{|u| \leq \frac{(t-s)r^3(\mathfrak{c})}{2\mathfrak{c}^2}}\|\tau^x_u f(p')-f(p')\|_{L^{\infty}_x}d\omega dp'dq'\nonumber\\
&\lesssim r(\mathfrak{c})\int_0^te^{-\nu_{0}(t-s)}\sup_{|u| \leq \frac{(t-s)r^3(\mathfrak{c})}{2\mathfrak{c}^2}}\|\sqrt{\mu}(\tau^x_u f-f)\|_{L^1_pL^{\infty}_x}ds+\frac{1}{\mathfrak{c}^{2-\epsilon}}\nonumber\\
&\lesssim r(\mathfrak{c})\int_0^t e^{-\nu_{0}(t-s)}\Big(\frac{(t-s)r^3(\mathfrak{c})}{2\mathfrak{c}^2}\Big)^{1-\epsilon}ds+\frac{1}{\mathfrak{c}^{2-\epsilon}}\lesssim \frac{1}{\mathfrak{c}^{2-3\epsilon}}.
\end{align}
By similar arguments as above, we obtain
\begin{align}\label{4.52}
\|\mathcal{L}_{224}\|_{L^1_p(|p|+|q|<r(\mathfrak{c}))L^{\infty}_x}	&\lesssim r(\mathfrak{c})\int_0^te^{-\nu_{0}(t-s)}\sup_{|u| \leq \frac{Cr^3(\mathfrak{c})}{\mathfrak{c}^2}}\|\sqrt{\mu}(\tau^p_u f-f)\|_{L^1_pL^{\infty}_x}ds+\frac{1}{\mathfrak{c}^{2-\epsilon}}\nonumber\\
&\lesssim \frac{1}{\mathfrak{c}^{2-3\epsilon}}.
\end{align}
Combining \eqref{4.45}--\eqref{4.52}, one has
\begin{align*}
\|\mathcal{L}_{22}\|_{L^1_pL^{\infty}_x}\lesssim r(\mathfrak{c})\int_0^te^{-\nu_{0}(t-s)}\|\sqrt{J_{\mathfrak{c}}}(f_{\mathfrak{c}}-f)(s)\|_{L^1_pL^{\infty}_x}ds+ \frac{1}{\mathfrak{c}^{2-3\epsilon}}.
\end{align*}

The estimates for $\|\mathcal{L}_{23}\|_{L^1_pL^{\infty}_x}$ and $\|\mathcal{L}_{24}\|_{L^1_pL^{\infty}_x}$ are very similar to the one for $\|\mathcal{L}_{22}\|_{L^1_pL^{\infty}_x}$ with the same bound. Thus we obtain 
\begin{align}\label{2.41}
\|\mathcal{L}_{2}\|_{L^1_pL^{\infty}_x}\lesssim r(\mathfrak{c})\int_0^te^{-\nu_{0}(t-s)}\|\sqrt{J_{\mathfrak{c}}}(f_{\mathfrak{c}}-f)(s)\|_{L^1_pL^{\infty}_x}ds+ \frac{1}{\mathfrak{c}^{2-3\epsilon}}.
\end{align}

Finally we estimate $\mathcal{L}_{3}$. It holds that
\begin{align*}
\mathcal{L}_{3}&=\int_0^t \sqrt{J_{\mathfrak{c}}(p)}\Big[e^{-\nu_{\mathfrak{c}}(p)(t-s)}-e^{-\nu(p)(t-s)}\Big]\mathbf{\Gamma}(f,f)(s,y,p)ds\nonumber\\
&\quad +\int_0^tds \int_{\mathbb{R}^3\times \mathbb{S}^2} \sqrt{J_{\mathfrak{c}}(p)}e^{-\nu_{\mathfrak{c}}(p)(t-s)}\left[\mathcal{K}_{\mathfrak{c}}(p,q,\omega)\sqrt{J_{\mathfrak{c}}(q)}
f_{\mathfrak{c}}(s,y_{\mathfrak{c}},p')f_{\mathfrak{c}}(s,y_{\mathfrak{c}},q')\right.\nonumber\\
&\qquad \qquad \left.-\mathcal{K}_{\infty}(p,q,\omega)\sqrt{\mu(q)}f(s,y,\bar{p}')f(s,y,\bar{q}')\right]d\omega dq\nonumber\\
&\quad -\int_0^tds \int_{\mathbb{R}^3\times \mathbb{S}^2} \sqrt{J_{\mathfrak{c}}(p)}e^{-\nu_{\mathfrak{c}}(p)(t-s)}\left[\mathcal{K}_{\mathfrak{c}}(p,q,\omega)\sqrt{J_{\mathfrak{c}}(q)}
f_{\mathfrak{c}}(s,y_{\mathfrak{c}},p)f_{\mathfrak{c}}(s,y_{\mathfrak{c}},q)\right.\nonumber\\ 
&\qquad  \qquad \left. -\mathcal{K}_{\infty}(p,q,\omega)\sqrt{\mu(q)}f(s,y,p)f(s,y,q)\right]d\omega dq\nonumber\\
&:=\sum_{i=1}^3 \mathcal{L}_{3i}.
\end{align*}
For $\mathcal{L}_{31}$, noting that $\|J_{\mathfrak{c}}^{\frac{1}{4}}(p)\mathbf{\Gamma}(f,f)(s)\|_{L^{\infty}_{x,p}}\lesssim 1$, by similar arguments as in \eqref{4.21-0}, one obtains 
\begin{align*}
\|\mathcal{L}_{31}\|_{L^1_pL^{\infty}_x}\lesssim \frac{1}{\mathfrak{c}^{2-2\epsilon}}.
\end{align*}
We then expand $\mathcal{L}_{32}$ as 
\begin{align}\label{2.44}
&\int_0^tds \int_{\mathbb{R}^3\times \mathbb{S}^2} \sqrt{J_{\mathfrak{c}}(p)}e^{-\nu_{\mathfrak{c}}(p)(t-s)}\mathcal{K}_{\mathfrak{c}}(p,q,\omega)(\sqrt{J_{\mathfrak{c}}(q)}-\sqrt{\mu(q)})f_{\mathfrak{c}}(s,y_{\mathfrak{c}},p')f_{\mathfrak{c}}(s,y_{\mathfrak{c}},q')d\omega dq\nonumber\\
&+\int_0^tds \int_{\mathbb{R}^3\times \mathbb{S}^2} \sqrt{J_{\mathfrak{c}}(p)}e^{-\nu_{\mathfrak{c}}(p)(t-s)}\mathcal{K}_{\mathfrak{c}}(p,q,\omega)\sqrt{\mu(q)}(f_{\mathfrak{c}}(s,y_{\mathfrak{c}},p')-f(s,y_{\mathfrak{c}},p'))f_{\mathfrak{c}}(s,y_{\mathfrak{c}},q')d\omega dq\nonumber\\
&+\int_0^tds \int_{\mathbb{R}^3\times \mathbb{S}^2} \sqrt{J_{\mathfrak{c}}(p)}e^{-\nu_{\mathfrak{c}}(p)(t-s)}\mathcal{K}_{\mathfrak{c}}(p,q,\omega)\sqrt{\mu(q)}(f_{\mathfrak{c}}(s,y_{\mathfrak{c}},q')-f(s,y_{\mathfrak{c}},q'))f(s,y_{\mathfrak{c}},p')d\omega dq\nonumber\\
&+\int_0^tds \int_{\mathbb{R}^3\times \mathbb{S}^2} \sqrt{J_{\mathfrak{c}}(p)}e^{-\nu_{\mathfrak{c}}(p)(t-s)}\mathcal{K}_{\mathfrak{c}}(p,q,\omega)\sqrt{\mu(q)}(f(s,y_{\mathfrak{c}},q')-f(s,y,q'))f(s,y_{\mathfrak{c}},p')d\omega dq\nonumber\\
&+\int_0^tds \int_{\mathbb{R}^3\times \mathbb{S}^2} \sqrt{J_{\mathfrak{c}}(p)}e^{-\nu_{\mathfrak{c}}(p)(t-s)}\mathcal{K}_{\mathfrak{c}}(p,q,\omega)\sqrt{\mu(q)}(f(s,y_{\mathfrak{c}},p')-f(s,y,p'))f(s,y,q')d\omega dq\nonumber\\
&+\int_0^tds \int_{\mathbb{R}^3\times \mathbb{S}^2} \sqrt{J_{\mathfrak{c}}(p)}e^{-\nu_{\mathfrak{c}}(p)(t-s)}\mathcal{K}_{\mathfrak{c}}(p,q,\omega)\sqrt{\mu(q)}(f(s,y,q')-f(s,y,\bar{q}'))f(s,y,p')d\omega dq\nonumber\\
&+\int_0^tds \int_{\mathbb{R}^3\times \mathbb{S}^2} \sqrt{J_{\mathfrak{c}}(p)}e^{-\nu_{\mathfrak{c}}(p)(t-s)}\mathcal{K}_{\mathfrak{c}}(p,q,\omega)\sqrt{\mu(q)}(f(s,y,p')-f(s,y,\bar{p}'))f(s,y,\bar{q}')d\omega dq\nonumber\\
&+\int_0^tds \int_{\mathbb{R}^3\times \mathbb{S}^2} \sqrt{J_{\mathfrak{c}}(p)}e^{-\nu_{\mathfrak{c}}(p)(t-s)}(\mathcal{K}_{\mathfrak{c}}-\mathcal{K}_{\infty})(p,q,\omega)\sqrt{\mu(q)}f(s,y,\bar{p}')f(s,y,\bar{q}')d\omega dq.
\end{align}

It is noted that the estimate of each item on the RHS of \eqref{2.44} is similar to that of one item on the RHS of \eqref{4.45} with the same bound. The estimate for $\|\mathcal{L}_{33}\|_{L^1_pL^{\infty}_x}$ is easier than the one for $\|\mathcal{L}_{32}\|_{L^1_pL^{\infty}_x}$. Thus we have
\begin{align}\label{2.45}
\|\mathcal{L}_{3}\|_{L^1_pL^{\infty}_x}\lesssim r(\mathfrak{c})\int_0^te^{-\nu_{0}(t-s)}\|\sqrt{J_{\mathfrak{c}}}(f_{\mathfrak{c}}-f)(s)\|_{L^1_pL^{\infty}_x}ds+\frac{1}{\mathfrak{c}^{2-3\epsilon}}.
\end{align}

Combining \eqref{4.28-000}, \eqref{2.33}, \eqref{2.41} and \eqref{2.45}, we conclude that
\begin{align*}
\|\sqrt{J_{\mathfrak{c}}}(f_{\mathfrak{c}}-f)(t)\|_{L^1_pL^{\infty}_x} &\le \frac{C}{\mathfrak{c}^{k-3\epsilon}} +Cr(\mathfrak{c})\int_0^te^{-\nu_{0}(t-s)}\|\sqrt{J_{\mathfrak{c}}}(f_{\mathfrak{c}}-f)(s)\|_{L^1_pL^{\infty}_x}ds.
\end{align*}
By similar arguments as in \eqref{4.11-20}--\eqref{4.17-0}, for any $0\le t\le (\log \mathfrak{c})^{\alpha_0}$, one has
\begin{align*}
\|\sqrt{J_{\mathfrak{c}}}(f_{\mathfrak{c}}-f)(t)\|_{L^1_pL^{\infty}_x} &\le C(\epsilon) / \mathfrak{c}^{k-4\epsilon}.
\end{align*}
Take $\delta=4\epsilon$ and we obtain \eqref{1.7}. 

On the other hand, using the exponential decay property of $f_{\mathfrak{c}}(t,x,p)$ and $f(t,x,p)$, one has
\begin{align*}
\|\sqrt{J_{\mathfrak{c}}}(f_{\mathfrak{c}}-f)(t)\|_{L^1_pL^{\infty}_x}\lesssim e^{-\tilde{\sigma}t}, \quad t\ge 0.
\end{align*}
Hence for $(\log \mathfrak{c})^{\alpha_0}\le t\le (\log \mathfrak{c})^{\beta_0}$ with $\beta_0>1$, one has
\begin{align*}
\|\sqrt{J_{\mathfrak{c}}}(f_{\mathfrak{c}}-f)(t)\|_{L^1_pL^{\infty}_x}\lesssim e^{-\tilde{\sigma}(\log \mathfrak{c})^{\alpha_0}}.
\end{align*}
For $t\ge (\log \mathfrak{c})^{\beta_0}$, using Lemma \ref{lem2.5}, one has
\begin{align*}
\|\sqrt{J_{\mathfrak{c}}}(f_{\mathfrak{c}}-f)(t)\|_{L^1_pL^{\infty}_x}\lesssim e^{-\tilde{\sigma}(\log \mathfrak{c})^{\beta_0}}\lesssim \frac{1}{\mathfrak{c}^2}.
\end{align*}
Therefore the proof of Theorem \ref{thm1.3} is completed.
$\hfill\Box$

\section{Global-in-time Newtonian limit in $L^{\infty}_{x,p}$}

In this section, we are devoted to proving Theorem \ref{thm1.4}, i.e., the Newtonian limit in $L^{\infty}_{x,p}$. We first consider the propagation of regularities for the Newtonian Boltzmann equation. The point is that we do not assume any smallness on the derivatives of initial data, and such regularity plays a key role in the Newtonian limit in $L^\infty_{x,p}$.

\begin{Lemma}\label{lem5.1}
Under the assumptions of Theorem \ref{thm1.4}, there exists a positive constant $\hat{\lambda}>0$ such that
	\begin{align*}
		\|w_{\beta-1}\nabla_xf(t)\|_{L^\infty_{x,p}}\lesssim e^{-\hat{\lambda}t},\text{ for }t\geq0.
	\end{align*}
\end{Lemma}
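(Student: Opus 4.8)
~

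\textbf{Proof proposal.} The plan is to differentiate the mild formulation \eqref{4.4-0} of the Newtonian Boltzmann equation in $x$ and run an $L^\infty$ estimate in the spirit of the standard $L^2$--$L^\infty$ theory, closing a Gr\"onwall-type inequality with exponential gain. First I would observe that, since the characteristics $x\mapsto x-p(t-s)$ are simple translations, $\nabla_x$ commutes with the transport operator; hence, writing $g:=w_{\beta-1}\nabla_x f$, differentiating \eqref{4.4-0} gives
\begin{align*}
	\nabla_x f(t,x,p)&=e^{-\nu(p)t}\nabla_x f_0(x-pt,p)+\int_0^te^{-\nu(p)(t-s)}(\mathbf{K}\nabla_x f)(s,x-p(t-s),p)\,ds\\
	&\quad+\int_0^te^{-\nu(p)(t-s)}\big[\mathbf{\Gamma}(\nabla_x f,f)+\mathbf{\Gamma}(f,\nabla_x f)\big](s,x-p(t-s),p)\,ds,
\end{align*}
because $\mathbf{K}$ and $\mathbf{\Gamma}$ act only on the $p$-variable and are linear/bilinear. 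Multiplying by $w_{\beta-1}(p)$ and using the kernel bound \eqref{2.12-20} for $\mathbf{K}$ (which gives $\int |k(p,q)|\frac{w_{\beta-1}(p)}{w_{\beta-1}(q)}dq\lesssim \frac{1}{1+|p|}$), together with the nonlinear estimate $\|\nu^{-1}w_{\beta-1}\mathbf{\Gamma}(h_1,h_2)\|_{L^\infty_p}\lesssim \|w_{\beta-1}h_1\|_{L^\infty_p}\|w_{\beta-1}h_2\|_{L^\infty_p}$ and the smallness/decay \eqref{1.30} of $\|w_\beta f(t)\|_{L^\infty_{x,p}}\lesssim e^{-\sigma_0 t}$, one obtains schematically
\begin{align*}
	\|g(t)\|_{L^\infty_{x,p}}\le Ce^{-\nu_0 t}\|w_{\beta-1}\nabla_xf_0\|_{L^\infty_{x,p}}+C\int_0^t e^{-\nu_0(t-s)}\big(\tfrac{1}{1+|p|}+e^{-\sigma_0 s}\big)\|g(s)\|_{L^\infty_{x,p}}\,ds.
\end{align*}

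The term $\frac{1}{1+|p|}$ coming from $\mathbf{K}$ does not decay and is not small pointwise, so the naive Gr\"onwall argument only yields boundedness, not decay; this is the main obstacle. To overcome it I would iterate the Duhamel representation once more (the double-$\mathbf{K}$ trick of Lemma \ref{lem4.3} / Vidav's argument): substitute the mild form of $\nabla_x f$ back into the $\mathbf{K}\nabla_x f$ term and split the resulting double integral into a high-momentum region, a small-time-interval region, and a compact region where an $L^2$ bound on $\nabla_x f$ can be inserted via Cauchy--Schwarz. The high-momentum and short-time pieces carry a small constant; the compact piece is controlled by $\|\nabla_x f(s)\|_{L^2}$, which decays like $e^{-\hat\lambda_2 s}$ by a standard energy estimate for the linearized equation (differentiate \eqref{2.4-10} in $x$, pair with $\nabla_x f$, use the coercivity $\langle \mathbf{L}g,g\rangle\gtrsim \|(\mathbf{I-P})g\|_\nu^2$ together with the conservation laws $(M_0,\mathbf{J}_0,E_0)=(0,\mathbf{0},0)$ and a macroscopic estimate of Guo/Esposito type to recover the full $\|\nabla_x f\|_\nu^2$ dissipation — note $\partial_t\nabla_x f$ satisfies conservation laws with zero data since $\int \nabla_x(\cdot)\,dx=0$ on the torus).

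Concretely, the steps in order: (i) derive the mild equation for $\nabla_x f$ and the weighted form for $g=w_{\beta-1}\nabla_x f$; (ii) establish the $L^2$-decay $\|\nabla_x f(t)\|_{L^2}\lesssim e^{-\hat\lambda_2 t}$ via the energy method using $\langle\mathbf{L}g,g\rangle\gtrsim\|(\mathbf{I-P})g\|_\nu^2$, the zero conservation laws, and Proposition \ref{prop3.2}-type control of the hydrodynamic part (applied to the $x$-differentiated equation, whose source $\nabla_x\mathbf{\Gamma}(f,f)$ is quadratically small and decaying); (iii) run the Vidav double-Duhamel iteration on $g$ to split the $\mathbf{K}$-contribution, absorbing the non-decaying part into a small coefficient and bounding the compact part by the $L^2$-decay from step (ii); (iv) collect terms to obtain $e^{\hat\lambda t}\|g(t)\|_{L^\infty_{x,p}}\le C\|w_{\beta-1}\nabla_x f_0\|_{L^\infty_{x,p}}+C$ for a suitable $\hat\lambda\in(0,\min\{\nu_0,\sigma_0,\hat\lambda_2\})$, which gives the claim since $\|w_{\beta-1}\nabla_x f_0\|_{L^\infty_{x,p}}\le\hat C_5$ by \eqref{1.25}. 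The bilinear terms $\mathbf{\Gamma}(\nabla_x f,f)$ and $\mathbf{\Gamma}(f,\nabla_x f)$ are harmless: one factor always carries the decaying small norm $\|w_\beta f(s)\|_{L^\infty_{x,p}}\lesssim e^{-\sigma_0 s}$, so they contribute $\int_0^t e^{-\nu_0(t-s)}e^{-\sigma_0 s}\|g(s)\|_{L^\infty_{x,p}}\,ds$, which is absorbed by choosing $\hat\lambda$ small and using that $e^{-\sigma_0 s}$ is integrable against the decaying exponential.
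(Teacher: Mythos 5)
Your overall architecture coincides with the paper's proof: differentiate in $x$ (the derivative commutes with transport and with $\mathbf{K}$, $\mathbf{\Gamma}$ in $p$), run the weighted $L^\infty$ mild formulation, handle the non-small $\mathbf{K}$ contribution by the Vidav double-Duhamel iteration with the compact region bounded by an $L^2$ norm of $\nabla_x f$, obtain the $L^2$ information from the energy/macroscopic machinery of Proposition \ref{prop3.2} and Lemma \ref{lem4.5} (noting that $\nabla_x f$ has zero conserved quantities on the torus and that $\mathbf{\Gamma}$ is orthogonal to the collision invariants), and close with a Gr\"onwall argument in which the bilinear terms always carry the decaying factor $e^{-\sigma_0 s}$. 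All of that is exactly what the paper does.

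The one genuine gap is in your step (ii). You claim the $L^2$ decay of $\nabla_x f$ can be established first, as an independent step, because the source $\nabla_x\mathbf{\Gamma}(f,f)=\mathbf{\Gamma}(\nabla_x f,f)+\mathbf{\Gamma}(f,\nabla_x f)$ is ``quadratically small and decaying.'' It is neither: Theorem \ref{thm1.4} assumes no smallness anywhere ($\|w_\beta f(t)\|_{L^\infty}\lesssim e^{-\sigma_0 t}$ with an unspecified constant, and \eqref{1.25} only bounds $\nabla_x f_0$ by $\hat C_5$), and the source is linear in the unknown $\nabla_x f$ itself. Consequently you cannot simply cite Proposition \ref{prop3.2}/Lemma \ref{lem4.5} with $\mathcal{S}=\mathbf{\Gamma}(\nabla_x f,f)+\mathbf{\Gamma}(f,\nabla_x f)$ and read off decay: those results need $\|\mathcal{S}\|_{L^2}$, and the loss term gives $\|\mathbf{\Gamma}^-(\nabla_x f,f)\|_{L^2}\lesssim \|w_\beta f\|_{L^\infty}\|\nu\,\nabla_x f\|_{L^2}$, which is controlled neither by the energy norm $\|\nabla_x f\|_{L^2}$ nor by the dissipation $\|\nabla_x f\|_\nu$; the natural bound is $\|\mathcal{S}(s)\|_{L^2}\lesssim e^{-\sigma_0 s}\|w_{\beta-1}\nabla_x f(s)\|_{L^\infty}$, i.e.\ it involves precisely the quantity you are trying to estimate. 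The paper resolves this by \emph{not} decoupling: its $L^2$ bound (see \eqref{5.6-000}) retains $\|w_{\beta-1}\mathfrak{y}_i(s_1)\|_{L^\infty}$ inside the Duhamel integral, this is substituted into the $L^\infty$ inequality, and the two are closed simultaneously by squaring and a Gr\"onwall argument (the integrability of $e^{-\sigma_0 s}$, not smallness, makes the Gr\"onwall factor finite). Also, because nothing is small, an initial time layer must be treated separately by a crude Gr\"onwall bound on $[0,T_0]$ (the paper uses $\nu_f\ge 0$ there and $\nu_f\ge\tfrac12\nu$ only for $t\ge T_0$); your scheme needs the analogous splitting when you absorb the decaying-coefficient terms. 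So either adopt the coupled closure, or, if you insist on an independent $L^2$ step, you must first prove a refined macroscopic estimate in which the source enters through a weaker (e.g.\ $\nu^{-1/2}$-weighted or dual) norm so that the bilinear terms can be absorbed into the dissipation for $t\ge T_0$; as written, step (ii) does not follow from the cited results.
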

\begin{proof}
It is easy to prove the  $W^{1,\infty}$ regularity in local time, so we need only to establish  the corresponding {\it a priori} estimate.
	 Differentiating \eqref{2.4-10} with respect to $x_i$ leads to
	\begin{align}\label{5.3}
		\begin{cases}
			\partial_t\mathfrak{y}_i+p\cdot\nabla_x\mathfrak{y}_i+\FL\mathfrak{y}_i=\mathbf{\Gamma}(\mathfrak{y}_i,f)+\mathbf{\Gamma}(f,\mathfrak{y}_i),\\
			\mathfrak{y}_i|_{t=0}=\partial_{x_i}f_0(x,p),
		\end{cases}
	\end{align}
    where $\mathfrak{y}_i:=\partial_{x_i}f$. 
	
	For later use, we rewrite $\eqref{5.3}_1$  as 
	\begin{align}\label{5.5}
		\partial_t\mathfrak{y}_i+p\cdot\nabla_x\mathfrak{y}_i+\nu_f\mathfrak{y}_i=\mathbf{K}\mathfrak{y}_i+\mathbf{\Gamma}^+(\mathfrak{y}_i,f)+\mathbf{\Gamma}^+(f,\mathfrak{y}_i)-\mathbf{\Gamma}^{-}(f,\mathfrak{y}_i),
	\end{align}
	where
	\begin{align*}
		\nu_f(t,x,p)&:=\iint_{\R^3\times\mathbb{S}^2}|(p-q)\cdot\omega|\left\{\mu(q)+\sqrt{\mu(q)}f(t,x,q)\right\}dqd\omega\geq 0.
	\end{align*}
	Noting \cite[Lemma 2.2]{Duan-2}, we have
	\begin{align*}
		|w_{\beta-1}\mathbf{\Gamma}^+(\mathfrak{y}_i,f)|+|w_{\beta-1}\mathbf{\Gamma}^+(f,\mathfrak{y}_i)|\lesssim \|w_{\beta-1} f\|_{L^\infty_p}\cdot \|w_{\beta-1}\mathfrak{y}_i\|_{L^\infty_p},
	\end{align*}
    and one can easily get
	\begin{align*}
		|w_{\beta-1} \mathbf{\Gamma}^{-}(f,\mathfrak{y}_i)|\leq \|\mathfrak{y}_i\|_{L^\infty_p}\cdot \|w_\beta f\|_{L^\infty_p}.
	\end{align*}
    
    Set $\tilde{\nu}_0:=\inf_{p\in \mathbb{R}^3}\nu(p)$. Noting \eqref{1.30}, one can choose $T_0$ suitably large so that $\nu_f(t,x,p)\geq \f12\nu(p)\ge \frac{1}{2}\tilde{\nu}_0$ for $t\ge T_0$. 
    
    The mild form of equation \eqref{5.5} can be written as
\begin{align}\label{5.4-05}
&\displaystyle w_{\beta-1}(p)\mathfrak{y}_i(t,x,p)\nonumber\\
&\displaystyle=e^{-\int_0^t
\nu_f(s,x,p)ds}w_{\beta-1}(p)\mathfrak{y}_{i}(0,x-pt,p)\nonumber\\
&\quad \displaystyle +\int_0^te^{-\int_s^t
\nu_f(s_1,x,p)ds_1}w_{\beta-1}(p)\mathbf{K}\mathfrak{y}_i\left(s,x-p(t-s),p\right)ds\nonumber\\
&\quad\displaystyle+\int_0^t e^{-\int_s^t
\nu_f(s_1,x,p)ds_1} w_{\beta-1}(p)\{\mathbf{\Gamma}^+(\mathfrak{y}_i,f)+\mathbf{\Gamma}^+(f,\mathfrak{y}_i)-\mathbf{\Gamma}^{-}(f,\mathfrak{y}_i)\}\left(s,x-p(t-s),p\right)ds\nonumber\\
&:=I_1+I_2+I_3, \quad\text{ for } t\leq T_0,
	\end{align}
and
	\begin{align}\label{5.8-1}
	&w_{\beta-1}(p)\mathfrak{y}_i(t,x,p)\nonumber\\
	&=e^{-\int_{T_0}^t
\nu_f(s,x,p)ds}w_{\beta-1}(p)\mathfrak{y}_{i}(T_0,x-p(t-T_0),p)\nonumber\\
	&\quad+\int_{T_0}^te^{-\int_s^t
\nu_f(s_1,x,p)ds_1}w_{\beta-1}(p)\mathbf{K}\mathfrak{y}_i\left(s,x-p(t-s),p\right)ds\nonumber\\
	&\quad +\int_{T_0}^t e^{-\int_s^t
\nu_f(s_1,x,p)ds_1} w_{\beta-1}(p)\{\mathbf{\Gamma}^+(\mathfrak{y}_i,f)+\mathbf{\Gamma}^+(f,\mathfrak{y}_i)-\mathbf{\Gamma}^{-}(f,\mathfrak{y}_i)\}\left(s,x-p(t-s),p\right)ds\nonumber\\
	&:=M_1+M_2+M_3, \quad\text{ for } t> T_0.
\end{align}

	\noindent{\it Case 1. $t\in[0,T_0]$}. It follows from \eqref{5.4-05} that
	\begin{align*} 
		\|w_{\beta-1}\mathfrak{y}_i(s)\|_{L^\infty_{x,p}} \lesssim  \|w_{\beta-1}\partial_{x_i}f_0\|_{L^\infty_{x,p}} + C\int_0^t \|w_{\beta-1}\mathfrak{y}_i(s)\|_{L^\infty_{x,p}}ds,
	\end{align*}
which, together with Gr\"{o}nwall's inequality and \eqref{1.25}, yields that 
	\begin{align}\label{5.8-0}
		\sup_{0 \le s \le T_0}\|w_{\beta-1}\mathfrak{y}_i(s)\|_{L^\infty_{x,p}}\leq Ce^{CT_0}.
	\end{align}
	
	\noindent{\it Case 2. $t>T_0$}. Now it is clear that $
\nu_f(t,x,p)\geq \f12\nu(p)\geq \f12\tilde{\nu}_0>0$. Thus one has
	\begin{align*}
		|M_1|\leq C_{T_0}e^{-\f12\tilde{\nu}_0t}
	\end{align*}
	and
	\begin{align*} 
		|M_3|&\leq C\int_{T_0}^{t} e^{-\f12\tilde{\nu}_0(t-s)}e^{-\sigma_0s}\|w_{\beta-1}\mathfrak{y}_i(s)\|_{L^\infty_{x,p}}ds\nonumber\\
		&\leq Ce^{-\hat{\lambda}t}\int_{T_0}^te^{-\hat{\lambda}s}\|e^{\hat{\lambda}s}w_{\beta-1}\mathfrak{y}_i(s)\|_{L^\infty_{x,p}}ds.
	\end{align*}
    Here we choose $\hat{\lambda}$ such that
    \begin{align*}
   	0<\hat{\lambda}<\min\Big\{\f18 \nu_0,\f18\tilde{\nu}_0,\frac{\sigma_0}{2},\frac{\sigma_1}{2},\frac{\sigma_2}{2},\frac{\sigma_3}{2}\Big\},
   \end{align*}
   where $\sigma_i\ (i=1,2,3)$ are the constants in \eqref{5.6-000}, \eqref{5.25-000} and \eqref{5.78-000}, respectively.
	
	For $M_2$, using \eqref{5.8-1} once again, one obtains
	\begin{align*}
		M_2=&\int_{T_0}^te^{-\int_s^t
\nu_f(\tau,x,p)d\tau}e^{-\int_{T_0}^s
\nu_f(s_1,y,\tilde{p})ds_1}ds\int_{\R^3}\tilde{k}_w(p,\tilde{p})w_{\beta-1}(\tilde{p})\mathfrak{y}_i\left(T_0,y-\tilde{p}(s-T_0),\tilde{p}\right)d\tilde{p}\nonumber\\
		&+\int_{T_0}^t e^{-\int_s^t
\nu_f(\tau,x,p)d\tau}ds\int_{T_0}^se^{-\int_{s_1}^s
\nu_f(s_2,y,\tilde{p})ds_2}ds_1\iint_{\R^3\times\R^3}\tilde{k}_w(p,\tilde{p})\tilde{k}_w(\tilde{p},\tilde{p}')\\
		&\qquad\qquad \times w_{\beta-1}(\tilde{p}')\mathfrak{y}_i(s_1,y-\tilde{p}(s-s_1),\tilde{p}')d\tilde{p}d\tilde{p}'\nonumber\\
		&+\int_{T_0}^te^{-\int_s^t
\nu_f(\tau,x,p)d\tau}ds\int_{T_0}^s e^{-\int_{s_1}^s
\nu_f(s_2,y,\tilde{p})ds_2}ds_1\int_{\R^3}\tilde{k}_w(p,\tilde{p})w_{\beta-1}(\tilde{p})\\
		&\qquad\qquad \times \{\mathbf{\Gamma}^+(\mathfrak{y}_i,f)+\mathbf{\Gamma}^+(f,\mathfrak{y}_i)-\mathbf{\Gamma}^{-}(f,\mathfrak{y}_i)\} \left(s_1,y-\tilde{p}(s-s_1),\tilde{p}\right)d\tilde{p}\\
		:=&M_{2,1}+M_{2,2}+M_{2,3}.
	\end{align*}
 It is direct to have 
	\begin{align*}
		|M_{2,1}|\leq C_{T_0}\int_{T_0}^t e^{-\f12\tilde{\nu}_0(t-s)}e^{-\f12\tilde{\nu}_0s}ds\cdot \|w_{\beta-1}\mathfrak{y}_i(T_0)\|_{L^\infty_{x,p}}\leq C_{T_0}e^{-\f14\tilde{\nu}_0t}
	\end{align*}
	and 
	\begin{align*}
		|M_{2,3}|\leq Ce^{-\hat{\lambda}t}\int_{T_0}^te^{-\hat{\lambda}s}\|e^{\hat{\lambda}s}w_{\beta-1}\mathfrak{y}_i(s)\|_{L^\infty_{x,p}}ds.
	\end{align*}
	
	For $M_{2,2}$, similar to Lemma \ref{lem4.3}, one can obtain
	\begin{align*}
		|M_{2,2}|
		&\leq \f{C}{N}e^{-\hat{\lambda}t}\sup_{T_0 \le s \le t}\|e^{\hat{\lambda}s}w_{\beta-1}\mathfrak{y}_i(s)\|_{L^\infty_{x,p}}+C_N\int_{T_0}^te^{-2\hat{\lambda}(t-s)}\|\mathfrak{y}_i(s)\|_{L^2_{x,p}}ds.
	\end{align*}
    By similar arguments as in Proposition \ref{prop3.2} and Lemma \ref{lem4.5}, there exists a positive constant $\sigma_1>0$, such that
    \begin{align}\label{5.6-000}
    	&\|\mathfrak{y}_i(s)\|_{L^2_{x,p}}\nonumber\\
    	&\le  e^{-\frac{\sigma_1}{2}s}\|\mathfrak{y}_i(T_0)\|_{L^2_{x,p}}+C\Big(\int_{T_0}^se^{-\sigma_1(s-s_1)}[\|\Gamma(f,\mathfrak{y}_i)(s_1)\|_{L^2_{x,p}}^2+\|\Gamma(\mathfrak{y}_i,f)(s_1)\|_{L^2_{x,p}}^2]ds_1\Big)^{\frac{1}{2}}\nonumber\\
    	&\le C_{T_0}e^{-\frac{\sigma_1}{2}s}+C\Big(\int_{T_0}^se^{-\sigma_1(s-s_1)}e^{-2\sigma_0s_1}\|w_{\beta-1}\mathfrak{y}_i(s_1)\|_{L^{\infty}_{x,p}}^2ds_1\Big)^{\frac{1}{2}}.
    \end{align}
    In view of the choice of $\hat{\lambda}$, it is easy to see that
    \begin{align*}
    \int_{T_0}^te^{-2\hat{\lambda}(t-s)}C_{T_0}e^{-\frac{\sigma_1}{2}s}ds\le C_{N,T_0}e^{-\hat{\lambda}t}.
    \end{align*}
	Hence we obtain that
	\begin{align}\label{5.9-40}
	&\|w_{\beta-1}\mathfrak{y}_i(t)\|_{L^\infty_{x,p}}\nonumber\\
	&\leq C_{N,T_0}e^{-\hat{\lambda}t} +Ce^{-\hat{\lambda}t}\int_{T_0}^te^{-\hat{\lambda}s}\|e^{\hat{\lambda}s}w_{\beta-1}\mathfrak{y}_i(s)\|_{L^\infty_{x,p}}ds + \f{C}{N}e^{-\hat{\lambda}t}\sup_{T_0 \le s \le t}\|e^{\hat{\lambda}s}w_{\beta-1}\mathfrak{y}_i(s)\|_{L^\infty_{x,p}}\nonumber\\
		&\quad +C_{N}\int_{T_0}^te^{-2\hat{\lambda}(t-s)}\left\{\int_{T_0}^se^{-\sigma_1(s-s_1)}e^{-2\sigma_0s_1}\|w_{\beta-1}\mathfrak{y}_i(s_1)\|_{L^\infty_{x,p}}^2ds_1\right\}^{\f12}ds,
	\end{align}
    which, together with H\"{o}lder's inequality, yields that
    	\begin{align*}
    	\|w_{\beta-1}\mathfrak{y}_i(t)\|^2_{L^\infty_{x,p}} 
    	&\leq C_{N,T_0}e^{-2\hat{\lambda}t}+C_{N}e^{-2\hat{\lambda}t}\int_{T_0}^te^{-\hat{\lambda}s}\|e^{\hat{\lambda}s}w_{\beta-1}\mathfrak{y}_i(s)\|^2_{L^\infty_{x,p}}ds\nonumber\\
    	&\quad +\f{C}{N^2}e^{-2\hat{\lambda}t}\sup_{T_0 \le s \le t}\|e^{\hat{\lambda}s}w_{\beta-1}\mathfrak{y}_i(s)\|^2_{L^\infty_{x,p}}.
    \end{align*}

    Choosing $N$ suitably large, we conclude that
    \begin{align*}
    	\sup_{T_0 \le \tau \le t}\|e^{\hat{\lambda}\tau}w_{\beta-1}\mathfrak{y}_i(\tau)\|_{L^\infty_{x,p}}^2&\leq C_{T_0}+C\int_{T_0}^te^{-\hat{\lambda}s}\sup_{T_0 \le \tau \le s}\|e^{\hat{\lambda}\tau}w_{\beta-1}\mathfrak{y}_i(\tau)\|^2_{L^\infty_{x,p}}ds,
    \end{align*}
which, together with Gr\"{o}nwall's inequality, yields that 
    \begin{align*}
    	\sup_{T_0 \le \tau \le t}\|e^{\hat{\lambda}\tau}w_{\beta-1}\mathfrak{y}_i(\tau)\|_{L^\infty_{x,p}}^2\le C_{T_0}.
    \end{align*}
   Thus we have
     \begin{align*}
     	\|w_{\beta-1}\mathfrak{y}_i(s)\|_{L^\infty_{x,p}}\le C_{T_0}e^{-\hat{\lambda}s},\quad \text{for}\ T_0\le s\le t,
     \end{align*}
%
	which, together with \eqref{5.8-0}, yields that
	\begin{align}\label{4.49-1}
		\|w_{\beta-1} \partial_{x_i} f(t)\|_{L^\infty_{x,p}} &\lesssim e^{-\hat{\lambda}t},\quad i=1,2,3
	\end{align}
	for all $t\ge 0$. Therefore the proof of Lemma \ref{lem5.1} is completed.
\end{proof}

\begin{Lemma}\label{lem5.2}
		Under the assumptions of Theorem \ref{thm1.4}, it holds that 
	\begin{align*}
		\|w_{\beta-2}\nabla_pf(t)\|_{L^\infty_{x,p}}\lesssim 1,\text{ for }t\geq0.
	\end{align*}
\end{Lemma}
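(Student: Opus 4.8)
The plan is to mimic the argument of Lemma~\ref{lem5.1}, differentiating the Newtonian Boltzmann equation in $p_i$ and running the same mild-form / $L^2$--$L^\infty$ bootstrap, but now keeping track of the extra commutator terms produced by $\nabla_p$. Let $\mathfrak{z}_i:=\partial_{p_i}f$. Differentiating \eqref{2.4-10} with respect to $p_i$ produces
\begin{align*}
	\partial_t\mathfrak{z}_i+p\cdot\nabla_x\mathfrak{z}_i+\partial_{p_i}f
	+\nu_f\,\mathfrak{z}_i
	=\mathbf{K}\mathfrak{z}_i+(\partial_{p_i}\mathbf{K})f
	+\mathbf{\Gamma}^{+}(\mathfrak{z}_i,f)+\mathbf{\Gamma}^{+}(f,\mathfrak{z}_i)-\mathbf{\Gamma}^{-}(f,\mathfrak{z}_i)
	+\text{(commutators)},
\end{align*}
where the extra ``bad'' terms come from $\partial_{p_i}$ hitting the transport term $p\cdot\nabla_x f$ (giving $\partial_{x_i}f=\mathfrak{y}_i$, which is already controlled and decaying by Lemma~\ref{lem5.1}), from $\partial_{p_i}$ hitting $\nu(p)$ and the kernels $\mathcal{K}_\infty(p,q,\omega)$ inside $\mathbf{K}$ and $\mathbf{\Gamma}$, and crucially from $\partial_{p_i}$ hitting the collisional-velocity maps $\bar p',\bar q'$ in $\mathbf{\Gamma}^{\pm}$, which — after the usual change of variables — yields a term of the form $\tfrac{p_i}{2}\,\tfrac{1}{\sqrt\mu}\,Q(F,F)$ on the right-hand side (this is the term flagged in the excerpt as the source of the obstruction to decay). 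First I would write the equation in mild form with integrating factor $e^{-\int_s^t\nu_f}$, split time into $[0,T_0]$ and $(T_0,\infty)$ with $T_0$ chosen so that $\nu_f\ge\tfrac12\nu\ge\tfrac12\tilde\nu_0$ for $t\ge T_0$ (as in Lemma~\ref{lem5.1}), and on $[0,T_0]$ use Grönwall plus the hypothesis \eqref{1.25} to get a finite bound $\sup_{[0,T_0]}\|w_{\beta-2}\mathfrak{z}_i(t)\|_{L^\infty_{x,p}}\le C_{T_0}$.

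For $t>T_0$ I would iterate the mild form once (the standard ``$\mathcal{I}_{2}$ into $\mathcal{I}_{2,1}+\mathcal{I}_{2,2}+\mathcal{I}_{2,3}$'' trick of Lemma~\ref{lem4.3} / the $M_{2,j}$ split of Lemma~\ref{lem5.1}) so that the double-$\mathbf{K}$ term can be estimated by an $L^2$ quantity up to a small constant $C/N$ times the $L^\infty$ norm of $w_{\beta-2}\mathfrak{z}_i$. The needed $L^2$ bound on $\mathfrak{z}_i$ comes from an energy estimate on the $\mathfrak{z}_i$-equation in the spirit of Proposition~\ref{prop3.2} and Lemma~\ref{lem4.5}; here one must check that the source terms (the commutators and the $\tfrac{p_i}{2}\tfrac{1}{\sqrt\mu}Q(F,F)$ term, and $\mathbf{\Gamma}^{\pm}(f,\mathfrak{z}_i)$, $\mathbf{\Gamma}(\mathfrak{z}_i,f)$) have the right decay/integrability: the terms involving $f$ carry $e^{-\sigma_0 s}$ from \eqref{1.30}, the $\mathfrak{y}_i$-term carries $e^{-\hat\lambda s}$ from Lemma~\ref{lem5.1}, and the $\tfrac{p_i}{2}\tfrac{1}{\sqrt\mu}Q(F,F)$ term is quadratic in $f$ hence carries $e^{-2\sigma_0 s}$ and is bounded in the appropriate weighted $L^\infty$ (and $L^2$) norm using the nonlinear estimates for $\mathbf{\Gamma}$ and the assumption $\beta>8$, which leaves the weight $\beta-2$ with enough room after the loss of one or two powers. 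Assembling these and absorbing the $C/N$ term, I get
\begin{align*}
	\|w_{\beta-2}\mathfrak{z}_i(t)\|_{L^\infty_{x,p}}
	\le C_{N,T_0}
	+C\int_{T_0}^t \big(e^{-\hat\lambda(t-s)}+e^{-\sigma_0(t-s)}\big)\,\|w_{\beta-2}\mathfrak{z}_i(s)\|_{L^\infty_{x,p}}\,ds
	+(\text{an }L^2\text{-controlled, integrable remainder}),
\end{align*}
and since here one only wants \emph{boundedness} (not decay), the source term $\tfrac{p_i}{2}\tfrac{1}{\sqrt\mu}Q(F,F)$, although it only decays in $s$ and does not vanish as the weight is applied, contributes a finite time-integral; a Grönwall argument then closes $\sup_{t\ge0}\|w_{\beta-2}\mathfrak{z}_i(t)\|_{L^\infty_{x,p}}\lesssim 1$ for $i=1,2,3$.

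\textbf{Main obstacle.} The delicate point, as the authors already note, is precisely the term $\tfrac{p_i}{2}\tfrac{1}{\sqrt\mu}Q(F,F)$ (equivalently, the contribution from $\partial_{p_i}$ acting on the post-collisional velocities): unlike all other terms it does not come with an exponential-in-$t$ decay after multiplication by $w_{\beta-2}$ in a way that would give time-decay of $\mathfrak{z}_i$, because the extra polynomial factor $p_i$ eats a power of the weight and the change of variables moves the decay onto a factor that is only $O(1)$. This is why the conclusion of the lemma is merely $\|w_{\beta-2}\nabla_p f(t)\|_{L^\infty_{x,p}}\lesssim 1$ rather than an exponential bound. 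The work is to verify that this term is nonetheless \emph{uniformly bounded} in the weighted $L^\infty$ norm for all $t$: one uses $\|w_\beta f(t)\|_{L^\infty_{x,p}}\lesssim e^{-\sigma_0 t}$ from \eqref{1.30}, the nonlinear estimate $\|w_{\beta'}\mathbf{\Gamma}^+(h_1,h_2)\|_{L^\infty}\lesssim\|w_{\beta'}h_1\|_{L^\infty}\|w_{\beta'}h_2\|_{L^\infty}$ (Newtonian analogue of Lemma~\ref{lem2.7}) together with the algebraic inequality $w_{\beta-2}(p)|p_i|\lesssim w_{\beta-1}(p)\lesssim w_\beta(p)$ and $\beta>8$, so that $\|w_{\beta-2}\,\tfrac{p_i}{\sqrt\mu}Q(F,F)(s)\|_{L^\infty_{x,p}}\lesssim e^{-2\sigma_0 s}$, whose time integral is finite; feeding this as a bounded forcing into the Grönwall scheme above gives the uniform-in-time bound. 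The remaining steps (the $L^2$ estimate on $\mathfrak{z}_i$ via Proposition~\ref{prop3.2}–type arguments, the $\mathbf{K}$-iteration, and the Grönwall closure) are routine adaptations of Lemmas~\ref{lem4.3}, \ref{lem4.5} and \ref{lem5.1}.
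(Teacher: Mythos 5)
There is a genuine gap, and it sits exactly at the point you label ``routine''. Your plan is to run the mild-form/$L^2$--$L^\infty$ bootstrap directly on $\mathfrak{z}_i=\partial_{p_i}f$ and to obtain the needed $L^2$ bound ``in the spirit of Proposition~\ref{prop3.2} and Lemma~\ref{lem4.5}'' after checking that the sources decay. But those results are not applicable to the $\mathfrak{z}_i$-equation as it stands: they require the source to be orthogonal to the collision invariants after integration over $\mathbb{T}^3\times\mathbb{R}^3$ (the analogue of \eqref{3.21-0}) \emph{and} the solution to have vanishing macroscopic moments, and both conditions fail here. Indeed, writing the commutator cleanly (by the translation change of variables $q=p+\xi$, which incidentally removes all kernel-derivative terms $(\partial_{p_i}\mathbf{K})f$ from your equation), the only offending source is $\frac{p_i}{2}\frac{1}{\sqrt{\mu}}Q(F,F)$, and as recorded in \eqref{5.9-1} its moments against $(1,p,\frac{|p|^2-3}{\sqrt{6}})\sqrt{\mu}$ do not vanish; moreover $\partial_{p_i}f_0$ has no reason to have zero moments. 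The obstruction is therefore structural, not a matter of decay or weight bookkeeping: in the Proposition~\ref{prop3.2} machinery the hydrodynamic coefficients $a,b,c$ of $\mathfrak{z}_i$ no longer have zero spatial mean, the elliptic problems $-\Delta\phi=a,b_i,c$ on the torus cannot be posed, and the dissipation controls only $(\mathbf{I}-\mathbf{P})\mathfrak{z}_i$, so the ``$L^2$-controlled, integrable remainder'' in your Gr\"onwall scheme is unjustified. This is precisely why the paper introduces the decomposition $\mathfrak{f}_i=\mathfrak{f}_{i,1}+\mathfrak{f}_{i,2}$ with $\mathfrak{f}_{i,1}=\big(\mathfrak{b}_i(t,x)\cdot p+\mathfrak{r}_i(t,x)\frac{|p|^2-3}{\sqrt{6}}\big)\sqrt{\mu}$, where $\partial_t\mathfrak{b}_i,\partial_t\mathfrak{r}_i$ are driven by the moments of $\frac{p_i}{2}Q(F,F)$ as in \eqref{4.50-02} and the (constant) initial data are chosen so that \eqref{5.22-06}--\eqref{5.23-06} hold; only then can the $L^2$ decay estimate be applied, to $\mathfrak{f}_{i,2}$, while $\mathfrak{f}_{i,1}$ is bounded directly (it does not decay, which is exactly why the lemma only claims boundedness). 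Your proposal contains no such corrector, so the $t>T_0$ part of the argument does not close.

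Two smaller inaccuracies: the term $\frac{p_i}{2}\frac{1}{\sqrt{\mu}}Q(F,F)$ is \emph{not} quadratic in $f$; since $Q(\mu,\mu)=0$ one has $\frac{1}{\sqrt{\mu}}Q(F,F)=-\mathbf{L}f+\mathbf{\Gamma}(f,f)$, which contains a linear part and hence decays only like $e^{-\sigma_0 s}$ by \eqref{1.30}, not $e^{-2\sigma_0 s}$ (harmless for boundedness, but it signals that the difficulty is being misread as a decay-rate issue). Also the stray ``$+\partial_{p_i}f$'' on the left of your differentiated equation should be $\partial_{x_i}f$, the commutator of $\partial_{p_i}$ with $p\cdot\nabla_x$, which is indeed controlled by Lemma~\ref{lem5.1} as you say. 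With the macroscopic corrector added and the source bookkeeping corrected, the remaining steps of your outline do coincide with the paper's proof.
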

\begin{proof}
	 Differentiating \eqref{2.4-10} with respect to $p_i$, we obtain 
	\begin{align*}
		\partial_t\partial_{p_i}f+p\cdot\nabla_x\partial_{p_i}f+\partial_{x_i}f=\partial_{p_i}\Big(\f{1}{\sqrt{\mu}}Q(F,F)\Big).
	\end{align*}
	A direct calculation shows that
	\begin{align*}
		&\partial_{p_i}\Big(\f{1}{\sqrt{\mu}}Q(F,F)\Big)-\f{p_i}{2}\f{1}{\sqrt{\mu}}Q(F,F)\nonumber\\
		&=\f{1}{\sqrt{\mu}}\partial_{p_i}\Big\{\int_{\R^3\times\mathbb{S}^2}\mathcal{K}_\infty(p,q,\omega)[F(\bar{p}')F(\bar{q}')-F(p)F(q)] d\omega dq\Big\}\nonumber\\
		&=\f{1}{\sqrt{\mu}}\partial_{p_i}\Big\{\int_{\R^3\times\mathbb{S}^2}|\xi\cdot\omega|\Big[F(p+(\xi\cdot\omega)\omega)F(p+\xi-(\xi\cdot\omega)\omega)-F(p)F(p+\xi)\Big] d\omega d\xi\Big\}\nonumber\\
		&=\f{1}{\sqrt{\mu}}\big\{Q(\partial_{p_i}F,F)+Q(F,\partial_{p_i}F)\big\}\nonumber\\
		&=-\FL\partial_{p_i}f+\mathbf{\Gamma}(\partial_{p_i}f,f)+\mathbf{\Gamma}(f,\partial_{p_i}f)+\mathbf{\Gamma}(-\f{p_i}{2}f,f)+\mathbf{\Gamma}(f,-\f{p_i}{2}f)\nonumber\\
		&\qquad +\FL \{\f{p_i}{2}f\}+\mathbf{\Gamma}(-p_i\sqrt{\mu},f)+\mathbf{\Gamma}(f,-p_i\sqrt{\mu}).
	\end{align*}
Denote $\mathfrak{f}_i:=\partial_{p_i}f$, then the equation of $\mathfrak{f}_i$ is
	\begin{align}\label{5.9-a}
		\begin{cases}
			\partial_t \mathfrak{f}_i+p\cdot\nabla_x\mathfrak{f}_i+\FL\mathfrak{f}_i=\mathbf{\Gamma}(f,\mathfrak{f}_i)+\mathbf{\Gamma}(\mathfrak{f}_i,f)+\mathcal{R}_i,\\
			\mathfrak{f}_i(t,x,p)|_{t=0}=\partial_{p_i}f_0(x,p),
		\end{cases}
	\end{align}
	where
	\begin{align}\label{5.9-b}
		\mathcal{R}_i:=&-\partial_{x_i}f+\f{p_i}{2}\f{1}{\sqrt{\mu}}Q(F,F)+\FL \{\f{p_i}{2}f\}+\mathbf{\Gamma}(-\f{p_i}{2}f,f)+\mathbf{\Gamma}(f,-\f{p_i}{2}f)\nonumber\\
		&\quad +\mathbf{\Gamma}(-p_i\sqrt{\mu},f)+\mathbf{\Gamma}(f,-p_i\sqrt{\mu}).
	\end{align}
	
	It is direct to see 
	\begin{align}\label{5.9-1}
	\int_{\mathbb{T}^3}\int_{\R^3} \mathcal{R}_i
	\begin{pmatrix}
		1  \\  p \vspace{1.0ex}\\  \f{|p|^2-3}{\sqrt{6}}
	\end{pmatrix}
	\sqrt{\mu(p)} dpdx	\equiv \int_{\mathbb{T}^3}\int_{\R^3}\f{p_i}{2}\f{1}{\sqrt{\mu}}Q(F,F)
		\begin{pmatrix}
			1  \\  p \vspace{1.0ex}\\  \f{|p|^2-3}{\sqrt{6}}
		\end{pmatrix}
		\sqrt{\mu(p)} dpdx\neq \mathbf{0},
	\end{align}
Due to \eqref{5.9-1}, it is hard to apply the $L^2-L^\infty$ argument. For this, we introduce a  decomposition $\mathfrak{f}_i=\mathfrak{f}_{i,1}+\mathfrak{f}_{i,2}$ such that 
	\begin{align*}
		\int_{\mathbb{T}^3}\int_{\R^3}\big\{\partial_t\mathfrak{f}_{i,1}+p\cdot\nabla_x\mathfrak{f}_{i,1}-\mathcal{R}_i\big\}\begin{pmatrix}
			1  \\  p \vspace{1.0ex}\\  \f{|p|^2-3}{\sqrt{6}}
		\end{pmatrix}\sqrt{\mu(p)}dpdx=\mathbf{0},
	\end{align*}
	which is equivalent to
	\begin{align}\label{4.33-0}
		\int_{\mathbb{T}^3}\int_{\R^3}\partial_t\mathfrak{f}_{i,1}\begin{pmatrix}
			1  \\  p \vspace{1.0ex}\\  \f{|p|^2-3}{\sqrt{6}}
		\end{pmatrix}
	   \sqrt{\mu(p)} dpdx=\int_{\mathbb{T}^3}\int_{\R^3}\f{p_i}{2}Q(F,F)
	   \begin{pmatrix}
	   	1  \\  p \vspace{1.0ex}\\  \f{|p|^2-3}{\sqrt{6}}
	   \end{pmatrix}
       dpdx.
	\end{align}
	We also require that 
	\begin{align}\label{5.12}
		\int_{\mathbb{T}^3}\int_{\R^3}\{\mathfrak{f}_{i}-\mathfrak{f}_{i,1}\}(0,x,p)
		\begin{pmatrix}
			1  \\  p \vspace{1.0ex}\\  \f{|p|^2-3}{\sqrt{6}}
		\end{pmatrix}
		\sqrt{\mu(p)}dpdx=\mathbf{0}.
	\end{align}
    
    The equation for $\mathfrak{f}_{i,2}$ takes the form 
     \begin{align}\label{5.16-30}
    	\partial_t\mathfrak{f}_{i,2}+p\cdot\nabla_x\mathfrak{f}_{i,2}+\FL\mathfrak{f}_{i,2}=\mathbf{\Gamma}(f,\mathfrak{f}_{i,2})+\mathbf{\Gamma}(\mathfrak{f}_{i,2},f)+\bar{\mathcal{R}}_i,
    \end{align}
    where
    \begin{align*}
    	\bar{\mathcal{R}}_i:=\mathbf{\Gamma}(f,\mathfrak{f}_{i,1})+\mathbf{\Gamma}(\mathfrak{f}_{i,1},f)-\FL\mathfrak{f}_{i,1}-\{\partial_t\mathfrak{f}_{i,1}+p\cdot\nabla_x\mathfrak{f}_{i,1}-\mathcal{R}_i\}.
    \end{align*}
    Based on our choice of $\mathfrak{f}_{i,1}$, for the equation \eqref{5.16-30} of $\mathfrak{f}_{i,2}$, we now have
    \begin{align}\label{5.22-06}
    	\int_{\mathbb{T}^3}\int_{\mathbb{R}^3}\Big(\mathbf{\Gamma}(f,\mathfrak{f}_{i,2})+\mathbf{\Gamma}(\mathfrak{f}_{i,2},f)+\bar{\mathcal{R}}_i\Big)
    	\begin{pmatrix}
    		1  \\  p \vspace{1.0ex}\\  \f{|p|^2-3}{\sqrt{6}}
    	\end{pmatrix}
    	\sqrt{\mu(p)}dpdx=\mathbf{0}
    \end{align}
    and
    \begin{align}\label{5.23-06}
    	\int_{\mathbb{T}^3}\int_{\R^3}\mathfrak{f}_{i,2}(0,x,p)
    	\begin{pmatrix}
    		1  \\  p \vspace{1.0ex}\\  \f{|p|^2-3}{\sqrt{6}}
    	\end{pmatrix}
    	\sqrt{\mu(p)}dpdx=\mathbf{0}.
    \end{align}

	Let $\mathfrak{f}_{i,1}=\Big(\mathfrak{b}_i(t,x)\cdot p+\mathfrak{r}_i(t,x)\f{|p|^2-3}{\sqrt{6}}\Big)\sqrt{\mu(p)}$ such that
	\begin{align}\label{4.50-02}
	\partial_t\mathfrak{b}_{i}=\int_{\R^3}\f{p_ip}{2}Q(F,F)dp,\quad \partial_t\mathfrak{r}_i=\int_{\R^3}\f{p_i(|p|^2-3)}{2\sqrt{6}}Q(F,F)dp.
	\end{align}
    For such  $\mathfrak{b}_i$ and $\mathfrak{r}_i$, it is direct to check that \eqref{4.33-0} holds true. 
    
    For the initial data of $\mathfrak{f}_{i,1}$, using \eqref{1.18-10} and integrating by parts, one reduces  \eqref{5.12} to 
    \begin{align}
    	\int_{\mathbb{T}^3}\mathfrak{b}_{i}(0,x)dx&=\f12\int_{\mathbb{T}^3}\int_{\R^3}f_0(x,p)p_ip\sqrt{\mu(p)}dpdx,\label{5.15-06}\\
    	\int_{\mathbb{T}^3}\mathfrak{r}(0,x)dx&=\f12\int_{\mathbb{T}^3}\int_{\R^3}f_0(x,p)\f{p_i(|p|^2-3)}{\sqrt{6}}\sqrt{\mu(p)}dpdx. \label{5.16-06}
    \end{align}
    In view of \eqref{5.15-06}--\eqref{5.16-06} and the boundedness of $f_0$, we take $\mathfrak{b}_{i}(0,x)$ and $\mathfrak{r}_i(0,x)$ as constants 
    \begin{align}\label{4.50-08}
    	\mathfrak{b}_{i}(0,x)\equiv \mathfrak{b}^0_i,\quad \mathfrak{r}_i(0,x)\equiv \mathfrak{r}^0_i,
    \end{align}
    so that \eqref{5.15-06}--\eqref{5.16-06} hold true.
%
	
	Using \eqref{1.30}, we have from \eqref{4.50-02} that
	\begin{align}\label{5.13-40}
		&\|(\partial_t\mathfrak{b}_{i}(t,\cdot),\partial_t\mathfrak{r}_i(t,\cdot))\|_{L^{\infty}_x}\lesssim e^{-\sigma_0t},\quad i=1,2,3,
	\end{align}
	which, together with $\eqref{4.50-08}$, yields that
	\begin{align*}
		\|(\mathfrak{b}_{i},\mathfrak{r}_i)\|_{L^{\infty}_{t,x}}\lesssim1,\quad i=1,2,3.
	\end{align*}
    Thus one has
    \begin{align}\label{5.16-50}
    	\|w_{\beta-2}\mathfrak{f}_{i,1}(t)\|_{L^{\infty}_{x,p}}\lesssim 1,\quad \text{for}\ t\ge 0.
    \end{align}
	Applying the operator $\nabla_x$ to \eqref{4.50-02} and using \eqref{4.49-1}, $\eqref{4.50-08}$, we can further obtain
	\begin{align}\label{4.52-1}
		\|\nabla_x(\mathfrak{b}_i,\mathfrak{r}_i)\|_{L^{\infty}_{t,x}}\lesssim 1.
	\end{align}
    
	The mild form of equation \eqref{5.16-30} can be written as
	\begin{align*}
		&w_{\beta-2}(p)\mathfrak{f}_{i,2}(t,x,p)\nonumber\\
		&=e^{-\int_0^t
\nu_f(s,x,p)ds}w_{\beta-2}(p)\mathfrak{f}_{i,2}(0,x-pt,p)\nonumber\\
		&\quad +\int_0^te^{-\int_s^t
\nu_f(s_1,x,p)ds_1}w_{\beta-2}(p)\mathbf{K}\mathfrak{f}_{i,2}\left(s,x-p(t-s),p\right)ds\nonumber\\
		&\quad+\int_0^t e^{-\int_s^t
\nu_f(s_1,x,p)ds_1} w_{\beta-2}(p)\{\mathbf{\Gamma}^+(\mathfrak{f}_{i,2},f)+\mathbf{\Gamma}^+(f,\mathfrak{f}_{i,2})-\mathbf{\Gamma}^{-}(f,\mathfrak{f}_{i,2})\}\left(s,x-p(t-s),p\right)ds\nonumber\\
		&\quad+\int_0^t e^{-\int_s^t
\nu_f(s_1,x,p)ds_1} w_{\beta-2}(p)\bar{\mathcal{R}}_i\left(s,x-p(t-s),p\right)ds,\quad\text{for }t\leq T_0,
	\end{align*}
	and 
	\begin{align*}
		&w_{\beta-2}(p)\mathfrak{f}_{i,2}(t,x,p)\nonumber\\
		&=e^{-\int_{T_0}^t
\nu_f(s,x,p)ds}w_{\beta-2}(p)\mathfrak{f}_{i,2}(T_0,x-p(t-T_0),p)\nonumber\\
		&\quad+\int_{T_0}^te^{-\int_s^t
\nu_f(s_1,x,p)ds_1}w_{\beta-2}(p)\mathbf{K}\mathfrak{f}_{i,2}\left(s,x-p(t-s),p\right)ds\nonumber\\
		&\quad+\int_{T_0}^t e^{-\int_s^t
\nu_f(s_1,x,p)ds_1} w_{\beta-2}(p)\{\mathbf{\Gamma}^+(\mathfrak{f}_{i,2},f)+\mathbf{\Gamma}^+(f,\mathfrak{f}_{i,2})-\mathbf{\Gamma}^{-}(f,\mathfrak{f}_{i,2})\}\left(s,x-p(t-s),p\right)ds\nonumber\\
		&\quad+\int_{T_0}^t e^{-\int_s^t
\nu_f(s_1,x,p)ds_1} w_{\beta-2}(p)\bar{\mathcal{R}}_i\left(s,x-p(t-s),p\right)ds,\quad \text{for }t> T_0.
	\end{align*}
	
	Using \eqref{1.30}, \eqref{4.49-1} and \eqref{5.13-40}--\eqref{4.52-1}, for all $t\ge 0$, one has
	\begin{align}\label{5.20-08}
		&\sup_{0 \le s \le t}\|w_{\beta-2}\bar{\mathcal{R}}_i(s)\|_{L^\infty_{x,p}}\nonumber\\
		&\lesssim \sup_{0 \le s \le t}\|\nu^{-1}w_{\beta-1}\left(\mathbf{\Gamma}(f,\mathfrak{f}_{i,1})+\mathbf{\Gamma}(\mathfrak{f}_{i,1},f)-\partial_t\mathfrak{f}_{i,1}-p\cdot\nabla_x\mathfrak{f}_{i,1}+\mathcal{R}_i\right)(s)\|_{L^\infty_{x,p}}\nonumber\\
		&\lesssim \sup_{0 \le s \le t}\Big\{\|w_{\beta-1} f(s)\|_{L^\infty_{x,p}}\cdot \|w_{\beta-1} \mathfrak{f}_{i,1}(s)\|_{|L^\infty_{x,p}}+\|w_{\beta-2} \partial_t\mathfrak{f}_{i,1}(s)\|_{L^\infty_{x,p}}\nonumber\\
		&\quad +\|\nu^{-1}w_{\beta-1} p\cdot\nabla_x\mathfrak{f}_{i,1}(s)\|_{L^\infty_{x,p}}
		+\|\nu^{-1}w_{\beta-1} \partial_{x_i}f(s)\|_{L^\infty_{x,p}}\nonumber\\
		&\quad +\|\nu^{-1}w_{\beta-1} \f{p_i}{\sqrt{\mu}}Q(F(s),F(s))\|_{L^\infty_{x,p}} +\|\nu^{-1}w_{\beta-1} \FL \{p_if(s)\}\|_{L^\infty_{x,p}}\nonumber\\
		&\quad+\|\nu^{-1}w_{\beta-1}\{\mathbf{\Gamma}(-p_if(s),f(s))+\mathbf{\Gamma}(f(s),-p_if(s))\}\|_{L^\infty_{x,p}}\nonumber\\
		&\quad+\|\nu^{-1}w_{\beta-1} \{\mathbf{\Gamma}(-p_i\sqrt{\mu},f(s))+\mathbf{\Gamma}(f(s),-p_i\sqrt{\mu})\}\|_{L^\infty_{x,p}}\Big\}\nonumber\\
		&\lesssim1.
	\end{align}

	For $t\leq T_0$, it is clear that
	\begin{align*}
		\sup_{0 \le s \le t}\|w_{\beta-2}\mathfrak{f}_{i,2}(s)\|_{L^\infty_{x,p}}&\leq C\left(\|w_{\beta-2}\partial_{p_i}f_0\|_{L^\infty_{x,p}}+\|w_{\beta-2}\mathfrak{f}_{i,1}(0)\|_{L^\infty_{x,p}}\right)\nonumber\\
		&\quad +CT_0\sup_{0 \le s \le t}\|w_{\beta-2}\bar{\mathcal{R}}_i(s)\|_{L^\infty_{x,p}}+C\int_0^t\|w_{\beta-2}\mathfrak{f}_{i,2}(s)\|_{L^\infty_{x,p}}ds,
	\end{align*}
	which, together with Gr\"{o}nwall's inequality and \eqref{1.25}, \eqref{5.16-50} as well as \eqref{5.20-08}, yields that
	\begin{align}\label{5.19-50}
		\sup_{0 \le s \le T_0}\|w_{\beta-2}\mathfrak{f}_{i,2}(s)\|_{L^\infty_{x,p}}\leq CT_0e^{CT_0}.
	\end{align} 

	For $t\ge T_0$, similar to the deduction of \eqref{5.9-40}, we obtain
	\begin{align}\label{5.23-03}
		&\|w_{\beta-2}\mathfrak{f}_{i,2}(t)\|_{L^\infty_{x,p}}\nonumber\\
		&\leq C_{T_0}+C\int_{T_0}^te^{-\hat{\lambda}s}\|w_{\beta-2}\mathfrak{f}_{i,2}(s)\|_{L^\infty_{x,p}}ds+\f{C}{N}\sup_{T_0 \le s \le t}\|w_{\beta-2}\mathfrak{f}_{i,2}(s)\|_{L^\infty_{x,p}}\nonumber\\
		&\quad +C_{N}\int_{T_0}^te^{-2\hat{\lambda}(t-s)}\|\mathfrak{f}_{i,2}(s)\|_{L^2_{x,p}}ds.
	\end{align}
In view of \eqref{5.22-06}--\eqref{5.23-06}, one can carry out similar arguments as in Proposition \eqref{prop3.2} and Lemma \ref{lem4.5} to obtain
\begin{align}\label{5.25-000}
	&\|\mathfrak{f}_{i,2}(s)\|_{L^2_{x,p}}\leq C_{T_0}e^{-\f{\sigma_2s}{2}}\|\mathfrak{f}_{i,2}(T_0)\|_{L^2_{x,p}}\nonumber\\
	&\qquad +C \left\{\int_{T_0}^se^{-\sigma_2(s-s_1)}\left[\|\mathbf{\Gamma}(f,\mathfrak{f}_{i,2})(s_1)\|_{L^2_{x,p}}^2+\|\mathbf{\Gamma}(\mathfrak{f}_{i,2},f)(s_1)\|_{L^2_{x,p}}^2+\|\bar{\mathcal{R}}_i(s_1)\|_{L^2_{x,p}}^2\right]ds_1\right\}^{\f12}
\end{align}
for some positive constant $\sigma_2>0$. Then one has
\begin{align}\label{5.21-05}
	&\|\mathfrak{f}_{i,2}(s)\|_{L^2_{x,p}}\leq C_{T_0}e^{-\f{\sigma_2s}{2}}\|w_{\beta-2}\mathfrak{f}_{i,2}(T_0)\|_{L^\infty_{x,p}}\nonumber\\
	&\qquad+C \left\{\int_{T_0}^se^{-\sigma_2(s-s_1)}\left[e^{-2\sigma_0s_1}\|w_{\beta-2}\mathfrak{f}_{i,2}(s_1)\|_{L^\infty_{x,p}}^2+\|w_{\beta-2}\bar{\mathcal{R}}_i(s_1)\|_{L^\infty_{x,p}}^2\right]ds_1\right\}^{\f12}.
\end{align}
Plugging \eqref{5.21-05} into \eqref{5.23-03}, one obtains
	\begin{align*}
		\|w_{\beta-2}\mathfrak{f}_{i,2}(t)\|^2_{L^\infty_{x,p}} 
		&\leq C_{N,T_0}+C_{N}\int_{T_0}^te^{-\hat{\lambda}s}\|w_{\beta-2}\mathfrak{f}_{i,2}(s)\|^2_{L^\infty_{x,p}}ds+\f{C}{N^2}\sup_{T_0 \le s \le t}\|w_{\beta-2}\mathfrak{f}_{i,2}(s)\|^2_{L^\infty_{x,p}}.
	\end{align*}

	Taking $N$ suitably large, then  using Gr\"{o}nwall's inequality, one gets
	\begin{align*}
		\sup_{T_0\le s\le t}\|w_{\beta-2} \mathfrak{f}_{i,2}(s)\|^2_{L^\infty_{x,p}}\le  C_{T_0}, 
	\end{align*}
	which, together with \eqref{5.19-50}, yields that
	\begin{align}\label{5.18-1}
		\|w_{\beta-2} \mathfrak{f}_{i,2}(t)\|_{L^\infty_{x,p}}\lesssim 1,\quad i=1,2,3,
	\end{align}
    for all $t\ge 0$. Combining \eqref{5.16-50} and \eqref{5.18-1}, we 
	complete the proof of Lemma \ref{lem5.2}.
\end{proof}

\vspace{0.2cm}

We are now in a position to show Theorem \ref{thm1.4}.
\begin{proof}[Proof of Theorem \ref{thm1.4}]
Due to the complexity of the proof, we divide it into four steps.\\ 
\noindent{\it Step 1. Decomposition of $f_\fc-f$.} It follows from  \eqref{2.4-10} and \eqref{5.8} that
 \begin{align*}
 	\partial_t(f_\fc-f)+\hat{p}\cdot\nabla_x(f_\fc-f)+\FL_\fc(f_\fc-f)=\mathbf{\Gamma}_\fc(f_\fc,f_\fc)-\mathbf{\Gamma}_\fc(f,f)+\mathcal{R},
 \end{align*}
 where 
 \begin{align}\label{4.51-00}
 	\mathcal{R}:=(p-\hat{p})\cdot\nabla_xf+(\FL-\FL_\fc)f+(\mathbf{\Gamma}_\fc-\mathbf{\Gamma})(f,f).
 \end{align}
Define $\mathfrak{g}:=f_\fc-f$, then  $\mathfrak{g}$ satisfies
 \begin{align*}
 	\begin{cases}
 		\partial_t\mathfrak{g}+\hat{p}\cdot\nabla_x\mathfrak{g}+\FL_\fc \mathfrak{g}=\mathbf{\Gamma}_\fc(\mathfrak{g},f_\fc)+\mathbf{\Gamma}_\fc(f,\mathfrak{g})+\mathcal{R},\\
 		\mathfrak{g}(t,x,p)|_{t=0}=f_{0,\fc}-f_0.
 	\end{cases}
 \end{align*}
 
It is direct to see that 
\begin{align}\label{5.30-000}
\int_{\mathbb{T}^3}\int_{\R^3} \mathcal{R}
\begin{pmatrix}
	1  \\  p \vspace{1.0ex}\\  \f{p^0-A_3}{\sqrt{A_2-A_3^2}}
\end{pmatrix}
\sqrt{J_{\mathfrak{c}}(p)} dpdx
\equiv \int_{\mathbb{T}^3}\int_{\R^3}[\FL f-\mathbf{\Gamma}(f,f)]
	\begin{pmatrix}
		1  \\  p \vspace{1.0ex}\\  \f{p^0-A_3}{\sqrt{A_2-A_3^2}}
	\end{pmatrix}
	\sqrt{J_{\mathfrak{c}}(p)} dpdx\neq \mathbf{0}.
\end{align}
Due to \eqref{5.30-000}, it is hard to apply the $L^2-L^\infty$ argument. For this, we introduce a  decomposition $\mathfrak{g}=\mathfrak{g}_1+\mathfrak{g}_2$ with
 \begin{align*}
 	\int_{\mathbb{T}^3}\int_{\mathbb{R}^3}\Big\{\partial_t \mathfrak{g}_1+\hat{p}\cdot\nabla_x\mathfrak{g}_1-\mathcal{R}\Big\}
 	\begin{pmatrix}
 		1  \\  p \vspace{1.0ex}\\  \f{p^0-A_3}{\sqrt{A_2-A_3^2}}
 	\end{pmatrix}
 \sqrt{J_\fc(p)}dpdx=\mathbf{0},
 \end{align*}
 which is equivalent to
 \begin{align}\label{3.60-1}
 	&\int_{\mathbb{T}^3}\int_{ \mathbb{R}^3}\partial_t\mathfrak{g}_1
 	\begin{pmatrix}
 		1  \\  p \vspace{1.0ex}\\  \f{p^0-A_3}{\sqrt{A_2-A_3^2}}
 	\end{pmatrix}
 \sqrt{J_\fc(p)}dpdx=	\int_{\mathbb{T}^3}\int_{ \mathbb{R}^3}[\FL f-\mathbf{\Gamma}(f,f)]
 	\begin{pmatrix}
 		1  \\  p \vspace{1.0ex}\\  \f{p^0-A_3}{\sqrt{A_2-A_3^2}}
 	\end{pmatrix}
 	\sqrt{J_\fc(p)}dpdx. 
 \end{align}
We also require that
\begin{align}\label{5.28}
	\int_{\mathbb{T}^3}\int_{ \mathbb{R}^3}\{\mathfrak{g}-\mathfrak{g}_{1}\}(0,x,p)
	\begin{pmatrix}
		1  \\  p \vspace{1.0ex}\\  \f{p^0-A_3}{\sqrt{A_2-A_3^2}}
	\end{pmatrix}
    \sqrt{J_\fc(p)}dpdx=\mathbf{0}.
\end{align}

Now the equation for $\mathfrak{g}_2$ takes the form
\begin{align}\label{5.70-00}
	\partial_t\mathfrak{g}_2+\hat{p}\cdot\nabla_x\mathfrak{g}_2+\FL_\fc \mathfrak{g}_2=\mathbf{\Gamma}_\fc(\mathfrak{g}_2,f_\fc)+\mathbf{\Gamma}_\fc(f,\mathfrak{g}_2)+\tilde{\mathcal{R}},
\end{align}
where
\begin{align*}
	\tilde{\mathcal{R}}:=\mathbf{\Gamma}_\fc(\mathfrak{g}_1,f_\fc)+\mathbf{\Gamma}_\fc(f,\mathfrak{g}_1)+\mathcal{R}-\{\partial_t\mathfrak{g}_1+\hat{p}\cdot\nabla_x\mathfrak{g}_1+\mathbf{L}_{\mathfrak{c}}\mathfrak{g}_1\}.
\end{align*}

Based on the choice of $\mathfrak{g}_{1}$, we  have
\begin{align}\label{5.75-06}
	\int_{\mathbb{T}^3}\int_{\mathbb{R}^3}\Big(\mathbf{\Gamma}_\fc(\mathfrak{g}_2,f_\fc)+\mathbf{\Gamma}_\fc(f,\mathfrak{g}_2)+\tilde{\mathcal{R}}\Big)
	\begin{pmatrix}
		1  \\  p \vspace{1.0ex}\\  \f{p^0-A_3}{\sqrt{A_2-A_3^2}}
	\end{pmatrix}
	\sqrt{J_\fc(p)}dpdx=\mathbf{0}
\end{align}
and
\begin{align} \label{5.76-06}
	\int_{\mathbb{T}^3}\int_{ \mathbb{R}^3}\mathfrak{g}_{2}(0,x,p)
	\begin{pmatrix}
		1  \\  p \vspace{1.0ex}\\  \f{p^0-A_3}{\sqrt{A_2-A_3^2}}
	\end{pmatrix}
	\sqrt{J_\fc(p)}dpdx=\mathbf{0}.
\end{align}

\noindent{\it Step 2. Construction of $\mathfrak{g}_1$}.
  Let $\mathfrak{g}_1=\Big(\mathfrak{e}(t,x)+\mathfrak{p}(t,x)\cdot p+\mathfrak{l}(t,x)\f{p^0-A_3}{\sqrt{A_2-A_3^2}}\Big)\sqrt{J_\fc(p)}$ such that
 \begin{align}
 	&\partial_t \mathfrak{e}=\int_{\R^3}\{\FL f-\mathbf{\Gamma}(f,f)\}\sqrt{J_\fc(p)}dp,\label{4.58-11}\\
 	&\partial_t \mathfrak{p}=\f{1}{A_1}\int_{\R^3}\{\FL f-\mathbf{\Gamma}(f,f)\}p\sqrt{J_\fc(p)}dp,\label{4.58-12}\\
 	&\partial_t \mathfrak{l}=\int_{\R^3}\{\FL f-\mathbf{\Gamma}(f,f)\}\f{p^0-A_3}{\sqrt{A_2-A_3^2}}\sqrt{J_\fc(p)}dp.\label{4.58-13}
 \end{align}
 For such choices of $\mathfrak{e}$, $\mathfrak{p}$ and $\mathfrak{l}$, it is clear that \eqref{3.60-1} holds true. For the initial data of $\mathfrak{g}_{1}$, using \eqref{1.8-10}, one can reduce the condition \eqref{5.28} to 
 \begin{align}
 	\int_{\mathbb{T}^3}\mathfrak{e}(0,x)dx&=-\int_{\mathbb{T}^3}\int_{\R^3}f_0(x,p)\sqrt{J_\fc(p)}dpdx,\label{5.32-06}\\
 	\int_{\mathbb{T}^3}\mathfrak{p}(0,x)dx&=-\frac{1}{A_1}\int_{\mathbb{T}^3}\int_{\R^3}f_0(x,p)p\sqrt{J_\fc(p)}dpdx,\label{5.33-06}\\
 	\int_{\mathbb{T}^3}\mathfrak{l}(0,x)dx&=-\int_{\mathbb{T}^3}\int_{\R^3}f_0(x,p)\f{p^0-A_3}{\sqrt{A_2-A_3^2}}\sqrt{J_\fc(p)}dpdx.\label{5.34-06}
 \end{align}
From Lemma \ref{lem2.6}, it is clear that
 \begin{align}
 	|\partial_t \mathfrak{e}|&=\Big|\int_{\R^3}\{\FL f-\mathbf{\Gamma}(f,f)\}\{\sqrt{J_\fc(p)}-\sqrt{\mu(p)}\}dp\Big|\lesssim\f{1}{\fc^{2-\epsilon}}e^{-\sigma_0t}, \label{4.46-1}\\
    |\partial_t \mathfrak{p}|&=\Big|\int_{\R^3}\{\FL f-\mathbf{\Gamma}(f,f)\}p\{\sqrt{J_\fc(p)}-\sqrt{\mu(p)}\}dp\Big|
 	\lesssim\f{1}{\fc^{2-\epsilon}}e^{-\sigma_0t}.\label{4.46-2}
 \end{align}

 For the estimate of $\partial_t \mathfrak{l}$, using Lemma \ref{lem2.2} and \eqref{2.2-30}--\eqref{2.3-30}, one has
 \begin{align*} 
   \f{1}{\mathfrak{c}\sqrt{A_2-A_3^2}}=\sqrt{\f23}+O(\mathfrak{c}^{-2}),\quad \mathfrak{c}^2-\mathfrak{c}A_3+\f32=O(\mathfrak{c}^{-2}),
 \end{align*} 
 which, together with Lemma \ref{lem2.6} and  $\fc p^0-\fc^2-\f{|p|^2}{2}=-\f{|p|^4}{2(p^0+\fc)^2}$, yields that
\begin{align}\label{4.48-1}
	|\partial_t \mathfrak{l}|&\le\frac{1}{\mathfrak{c}\sqrt{A_2-A_3^2}}\Big|\int_{\R^3}\{\FL f-\mathbf{\Gamma}(f,f)\}\Big(\mathfrak{c}p^0-\mathfrak{c}^2-\frac{|p|^2}{2}\Big)\sqrt{J_\fc(p)}dp\Big|\nonumber\\
	&\quad + \frac{1}{\mathfrak{c}\sqrt{A_2-A_3^2}}\Big|\int_{\R^3}\{\FL f-\mathbf{\Gamma}(f,f)\}\Big(\mathfrak{c}^2-\mathfrak{c}A_3+\frac{3}{2}\Big)\sqrt{J_\fc(p)}dp\Big|\nonumber\\
	&\quad + \frac{1}{\mathfrak{c}\sqrt{A_2-A_3^2}}\Big|\int_{\R^3}\{\FL f-\mathbf{\Gamma}(f,f)\}\Big(\frac{|p|^2}{2}-\frac{3}{2}\Big)\Big(\sqrt{J_\fc(p)}-\sqrt{\mu(p)}\Big)dp\Big|\nonumber\\
	&\lesssim \f{1}{\fc^{2-\epsilon}}e^{-\sigma_0t},
\end{align}
where we have used the fact that 
\begin{align*}
	\int_{\R^3}\{\FL f-\mathbf{\Gamma}(f,f)\}\Big(\frac{|p|^2}{2}-\frac{3}{2}\Big)\sqrt{\mu(p)}dp=0.
\end{align*}
 Hence it follows from \eqref{4.46-1}--\eqref{4.48-1} that 
 \begin{align}\label{4.56-00}
 	\|\partial_t(\mathfrak{e},\mathfrak{p},\mathfrak{l})(t,\cdot)\|_{L^{\infty}_x}\lesssim \f{1}{\fc^{2-\epsilon}}e^{-\sigma_0t}.
 \end{align}

Similarly, it follows from \eqref{1.18-10} and \eqref{5.32-06}--\eqref{5.34-06} that
\begin{align}\label{5.39-06}
		\Big|\int_{\mathbb{T}^3}\mathfrak{e}(0,x)dx\Big|+\Big|\int_{\mathbb{T}^3}\mathfrak{p}(0,x)dx\Big|+\Big|\int_{\mathbb{T}^3}\mathfrak{l}(0,x)dx\Big| \lesssim \f{1}{\fc^{2-\epsilon}}.
\end{align}
 In view of \eqref{5.39-06}, we can choose  $\mathfrak{e}(0,x)$, $\mathfrak{p}(0,x)$ and $\mathfrak{l}(0,x)$ which are only functions of $\mathfrak{c}$ 
 \begin{align*}
 \mathfrak{e}(0,x)=\mathfrak{e}^0(\mathfrak{c}),\quad \mathfrak{p}(0,x)=\mathfrak{p}^0(\mathfrak{c}),\quad
 \mathfrak{l}(0,x)=\mathfrak{l}^0(\mathfrak{c}),
 \end{align*} 
so that \eqref{5.32-06}--\eqref{5.34-06} hold true. Consequently, one has
\begin{align}\label{5.41-06}
	|\mathfrak{e}(0,x)|+|\mathfrak{p}(0,x)|+|\mathfrak{l}(0,x)|\lesssim \f{1}{\fc^{2-\epsilon}},\quad \nabla_x(\mathfrak{e}(0,x),\mathfrak{p}(0,x),\mathfrak{l}(0,x))\equiv \mathbf{0}.
\end{align}

 Combining \eqref{4.56-00} and $\eqref{5.41-06}_1$ gives
 \begin{align}\label{4.56-01}
 	\|(\mathfrak{e},\mathfrak{p},\mathfrak{l})\|_{L^{\infty}_{t,x}}\lesssim \f{1}{\fc^{2-\epsilon}}.
 \end{align}
Applying $\nabla_x$ to \eqref{4.58-11}--\eqref{4.58-13}, then using \eqref{4.49-1} and Lemma \ref{lem5.1}, we can further obtain
\begin{align}\label{4.57-00}
	\|\nabla_x(\mathfrak{e},\mathfrak{p},\mathfrak{l})\|_{L^{\infty}_{t,x}}\lesssim \f{1}{\fc^{2-\epsilon}}.
\end{align}

\noindent{\it Step 3. Estimate on $\mathcal{R}$.} 
Noting \eqref{2.5} and \eqref{2.21-03}, one has
 \begin{align}\label{4.67-30}
 	(\FL-\FL_\fc)f=&\int_{\R^3}\int_{\mathbb{S}^2}\Big[\mathcal{K}_\infty(p,q,\omega)\mu(q)-\mathcal{K}_\fc(p,q,\omega)J_\fc(q)\Big]d\omega dq \cdot f(p)\nonumber\\
 	&+\int_{\R^3}\int_{\mathbb{S}^2}\Big[\mathcal{K}_\infty(p,q,\omega)\sqrt{\mu(p)\mu(q)}-\mathcal{K}_\fc(p,q,\omega)\sqrt{J_\fc(p)J_\fc(q)}\Big]f(q)d\omega dq\nonumber\\
 	&-\int_{\R^3}\int_{\mathbb{S}^2}\Big[\mathcal{K}_\infty(p,q,\omega)\sqrt{\mu(q)\mu(\bar{q}')}f(\bar{p}')-\mathcal{K}_\fc(p,q,\omega)\sqrt{J_\fc(q)J_\fc(q')}f(p')\Big]d\omega dq\nonumber\\
 	&-\int_{\R^3}\int_{\mathbb{S}^2}\Big[\mathcal{K}_\infty(p,q,\omega)\sqrt{\mu(q)\mu(\bar{p}')}f(\bar{q}')-\mathcal{K}_\fc(p,q,\omega)\sqrt{J_\fc(q)J_\fc(p')}f(q')\Big]d\omega dq\nonumber\\
 	:=&\mathcal{M}_1+\mathcal{M}_2+\mathcal{M}_3+\mathcal{M}_4,
 \end{align}
where $(\bar{p}',\bar{q}')$ and $(p',q')$  are defined in \eqref{1.16-0} and \eqref{2.16-0}, respectively. 
Using \eqref{1.30} and Lemmas \ref{lem2.4} $\&$ \ref{lem2.6}, one can obtain  
\begin{align}\label{4.48-0}
	|w_{\beta-6}\mathcal{M}_1|&\leq\Big|\int_{\R^3}\int_{\mathbb{S}^2}[\mathcal{K}_\infty(p,q,\omega)-\mathcal{K}_\fc(p,q,\omega)]\mu(q)d\omega dq\cdot w_{\beta-6}(p)f(p)\Big| \nonumber\\
	&\quad +\Big|\int_{\R^3}\int_{\mathbb{S}^2}\mathcal{K}_\fc(p,q,\omega)[\mu(q)-J_\fc(q)]d\omega dq\cdot w_{\beta-6}(p)f(p)\Big|\nonumber\\
	&\lesssim\f{1}{\fc^{2-\epsilon}}\|w_\beta f\|_{L^\infty_{x,p}}\lesssim \f{1}{\fc^{2-\epsilon}},
\end{align}
and
\begin{align}\label{4.69-30}
	|w_\beta\mathcal{M}_2|&\leq\Big|\int_{\R^3}\int_{\mathbb{S}^2}[\mathcal{K}_\infty(p,q,\omega)-\mathcal{K}_\fc(p,q,\omega)]w_\beta(p)\sqrt{\mu(p)\mu(q)}f(q)d\omega dq\Big|\nonumber\\
	&\quad +\Big|\int_{\R^3}\int_{\mathbb{S}^2}\mathcal{K}_\fc(p,q,\omega)[\sqrt{\mu(p)\mu(q)}-\sqrt{J_\fc(p)J_\fc(q)}]w_\beta(p)f(q)d\omega dq\Big|\nonumber\\
	&\lesssim\f{1}{\fc^{2-\epsilon}}.
\end{align}

It follows from Lemma \ref{lem2.3} and \eqref{2.12-20} that
\begin{align*}
	|\mathcal{M}_{2}+\mathcal{M}_3+\mathcal{M}_4|&=|(\mathbf{K}-\mathbf{K}_\fc)f|\nonumber\\
	&\lesssim \int_{\R^3}\{|k(p,q)|+|k_\fc(p,q)|\}\f{1}{(1+|q|)^{\beta}}\cdot| w_{\beta}(q)f(q)|dq\nonumber\\
	&\lesssim  \f{1}{(1+|p|)^{\beta}}\|w_\beta f\|_{L^\infty_{x,p}}\lesssim \f{1}{(1+|p|)^{\beta}}.
\end{align*}
For $|p|\geq\fc^{\f12}$, we obtain
\begin{align}\label{4.69-40}
	w_{\beta-4}|\mathcal{M}_{2}+\mathcal{M}_3+\mathcal{M}_4|\lesssim  \f{1}{\fc^{2}}.
\end{align}

Next we consider $|p|\le \fc^{\f12}$. It holds that
\begin{align}\label{4.71-03}
	\mathcal{M}_3&=\int_{\R^3}\int_{\mathbb{S}^2}\Big(\mathcal{K}_\fc(p,q,\omega)-\mathcal{K}_\infty(p,q,\omega)\Big)\sqrt{J_\fc(q)J_\fc(q')}f(p')d\omega dq\nonumber\\
	&\quad +\int_{\R^3}\int_{\mathbb{S}^2}\mathcal{K}_\infty(p,q,\omega)\Big(\sqrt{J_\fc(q)J_\fc(q')}-\sqrt{\mu(q)\mu(q')}\Big)f(p')d\omega dq\nonumber\\
	&\quad +\int_{\R^3}\int_{\mathbb{S}^2}\mathcal{K}_\infty(p,q,\omega)\Big(\sqrt{\mu(q)\mu(q')}-\sqrt{\mu(q)\mu(\bar{q}')}\Big)f(p')d\omega dq\nonumber\\
	&\quad +\int_{\R^3}\int_{\mathbb{S}^2}\mathcal{K}_\infty(p,q,\omega)\sqrt{\mu(q)\mu(\bar{q}')}\Big(f(p')-f(\bar{p}')\Big)d\omega dq\nonumber\\
	&:=\mathcal{M}_{3,1}+\mathcal{M}_{3,2}+\mathcal{M}_{3,3}+\mathcal{M}_{3,4}.
\end{align}
It follows from \eqref{1.30} that
\begin{align}\label{4.72-03}
	|w_{\beta-6} \mathcal{M}_{3,1}|\lesssim w_{\beta-6}(p)\int_{\R^3}\int_{\mathbb{S}^2}\f{(1+|p|+|q|)^6}{\fc^2}\sqrt{J_\fc(q)J_\fc(q')}w_\beta ^{-1}(p')d\omega dq\cdot \|w_\beta f\|_{L^\infty_{x,p}}\lesssim \f{1}{\fc^2},
\end{align}
where we have used the fact (see the proof of  \cite[Lemma 6.1]{Wang-Xiao}) that 
\begin{align}\label{4.49-0}
\sqrt{J_\fc(q')}w_\beta ^{-1}(p')\lesssim w_\beta ^{-1}(q')w_\beta ^{-1}(p')\lesssim\f{1}{(1+|p|)^\beta}.
\end{align}
Similarly, one has
\begin{align}\label{4.74-03}
	|w_{\beta-1}\mathcal{M}_{3,2}|&\lesssim \int_{\R^3}w_{\beta-1}(p)w_{\beta}^{-1}(p')\mathcal{K}_\infty(p,q,\omega)\Big|\sqrt{J_\fc(q)J_\fc(q')}-\sqrt{\mu(q)\mu(q')}\Big|dq\cdot \|w_\beta f\|_{L^\infty_{x,p}}\nonumber\\
&\lesssim\f{1}{\fc^{2-\epsilon}}.
\end{align}

For $|p|\leq \fc^{\f12}$, a direct calculation shows that
\begin{align*}
	|p|\lesssim|p'|+|q'|\lesssim|p'|+|\bar{q}'|+\f{(|p|+|q|)^3}{\fc^2}\lesssim|p'|+|\bar{q}'|+(1+|q|)^3,
\end{align*}
which implies that
\begin{align}\label{4.69-3}
	w_{\beta}(p)\lesssim w_{\beta}(p')w_{\beta}(\bar{q}')w_{3\beta}(q).
\end{align}
It follows from Lemma \ref{lem2.4} that
\begin{align}\label{4.70-3}
	|\sqrt{\mu(q')}-\sqrt{\mu(\bar{q}')}|\lesssim &e^{-\f{|q'|^2}{8}}\cdot\Big|e^{-\f{|q'|^2}{8}}-e^{-\f{|\bar{q}'|^2}{8}}\Big|+e^{-\f{|\bar{q}'|^2}{8}}\cdot \Big|e^{-\f{|q'|^2}{8}}-e^{-\f{|\bar{q}'|^2}{8}}\Big|\nonumber\\
	\lesssim& (e^{-\f{|q'|^2}{8}}+e^{-\f{|\bar{q}'|^2}{8}})\cdot\f{(|p|+|q|)^3}{\fc^2}.
\end{align}
Noting \eqref{4.49-0} and \eqref{4.69-3}--\eqref{4.70-3}, we obtain
\begin{align}\label{4.77-03}
	|w_{\beta-4} \mathcal{M}_{3,3}|\lesssim\f{1}{\fc^2},\quad |p|\le \fc^{\f12}.
\end{align}

For $\mathcal{M}_{3,4}$, it holds that
\begin{align*}
	\mathcal{M}_{3,4}=&\int_{|q|\geq\fc^{\f12}}\int_{\mathbb{S}^2}\mathcal{K}_\infty(p,q,\omega)\sqrt{\mu(q)\mu(\bar{q}')}[f(p')-f(\bar{p}')]d\omega dq\nonumber\\
	&+\int_{|q|\le  \fc^{\f12}}\int_{\mathbb{S}^2}\mathcal{K}_\infty(p,q,\omega)\sqrt{\mu(q)\mu(\bar{q}')}[f(p')-f(\bar{p}')]d\omega dq.
\end{align*}
It is clear that
\begin{align}\label{4.71-30}
	\Big|w_{\beta-5}(p)\int_{|q|\geq\fc^{\f12}}\int_{\mathbb{S}^2}\mathcal{K}_\infty(p,q,\omega)\sqrt{\mu(q)\mu(\bar{q}')}[f(p')-f(\bar{p}')]d\omega dq\Big|\lesssim \f{1}{\fc^{2}},\quad |p|\le \mathfrak{c}^{\frac{1}{2}}.
\end{align}
For $|p|\le \fc^{\f12}$, $|q|\le \fc^{\f12}$, we have from Lemma \ref{lem2.4} that  $|p'-\bar{p}'|\lesssim \f{(|p|+|q|)^3}{\fc^2}\lesssim \fc^{-\f12}$. It follows that $|p'-\bar{p}'|\le \frac{1}{2}$ for suitably large $\mathfrak{c}$. Thus one has 
\begin{align}\label{4.72-30}
	1+|\bar{p}'|-|p'-\bar{p}'|\geq \frac{1}{2} (1+|\bar{p}'|).
\end{align}
By the mean value theorem, there exists $p_{\vartheta}=\bar{p}'+\vartheta(p'-\bar{p}')$ with $0<\vartheta<1$, such that 
\begin{align*}
	|f(\bar{p}')-f(p')|=|\nabla_pf(p_{\vartheta})|\cdot |\bar{p}'-p'|= w_{\beta-2}^{-1}(p_{\vartheta})|w_{\beta-2}(p_{\vartheta})\nabla_pf(p_{\vartheta})|\cdot |\bar{p}'-p'|.
\end{align*}
It follows from \eqref{4.72-30} that $w_{\beta-2}^{-1}(p_{\vartheta})\lesssim w_{\beta-2}^{-1}(\bar{p}')$. Using Lemma \ref{lem5.2}, one has
\begin{align*}
	&\Big|w_{\beta-6}(p)\int_{|q|\le\fc^{\f12}}\int_{\mathbb{S}^2}\mathcal{K}_\infty(p,q,\omega)\sqrt{\mu(q)\mu(\bar{q}')}[f(p')-f(\bar{p}')]d\omega dq\Big|\nonumber\\
	&\lesssim \int_{|q|\le\fc^{\f12}}w_{\beta-6}(p)|p-q|\sqrt{\mu(q)\mu(\bar{q}')}w_{\beta-2}^{-1}(\bar{p}')\f{(|p|+|q|)^3}{\fc^2}d\omega dq\cdot\|w_{\beta-2} \nabla_pf\|_{L^\infty_{x,p}} \lesssim\f{1}{\fc^2},
\end{align*}
which, together with \eqref{4.71-30}, yields that
\begin{align}\label{4.50-0}
	|w_{\beta-6} \mathcal{M}_{3,4}|\lesssim\f{1}{\fc^2},\quad |p|\le \mathfrak{c}^{\frac{1}{2}}.
\end{align}
Combining \eqref{4.71-03}--\eqref{4.72-03}, \eqref{4.74-03}, \eqref{4.77-03} and \eqref{4.50-0}, we have
\begin{align}\label{4.81-03}
	|w_{\beta-6} \mathcal{M}_{3}|  \lesssim\f{1}{\fc^{2-\epsilon}},\quad |p|\le \mathfrak{c}^{\frac{1}{2}}.
\end{align}

Similar as above arguments, one can also obtain
\begin{align}\label{4.82-03}
	|w_{\beta-6} \mathcal{M}_{4}|  \lesssim\f{1}{\fc^{2-\epsilon}},\quad |p|\le \mathfrak{c}^{\frac{1}{2}}.
\end{align}
Consequently, we have from \eqref{4.67-30}--\eqref{4.69-40} and \eqref{4.81-03}--\eqref{4.82-03} that
\begin{align}\label{4.50-2}
	|w_{\beta-6}(\FL-\FL_\fc)f|\lesssim\f{1}{\fc^{2-\epsilon}}.
\end{align}

Next, we estimate $	w_{\beta-6}(\mathbf{\Gamma}-\mathbf{\Gamma}_\fc)(f,f)$. It holds that
 \begin{align*}
 	(\mathbf{\Gamma}-\mathbf{\Gamma}_\fc)(f,f)=&\int_{\R^3}\int_{\mathbb{S}^2}\mathcal{K}_{\infty}(p,q,\omega)\sqrt{\mu(q)}[f(\bar{p}')f(\bar{q}')-f(p)f(q)]d\omega dq\nonumber\\
 	&\quad-\int_{\R^3}\int_{\mathbb{S}^2}\mathcal{K}_\fc(p,q,\omega)\sqrt{J_\fc(q)}[f(p')f(q')-f(p)f(q)]d\omega dq.
 \end{align*}
For $|p|\geq \fc^{\f12}$, one has
\begin{align}\label{4.56-1}
	|w_{\beta-5}(\mathbf{\Gamma}-\mathbf{\Gamma}_\fc)(f,f)|&\lesssim w_{\beta-5}(p)(1+|p|)\big\{w_\beta^{-1}(\bar{p}')w_\beta^{-1}(\bar{q}')+w_\beta^{-1}(p')w_\beta^{-1}(q')\big\}\cdot\|w_{\beta}f\|^2_{L^{\infty}_{x,p}}\nonumber\\
	&\lesssim\f{1}{(1+|p|)^4}\lesssim\f{1}{\fc^2}.
\end{align}
For $|p|\le \fc^{\f12}$, we depart $(\mathbf{\Gamma}-\mathbf{\Gamma_\fc})(f,f)$ as
\begin{align*}
	(\mathbf{\Gamma}-\mathbf{\Gamma}_\fc)(f,f)=&\int_{|q|\geq\fc^{\f12}}\int_{\mathbb{S}^2}\mathcal{K}_{\infty}(p,q,\omega)\sqrt{\mu(q)}[f(\bar{p}')f(\bar{q}')-f(p)f(q)]d\omega dq\nonumber\\
	&\quad-\int_{|q|\geq \fc^{\f12}}\int_{\mathbb{S}^2}\mathcal{K}_\fc(p,q,\omega)\sqrt{J_\fc(q)}[f(p')f(q')-f(p)f(q)]d\omega dq\nonumber\\
	&\quad+\int_{|q|\le \fc^{\f12}}\int_{\mathbb{S}^2}[\mathcal{K}_\infty(p,q,\omega)\sqrt{\mu(q)}-\mathcal{K}_\fc(p,q,\omega)\sqrt{J_\fc(q)}]f(p')f(q')d\omega dq\nonumber\\
	&\quad +\int_{|q|\le \fc^{\f12}}\int_{\mathbb{S}^2}\mathcal{K}_\infty(p,q,\omega)\sqrt{\mu(q)}f(\bar{p}')[f(\bar{q}')-f(q')]d\omega dq\nonumber\\
	&\quad +\int_{|q|\le \fc^{\f12}}\int_{\mathbb{S}^2}\mathcal{K}_\infty(p,q,\omega)\sqrt{\mu(q)}[f(\bar{p}')-f(p')]f(q')d\omega dq\nonumber\\
	&\quad +\int_{|q|\le \fc^{\f12}}\int_{\mathbb{S}^2}[\mathcal{K}_\fc(p,q,\omega)\sqrt{J_\fc(q)}-\mathcal{K}_\infty(p,q,\omega)\sqrt{\mu(q)}]f(p)f(q)d\omega dq\nonumber\\
	:=&\sum_{i=1}^6\mathcal{E}_i.
\end{align*}
It is clear that
\begin{align}\label{4.66-0}
	|w_{\beta-1}(\mathcal{E}_1,\mathcal{E}_2)|\lesssim\f{1}{\fc^2}  \quad \mbox{and} \quad |w_{\beta-6}(\mathcal{E}_3,\mathcal{E}_6)|\lesssim\f{1}{\fc^{2-\epsilon}}.
\end{align}
By similar arguments as in the estimate of $\mathcal{M}_{3,4}$, we have
\begin{align}\label{4.57-0}
	|w_{\beta-6}(\mathcal{E}_4,\mathcal{E}_5)|\lesssim \f{1}{\fc^2}.
\end{align}
Then it follows from \eqref{4.66-0}--\eqref{4.57-0} that
\begin{align*}
	|w_{\beta-6}(\mathbf{\Gamma}-\mathbf{\Gamma}_\fc)(f,f)|\lesssim \f{1}{\fc^{2-\epsilon}},\quad |p|\le \mathfrak{c}^{\frac{1}{2}},
\end{align*}
which, together with \eqref{4.56-1}, yields that
\begin{align}\label{4.57-1}
	|w_{\beta-6}(\mathbf{\Gamma}-\mathbf{\Gamma}_\fc)(f,f)|\lesssim \f{1}{\fc^{2-\epsilon}}.
\end{align}

For the remaining term $(p-\hat{p})\cdot\nabla_x f$  on the RHS of \eqref{4.51-00}, it follows from Lemma \ref{lem5.1} that
\begin{align*}
	|w_{\beta-4}(p-\hat{p})\cdot\nabla_x f|=\Big|w_{\beta-4}\f{|p|^2}{p^0(p^0+\fc)}p\cdot\nabla_xf\Big| \lesssim  \f{1}{\fc^2} \|w_{\beta-1}\nabla_xf\|_{L^\infty_{x,p}} \lesssim\f{1}{\fc^2},
\end{align*}
which, together with \eqref{4.51-00}, \eqref{4.50-2} and \eqref{4.57-1}, yields that
\begin{align}\label{4.58-00}
	|w_{\beta-6}\mathcal{R}|\lesssim \f{1}{\fc^{2-\epsilon}}.
\end{align}

\noindent{\it Step 4. Newtonian limit.} Recall that 
\begin{align*}
	\tilde{\mathcal{R}}:=\mathbf{\Gamma}_\fc(\mathfrak{g}_1,f_\fc)+\mathbf{\Gamma}_\fc(f,\mathfrak{g}_1)+\mathcal{R}-\{\partial_t\mathfrak{g}_1+\hat{p}\cdot\nabla_x\mathfrak{g}_1+\mathbf{L}_{\mathfrak{c}}\mathfrak{g}_1\}.
\end{align*}
It follows from \eqref{1.29}--\eqref{1.30}, \eqref{4.56-00}, \eqref{4.56-01}--\eqref{4.57-00} and \eqref{4.58-00} that
\begin{align}\label{5.60-1}
	|w_{\beta-6}\tilde{\mathcal{R}}|\lesssim\f{1}{\fc^{k-\epsilon}}.
\end{align}

The mild form of equation \eqref{5.70-00} can be written as
\begin{align}\label{5.60}
	&w_{\beta-6}(p)\mathfrak{g}_2(t,x,p)\nonumber\\
	&=e^{-\int_0^t\nu_{f_\fc}(s,x,p)ds}w_{\beta-6}(p)\mathfrak{g}_{2}(0,x-\hat{p}t,p)\nonumber\\
	&\quad +\int_0^te^{-\int_s^t\nu_{f_\fc}(s_1,x,p)ds_1}w_{\beta-6}(p)\mathbf{K}_\fc\mathfrak{g}_2\left(s,x-\hat{p}(t-s),p\right)ds\nonumber\\
	&\quad +\int_0^t e^{-\int_s^t\nu_{f_\fc}(s_1,x,p)ds_1} w_{\beta-6}(p)\{\mathbf{\Gamma}^{+}_{\mathfrak{c}}(\mathfrak{g}_2,f_{\mathfrak{c}})+\mathbf{\Gamma}^{+}_{\mathfrak{c}}(f,\mathfrak{g}_2)-\mathbf{\Gamma}^{-}_{\mathfrak{c}}(f,\mathfrak{g}_2)\}\left(s,x-\hat{p}(t-s),p\right)ds\nonumber\\
	&\quad +\int_0^t e^{-\int_s^t\nu_{f_\fc}(s_1,x,p)ds_1} w_{\beta-6}(p)\tilde{\mathcal{R}}\left(s,x-\hat{p}(t-s),p\right)ds,\quad\text{for }t\leq T_1,
\end{align}
and 
\begin{align}\label{5.61}
	&w_{\beta-6}(p)\mathfrak{g}_2(t,x,p)\nonumber\\
	&=e^{-\int_{T_1}^t\nu_{f_\fc}(s,x,p)ds}w_{\beta-6}(p)\mathfrak{g}_{2}(T_1,x-\hat{p}(t-T_1),p)\nonumber\\
	&\quad +\int_{T_1}^te^{-\int_s^t\nu_{f_\fc}(s_1,x,p)ds_1}w_{\beta-6}(p)\mathbf{K}_\fc\mathfrak{g}_2\left(s,x-\hat{p}(t-s),p\right)ds\nonumber\\
	&\quad +\int_{T_1}^t e^{-\int_s^t\nu_{f_\fc}(s_1,x,p)ds_1} w_{\beta-6}(p)\{\mathbf{\Gamma}^{+}_{\mathfrak{c}}(\mathfrak{g}_2,f_{\mathfrak{c}})+\mathbf{\Gamma}^{+}_{\mathfrak{c}}(f,\mathfrak{g}_2)-\mathbf{\Gamma}^{-}_{\mathfrak{c}}(f,\mathfrak{g}_2)\}\left(s,x-\hat{p}(t-s),p\right)ds\nonumber\\
	&\quad +\int_{T_1}^t e^{-\int_s^t\nu_{f_\fc}(s_1,x,p)ds_1} w_{\beta-6}(p)\tilde{\mathcal{R}}\left(s,x-\hat{p}(t-s),p\right)ds,\quad \text{for }t\geq T_1,
\end{align}
where
\begin{align*}
	&\nu_{f_\fc}(t,x,p):=\iint_{\R^3\times \mathbb{S}^2}\mathcal{K}_\fc(p,q,\omega)\Big(J_\fc(q)+\sqrt{J_\fc(q)}f_\fc(t,x,q)\Big)d\omega dq\geq 0.
\end{align*}
Noting \eqref{1.29}, we choose $T_1$ suitably large so that $\nu_{f_\fc}(t,x,p)\geq\f12\nu_0$ for $t\geq T_1$. 

A direct calculation shows that
\begin{align*}
	&w_{\beta-6}\Big(|\mathbf{\Gamma}^{+}_{\mathfrak{c}}(\mathfrak{g}_2,f_{\mathfrak{c}})(s)|+|\mathbf{\Gamma}^{+}_{\mathfrak{c}}(f,\mathfrak{g}_2)(s)|+|\mathbf{\Gamma}^{-}_{\mathfrak{c}}(f,\mathfrak{g}_2)(s)|\Big)\nonumber\\
	&\leq \Big(\|w_{\beta-6}f_\fc(s)\|_{L^\infty_p}+\|w_{\beta-5}f(s)\|_{L^\infty_p}\Big)\|w_{\beta-6}\mathfrak{g}_2(s)\|_{L^\infty_{p}}.
\end{align*}
For $t\leq T_1$, it follows from \eqref{5.60} that
 \begin{align*}
 	\sup_{0 \le s \le t}\|w_{\beta-6}\mathfrak{g}_2(s)\|_{L^\infty_{x,p}}&\leq C\left(\|w_{\beta-6}\mathfrak{g}(0)\|_{L^\infty_{x,p}}+\|w_{\beta-6}\mathfrak{g}_1(0)\|_{L^\infty_{x,p}}\right)\nonumber\\
 	&\quad +C\int_0^t\|w_{\beta-6}\mathfrak{g}_2(s)\|_{L^\infty_{x,p}}ds+CT_1\sup_{0\leq s\leq t}\|w_{\beta-6}\tilde{\mathcal{R}}(s)\|_{L^\infty_{x,p}},
 \end{align*}
which, together with Gr\"{o}nwall's inequality and \eqref{1.26}, \eqref{5.41-06}, \eqref{5.60-1}, yields that
\begin{align}\label{5.67-03}
	\sup_{0 \le s \le T_1}\|w_{\beta-6}\mathfrak{g}_2(s)\|_{L^\infty_{x,p}}\leq \f{1}{\fc^{k-\epsilon}}CT_1e^{CT_1}.
\end{align}

Using \eqref{5.61}--\eqref{5.67-03}, for $t\geq T_1$, similar to the deduction of \eqref{5.9-40}, we obtain
\begin{align}\label{5.75-00}
	\|w_{\beta-6}\mathfrak{g}_{2}(t)\|_{L^\infty_{x,p}}
	&\leq C_{T_1}\frac{1}{\mathfrak{c}^{k-\epsilon}}+C\int_{T_1}^te^{-\hat{\lambda}s}\|w_{\beta-6}\mathfrak{g}_{2}(s)\|_{L^\infty_{x,p}}ds+\f{C}{N}\sup_{T_1 \le s \le t}\|w_{\beta-6}\mathfrak{g}_{2}(s)\|_{L^\infty_{x,p}}\nonumber\\
	&\quad +C_{N}\int_{T_1}^te^{-2\hat{\lambda}(t-s)}\|\mathfrak{g}_{2}(s)\|_{L^2_{x,p}}ds.
\end{align}
Thus one can use \eqref{5.75-06}--\eqref{5.76-06} and carry out similar arguments as in Proposition \ref{prop3.2} and Lemma \ref{lem4.5} to obtain
\begin{align}\label{5.78-000}
	&\|\mathfrak{g}_{2}(s)\|_{L^2_{x,p}}\leq C_{T_1}e^{-\f{\sigma_3s}{2}}\|\mathfrak{g}_{2}(T_1)\|_{L^2_{x,p}}\nonumber\\
	&\qquad +C \left\{\int_{T_1}^se^{-\sigma_3(s-s_1)}\left[\|\mathbf{\Gamma}_{\mathfrak{c}}(\mathfrak{g}_{2},f_{\mathfrak{c}})(s_1)\|_{L^2_{x,p}}^2+\|\mathbf{\Gamma}_{\mathfrak{c}}(f,\mathfrak{g}_{2})(s_1)\|_{L^2_{x,p}}^2+\|\tilde{\mathcal{R}}(s_1)\|_{L^2_{x,p}}^2\right]ds_1\right\}^{\f12}
\end{align}
for some positive constant $\sigma_3>0$. Then one gets
\begin{align}\label{5.78-00}
	&\|\mathfrak{g}_{2}(s)\|_{L^2_{x,p}}\leq C_{T_1}e^{-\f{\sigma_3s}{2}}\|w_{\beta-6}\mathfrak{g}_{2}(T_1)\|_{L^\infty_{x,p}}\nonumber\\
	&\qquad+C \left\{\int_{T_1}^se^{-\sigma_3(s-s_1)}\left[e^{-2\sigma_0s_1}\|w_{\beta-6}\mathfrak{g}_{2}(s_1)\|_{L^\infty_{x,p}}^2+\|w_{\beta-6}\bar{\mathcal{R}}(s_1)\|_{L^\infty_{x,p}}^2\right]ds_1\right\}^{\f12}.
\end{align}
Plugging \eqref{5.78-00} into \eqref{5.75-00}, one obtains
\begin{align*} 
	\|w_{\beta-6}\mathfrak{g}_{2}(t)\|^2_{L^\infty_{x,p}} 
	&\leq C_{N,T_1}\frac{1}{\mathfrak{c}^{2k-2\epsilon}}+C_{N}\int_{T_1}^te^{-\hat{\lambda}s}\|w_{\beta-6}\mathfrak{g}_{2}(s)\|^2_{L^\infty_{x,p}}ds+\f{C}{N^2}\sup_{T_1 \le s \le t}\|w_{\beta-6}\mathfrak{g}_{2}(s)\|^2_{L^\infty_{x,p}},
\end{align*}
which, together with Gr\"{o}nwall's inequality and \eqref{5.67-03}, yields that
\begin{align}\label{5.47-01}
	\|w_{\beta-6}\mathfrak{g}_2(t)\|_{L^\infty_{x,p}}\lesssim \frac{1}{\mathfrak{c}^{k-\epsilon}},\text{ for } t\geq 0.
\end{align}
Combining \eqref{4.56-01} and \eqref{5.47-01}, one finally obtains that
\begin{align*}
	\sup_{t\ge 0}\|w_{\beta-6}(f_{\mathfrak{c}}-f)(t)\|_{L^\infty_{x,p}}=\sup_{t\ge 0}\|w_{\beta-6}\mathfrak{g}(t)\|_{L^\infty_{x,p}}\lesssim\f{1}{\fc^{k-\epsilon}}.
\end{align*}
Hence we conclude \eqref{1.27}. Therefore the proof of Theorem \ref{thm1.4} is completed.
\end{proof}

	\noindent{\bf Acknowledgments.}
	Chuqi Cao's research is partially supported by Research Centre for Nonlinear Analysis, Hong Kong Polytechnic University. Yong Wang's research is partially supported by National Key R\&D Program of China No. 2021YFA1000800, National Natural Science Foundation of China No. 12022114, 12288201,  CAS Project for Young Scientists in Basic Research, Grant No. YSBR-031, and Youth Innovation Promotion Association of the Chinese Academy of Science No. 2019002. Changguo Xiao's research is partially supported by National Natural Science Foundation of China No. 12361045 and Guangxi Natural Science Foundation Grant No. 2023GXNSFAA026066. 
	
	\

	\noindent{\bf Conflict of interest.} The authors declare that they have no conflict of interest.	
	
\end{document}